\theoremstyle{definition}
\newtheorem{notation}{Notation}[section]
\newtheorem{definition}[notation]{Definition}
\theoremstyle{plain}
\newtheorem{theorem}[notation]{Theorem}
\newtheorem{theorem_i}{Theorem}
\newtheorem{lemma}[notation]{Lemma}
\newtheorem{corollary}[notation]{Corollary}
\newtheorem{proposition}[notation]{Proposition}
\newtheorem{fact}[notation]{Fact}
\newtheorem{claim}{Claim}
\theoremstyle{remark}
\newtheorem{example}[notation]{Example}
\newtheorem{remark}[notation]{Remark}
\newcommand{\dis}{\displaystyle}
\newcommand{\nor}{\normalfont}
\newcommand{\Ker}{\mathop{\mathrm{Ker}}\nolimits}
\newcommand{\Img}{\mathop{\mathrm{Im}}\nolimits}
\newcommand{\Hom}{\mathop{\mathrm{Hom}}\nolimits}
\newcommand{\End}{\mathop{\mathrm{End}}\nolimits}
\newcommand{\spn}{\mathop{\mathrm{span}}\nolimits}
\newcommand{\gra}{\mathop{\mathrm{gr}}\nolimits}
\newcommand{\weight}{\mathop{\mathrm{wt}}\nolimits}
\newcommand{\height}{\mathop{\mathrm{ht}}\nolimits}
\newcommand{\adr}{\mathop{\mathrm{Ad }}\nolimits}
\def\l@section{\@tocline{1}{0pt}{1pc}{}{}}
\def\l@subsection{\@tocline{2}{0pt}{1pc}{4.6em}{}}
\def\l@subsubsection{\@tocline{3}{0pt}{1pc}{7.6em}{}}
\renewcommand{\tocsection}[3]{%
  \indentlabel{\@ifnotempty{#2}{\makebox[2.3em][l]{%
    \ignorespaces#1 #2.\hfill}}}#3}
\renewcommand{\tocsubsection}[3]{%
  \indentlabel{\@ifnotempty{#2}{\hspace*{2.3em}\makebox[2.3em][l]{%
    \ignorespaces#1 #2.\hfill}}}#3}
\renewcommand{\tocsubsubsection}[3]{%
  \indentlabel{\@ifnotempty{#2}{\hspace*{4.6em}\makebox[3em][l]{%
    \ignorespaces#1 #2.\hfill}}}#3}
    \newcommand*{\qrr@gobblenexttocentry}[5]{}
    \newcommand*{\qrr@gobblenexttocentry}[4]{}
\newcommand*{\addsubsection}{%
    \addtocontents{toc}{\protect\qrr@gobblenexttocentry}%
    \subsection}
\begin{document}
\title[Representations of quantized function algebras]{Representations of quantized function algebras and the transition matrices from Canonical bases to PBW bases}
\author{Hironori Oya}
\address%[Hironori Oya]
{Graduate School of Mathematical Sciences, The University of Tokyo,
 Komaba, Tokyo, 153-8914, Japan}
\email{oya@ms.u-tokyo.ac.jp}
\date{}
\begin{abstract}
Let $G$ be a connected simply-connected simple complex algebraic group and $\mathfrak{g}$ the corresponding simple Lie algebra. In the first half of the present paper, we study the relation between the positive part $U_q(\mathfrak{n^+})$ of the quantized enveloping algebra $U_q(\mathfrak{g})$ and the specific irreducible representations of the quantized function algebra $\mathbb{Q}_q[G]$, taking into account the right $U_q(\mathfrak{g})$-algebra structure of $\mathbb{Q}_q[G]$. This work is motivated by Kuniba, Okado and Yamada's result (\cite{KOY}) together with Tanisaki and Saito's results (\cite{Tanisaki_QFA}, \cite{Saito_QFA}). In the latter half, we calculate the transition matrices from the canonical basis to the PBW bases of $U_q(\mathfrak{n^+})$ using the above relation. Consequently, we show that the constants arising from our calculation are described by the structure constants for the comultiplication of $U_q(\mathfrak{g})$. In particular, when $\mathfrak{g}$ is of type $ADE$, this result implies the positivity of the transition matrices, which was originally proved by Lusztig (\cite{Lus_canari1}) in the case when the PBW bases are associated with the adapted reduced words of the longest element of the Weyl group, and by Kato (\cite{KatoKLR}) in arbitrary cases. In fact, the constants in our calculation coincide with ones arising from the calculation using the bilinear form on $U_q(\mathfrak{n}^{\pm})$. We explain this coincidence in Appendix.
\end{abstract}
\maketitle
\tableofcontents 
\section{Introduction}\label{intro}
\begin{notation}
Denote the set of non-negative integers$(=\mathbb{Z}_{\geqq 0})$ by $\mathbb{N}$.
\end{notation}
Let $G$ be a connected simply-connected simple complex algebraic group and $\mathfrak{g}$ the corresponding simple Lie algebra. Then, we can define two Hopf algebras over the complex number field $\mathbb{C}$, the quantized enveloping algebra $U_q(\mathfrak{g})_{\mathbb{C}}$ and the quantized function algebra $\mathbb{C}_q[G]$. (In this case, $q$ is a complex number which is neither 0 nor a root of unity.) The Hopf algebras $U_q(\mathfrak{g})_{\mathbb{C}}$ and $\mathbb{C}_q[G]$ are $q$-analogues of the universal enveloping algebra $U(\mathfrak{g})$ of $\mathfrak{g}$ and the coordinate algebra $\mathbb{C}[G]$ of $G$ respectively. We can also define these Hopf algebras as the Hopf algebras over the rational function field $\mathbb{Q}(q)$ in the same way, and denote these by $U_q(\mathfrak{g})$ and $\mathbb{Q}_q[G]$. We will deal with $U_q(\mathfrak{g})$ and $\mathbb{Q}_q[G]$.
The algebra $\mathbb{Q}_q[G]$ has the natural left and right $U_q(\mathfrak{g})$-algebra structure. 

In the first half of this paper, we study the relation between the positive part $U_q(\mathfrak{n}^+)$ of $U_q(\mathfrak{g})$ and the specific irreducible representations of $\mathbb{Q}_q[G]$, which has been pointed out by Kuniba, Okado and Yamada \cite{KOY}. To explain this relation, we recall the ``tensor product construction'' of the irreducible representations of $\mathbb{Q}_q[G]$ due to Soibelman \cite{Soirep}. 

Let $I$ be the index set of simple roots of $\mathfrak{g}$ and $W$ the Weyl group of $\mathfrak{g}$. It is known that, for each $i\in I$, there exists an infinite dimensional irreducible representation $\pi_i$ of $\mathbb{Q}_q[G]$, whose representation space $V_{s_i}$ has a natural basis ${\big \{ }|m\rangle_i{\big \}}_{m\in \mathbb{Z}_{\geqq 0}}$ indexed by the non-negative integers (\cite{VakSoi}, \cite{Soirep}). Take an element $w\in W$ and fix its reduced expression $w=s_{i_1}\cdots s_{i_l}$. 
\begin{fact}[{\cite[5.4]{Soirep}}]
The representation $\pi_w:=\pi_{i_1}\otimes\cdots\otimes\pi_{i_l}$ of $\mathbb{Q}_q[G]$ is irreducible and its isomorphism class does not depend on the choice of the reduced expressions of $w$. 
\end{fact}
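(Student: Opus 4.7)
My approach separates the two assertions: irreducibility of $\pi_w$, and independence of the chosen reduced expression.

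For irreducibility, I would first write out the explicit action of each $\pi_i$ on the basis $\{|m\rangle_i\}_{m \in \mathbb{N}}$ applied to the standard generators of $\mathbb{Q}_q[G]$ (the matrix coefficients on fundamental representations), and then use the coproduct of $\mathbb{Q}_q[G]$ to compute the action on a general pure tensor $|m_1\rangle_{i_1} \otimes \cdots \otimes |m_l\rangle_{i_l}$. The key point to establish is twofold. First, the ``vacuum'' vector $|0\rangle_{i_1}\otimes\cdots\otimes |0\rangle_{i_l}$ is cyclic: suitable matrix coefficients built from fundamental weight vectors act as raising/lowering operators in each factor, and the coproduct propagates this action through the tensor word so that every basis vector is reached. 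Second, the ``diagonal'' matrix coefficients (those of highest weight to highest weight on each fundamental representation) act by pairwise distinct eigenvalues on distinct basis vectors of $V_{s_{i_1}}\otimes\cdots\otimes V_{s_{i_l}}$; consequently any $\mathbb{Q}_q[G]$-invariant subspace must be spanned by a subset of the basis, and cyclicity upgrades this to irreducibility.

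For independence of the reduced expression, I invoke Matsumoto's theorem: any two reduced expressions of $w$ are connected by a sequence of braid moves between pairs of simple reflections. It therefore suffices to verify, for each pair $i \neq j$, a rank-two braid-isomorphism of representations in which the number of tensor factors on each side is the order of $s_is_j$ in $W$. Since $\mathbb{Q}_q[G]$ is generated by matrix coefficients whose action through the coproduct is essentially controlled by the rank-two sub-datum associated to $\{i,j\}$, the verification reduces to the four rank-two cases $A_1\times A_1$, $A_2$, $B_2$, and $G_2$. The commuting case $A_1 \times A_1$ is handled by the flip map, and in each of the remaining cases I would write an ansatz for an intertwiner between pure tensors and solve the intertwining conditions imposed by the images of the generators of the rank-two $\mathbb{Q}_q[G]$.

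The principal obstacle will be the explicit construction in types $B_2$ and $G_2$, where the intertwiner coefficients are governed by $q$-hypergeometric identities whose verification is notoriously delicate. An attractive alternative that sidesteps most of this explicit calculation is to classify the irreducible representations of the rank-two quantized function algebras and observe that both tensor products realize the same isomorphism class. Even more abstractly, one can argue that $\pi_w$ carries a unique (up to scalar) vector annihilated by all ``positive'' matrix coefficients, identify its eigenvalues under the diagonal matrix coefficients intrinsically in terms of $w$, and then conclude isomorphism by Schur's lemma together with uniqueness of the vacuum vector.
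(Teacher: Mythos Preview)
The paper does not prove this statement: it is recorded as a \emph{Fact} in the introduction and restated as Theorem~\ref{soi} in Section~\ref{dab1}, in both places with a bare citation to \cite[5.4]{Soirep} (and a pointer to \cite{KroSoi}, \cite{Josbook} for further detail). There is therefore no in-paper argument to compare your proposal against.

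That said, your outline is a reasonable sketch of the standard argument as it appears in the cited literature. The irreducibility part via separating eigenvalues of diagonal matrix coefficients and cyclicity of the vacuum is essentially the ``highest weight'' picture the paper alludes to in Proposition~\ref{hw}. For the independence part, your Matsumoto-plus-rank-two reduction is the usual route; your last paragraph, replacing explicit intertwiner construction by a classification/uniqueness-of-vacuum argument, is closer in spirit to how Soibelman and Joseph actually organize the proof, and is the version the paper implicitly relies on when it normalizes the intertwiner $\Theta_{{\bf j},{\bf i}}$ by $|(0)\rangle_{{\bf i}}\mapsto |(0)\rangle_{{\bf j}}$. If you want to turn this into a self-contained proof, the main gap to fill is making precise the statement that the diagonal matrix coefficients separate all basis vectors (this uses that $q$ is not a root of unity and that the weights $s_{i_1}\cdots s_{i_{k-1}}\varpi_{i_k}$ are distinct along a reduced word), and then carrying out the vacuum-uniqueness argument carefully enough to pin down the isomorphism class without the rank-two computation.
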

See for instance \cite[Chapter 3]{KroSoi}, \cite[Chapter 10]{Josbook} for more details.

We explain Kuniba, Okado and Yamada's result \cite{KOY}.　Let $w_0$ be the longest element of $W$ and $N$ its length.　It follows from the construction that the representation space $V_{w_0}$ of $\pi_{w_0}$ has a basis ${\big \{ }|m_1\rangle_{i_1}\otimes\cdots\otimes|m_l\rangle_{i_N}(=:|({\bf m})\rangle_{{\bf i}}){\big \}}_{{\bf m}\in(\mathbb{Z}_{\geqq 0})^N}$ for each reduced word ${\bf i}=(i_1,\dots,i_N)$ of $w_0$ (which means that $w_0=s_{i_1}\cdots s_{i_N}$ is a reduced expression of $w_0$). An intertwiner $\Theta_{{\bf j}, {\bf i}}$ from $\pi_{i_1}\otimes\cdots\otimes\pi_{i_N}$ to $\pi_{j_1}\otimes\cdots\otimes\pi_{j_N}$ (${\bf i}, {\bf j}$ are reduced words of $w_0$) is given by
$$\dis |( 0 )\rangle_{{\bf i}}\mapsto |( 0 )\rangle_{{\bf j}}.$$
We regard the map $\Theta_{{\bf j}, {\bf i}}$ as the identity map on $V_{w_0}$.

On the other hand, we have a basis $\{E_{{\bf i}}^{{\bf c}}\}_{{\bf c}\in (\mathbb{Z}_{\geqq 0})^N}$, called the PBW basis, of $U_q(\mathfrak{n}^+)$ for each reduced word ${\bf i}$ of $w_0$. See Definition \ref{E_i^c} for the precise definition. Set the $\mathbb{Q}(q)$-linear isomorphism $\Phi_{{\bf i}}:U_q(\mathfrak{n}^+)\to V_{w_0}$ by 
$$E_{{\bf i}}^{{\bf c}}\mapsto |({\bf c})\rangle_{{\bf i}}\ ({\bf c}\in (\mathbb{Z}_{\geqq 0})^N).$$
Then, Kuniba, Okado and Yamada's result is the following:
\begin{fact}[{\cite[Theorem 5]{KOY}}]For any reduced words ${\bf i}, {\bf j}$ of $w_0$, we have
$$\Phi_{{\bf j}}=\Theta_{{\bf j}, {\bf i}}\circ\Phi_{{\bf i}}.$$
\end{fact}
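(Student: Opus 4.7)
The plan is to reduce the claim to the rank-two case. By Matsumoto's theorem, any two reduced words of $w_0$ are connected by a sequence of braid moves. The identity $\Phi_{\bf j} = \Theta_{{\bf j},{\bf i}} \circ \Phi_{\bf i}$ is transitive in $({\bf i},{\bf j})$ once one observes that the intertwiners satisfy $\Theta_{{\bf j},{\bf i}} = \Theta_{{\bf j},{\bf k}} \circ \Theta_{{\bf k},{\bf i}}$ (both are intertwiners sending $|(0)\rangle_{\bf i}$ to $|(0)\rangle_{\bf j}$, hence equal by uniqueness). It therefore suffices to treat the case where ${\bf j}$ is obtained from ${\bf i}$ by a single braid move, so that ${\bf i}$ and ${\bf j}$ agree outside a rank-two window. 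For such a pair, both the intertwiner on $V_{w_0}$ and the PBW basis transition in $U_q(\mathfrak{n}^+)$ act as the identity on the unchanged tensor (respectively PBW) factors, reducing the claim to the corresponding rank-two subalgebra.

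In the rank-two base case, I would compare two explicit expansions. On the algebra side, Lusztig's formulas give the change-of-basis between $\{E_{\bf i}^{\bf c}\}$ and $\{E_{\bf j}^{{\bf c}'}\}$ for each Coxeter relation ($s_is_j = s_js_i$, $s_is_js_i = s_js_is_j$, $(s_is_j)^2 = (s_js_i)^2$, $(s_is_j)^3 = (s_js_i)^3$) in terms of explicit $q$-binomial-type coefficients. On the representation side, the intertwiner from $\pi_{i_1}\otimes\cdots$ to $\pi_{j_1}\otimes\cdots$ normalized by $|(0)\rangle_{\bf i} \mapsto |(0)\rangle_{\bf j}$ is uniquely determined, and its matrix entries with respect to the bases $\{|({\bf c})\rangle_{\bf i}\}$ and $\{|({\bf c}')\rangle_{\bf j}\}$ can be computed from the known action of generators of $\mathbb{Q}_q[G]$ (Soibelman's matrix coefficients) on the Fock basis $\{|m\rangle_i\}$, bootstrapping from the base case via the cyclicity of $|(0)\rangle$. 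The $A_1\times A_1$ case is immediate; $A_2$ is a short computation; $B_2$ and $G_2$ require more bookkeeping.

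The main obstacle will be the rank-two verification in types $B_2$ and $G_2$, where both sides involve combinatorial formulas with nested $q$-binomial coefficients, and reconciling them requires careful alignment of conventions (normalizations of root vectors, matrix coefficients, and the Fock-space grading). A more elegant alternative that would bypass the case analysis is to exhibit a subalgebra of $\mathbb{Q}_q[G]$ whose action on $V_{w_0}$ is cyclic from $|(0)\rangle$ together with a compatible module structure on $U_q(\mathfrak{n}^+)$ cyclic from $1$, so that $\Phi_{\bf i}$ is characterized as the unique equivariant map carrying $1 \mapsto |(0)\rangle$. Constructing such a subalgebra explicitly appears to be the crux of any conceptual proof; if this structural route proves elusive, the case-by-case rank-two verification remains a workable but tedious fallback.
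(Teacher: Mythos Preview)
The paper does not give its own proof of this statement: it is quoted as a Fact in the introduction and again as Theorem~2.18, in both places with a bare citation to \cite[Theorem~5]{KOY} and no argument. So there is no in-paper proof to compare against.

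That said, your outline is essentially the strategy of the original Kuniba--Okado--Yamada proof: reduce via Matsumoto's theorem to a single braid move, then verify the rank-two identity by matching Lusztig's explicit PBW transition formulas against the matrix entries of the rank-two intertwiner. Your assessment of the difficulties is accurate---$A_1\times A_1$ and $A_2$ are short, while $B_2$ and especially $G_2$ are lengthy computations. Nothing in your sketch is wrong, but as written it is a plan rather than a proof: the rank-two verifications are where all the content lies, and you have not carried any of them out.

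Your proposed ``elegant alternative'' is worth a remark. The structure you are looking for---a subalgebra acting cyclically on $V_{w_0}$ from $|(0)\rangle$ and compatibly on $U_q(\mathfrak{n}^+)$ from $1$---is precisely what the paper assembles in Section~3 (the algebra $\check{U}^{\geq 0}$ via Theorem~3.8), building on the Saito--Tanisaki theorem (Proposition~3.7, originally conjectured by KOY). However, in the paper's logical order that machinery is developed \emph{after} Theorem~2.18 is already taken as input, and the Saito--Tanisaki proof itself uses the KOY theorem. So while your instinct points toward the right structural picture, using it to give an independent proof of the present statement would require disentangling that circularity.
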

This fact says that the definition of the map $\Phi_{{\bf i}}$ does not depend on the choice of ${\bf i}$, so we denote this map by $\Phi_{\mathrm{KOY}}$. 

We also use the conjecture \cite[Conjecture 1]{KOY}, recently proved by Saito \cite{Saito_QFA} and Tanisaki \cite{Tanisaki_QFA}. Let $c_{f_{\lambda}, v_{w_0\lambda}}^{\lambda}$ denote the element of $U_q(\mathfrak{g})^{\ast}$ determined by 
$$ x\mapsto\langle f_{\lambda}, x.v_{w_0\lambda}\rangle,$$
where $v_{w_0\lambda}$ is a lowest weight vector of the integrable highest weight $U_q(\mathfrak{g})$-module $V(\lambda)$ with highest weight $\lambda$, and $f_{\lambda}$ is a highest weight vector of the right $U_q(\mathfrak{g})$-module $V(\lambda)^{\ast}$. See Notation \ref{normalization} for their normalization. We set $\mathcal{S}:={\big\{ }c_{f_{\lambda}, v_{w_0\lambda}}^{\lambda}{\big  \}}_{\lambda\in P_+}$, where $P_+$ is the set of the dominant weights. Then, $\mathcal{S}$ is a (left and right) Ore multiplicative set in $\mathbb{Q}_q[G]$. We consider the quotient ring $\mathbb{Q}_q[G]_{\mathcal{S}}$ and set 
$$\xi_i:=(1-q_i^2)^{-1}(c_{f_{\varpi_i}, v_{w_0\varpi_i}}^{\varpi_i}.E_i)({c_{f_{\varpi_i}, v_{w_0\varpi_i}}^{\varpi_i}})^{-1}\in \mathbb{Q}_q[G]_{\mathcal{S}},$$
where $\varpi_i$ is the fundamental weight and $E_i$ is the positive Chevalley generator associated with $i\in I$.
Then, the action of $\mathbb{Q}_q[G]$ on $V_{w_0}$ can be extended to the action of $\mathbb{Q}_q[G]_{\mathcal{S}}$, and 
\begin{fact}[{\cite[Proposition 7.6]{Tanisaki_QFA}, \cite[Corollary 4.3.3]{Saito_QFA}}]\label{conjtheorem}
$$\Phi_{\mathrm{KOY}}(E_ix)=\xi_i.\Phi_{\mathrm{KOY}}(x),$$ for all $x\in U_q(\mathfrak{n}^+)$, where  $\xi_i.$ is the action of $\xi_i$ on $V_{w_0}$.
\end{fact}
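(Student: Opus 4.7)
The plan is to verify the identity on a PBW basis of $U_q(\mathfrak{n}^+)$ after making an advantageous choice of reduced word. Since $w_0$ is the longest element of $W$, we have $\ell(s_iw_0)=N-1$, so we may fix a reduced word ${\bf i}=(i,i_2,\ldots,i_N)$ of $w_0$ beginning with $i$. By the preceding Fact, $\Phi_{\mathrm{KOY}}=\Phi_{{\bf i}}$, and it suffices to check
\begin{equation*}
\Phi_{{\bf i}}(E_i\cdot E_{{\bf i}}^{{\bf c}})=\xi_i.\Phi_{{\bf i}}(E_{{\bf i}}^{{\bf c}})
\end{equation*}
for every ${\bf c}=(c_1,\ldots,c_N)\in\mathbb{N}^N$. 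Since ${\bf i}$ starts with $i$, the first PBW root vector is $E_{\beta_1}=E_i$, so the standard PBW recursion gives $E_i\cdot E_{{\bf i}}^{{\bf c}}=[c_1+1]_i\,E_{{\bf i}}^{{\bf c}+{\bf e}_1}$, where ${\bf e}_1$ is the first standard basis vector of $\mathbb{Z}^N$. Hence the left-hand side equals $[c_1+1]_i\,|c_1+1\rangle_i\otimes|c_2\rangle_{i_2}\otimes\cdots\otimes|c_N\rangle_{i_N}$, and the claim reduces to showing that $\xi_i$ acts on $V_{w_0}=V_{s_i}\otimes V_{s_{i_2}}\otimes\cdots\otimes V_{s_{i_N}}$ as $X_i\otimes\mathrm{id}^{\otimes(N-1)}$, where $X_i\colon|m\rangle_i\mapsto[m+1]_i|m+1\rangle_i$.

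To establish this factorization, I would analyze the actions of $a:=c_{f_{\varpi_i},v_{w_0\varpi_i}}^{\varpi_i}.E_i$ and $b:=c_{f_{\varpi_i},v_{w_0\varpi_i}}^{\varpi_i}$ separately on $V_{w_0}$ through the iterated coproduct of $\mathbb{Q}_q[G]$, using the standard formula $\Delta(c_{f,v}^{\varpi_i})=\sum_kc_{f,v^{(k)}}^{\varpi_i}\otimes c_{v^{(k)},v}^{\varpi_i}$ for any weight basis $\{v^{(k)}\}$ of $V(\varpi_i)$. The crucial weight-theoretic input is that $v_{w_0\varpi_i}$ spans the one-dimensional lowest weight space of $V(\varpi_i)$, so the rightmost slot of each summand in $\Delta^{(N-1)}(b)$ has the form $c_{v^{(k)},v_{w_0\varpi_i}}^{\varpi_i}$; the explicit action of $\pi_{i_N}$ on $|c_N\rangle_{i_N}$ coming from Soibelman's construction forces $v^{(k)}$ to lie on the extremal weight orbit through $s_{i_N}$. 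Iterating this constraint backward through slots $N-1,\ldots,2$ pins down a single family of intermediate weight vectors $v^{(k_2)},\ldots,v^{(k_{N-1})}$ whose contributions survive, and the analysis for $a$ is identical except for the $E_i$ twist on the left.

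The outcome is that both $\pi_{w_0}(a)$ and $\pi_{w_0}(b)$ act diagonally on the tail slots $2,\ldots,N$ of the chosen basis, with the same scalar depending only on $(c_2,\ldots,c_N)$, so multiplying by $b^{-1}$ cancels this tail scalar and leaves $\xi_i$ acting as an operator on the first slot alone. The remaining one-site computation uses the explicit formulas for $\pi_i(b)$ (diagonal in $|m\rangle_i$) and $\pi_i(a)$ (creation-type, shifting $|m\rangle_i$ to $|m+1\rangle_i$) obtained from \cite{VakSoi,Soirep}, combined with the normalization $(1-q_i^2)^{-1}$, to verify $(1-q_i^2)^{-1}\,\pi_i(a)\,\pi_i(b)^{-1}=X_i$.

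The main obstacle I anticipate is the weight bookkeeping in the iterated coproduct, i.e., verifying that all summands of $\Delta^{(N-1)}(a)$ and $\Delta^{(N-1)}(b)$ outside the single surviving family vanish when applied to the prescribed tensor of basis vectors. This vanishing rests on the one-factor representation formulas from Soibelman's construction together with a rigidity argument for the chain of extremal vectors $s_{i_N}\cdots s_{i_{k+1}}v_{w_0\varpi_i}\in V(\varpi_i)$ (whose weights are determined by the requirement that each intermediate matrix coefficient be nonzero on the vacuum-like basis vector in the next slot). Once this pruning of the coproduct is carried out, the identity follows from the single-site calculation described above.
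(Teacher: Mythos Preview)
The paper does not supply its own proof of this statement; it is quoted as a result of Saito and Tanisaki (restated later as Proposition~\ref{Saito} with the same citations) and used as a black box in establishing Theorem~\ref{maintool}. So there is no in-paper argument to compare against.

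That said, your strategy is sound and close in spirit to Saito's proof. The pruning you flag as the main obstacle can be carried out cleanly. Since $\langle\varpi_i,\alpha_i^{\vee}\rangle=1$, the $U_{q_i}(\mathfrak{sl}_{2,i})$-submodule of $V(\varpi_i)$ through $v_{\varpi_i}$ is two-dimensional, so $\phi_i^{\ast}(c_{f_{\varpi_i},v^{(k)}}^{\varpi_i})$ and $\phi_i^{\ast}(c_{f_{\varpi_i}.E_i,v^{(k)}}^{\varpi_i})$ both vanish unless $v^{(k)}\in\{v_{\varpi_i},v_{s_i\varpi_i}\}$. For $v^{(k)}=v_{\varpi_i}$ the tail factor $c_{f_{\varpi_i},v_{w_0\varpi_i}}^{\varpi_i}$ annihilates all of $V_{s_iw_0}$ by Proposition~\ref{hw}~(II), since $\varpi_i\not\leq s_i\varpi_i$. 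Hence only $v^{(k)}=v_{s_i\varpi_i}$ survives, and both $a$ and $b$ act as a first-slot operator tensored with the \emph{same} tail operator $c_{f_{s_i\varpi_i},v_{w_0\varpi_i}}^{\varpi_i}$; on the first slot these are $c_{22}$ and $c_{12}$ respectively. Your single-site computation then yields $(1-q_i^2)^{-1}q_i^{-m}(1-q_i^{2(m+1)})=[m+1]_i$, completing the argument. In short, the anticipated bookkeeping dissolves once you combine the two-dimensionality of the relevant $\mathfrak{sl}_{2,i}$-string with Proposition~\ref{hw}~(II).
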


We calculate the action of the elements of $\mathbb{Q}_q[G]_{\mathcal{S}}$ on $V_{w_0}$ other than $\xi_i$'s. More precisely, taking into account the right $U_q(\mathfrak{g})$-action, we investigate the action of the elements of $\mathbb{Q}_q[G/N^-]$, which is a right $U_q(\mathfrak{g})$-subalgebra of $\mathbb{Q}_q[G]$ defined as the set of invariants of $\mathbb{Q}_q[G]$ with respect to the left action of $U_q(\mathfrak{n}^-)$. We mainly use the method appearing in the reference \cite[Chapter 9]{Josbook}. 

The set $\mathcal{S}$ is also a (left and right) Ore multiplicative set in $\mathbb{Q}_q[G/N^-]$. The quotient ring is denoted by $\mathbb{Q}_q[G/N^-]_{\mathcal{S}}$. We construct a right $U_q(\mathfrak{g})$-algebra $C$ with the following properties:
\begin{itemize}
\item The right $U_q(\mathfrak{g})$-algebra $C$ is isomorphic to $\mathbb{Q}_q[G/N^-]$ (the multiplicative set in $C$ corresponding to $\mathcal{S}$ is denoted by $\mathcal{S}'$).
\item As a $\mathbb{Q}(q)$-algebra, the quotient ring $C_{\mathcal{S}'}$ is naturally regarded as the non-negative(=Borel) part $\check{U}^{\geqq 0}$ of (a variant of) the quantized enveloping algebra whose basis $\{K_{\lambda}\}_{\lambda\in P}$ of its Cartan part is indexed by the elements of the weight lattice $P$. 
\end{itemize}

The right $U_q(\mathfrak{g})$-algebra $C$ can be regarded as a subalgebra of $\check{U}^{\geqq 0}$ with a certain right $U_q(\mathfrak{g})$-algebra structure. Let $\{G_i\}_{i\in I}$ be the set of elements in $C_{\mathcal{S}'}(=\check{U}^{\geqq 0})$ corresponding to the positive Chevalley generators. Then, we have the following theorem:
\begin{theorem_i}\label{main1}
Let $\Upsilon$ be the isomorphism $\check{U}^{\geqq 0}\to\mathbb{Q}_q[G/N^-]_{\mathcal{S}}.$
$($Note that $\left.\Upsilon\right|_{C}:C\to \mathbb{Q}_q[G/N^-].)$ Then, 
$$\Upsilon (K_{\lambda})=c_{f_{\lambda}, v_{w_0\lambda}}^{\lambda}\ \text{and\ }\Upsilon(G_i)=\xi_i\  (\lambda\in P_+,\ i\in I).$$
\end{theorem_i}
Theorem \ref{main1} will be stated as Theorem \ref{maintool} below. Combining this theorem with Fact \ref{conjtheorem}, we can compute the action of the element of the form $c_{f_{\lambda}, v_{w_0\lambda}}^{\lambda}.x $ ($.x$ means the right action of $x\in U_q(\mathfrak{g})$) on $V_{w_0}$ by computing the image of $K_{\lambda}$ under the right action of $x$ in $C$.

In the latter half of this paper, we also deal with the canonical basis of $U_q(\mathfrak{n^+})$, which is defined by Lusztig \cite{Lus_canari1} and subsequently by Kashiwara \cite{Kas_originalcrys} under the different methods. The canonical basis of $U_q(\mathfrak{n}^+)$ is a basis of $U_q(\mathfrak{n^+})$ which is different from the PBW bases and does not depend on the choice of the reduced expressions of $w_0$. The elements of this basis have many nice properties, but it is difficult to calculate the explicit forms of them unlike the PBW bases, in general. 

For an element $G^{+}$ of the canonical basis of $U_q(\mathfrak{n}^+)$, we can write 
$$G^{+}=\sum_{{\bf c}}{_{{\bf i}}\zeta}_{{\bf c}}^{G^{+}}E_{{\bf i}}^{{\bf c}}\ \text{with\ } {_{{\bf i}}\zeta}_{{\bf c}}^{G^{+}}\in \mathbb{Z}[q^{\pm 1}].$$
Our aim is to investigate the coefficients ${_{{\bf i}}\zeta}_{{\bf c}}^{G^{+}}$ by using the representation $V_{w_0}$ of $\mathbb{Q}_q[G]$. 

Let us explain our method briefly. Fix an arbitrary reduced word ${\bf i}$ of $w_0$. Take the sufficiently ``large'' dominant integral weight $\lambda_0$ associated with $G^{+}$ and set $c_{G^{+}}:=c_{f_{\lambda_0}, v_{w_0\lambda_0}}^{\lambda_0}.\ast (G^{+})$, where $\ast$ is a certain $\mathbb{Q}(q)$-algebra anti-involution of $U_q(\mathfrak{g})$. (Definition \ref{automorphisms}). By Theorem \ref{main1} (and Fact \ref{conjtheorem}), we can show that the element $\Phi_{\mathrm{KOY}}^{-1}(c_{G^{+}}.|( 0 )\rangle_{{\bf i}})$ of $U_q(\mathfrak{n}^+)$ is equal to $G^{+}$ modulo the terms of the form (the sufficiently large powers of $q$)$\cdot E_{{\bf i}}^{{\bf c}}$ when we express this element using the PBW basis. 

On the other hand, it is possible in principle to compute the element $c_{G^{+}}.|(0)\rangle_{\bf i}$ according to the definition of the tensor product module $V_{w_0}$. In fact, it is difficult to obtain the explicit result, but we can compute it modulo the terms of the form (the sufficiently large powers of $q$)$\cdot|({\bf c})\rangle_{{\bf i}}$. Taking the identification via $\Phi_{\mathrm{KOY}}$ into consideration, we obtain the desired coefficients ${_{{\bf i}}\zeta}_{{\bf c}}^{G^{+}}$ and show that they are described by the structure constants for the comultiplication of $U_q(\mathfrak{g})$. In particular, when the Lie algebra $\mathfrak{g}$ is of type $ADE$, these structure constants are the elements of $\mathbb{N}[q^{\pm 1}]$ by Lusztig's theorem (\cite[Theorem 11.5]{Lus_quiper}). Using this fact, we obtain the following theorem:
\begin{theorem_i}\label{main2}
Assume that the Lie algebra $\mathfrak{g}$ is of type $ADE$. Take an arbitrary reduced word ${\bf i}$ of $w_0$ and an element $G^{+}$ of the canonical basis of $U_q(\mathfrak{n}^+)$. Then, we have
$$G^{+}=\sum_{{\bf c}}{_{{\bf i}}\zeta}_{{\bf c}}^{G^{+}}E_{{\bf i}}^{{\bf c}}\ \text{with\ } {_{{\bf i}}\zeta}_{{\bf c}}^{G^{+}}\in \mathbb{N}[q^{\pm 1}].$$
\end{theorem_i}
Theorem \ref{main2} will be stated as Theorem \ref{maintheorem} below. We remark that Theorem \ref{maintheorem} is actually the negative counterpart of Theorem \ref{main2}. However, the statement in the negative side can be immediately translated into that for the positive side and vice versa. 

In fact, this theorem itself is not a new result. It was originally proved by Lusztig in his original paper of the canonical bases (\cite[Corollary 10.7]{Lus_canari1}) in the case when PBW bases are associated with the ``adapted'' reduced words of $w_0$ (See \cite[4.7]{Lus_canari1}.), through his geometric realization of the elements of the canonical bases and PBW bases. Recently, this fact for arbitrary PBW bases was proved by Kato (\cite[Theorem 4.17]{KatoKLR}), through the categorification of PBW bases by using the Khovanov-Lauda-Rouquier algebras. Beyond type $ADE$, by the way, McNamara also established the categorification of PBW bases via the Khovanov-Lauda-Rouquier algebras for arbitrary finite types (\cite[Theorem 3.1]{McNamara_finite}, the dual PBW bases) and symmetric affine types (\cite[Theorem 24.4]{McNamara_symmaffine}). Hence, in fact, such positivity also holds for symmetric affine types (\cite[Theorem 24.10]{McNamara_symmaffine}) though we do not deal with them in this paper. (For nonsymmetric finite types, the ``canonical basis'' arising from the Khovanov-Lauda-Rouquier categorification does not coincide with the above-mentioned canonical basis and the positivity theorem fails in general.)

When we perform our calculation, we heavily use the properties of the canonical basis of $U_q(\mathfrak{n}^{\pm})$, the canonical and dual canonical bases of the highest weight integrable modules of $U_q(\mathfrak{g})$, since the coproduct of $c_{G^{+}}$ is described by these objects. One of the important properties, ``Similarity of the structure constants'', will be proved in Section \ref{dab2}. (Proposition \ref{similarity}) 

After this paper was submitted to the preprint server, Yoshiyuki Kimura pointed out to the author the existence of a much simpler proof of the positivity of the transition matrices from canonical bases to PBW bases. We explain the proof of this which is obtained from his comments in the first half
of Appendix. Moreover, this method provides, in fact, the same constants as in Section \ref{main} even when $\mathfrak{g}$ is of nonsymmetric (finite) type. We check this point in the latter half. Since the calculation in Section \ref{main} is slightly complicated, it might be helpful to read Subsection \ref{app1} before reading Section \ref{main}. At last, we also state the corollaries of this comparison. 
\addsubsection*{Acknowledgements}
The author is greatly indebted to Yoshiyuki Kimura, Yoshihisa Saito and Toshiyuki Tanisaki for many helpful discussions and comments. The appendix in this paper is attached essentially motivated by Kimura's comments. I would also like to thank Ryo Sato for many helpful discussions. This work was supported by the Program for Leading Graduate Schools, MEXT, Japan.
\section{Preliminaries I : the quantized enveloping algebras and the quantized function algebras}\label{dab1}
\subsection{Definitions of $U_q(\mathfrak{g})$ and $\mathbb{Q}_q[G]$}
Firstly, we review the definition of the quantized enveloping algebras and the quantized function algebras. 

\begin{notation}\label{notation}
Let $\mathfrak{g}$ be a finite dimensional complex simple Lie algebra, $\Delta $ the root system of $\mathfrak{g}$ with respect to a fixed Cartan subalgebra $\mathfrak{h}$, $Q(\subset \mathfrak{h}^{\ast})$ the root lattice, $Q^{\vee}(\subset \mathfrak{h})$ the coroot lattice. Fix a set of simple roots $\Pi = \{ \alpha_i \}_{i \in I} (\subset \Delta \subset \mathfrak{h}^{\ast})$ and denote the set of simple coroots by $\Pi^{\vee}:= \{ \alpha_i^{\vee} \}_{i \in I} (\subset \mathfrak{h})$. $(\langle \alpha_i^{\vee}, \alpha_j \rangle )_{i,j \in I}$ is called the Cartan matrix of $\mathfrak{g}$. We say that the Lie algebra $\mathfrak{g}$ is of type $ADE$ if the Cartan matrix of $\mathfrak{g}$ is symmetric. Let $W$ be the Weyl group, $e$ the unit of $W$, $s_i(\in W)$ the simple reflection corresponding to $\alpha_i$. (i.e. $s_i(\lambda)=\lambda-\langle \lambda, \alpha_i^{\vee}\rangle \alpha_i$ for $\lambda \in \mathfrak{h}^{\ast}$.), $w_0$ the longest element of $W$ and $l(w)$ the length of an element $w$ of $W$. Define the symmetric bilinear form $(\ ,\ )$ on $\mathfrak{h}^{\ast}$ by $2(\alpha_i, \alpha_j)/(\alpha_i, \alpha_i)=\langle \alpha_i^{\vee}, \alpha_j \rangle$ and $(\alpha_i, \alpha_i)=2$ for all short simple root $\alpha_i$. Define $Q_+:= \sum_{i\in I}\mathbb{Z}_{\geqq 0}\alpha_i$, $Q_-:=-Q_+$, $\dis \Delta_+ := \Delta\cap Q_+$ (the set of positive roots), $P:= \Set{\lambda \in \mathfrak{h}^{\ast}| \langle \lambda , \alpha_i^{\vee} \rangle \in \mathbb{Z} \text{\footnotesize\ for\ all}\ i \in I }$ (the integral weight lattice), and $P_+:= \Set{\lambda \in \mathfrak{h}^{\ast}| \langle \lambda , \alpha_i^{\vee} \rangle \in \mathbb{Z}_{\geqq 0} \text{\footnotesize\ for\ all}\ i \in I }$ (the set of dominant integral weights). $\varpi_i$ denotes the fundamental weight such that $\langle \varpi_i, \alpha_j^{\vee} \rangle=\delta_{i, j}$. We define the partial order $\leq$ on $Q$ by $\alpha \leq \beta \Leftrightarrow \beta -\alpha \in Q_+$. For an element $\alpha:=\sum_{i\in I}m_i\alpha_i\in Q$ $(m_i\in \mathbb{Z})$, we set $\height \alpha:=\sum_{i\in I}m_i$ (called the height of $\alpha$).

We set
\[
\begin{array}{l}
q_i:= q^{\frac{(\alpha_i, \alpha_i)}{2}}, \\
\dis [n]:= \frac{q^n-q^{-n}}{q-q^{-1}}\ \text{for\ }n\in \mathbb{Z},\\
\dis \left[ \begin{array}{c} n\\k \end{array} \right]:=\begin{cases}\dis \frac{[n][n-1]\cdots[n-k+1]}{[k][k-1]\cdots [1]}&\text{if\ }n\in \mathbb{Z}, k\in \mathbb{Z}_{>0},\\ 1 &\text{if\ }n\in \mathbb{Z}, k=0,\end{cases}\\
\dis [n]!:=[n][n-1]\cdots[1] \text{\ for\ } n\in \mathbb{Z}_{>0}, [0]!:=1.\\
\end{array}
\]
Note that $[n], \left[ \begin{array}{c} n\\k \end{array} \right]\in \mathbb{Z}[q^{\pm 1}]$ and $\dis \left[ \begin{array}{c} n\\k \end{array} \right]=\dis \frac{[n]!}{[k]![n-k]!}\ \text{if\ } n\geqq k\geqq 0$. For a rational function $X\in \mathbb{Q}(q)$, we define $X_i$ by the rational function obtained from $X$ by substituting $q$ by $q_i$ ($i\in I$).
\end{notation}
\begin{definition}\label{QEA}
The quantized enveloping algebra $U_q(\mathfrak{g})$ is the unital associative $\mathbb{Q}(q)$-algebra defined by the generators
\begin{center}
$E_i$, $F_i$ ($i\in I$), $K_h$ ($h \in Q^{\vee}$),
\end{center}
and the relations (i)-(vi) below.
\[
\begin{array}{ccl}
\text{(i)}&K_0=1,\ K_hK_{h'}=K_{h+h'} & \text{for\ all\ } h, h' \in Q^{\vee},\\
\text{\nor (ii)}&K_{h}E_i=q^{\langle h, \alpha_i \rangle}E_iK_h & \text{for\ all\ } h \in Q^{\vee}, i \in I,\\
\text{(iii)}&K_hF_i=q^{-\langle h, \alpha_i \rangle}F_iK_h & \text{for\ all\ } h \in Q^{\vee}, i \in I,\\
\text{(iv)}&\dis [E_i, F_j]=\delta_{ij}\frac{K_i-K_{-i}}{q_i-q_i^{-1}} & \text{for\ all\ } i, j \in I,\\
\multicolumn{3}{c}{\text{where\ }K_i:= K_{\frac{(\alpha_i, \alpha_i)}{2}\alpha_i^{\vee}}\ \text{and\ } K_{-i}:= K_{-\frac{(\alpha_i, \alpha_i)}{2}\alpha_i^{\vee}},}\\
\end{array}
\]
\[
\begin{array}{ccl}
\text{(v)}&\dis \sum_{k=0}^{1-\langle \alpha_i^{\vee}, \alpha_j \rangle}(-1)^kE_i^{(k)}E_jE_i^{(1-\langle \alpha_i^{\vee}, \alpha_j \rangle-k)}=0 &\text{for\ all\ } i, j \in I \ \text{with\ } i\neq j,\\
\text{(vi)}&\dis \sum_{k=0}^{1-\langle \alpha_i^{\vee}, \alpha_j \rangle}(-1)^kF_i^{(k)}F_jF_i^{(1-\langle \alpha_i^{\vee}, \alpha_j \rangle-k)}=0 &\text{for\ all\ } i, j \in I \ \text{with\ } i\neq j\\
\multicolumn{3}{c}{\dis \text{where\ }X_i^{(n)}:= \frac{X_i^n}{[n]_i!}.}
\end{array} 
\]

For $\dis \alpha=\sum_{i\in I}m_i\alpha_i \in Q$ ($m_i \in \mathbb{Z}$), we set $K_{\alpha}:= K_{\sum_{i\in I}\frac{m_i(\alpha_i, \alpha_i)}{2}\alpha_i^{\vee}}$. Note that $K_{\pm\alpha_i}=K_{\pm i}$.

The subalgebra of $U_q(\mathfrak{g})$ generated by $\{ E_i \}_{i\in I}$ (resp.~$\{ F_i \}_{i\in I}$,  $\{ K_{\alpha_i^{\vee}} \}_{i\in I}$
%, $\{ E_i, K_{\alpha_i^{\vee}} \}_{i\in I}$, $\{ F_i, K_{\alpha_i^{\vee}} \}_{i\in I}$
) is denoted by $U_q(\mathfrak{n}^+)$ (resp.~$U_q(\mathfrak{n}^-)$,  $U^0$
%, $U_q(\mathfrak{b}^+)$, $U_q(\mathfrak{b}^-)$
). 

For $\alpha \in Q$, we set
\[
\begin{array}{l}
U_q(\mathfrak{g})_{\alpha} := \Set{u \in U_q(\mathfrak{g})| K_huK_{-h}=q^{\langle\alpha, h \rangle}u \text{\footnotesize\ for\ all\ } h \in Q^{\vee}}.
\end{array}
\]
The elements of $U_q(\mathfrak{g})_{\alpha}$ are called homogeneous and said to have weights $\alpha$. For a homogeneous element $u\in U_q(\mathfrak{g})_{\alpha}$, we set $\weight u=\alpha$. $U_q(\mathfrak{n}^+)_{\alpha}$, $U_q(\mathfrak{n}^-)_{\alpha}
%,\dots
$ are defined similarly.

There exists a $\mathbb{Q}(q)$-linear isomorphism $U_q(\mathfrak{n}^-)\otimes U^0\otimes U_q(\mathfrak{n}^+)\to U_q(\mathfrak{g})$, $a\otimes b\otimes c\mapsto abc$. This is called the triangular decomposition of $U_q(\mathfrak{g})$. 

The algebra $U_q(\mathfrak{g})$ has a Hopf algebra structure given by the following comultiplication $\Delta$, counit $\varepsilon$ and antipode $S$:
\[
\begin{array}{ccc}
\Delta (E_i)=E_i\otimes 1+K_i\otimes E_i,& \varepsilon (E_i)=0,& S(E_i)=-K_{-i}E_i,\\
\Delta (F_i)=F_i\otimes K_{-i}+1\otimes F_i,& \varepsilon (F_i)=0,& S(F_i)=-F_iK_i,\\
\Delta (K_{\alpha_i^{\vee}})=K_{\alpha_i^{\vee}}\otimes K_{\alpha_i^{\vee}},& \varepsilon (K_{\alpha_i^{\vee}})=1,& S(K_{\alpha_i^{\vee}})=K_{-\alpha_i^{\vee}},
\end{array}
\]
for all $i \in I$.

\end{definition}
\begin{remark}
This $U_q(\mathfrak{g})$ is called of simply-connected type in \cite{Lusbook}.
\end{remark}
\begin{definition}\label{automorphisms}
We define the $\mathbb{Q}(q)$-algebra, anti-coalgebra involution $\omega : U_q(\mathfrak{g})\to U_q(\mathfrak{g})$ by
\[
\begin{array}{lll}
\omega(E_i)=F_i,& \omega(F_i)=E_i,& \omega(K_{\alpha_i^{\vee}})=K_{-\alpha_i^{\vee}},
\end{array}
\]
for $i\in I$.
We define the $\mathbb{Q}(q)$-algebra anti-involutions $\ast, \varphi, \omega' : U_q(\mathfrak{g})\to U_q(\mathfrak{g})$ by
\[
\begin{array}{lll}
\ast(E_i)=E_i,& \ast(F_i)=F_i, & \ast(K_{\alpha_i^{\vee}})=K_{-\alpha_i^{\vee}},\\
\varphi(E_i)=F_i,& \varphi(F_i)=E_i, & \varphi(K_{\alpha_i^{\vee}})=K_{\alpha_i^{\vee}},\\
\omega'(E_i)=K_iF_i,& \omega'(F_i)=E_iK_{-i}, & \omega'(K_{\alpha_i^{\vee}})=K_{\alpha_i^{\vee}},
\end{array}
\]
for $i\in I$. Note that $\omega'$ is a $\mathbb{Q}(q)$-coalgebra involution and $\omega=\ast\circ \varphi=\varphi\circ \ast$.

We define the $\mathbb{Q}$-algebra involution $\overline{(\cdot)} : U_q(\mathfrak{g})\to U_q(\mathfrak{g})$ by
\[
\begin{array}{llll}
\overline{E_i}=E_i,& \overline{F_i}=F_i,& \overline{K_{\alpha_i^{\vee}}}=K_{-\alpha_i^{\vee}},& \overline{q}=q^{-1},
\end{array}
\]
for $i\in I$.
\end{definition}
\begin{definition}\label{qderiv}
For $i\in I$, define the $\mathbb{Q}(q)$-linear maps $_ir$, $r_i:U_q(\mathfrak{n}^+)\to U_q(\mathfrak{n}^+)$ by
\[
\begin{array}{l}
_ir(xy)={_ir}(x)y+q_i^{\langle \weight x , \alpha_i^{\vee} \rangle}x {_ir}(y)\ \text{and}\ _ir(E_j)=\delta_{ij},\\
r_i(xy)=q_i^{\langle \weight y , \alpha_i^{\vee} \rangle}r_i(x)y+xr_i(y)\ \text{and}\ r_i(E_j)=\delta_{ij},
\end{array}
\]
for $j \in I$ and homogeneous elements $x, y \in U_q(\mathfrak{n^+})$.

Moreover, we set $_ie':=\left.\overline{(\cdot)}\circ \omega \circ r_i \circ \omega \circ \overline{(\cdot)}\right|_{U_q(\mathfrak{n}^-)}$, $e'_i:=\left.\overline{(\cdot)}\circ \omega \circ {_ir} \circ \omega \circ \overline{(\cdot)}\right|_{U_q(\mathfrak{n}^-)}: U_q(\mathfrak{n}^-)\to U_q(\mathfrak{n}^-)$.

We have $\left.\ast\circ {_ir}\circ\ast\right|_{U_q(\mathfrak{n}^+)}=r_i$ and $\left.\ast\circ {_ie'}\circ\ast\right|_{U_q(\mathfrak{n}^-)}=e'_i$ for all $i\in I$. 

For any homogeneous element $x \in U_q(\mathfrak{n}^+), y\in U_q(\mathfrak{n}^-)$ and $p\in \mathbb{Z}_{\geqq 0}$, we have 
\begin{align*}
\Delta(x)&=E_i^{(p)}K_{x - p\alpha_i}\otimes q_i^{-\frac{1}{2}p(p-1)}( _ir)^p(x)+\sum_{\substack{x'\in U_q(\mathfrak{n}^+):\text{homogeneous},\\ \weight x'\neq p\alpha_i}}x'K_{\weight x''}\otimes x'',\\
\Delta(x)&=q_i^{-\frac{1}{2}p(p-1)}(r_i)^p(x)K_{p\alpha_i}\otimes E_i^{(p)}+\sum_{\substack{x''\in U_q(\mathfrak{n}^+):\text{homogeneous},\\ \weight x''\neq p\alpha_i}}x'K_{\weight x''}\otimes x'',\\
\Delta(y)&=F_i^{(p)}\otimes q_i^{\frac{1}{2}p(p-1)}K_{-p\alpha_i}(e'_i)^p(y)+\sum_{\substack{y'\in U_q(\mathfrak{n}^-):\text{homogeneous},\\ \weight y'\neq -p\alpha_i}}y'\otimes K_{\weight y'}y'',\\
\Delta(y)&=q_i^{\frac{1}{2}p(p-1)}(_ie')^p(y)\otimes K_{\weight y+p\alpha_i}F_i^{(p)}+\sum_{\substack{y''\in U_q(\mathfrak{n}^-):\text{homogeneous},\\ \weight y''\neq -p\alpha_i}}y'\otimes K_{\weight y'}y''.\\
\end{align*}

For a homogeneous element $x\in U_q(\mathfrak{n}^+)$ (resp.~$U_q(\mathfrak{n}^-)$) with $\weight x\neq 0$, we have 
\begin{itemize}
\item[(1)] if $_ir(x)=0$ (resp.~$e'_i(x)=0$) for all $i\in I$, then $x=0$, and
\item[(2)] if $r_i(x)=0$ (resp.~$_ie'(x)=0$) for all $i\in I$, then $x=0$.
\end{itemize}

For a homogeneous element $x\in U_q(\mathfrak{n}^+)$ (resp.~$U_q(\mathfrak{n}^-)$), we have

\begin{align}
r_i(x)&=q_i^{\langle \mathrm{wt}x-\alpha_i, \alpha_i^{\vee} \rangle}\overline{_ir(\overline{x})} \label{qderiv1}\\
(\text{resp.~} e'_i(x)&=q_i^{\langle \mathrm{wt}x+\alpha_i, \alpha_i^{\vee} \rangle}\overline{_i{e'}(\overline{x})}) \label{qderiv2}
\end{align}
For $x\in U_q(\mathfrak{n}^+)$ and $y\in U_q(\mathfrak{n}^-)$, we have
\begin{itemize}
\item $\dis F_ix-xF_i=\frac{K_{-i}{_ir}(x)-r_i(x)K_i}{q_i-q_i^{-1}}$ in $U_q(\mathfrak{g})$, and
\item $\dis E_iy-yE_i=\frac{ _ie'(y)K_i-K_{-i}e'_i(y)}{q_i-q_i^{-1}}$ in $U_q(\mathfrak{g})$.
\end{itemize}
See \cite[Chapter 1, 3.1.5]{Lusbook} for details.
\end{definition}
\begin{definition}
Let $M$ be a left (resp.~right) $U_q(\mathfrak{g})$-module. For any $\lambda \in P$, we set
\begin{center}
$M_{\lambda}:= \Set{m \in M | K_h.m=q^{\langle h, \lambda \rangle}m\ (\text{\footnotesize resp.~ }m.K_h=q^{\langle h, \lambda \rangle}m) \text{\footnotesize\ for\ all\ } h \in Q^{\vee}}$.
\end{center}
We say that $M$ is integrable if $M$ satisfies 

\begin{itemize}
\item $\dis M=\bigoplus_{\lambda \in P}M_{\lambda}$,
\item The action of $E_i$ and $F_i$ are locally nilpotent on $M$ for all $i \in I$.
\end{itemize}

Let $\mathcal{O}_{\mathrm{int}}(\mathfrak{g})$ (resp.~$\mathcal{O}_{\mathrm{int}}(\mathfrak{g}^{\mathrm{opp}})$) be the category of finite dimensional integrable left (resp.~right) $U_q(\mathfrak{g})$-modules. The category $\mathcal{O}_{\mathrm{int}}(\mathfrak{g})$ (resp.~ $\mathcal{O}_{\mathrm{int}}(\mathfrak{g}^{\mathrm{opp}})$) is a semisimple category and its irreducible objects are isomorphic to exactly one $V(\lambda)$ (resp.~$V^{r}(\lambda)$) for some $\lambda \in P_+$, where $V(\lambda)$ (resp.~$V^{r}(\lambda)$) is the irreducible integrable left (resp.~right) $U_q(\mathfrak{g})$-module with highest weight $\lambda$. (See. \cite[Chapter 5]{Janbook})
\end{definition}
\begin{remark}
A highest weight vector of a right $U_q(\mathfrak{g})$-module is a vector which vanishes by the action  of $F_i$'s ($i \in I$).

The dual space $V(\lambda)^{\ast}$ of $V(\lambda)$ has a natural right $U_q(\mathfrak{g})$-module structure, and $V(\lambda)^{\ast}$ is isomorphic to $V^{r}(\lambda)$ as a right $U_q(\mathfrak{g})$-module.
\end{remark}
\begin{definition}
The dual space $U_q(\mathfrak{g})^{\ast}$ of $U_q(\mathfrak{g})$ has a natural $U_q(\mathfrak{g})$-bimodule structure. Hence, we can define the subspace $\mathbb{Q}_q[G]$ of $U_q(\mathfrak{g})^{\ast}$ by
\[
\begin{array}{l}
\Set{ f \in U_q(\mathfrak{g})^{\ast} | U_q(\mathfrak{g}).f \text{\footnotesize\ belongs\ to\ } \mathcal{O}_{\mathrm{int}}(\mathfrak{g}) \text{\footnotesize\ and\ } f.U_q(\mathfrak{g})\text{\footnotesize\ belongs\ to\ } \mathcal{O}_{\mathrm{int}}(\mathfrak{g}^{\mathrm{opp}})},
\end{array}
\]
Then, $\mathbb{Q}_q[G]$ has a Hopf algebra structure (\cite[Chapter 7]{Janbook}) induced from one of $U_q(\mathfrak{g})$, and a left and right $U_q(\mathfrak{g})$-algebra structure. Note that $U_q(\mathfrak{g})^{\ast}$ has a natural $\mathbb{Q}(q)$-algebra structure but does not have the bialgebra (hence, the Hopf algebra) structure. This Hopf algebra $\mathbb{Q}_q[G]$ is called the quantized function algebra. By abuse of notation, the coproduct (resp.~the counit, the antipode) of $\mathbb{Q}_q[G]$ is also denoted by $\Delta$ (resp.~$\varepsilon$, $S$).
\end{definition}
The Hopf algebra $\mathbb{Q}_q[G]$ is a quantum analogue of the algebra of regular functions on $G$ where $G$ is the connected simply-connected simple complex algebraic group whose Lie algebra is $\mathfrak{g}$. There is the $q$-analogue of the Peter-Weyl theorem(\cite[Proposition 7.2.2]{Kasglo}):
\begin{proposition}
For $\lambda \in P_+$, we define the $U_q(\mathfrak{g})$-bimodule homomorphism $\Psi_{\lambda}: V(\lambda)^{\ast}\otimes V(\lambda) \to \mathbb{Q}_q[G]$ by $f\otimes v \mapsto (u\mapsto \langle f, u.v \rangle)$.
Then, 
\begin{center}
$\dis \bigoplus_{\lambda \in P_+}\Psi_{\lambda}: \bigoplus_{\lambda \in P_+} V(\lambda)^{\ast}\otimes V(\lambda) \to \mathbb{Q}_q[G]$
\end{center}
is an isomorphism of $U_q(\mathfrak{g})$-bimodules.
\end{proposition}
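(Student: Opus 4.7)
The plan is to verify three things: that each $\Psi_\lambda$ is a well-defined $U_q(\mathfrak{g})$-bimodule map into $\mathbb{Q}_q[G]$, that $\bigoplus_\lambda \Psi_\lambda$ is injective, and that it is surjective. First, I would check well-definedness: the left-$U_q(\mathfrak{g})$-submodule of $U_q(\mathfrak{g})^{\ast}$ generated by $\Psi_\lambda(f\otimes v)$ is contained in $\Psi_\lambda(f\otimes V(\lambda))$, which is a quotient of $V(\lambda)$ and hence belongs to $\mathcal{O}_{\mathrm{int}}(\mathfrak{g})$; symmetrically for the right action, using $V(\lambda)^{\ast}\in \mathcal{O}_{\mathrm{int}}(\mathfrak{g}^{\mathrm{opp}})$. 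The bimodule compatibility then follows by unwinding the definitions of the left/right actions on $U_q(\mathfrak{g})^{\ast}$.

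Next I would turn to injectivity. The key point is that each $\Psi_\lambda$ is itself injective. I would identify $V(\lambda)^{\ast}\otimes V(\lambda)$ with $\End_{\mathbb{Q}(q)}(V(\lambda))$ as a $U_q(\mathfrak{g})$-bimodule and invoke the Jacobson density theorem: since $V(\lambda)$ is finite-dimensional and $\End_{U_q(\mathfrak{g})}(V(\lambda))=\mathbb{Q}(q)$ (a short check using the one-dimensional $\lambda$-weight space and the fact that $v_\lambda$ generates $V(\lambda)$), the structure map $U_q(\mathfrak{g})\to \End_{\mathbb{Q}(q)}(V(\lambda))$ is surjective, making $V(\lambda)^{\ast}\otimes V(\lambda)$ a simple bimodule. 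Since $\Psi_\lambda$ is visibly nonzero (any pair with $\langle f,v\rangle\neq 0$ witnesses this at $u=1$), $\Ker \Psi_\lambda=0$. To upgrade this to injectivity of $\bigoplus_\lambda \Psi_\lambda$, I would observe that $\Img \Psi_\lambda$ is a sum of copies of $V(\lambda)$ as a left module, so it lies in the $V(\lambda)$-isotypic component of $\mathbb{Q}_q[G]$; the sum over $\lambda\in P_+$ of isotypic components of pairwise non-isomorphic irreducibles is automatically direct.

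Finally, for surjectivity, given $f\in \mathbb{Q}_q[G]$, the finite-dimensional integrable left module $M:=U_q(\mathfrak{g}).f$ decomposes as $\bigoplus_k V(\mu_k)$. For each embedding $\iota:V(\mu)\hookrightarrow \mathbb{Q}_q[G]$ occurring in this decomposition, set $\phi(v):=\iota(v)(1)\in V(\mu)^{\ast}$; the one-line computation $\iota(v)(u)=(u.\iota(v))(1)=\iota(u.v)(1)=\langle\phi,u.v\rangle$ shows $\iota(v)=\Psi_\mu(\phi\otimes v)$, so $\iota(V(\mu))\subset \Img \Psi_\mu$ and consequently $f\in \sum_\mu \Img \Psi_\mu$. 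I expect the principal obstacle to be the simplicity of $V(\lambda)^{\ast}\otimes V(\lambda)$ via the quantum Jacobson density statement; once that is granted, all remaining steps are formal.
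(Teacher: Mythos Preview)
The paper does not supply its own proof of this proposition: it is stated with a citation to \cite[Proposition 7.2.2]{Kasglo} and used as background. So there is no in-paper argument to compare against.

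That said, your proposal is a correct and standard route to the quantum Peter--Weyl theorem. The well-definedness and bimodule-homomorphism checks are routine. For injectivity, your use of Jacobson density (via $\End_{U_q(\mathfrak{g})}(V(\lambda))=\mathbb{Q}(q)$) to conclude simplicity of $V(\lambda)^\ast\otimes V(\lambda)$ as a bimodule is the right idea, and the isotypic argument cleanly separates the images for distinct $\lambda$. The surjectivity step, evaluating an embedded copy of $V(\mu)$ at $1$ to produce the functional $\phi$, is exactly the classical matrix-coefficient trick and works without change here because $\mathcal{O}_{\mathrm{int}}(\mathfrak{g})$ is semisimple with the $V(\lambda)$ as a complete set of irreducibles. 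There is no genuine gap; the only point worth polishing is the Schur-lemma input $\End_{U_q(\mathfrak{g})}(V(\lambda))=\mathbb{Q}(q)$, which follows immediately from the one-dimensionality of $V(\lambda)_\lambda$ and cyclicity of $v_\lambda$, as you indicated.
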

\begin{notation}\label{normalization}
For $f \in V(\lambda)^{\ast}$ and $v \in V(\lambda)$, we set
\begin{center}
$c_{f, v}^{\lambda}:= \Psi_{\lambda}(f\otimes v)$.
\end{center}
For each $\lambda \in P_+$, we fix a highest weight vector of $V(\lambda)$ (resp.~$V^{\ast}(\lambda)$), denoted by $v_{\lambda}$ (resp.~$f_{\lambda}$). We make an assumption that $\langle f_{\lambda}, v_{\lambda} \rangle=1$. For $w=s_{i_{l(w)}}\cdots s_{i_1} \in W$, we set
\begin{center}
$v_{w\lambda}:= F_{i_{l(w)}}^{(\langle s_{i_{l(w)-1}}\cdots s_{i_1}\lambda, \alpha_{i_{l(w)}}^{\vee} \rangle)}\cdots F_{i_2}^{(\langle s_{i_1}\lambda, \alpha_{i_2}^{\vee} \rangle)}F_{i_1}^{(\langle \lambda, \alpha_{i_1}^{\vee} \rangle)}.v_{\lambda}$,\\
$f_{w\lambda}:= f_{\lambda}. E_{i_1}^{(\langle \lambda, \alpha_{i_1}^{\vee} \rangle)}E_{i_2}^{(\langle s_{i_1}\lambda, \alpha_{i_2}^{\vee} \rangle)}\cdots E_{i_{l(w)}}^{(\langle s_{i_{l(w)-1}}\cdots s_{i_1}\lambda, \alpha_{i_{l(w)}}^{\vee} \rangle)}$.
\end{center}
It is well known that $v_{w\lambda}$ and $f_{w\lambda}$ depend only on the weight $w\lambda$ (i.e. not on the choice of $w$ and its reduced expression. See \cite[Proposition 39.3.7]{Lusbook}). Moreover, $\langle f_{w\lambda}, v_{w\lambda} \rangle=1$.
\end{notation}
\subsection{A review of the representation theory of the quantized function algebras}
Let us recall some basic facts on simple $\mathbb{Q}_q[G]$-modules due to Soibelman et al.
\begin{definition}[{The $\mathbb{Q}_q[G]$-modules $V_{s_i}$}]\label{vsigma}
First, we construct a certain simple $\mathbb{Q}_q[SL_2]$-module. 
The algebra ${\mathbb{Q}_q[{SL}_2]}$ is generated by $c_{ij}:= F^{\delta_{2,j}}.c^{\varpi}_{f_{\varpi}, v_{\varpi}}.E^{\delta_{2,i}}$ ($i, j \in \{1, 2\}$). (Since $I=\{ \ast \}$, $E_i$, $F_i$, $\varpi_i$ ($i\in I$) are simply denoted by $E$, $F$, $\varpi$, respectively.) Moreover, $c_{ij}$'s satisfies the following relations and, in fact, the relations of $c_{ij}$'s are exhausted by them:
\[
\begin{array}{ll}
c_{11}c_{12}=qc_{12}c_{11},& c_{21}c_{22}=qc_{22}c_{21},\\
c_{11}c_{21}=qc_{21}c_{11},& c_{12}c_{22}=qc_{22}c_{12},\\
\left[c_{12},c_{21}\right]=0,& \left[c_{11},c_{22}\right]=(q-q^{-1})c_{12}c_{21},\\
\multicolumn{2}{c}{c_{11}c_{22}-qc_{12}c_{21}=1.}
\end{array} 
\]
Let $\dis V := \bigoplus_{m \in \mathbb{Z}_{\geqq 0}} \mathbb{Q}(q) \ket{m}$ be an infinite dimensional $\mathbb{Q}(q)$-vector space with a basis indexed by non-negative integers. We define a $\mathbb{Q}_q[SL_2]$-module structure on $V$ by
\[
\begin{array}{ll}
c_{11}. :\ket{m} \longmapsto 
\begin{cases}
0 \\
\ket{m-1}\\
\end{cases}
&\begin{tabular}{l}
{\rm if}\ $m=0$, \\
{\rm if}\ $m \in \mathbb{Z}_{>0},$\\
\end{tabular}\\
c_{12}. :\ket{m} \longmapsto q^m \ket{m}&{\rm \ \ for}\ m \in \mathbb{Z}_{\geqq 0},\\
c_{21}. :\ket{m} \longmapsto -q^{m+1} \ket{m}&{\rm \ \ for}\ m \in \mathbb{Z}_{\geqq 0},\\
c_{22}. :\ket{m} \longmapsto (1-q^{2(m+1)})\ket{m+1}&{\rm \ \ for}\ m \in \mathbb{Z}_{\geqq 0}.
\end{array}
\]
By the construction, it is easy to see that this is a simple ${\mathbb{Q}_q[SL_2]}$-module. The algebra homomorphism $\mathbb{Q}_q[SL_2] \to \End_{\mathbb{Q}(q)}(V)$ corresponding to this infinite dimensional $\mathbb{Q}_q[SL_2]$-module is denoted by $\pi$.

Next, we construct simple representations of $\mathbb{Q}_q[G]$ using this representation $\pi$.

For $i \in I$, there exists the injective Hopf algebra homomorphism $\phi_i: U_{q_i}(\mathfrak{sl}_2) \to U_q(\mathfrak{g})$ defined by $E \mapsto E_i$, $F \mapsto F_i$ and $K_{\alpha}(=K_{\alpha^{\vee}})\mapsto K_i$. The image of $\phi_i$ is denoted by $U_{q_i}(\mathfrak{sl}_{2, i})$. Hence, we have the dual surjective Hopf algebra homomorphism $\phi_i^{\ast}: \mathbb{Q}_q[G] \to \mathbb{Q}_{q_i}[SL_2]$, $f\mapsto \left. f\right|_{U_{q_i}(\mathfrak{sl}_{2, i})}\circ\phi_i$.

Therefore, we obtain the representations $\left. \pi \right|_{q=q_i} \circ\phi_i$ of $\mathbb{Q}_q[G]$ ($i\in I$). The simple $\mathbb{Q}_q[G]$-module corresponding to this representation will be denoted by $V_{s_i}$ ($=V$ as a vector space) and its natural basis will be denoted by ${\big \{}\ket{m}_i {\big \}}_{m\in \mathbb{Z}_{\geqq 0}}$. The trivial $\mathbb{Q}_q[G]$-module will be denoted by $V_e$.
\end{definition}
The following theorem is known as the tensor product theorem.
\begin{theorem}[{%Soibelman 
\cite[5.4]{Soirep}}]\label{soi}
Let $w \in W$. Then, for any reduced expression $w =s_{i_1}\cdots s_{i_{l(w)}}$, the $\mathbb{Q}_q[G]$-module $V_{s_{i_1}}\otimes \cdots \otimes V_{s_{i_{l(w)}}}$ is simple and its isomorphism class does not depend on the choice of the reduced expressions of $w$. Moreover, for any two reduced expression $w =s_{i_1}\cdots s_{i_{l(w)}}=s_{j_1}\cdots s_{j_{l(w)}}$, we have the $\mathbb{Q}_q[G]$-module isomorphism $\Theta_{{\bf j}, {\bf i}}: V_{s_{i_1}}\otimes \cdots \otimes V_{s_{i_{l(w)}}}\to V_{s_{j_1}}\otimes \cdots \otimes V_{s_{j_{l(w)}}}$ given by $\ket{0}_{i_1}\otimes \cdots \otimes\ket{0}_{i_{l(w)}}\mapsto \ket{0}_{j_1}\otimes \cdots \otimes\ket{0}_{j_{l(w)}}$, where ${\bf i}=(i_1,\dots,i_{l(w)})$ and ${\bf j}=(j_1,\dots ,j_{l(w)})$.

Hence, we denote this module by $V_w$. 
\end{theorem}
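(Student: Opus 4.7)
The plan splits into three steps: simplicity of each factor $V_{s_i}$, simplicity of the tensor product $V_\mathbf{i}:=V_{s_{i_1}}\otimes\cdots\otimes V_{s_{i_{l(w)}}}$, and the construction of the intertwiner $\Theta_{\mathbf{j},\mathbf{i}}$ (which subsumes the independence from the reduced expression). Simplicity of each $V_{s_i}$ is immediate from the explicit action recorded in Definition \ref{vsigma}: $c_{11}$ lowers $|m\rangle\mapsto|m-1\rangle$, $c_{22}$ raises the index, and $c_{12}$ acts diagonally with pairwise distinct scalars $q^m$, which together preclude any proper invariant subspace.

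For the tensor product, I would argue by induction on $l(w)$, exhibiting $|0\rangle_\mathbf{i}:=|0\rangle_{i_1}\otimes\cdots\otimes|0\rangle_{i_{l(w)}}$ as a cyclic vector whose annihilator is a maximal left ideal. Cyclicity follows by acting with elements of $\mathbb{Q}_q[G]$ whose iterated coproducts are concentrated on the last tensor factor---producing arbitrary vectors of the form $|0\rangle_{i_1}\otimes\cdots\otimes|0\rangle_{i_{l(w)-1}}\otimes|m\rangle_{i_{l(w)}}$---and then invoking the inductive hypothesis on $V_{s_{i_1}}\otimes\cdots\otimes V_{s_{i_{l(w)-1}}}$. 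Maximality of the annihilator is then established by constructing, via the iterated coproduct of $\mathbb{Q}_q[G]$ and the Peter--Weyl decomposition, a family of matrix coefficients of the form $c^\lambda_{f_\mu,v_\nu}$ (for extremal weight vectors attached to initial subwords of $\mathbf{i}$) whose joint action makes $|0\rangle_\mathbf{i}$ a simultaneous eigenvector with one-dimensional eigenspace; the latter one-dimensionality is again proved inductively using the explicit formulas of Definition \ref{vsigma}.

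For the intertwiner, once $V_\mathbf{i}$ and $V_\mathbf{j}$ are known to be simple, Schur's lemma reduces the task to producing any nonzero $\mathbb{Q}_q[G]$-homomorphism $V_\mathbf{i}\to V_\mathbf{j}$ sending $|0\rangle_\mathbf{i}$ to a scalar multiple of $|0\rangle_\mathbf{j}$. By Matsumoto's theorem, any two reduced expressions of $w$ are connected by a chain of braid moves, so it suffices to verify the statement in the rank-two situations corresponding to braid relations of lengths $2$, $3$, $4$, and $6$. In each such case one may restrict to the rank-two parabolic sub-Hopf algebra of $U_q(\mathfrak{g})$ (using the surjection $\phi_i^{\ast}$ of Definition \ref{vsigma} and its rank-two analogue) and check, on a generating set of matrix coefficients, that $|0\rangle_\mathbf{i}\mapsto|0\rangle_\mathbf{j}$ extends to a module map. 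The main obstacle is the $B_2$ and $G_2$ verifications, where matrix entries of a generating set acting on a $4$- or $6$-fold tensor product produce combinatorially intricate expressions; these are, however, finite explicit computations and are carried out in the references \cite{Soirep}, \cite{KroSoi}, \cite{Josbook}.
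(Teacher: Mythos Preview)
The paper does not give its own proof of this theorem: it is stated as a known result with the citation \cite[5.4]{Soirep} (and the reader is referred elsewhere, e.g.\ \cite{KroSoi}, \cite{Josbook}, for details). Your sketch is a reasonable outline of the standard argument found in those references---simplicity via a highest-weight-type analysis of $|0\rangle_{\mathbf{i}}$, and reduction of the intertwiner statement to rank-two braid moves via Matsumoto's theorem---so there is nothing in the paper to compare it against beyond the citation itself.

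One small caution on your sketch: the step ``acting with elements whose iterated coproducts are concentrated on the last tensor factor'' is the most delicate part and is not quite how the cited proofs proceed. In Joseph's treatment (and in Proposition~\ref{hw} of the present paper) the key point is rather that certain matrix coefficients $c^\lambda_{f,v_\lambda}$ and $c^\lambda_{f,v_{w_0\lambda}}$ act on $|0\rangle_{\mathbf{i}}$ with controlled weight constraints (cf.\ Proposition~\ref{hw}~(I),(II)), which forces any nonzero submodule to contain $|0\rangle_{\mathbf{i}}$; cyclicity is then the easier direction. Your overall strategy is correct, but if you were to write it out in full you would want to follow that weight-filtration argument rather than trying to isolate tensor factors directly.
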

The module $V_w$ is a ``highest weight module'' in some sense. Let us explain this point.
\begin{definition}
We define the subspaces $\mathbb{Q}_q[G/N^+]$, $\mathbb{Q}_q[G/N^-]$ of $\mathbb{Q}_q[G]$ by
\begin{align*}
\mathbb{Q}_q[G/N^+]&:= \spn_{\mathbb{Q}(q)}\Set{c_{f, v}^{\lambda}| f \in V(\lambda)^{\ast}, v \in V(\lambda)_{\lambda}},\\
\mathbb{Q}_q[G/N^-]&:= \spn_{\mathbb{Q}(q)}\Set{c_{f, v}^{\lambda}| f \in V(\lambda)^{\ast}, v \in V(\lambda)_{w_0\lambda}}.
\end{align*}

Then, $\mathbb{Q}_q[G/N^+]$ and $\mathbb{Q}_q[G/N^-]$ are the right $U_q(\mathfrak{g})$-subalgebras of $\mathbb{Q}_q[G]$. 
\end{definition}
The following lemma is the ``triangular decomposition'' of $\mathbb{Q}_q[G]$. (\cite[Chapter3]{KroSoi})
\begin{lemma}\label{decomposition}
The multiplication $\mathbb{Q}_q[G/N^-] \otimes_{\mathbb{Q}(q)} \mathbb{Q}_q[G/N^+]\to \mathbb{Q}_q[G]$, $c\otimes c' \mapsto cc'$ is surjective.
\end{lemma}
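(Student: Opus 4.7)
The plan is to reduce, via the Peter--Weyl decomposition of $\mathbb{Q}_q[G]$, to showing that for each $\lambda\in P_+$ the isotypic component $\Psi_\lambda(V(\lambda)^{\ast}\otimes V(\lambda))$ lies in $A:=\mathbb{Q}_q[G/N^-]\cdot \mathbb{Q}_q[G/N^+]$. I would first observe that both $\mathbb{Q}_q[G/N^\pm]$ are $\mathbb{Q}(q)$-subalgebras of $\mathbb{Q}_q[G]$ (apply the coproduct formula for $F_i$ or $E_i$ to a product of $U_q(\mathfrak{n}^\pm)$-invariants, using $\varepsilon$-triviality) and right $U_q(\mathfrak{g})$-submodules (since the left and right actions on $\mathbb{Q}_q[G]$ commute). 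Hence by $(ab).u=\sum(a.u_{(1)})(b.u_{(2)})$, $A$ is itself a right $U_q(\mathfrak{g})$-submodule. Since $V(\lambda)^{\ast}$ is cyclic as a right $U_q(\mathfrak{g})$-module with generator $f_\lambda$ and the right action on $V(\lambda)^{\ast}\otimes V(\lambda)$ only touches the first factor, it suffices to prove $c^{\lambda}_{f_\lambda,v}\in A$ for every $v\in V(\lambda)$ and every $\lambda\in P_+$.

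I would proceed by induction on $\height(\lambda-\weight v)\geq 0$. The base $v=v_\lambda$ is immediate, since $c^{\lambda}_{f_\lambda,v_\lambda}\in \mathbb{Q}_q[G/N^+]\subset A$. For the inductive step, write $v=F_iv'$ with $v'$ of higher weight, so that $c^{\lambda}_{f_\lambda,v}=F_i.c^{\lambda}_{f_\lambda,v'}$ under the left action. Expanding $c^{\lambda}_{f_\lambda,v'}=\sum_k a_kb_k$ by the inductive hypothesis and applying $\Delta(F_i)=F_i\otimes K_{-i}+1\otimes F_i$ together with the $U_q(\mathfrak{n}^-)$-invariance $F_i.a_k=0$ gives $F_i.(a_kb_k)=a_k\cdot(F_i.b_k)$. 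Writing $b_k=c^{\nu_k}_{g_k,v_{\nu_k}}$ yields $F_i.b_k=c^{\nu_k}_{g_k,F_iv_{\nu_k}}$, which by the right-$U_q(\mathfrak{g})$-module property of $A$ lies in $A$ as soon as $c^{\nu_k}_{f_{\nu_k},F_iv_{\nu_k}}\in A$; the subalgebra property of $\mathbb{Q}_q[G/N^-]$ then ensures $a_k\cdot A\subset A$, closing the step.

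The main obstacle is that the reduction to $c^{\nu_k}_{f_{\nu_k},F_iv_{\nu_k}}\in A$ corresponds to $\height(\nu_k-\weight(F_iv_{\nu_k}))=1$, which fails to be strictly smaller than $\height(\lambda-\weight v)=1$ at the first non-trivial level. I would handle this height-$1$ case directly: if $\lambda=\lambda_1+\lambda_2$ with $\lambda_1,\lambda_2\in P_+\setminus\{0\}$ and the canonical embedding $V(\lambda)\hookrightarrow V(\lambda_1)\otimes V(\lambda_2)$, $v_\lambda\mapsto v_{\lambda_1}\otimes v_{\lambda_2}$, then applying the coproduct formula for $F_i$ inside this tensor product yields the splitting
\[
c^{\lambda}_{f_\lambda,F_iv_\lambda}=q_i^{-\langle\lambda_2,\alpha_i^\vee\rangle}\,c^{\lambda_1}_{f_{\lambda_1},F_iv_{\lambda_1}}\cdot c^{\lambda_2}_{f_{\lambda_2},v_{\lambda_2}}+c^{\lambda_1}_{f_{\lambda_1},v_{\lambda_1}}\cdot c^{\lambda_2}_{f_{\lambda_2},F_iv_{\lambda_2}},
\]
which recursively reduces the situation to the case $\lambda=\varpi_i$ fundamental; this residual case is resolved by a direct computation inside the basic representation $V(\varpi_i)$, along the lines of \cite[Chapter 3]{KroSoi}.
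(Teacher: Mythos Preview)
The paper does not give its own proof of this lemma; it simply records the statement and cites \cite[Chapter 3]{KroSoi}. Your overall reduction to the height-$1$ statement is sound, but the treatment of that case has two genuine gaps.

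First, your recursive splitting
\[
c^{\lambda}_{f_\lambda,F_iv_\lambda}=q_i^{-\langle\lambda_2,\alpha_i^\vee\rangle}\,c^{\lambda_1}_{f_{\lambda_1},F_iv_{\lambda_1}}\cdot c^{\lambda_2}_{f_{\lambda_2},v_{\lambda_2}}+c^{\lambda_1}_{f_{\lambda_1},v_{\lambda_1}}\cdot c^{\lambda_2}_{f_{\lambda_2},F_iv_{\lambda_2}}
\]
does not close. The second summand has its factors in the \emph{wrong order}: even granting by induction that $c^{\lambda_2}_{f_{\lambda_2},F_iv_{\lambda_2}}\in A$, the product lies in $\mathbb{Q}_q[G/N^+]\cdot A=\mathbb{Q}_q[G/N^+]\cdot\mathbb{Q}_q[G/N^-]\cdot\mathbb{Q}_q[G/N^+]$, and nothing you have proved forces this into $A$. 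You can kill the second term by choosing $\lambda_2=\varpi_j$ with $j\neq i$ (then $F_iv_{\varpi_j}=0$), but this option is unavailable when $\lambda=m\varpi_i$, and for $m\geq 2$ the ordering problem is unavoidable. Showing $\mathbb{Q}_q[G/N^+]\cdot\mathbb{Q}_q[G/N^-]\subset A$ is itself a nontrivial straightening statement; in the standard references it is handled via $R$-matrix commutation relations rather than by this kind of induction.

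Second, the ``residual case'' $c^{\varpi_i}_{f_{\varpi_i},F_iv_{\varpi_i}}\in A$ is not a direct computation outside of $\mathfrak{sl}_2$. In rank $\geq 2$ the vector $F_iv_{\varpi_i}$ has weight $s_i\varpi_i$, which is neither the highest nor the lowest weight of $V(\varpi_i)$, so this matrix coefficient lies in neither $\mathbb{Q}_q[G/N^+]$ nor $\mathbb{Q}_q[G/N^-]$; exhibiting it as a product in $A$ already requires a genuine argument, and the phrase ``along the lines of \cite[Chapter 3]{KroSoi}'' does not supply one.
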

We have the following proposition. (\cite[Proposition 10.1.1, Lemma 10.1.7, Theorem 10.1.18]{Josbook})
\begin{proposition}\label{hw}
Let $w \in W$ and $w =s_{i_1}\cdots s_{i_{l(w)}}$ be its reduced expression.
Set $\ket{(0)}_w:= \ket{0}_{i_1} \otimes \cdots \otimes \ket{0}_{i_{l(w)}} \in V_{s_{i_1}}\otimes \cdots \otimes V_{s_{i_{l(w)}}}$.

Then, 
\begin{itemize}
\item[{\nor (i)}] $\mathbb{Q}_q[G/N^+].\ket{(0)}_w=\mathbb{Q}(q)\ket{(0)}_w$, and
\item[{\nor (ii)}] if $c_{f, v_{\lambda}}^{\lambda}.\ket{(0)}_w\neq 0$ for some weight vector $f \in V(\lambda)^{\ast}$, then $\weight f=w \lambda$.
\end{itemize}
Moreover,
\begin{itemize}
\item[{\nor (I)}] if $c_{f, v_{\lambda}}^{\lambda}.V_w\neq 0$ for some $\lambda \in P_+$ and a weight vector $f \in V(\lambda)^{\ast}$, then $\weight f \geq w \lambda$,
\item[{\nor (II)}] if $c_{f, v_{w_0\lambda}}^{\lambda}.V_w\neq 0$ for some $\lambda \in P_+$ and a weight vector $f \in V(\lambda)^{\ast}$, then $\weight f \leq ww_0 \lambda$.
\end{itemize}
\end{proposition}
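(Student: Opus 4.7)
The plan is to establish all four parts simultaneously by induction on $l(w)$. The base case $w=e$ is immediate: $V_e$ is trivial, $c^{\lambda}_{f,v}.\ket{(0)}_e=\langle f,v\rangle$, and (i), (ii), (I), (II) follow at once. For $w=s_i$, the action factors through the Hopf surjection $\phi_i^{*}\colon \mathbb{Q}_q[G]\to \mathbb{Q}_{q_i}[SL_2]$, and $c^{\lambda}_{f,v_{\lambda}}$ restricts to the matrix coefficient of the $U_{q_i}(\mathfrak{sl}_{2,i})$-irreducible submodule $V^{(i)}:=U_{q_i}(\mathfrak{sl}_{2,i}).v_{\lambda}\subset V(\lambda)$, of highest weight $\langle\lambda,\alpha_i^{\vee}\rangle\varpi_i$ with $v_{\lambda}$ as its highest weight vector. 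From the explicit action in Definition \ref{vsigma}, the subalgebra $\mathbb{Q}_{q_i}[SL_2/N^+]$ (spanned by monomials in $c_{11}$ and $c_{21}$) acts on $\ket{0}_i$ by scalars, giving (i); tracking which weights of $f$ make $\phi_i^{*}(c^{\lambda}_{f,v_{\lambda}})$ nonzero (precisely the weights of $V^{(i)}$, i.e., $s_i\lambda\le\weight f\le\lambda$) supplies (ii) and (I), and (II) is the symmetric analysis with $v_{w_0\lambda}$ replacing $v_{\lambda}$.

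For the inductive step, write $w=s_{i_1}w'$ with $l(s_{i_1}w')=l(w')+1$, so that $V_w\cong V_{s_{i_1}}\otimes V_{w'}$ and $\ket{(0)}_w=\ket{0}_{i_1}\otimes \ket{(0)}_{w'}$. Expand the coproduct
\[
\Delta(c^{\lambda}_{f,v_{\lambda}})=\sum_{\nu}\sum_{k} c^{\lambda}_{f,v_k^\nu}\otimes c^{\lambda}_{f_k^\nu,v_{\lambda}},
\]
with $\{v_k^\nu\}$ a weight basis of $V(\lambda)_\nu$ and $\{f_k^\nu\}$ its dual basis. For (i) and (ii), apply this to $\ket{(0)}_w$: the inductive (ii) on $w'$ kills every term with $\nu\ne w'\lambda$, and since $\dim V(\lambda)_{w'\lambda}=1$, the sum collapses to a scalar multiple of $c^{\lambda}_{f,v_{w'\lambda}}.\ket{0}_{i_1}\otimes\ket{(0)}_{w'}$. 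The hypothesis $s_{i_1}w'>w'$ (equivalently $\langle w'\lambda,\alpha_{i_1}^{\vee}\rangle\ge 0$) yields $E_{i_1}.v_{w'\lambda}=0$ by the extremal-vector calculus associated with Notation \ref{normalization}, so $U_{q_{i_1}}(\mathfrak{sl}_{2,i_1}).v_{w'\lambda}$ is an $SL_2$-irreducible with $v_{w'\lambda}$ as its highest weight vector. Applying the $SL_2$-base case to this submodule yields (i) for $w$ and the weight equality $\weight f=s_{i_1}(w'\lambda)=w\lambda$ in (ii).

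For (I) (and symmetrically (II)), apply the same coproduct to a general $\ket{m}_{i_1}\otimes v'\in V_w$, and choose the weight basis adapted to the $SL_{2,i_1}$-decomposition $V(\lambda)|_{U_{q_{i_1}}(\mathfrak{sl}_{2,i_1})}=\bigoplus_s V^{(i_1)}_s$ into irreducible components. The inductive (I) on $w'$ restricts the surviving $\nu$'s to $\nu\ge w'\lambda$; for the first tensor factor, $\phi_{i_1}^{*}(c^{\lambda}_{f,v_k^\nu})$ is a matrix coefficient of the unique $SL_2$-irreducible $V^{(i_1)}_s$ containing $v_k^\nu$, and its action on $V_{s_{i_1}}$ is nonzero only if $\weight f$ is a weight of $V^{(i_1)}_s$, giving the per-string bound $s_{i_1}\mu_s\le \weight f\le \mu_s$ where $\mu_s$ denotes the top weight of that component.

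The main obstacle is upgrading this per-string constraint to the global inequality $\weight f\ge w\lambda$: since the reflection $s_{i_1}$ does not preserve the partial order on $Q$, the naive lower bound $\weight f\ge s_{i_1}\mu_s$ can be strictly weaker than $\weight f\ge w\lambda=s_{i_1}w'\lambda$. Closing this gap requires either exhibiting the cancellations among coproduct summands indexed by the ``low'' weights within each string, or, equivalently, showing via extremal-vector combinatorics and the hypothesis $s_{i_1}w'>w'$ that every $SL_{2,i_1}$-string meeting $\bigoplus_{\nu\ge w'\lambda}V(\lambda)_\nu$ has its bottom weight already $\ge w\lambda$. This refined weight-combinatorial argument, which is the technical heart of the proof, is what forces $s_{i_1}w'>w'$ to be an essential hypothesis.
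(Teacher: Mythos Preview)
The paper does not prove this proposition; it simply cites Joseph's book (Proposition 10.1.1, Lemma 10.1.7, Theorem 10.1.18). So there is no in-paper argument to compare against, and I will assess your attempt on its own.

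Your treatment of (i) and (ii) is correct. The coproduct expansion, the collapse to a single term via the inductive hypothesis and $\dim V(\lambda)_{w'\lambda}=1$, and the use of the extremal property $E_{i_1}.v_{w'\lambda}=0$ (from $w'^{-1}\alpha_{i_1}\in\Delta_+$) to reduce to the $SL_2$ base case are all sound and constitute the standard proof.

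Your treatment of (I) and (II), however, is genuinely incomplete, and you say so yourself. The issue is not merely cosmetic. Your ``per-string'' strategy tries to show that \emph{each individual summand} $c^{\lambda}_{f,v_k^{\nu}}.|m\rangle_{i_1}\otimes c^{\lambda}_{f_k^{\nu},v_\lambda}.v'$ vanishes when $\weight f\not\geq w\lambda$; this is strictly stronger than (I) and requires the weight-combinatorial statement you formulate (``every $SL_{2,i_1}$-string meeting $\bigoplus_{\nu\geq w'\lambda}V(\lambda)_\nu$ has bottom weight $\geq w\lambda$''). You do not prove this statement, and it does not follow from anything you have set up: it needs an honest argument about weight supports (e.g., that $w'\lambda+\alpha_{i_1}$ is never a weight of $V(\lambda)$ since $w'^{-1}\alpha_{i_1}\in\Delta_+$, and then a careful bound on how far below $w'\lambda$ the string can reach), or else a Demazure-module style induction. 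Your alternative route via ``exhibiting the cancellations'' is just a placeholder.

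The proofs in Joseph's book use different machinery---chiefly the $R$-matrix commutation relations in $\mathbb{Q}_q[G]$ between $\mathbb{Q}_q[G/N^+]$ and $\mathbb{Q}_q[G/N^-]$, combined with (i)--(ii) and the fact that $V_w=\mathbb{Q}_q[G/N^-].\ket{(0)}_w$---rather than a bare string-by-string weight analysis. If you want to finish your inductive approach you must either supply the missing weight-combinatorics proof in full, or switch to the commutation-relation argument.
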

\subsection{Kuniba-Okado-Yamada's theorem}
\hfill Kuniba-Okado-Yamada's theorem\\
points out the relation between the $\mathbb{Q}(q)$-vector space $U_q(\mathfrak{n^+})$ and the simple $\mathbb{Q}_q[G]$-module $V_{w_0}$. To explain this, we introduce the PBW bases of $U_q(\mathfrak{n^+})$. (\cite[Chapter 37,40]{Lusbook})
\begin{definition}[Symmetries $T'_{i, \epsilon}$, $T''_{i, \epsilon}$]\label{braidaction}
For $i\in I$ and $\epsilon \in \{\pm 1 \}$, there exist $\mathbb{Q}(q)$-algebra automorphisms $T'_{i, \epsilon}$, $T''_{i, \epsilon}: U_q(\mathfrak{g})\to U_q(\mathfrak{g})$ defined by
\[
\begin{array}{l}
T'_{i, \epsilon}(E_i)=-K_{\epsilon i}F_i, T'_{i, \epsilon}(F_i)=-E_iK_{-\epsilon i},\\
\dis T'_{i, \epsilon}(E_j)=\sum_{r+s=-\langle \alpha_i^{\vee}, \alpha_j \rangle}(-1)^rq_i^{\epsilon r}E_i^{(r)}E_jE_i^{(s)} \mathrm{for\ } j\neq i,\\
\dis T'_{i, \epsilon}(F_j)=\sum_{r+s=-\langle \alpha_i^{\vee}, \alpha_j \rangle}(-1)^rq_i^{-\epsilon r}F_i^{(s)}F_jF_i^{(r)} \mathrm{for\ } j\neq i,\\
T'_{i, \epsilon}(K_h)=K_{h-\langle \alpha_i, h \rangle\alpha_i^{\vee}},\\
T''_{i, \epsilon}(E_i)=-F_iK_{\epsilon i}, T''_{i, \epsilon}(F_i)=-K_{-\epsilon i}E_i,\\
\dis T''_{i, \epsilon}(E_j)=\sum_{r+s=-\langle \alpha_i^{\vee}, \alpha_j \rangle}(-1)^rq_i^{-\epsilon r}E_i^{(s)}E_jE_i^{(r)} \mathrm{for\ } j\neq i,\\
\dis T''_{i, \epsilon}(F_j)=\sum_{r+s=-\langle \alpha_i^{\vee}, \alpha_j \rangle}(-1)^rq_i^{\epsilon r}F_i^{(r)}F_jF_i^{(s)} \mathrm{for\ } j\neq i,\\
T''_{i, \epsilon}(K_h)=K_{h-\langle \alpha_i, h \rangle\alpha_i^{\vee}}.\\
\end{array}
\]
Note that 
\[
\begin{array}{ll}
T'_{i, \epsilon}=(T''_{i, -\epsilon})^{-1} & \omega\circ T'_{i, \epsilon}\circ \omega=T''_{i, \epsilon},\\
\ast\circ T'_{i, \epsilon}\circ \ast=T''_{i, -\epsilon},& \overline{(\cdot )}\circ T^{\#}_{i, \epsilon}\circ\overline{(\cdot )}=T^{\#}_{i, -\epsilon} 
\end{array}
\]
where $\#\in\{\ ',\ ''\}$.
\end{definition}
\begin{definition}[PBW bases]\label{PBW}
Let $\epsilon \in \{ \pm 1 \}$ and ${\bf i}=(i_1, i_2,\dots, i_{l(w_0)})$ be a reduced word of the longest element $w_0$ of $W$. (i.e. $w_0=s_{i_1}s_{i_2}\cdots s_{i_{l(w_0)}}$.)

Then, the vectors
$$\Set{E_{i_1}^{(c_1)}T'_{i_1, \epsilon}(E_{i_2}^{(c_2)})\cdots T'_{i_1, \epsilon}T'_{i_2, \epsilon}\cdots T'_{i_{l(w_0)-1}, \epsilon}(E_{i_{l(w_0)}}^{(c_{l(w_0)})})|(c_1, c_2,\dots,c_{l(w_0)})\in (\mathbb{Z}_{\geqq 0})^{l(w_0)}}$$
form a basis of $U_q(\mathfrak{n^+})$. Moreover, the vectors
$$\Set{E_{i_1}^{(c_1)}T''_{i_1, \epsilon}(E_{i_2}^{(c_2)})\cdots T''_{i_1, \epsilon}T''_{i_2, \epsilon}\cdots T''_{i_{l(w_0)-1}, \epsilon}(E_{i_{l(w_0)}}^{(c_{l(w_0)})})|(c_1, c_2,\dots,c_{l(w_0)})\in (\mathbb{Z}_{\geqq 0})^{l(w_0)}}$$
also form a basis of $U_q(\mathfrak{n}^+)$. They are called the PBW bases of $U_q(\mathfrak{n}^+)$. (See, for instance, \cite[Chapter 8]{Janbook} for details.)
\end{definition}
\begin{remark}\label{positivelabel}
For any reduced word ${\bf i}=(i_1, i_2,\dots, i_{l(w_0)})$ of $w_0$, we have
\begin{center}
$\Delta_+=\{ \beta_{{\bf i}}^1, \beta_{{\bf i}}^2,\dots, \beta_{{\bf i}}^{l(w_0)} \}$ where $\beta_{{\bf i}}^k:=s_{i_1}\cdots s_{i_{k-1}}(\alpha_{i_k})$.
\end{center}
For any $k=1,\dots, l(w_0)$, we have
\begin{center}
$T^{\#}_{i_1, \epsilon}T^{\#}_{i_2, \epsilon}\cdots T^{\#}_{i_{k-1}, \epsilon}(E_{i_k})\in U_q(\mathfrak{n}^+)_{\beta_{{\bf i}}^k}$,
\end{center}
and if $\beta_{{\bf i}}^k=\alpha_i$ for some $i\in I$, then
\begin{center}
$T^{\#}_{i_1, \epsilon}T^{\#}_{i_2, \epsilon}\cdots T^{\#}_{i_{k-1}, \epsilon}(E_{i_k})=E_i$,
\end{center}
where $\#\in \{\ ',\ ''\}$. See for instance \cite[Proposition 8.20]{Janbook}.
\end{remark}
\begin{definition}\label{E_i^c}
In the setting of Theorem \ref{PBW}, we set
$$E_{{\bf i}}^{{\bf c}}:= E_{i_1}^{(c_1)}T'_{i_1, 1}(E_{i_2}^{(c_2)})\cdots T'_{i_1, 1}T'_{i_2, 1}\cdots T'_{i_{l(w_0)-1}, 1}(E_{i_{l(w_0)}}^{(c_{l(w_0)})}),$$
where ${\bf c}=(c_1, c_2,\dots,c_{l(w_0)})\in (\mathbb{Z}_{\geqq 0})^{l(w_0)}$.
\end{definition}
Now, we can state Kuniba-Okado-Yamada's theorem. %For $i\in I$ and $c\in \mathbb{Z}_{\geqq 0}$, we set %$$|c\rangle_i:=\frac{1}{\prod_{l=1}^{c}(1-q_i^{2l})}\ket{c}_i.$$
Let ${\bf i}=(i_1, i_2,\dots, i_{l(w_0)})$ be a reduced word of $w_0$.
For ${\bf c}=(c_1, c_2,\dots,c_{l(w_0)})\in (\mathbb{Z}_{\geqq 0})^{l(w_0)}$, we set
\begin{align*}
| ({\bf c}) \rangle_{{\bf i}}:=|c_1\rangle_{i_1}\otimes\cdots\otimes|c_{l(w_0)}\rangle_{i_{l(w_0)}}
%&=\frac{1}{\prod_{k=1}^{l(w_0)}\prod_{l=1}^{c_k}(1-q_{i_k}^{2l})}\ket{c_1}_{i_1}\otimes \cdots \otimes\ket{c_{l(w_0)}}_{i_{l(w_0)}} (\in V_{w_0})
\end{align*}

Define the $\mathbb{Q}(q)$-linear isomorphism $\Phi_{{\bf i}}:U_q(\mathfrak{n}^+)\to V_{w_0}$ by $E_{{\bf i}}^{{\bf c}} \mapsto |({\bf c})\rangle_{{\bf i}}$.

\begin{theorem}[{%Kuniba-Okado-Yamada 
\cite[Theorem 5]{KOY}}]\label{KOY}
For any two reduced words ${\bf i}$, ${\bf j}$ of $w_0$, we have
$$\Phi_{{\bf j}}=\Theta_{{\bf j}, {\bf i}}\circ \Phi_{{\bf i}}.$$
\end{theorem}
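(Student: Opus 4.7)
The approach I would take is the classical reduction to rank two, using Matsumoto's theorem together with a localization argument, followed by direct verification in each rank-two case.

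First, by Matsumoto's theorem any two reduced expressions of $w_0$ are connected by a finite sequence of braid moves of lengths $2, 3, 4, 6$. Because $V_{w_0}$ is irreducible, Schur's lemma forces the space of $\mathbb{Q}_q[G]$-module intertwiners between any two of its tensor-product realizations to be one-dimensional; together with the normalization $\Theta_{{\bf j},{\bf i}}(|(0)\rangle_{{\bf i}})=|(0)\rangle_{{\bf j}}$, this gives $\Theta_{{\bf k},{\bf j}}\circ\Theta_{{\bf j},{\bf i}}=\Theta_{{\bf k},{\bf i}}$ for any triple of reduced words. Hence it suffices to prove $\Phi_{{\bf j}}=\Theta_{{\bf j},{\bf i}}\circ\Phi_{{\bf i}}$ when ${\bf i}$ and ${\bf j}$ differ by a single braid move.

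Next, I would localize both sides to the rank-two block in which the braid move takes place. If the move changes positions $k,\dots,k+m-1$ with $m\in\{2,3,4,6\}$, then $s_{i_1}\cdots s_{i_{k-1}}$ and $s_{i_1}\cdots s_{i_{k+m-1}}$ are each the same element of $W$ for both ${\bf i}$ and ${\bf j}$. Since the operators $T'_{i,1}$ satisfy the braid relations, the factors $T'_{i_1,1}\cdots T'_{i_{p-1},1}(E_{i_p}^{(c_p)})$ of $E_{{\bf i}}^{{\bf c}}$ at positions $p\notin\{k,\dots,k+m-1\}$ coincide with those of $E_{{\bf j}}^{{\bf c}'}$ whenever ${\bf c},{\bf c}'$ agree outside the block, so the PBW transition is concentrated in the block. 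On the module side, let $\Theta'$ be the unique rank-two intertwiner between the tensor products of the block factors; then $\mathrm{id}\otimes\Theta'\otimes\mathrm{id}$ is a $\mathbb{Q}_q[G]$-module homomorphism (the Hopf coaction acts factor-wise) sending $|(0)\rangle_{{\bf i}}$ to $|(0)\rangle_{{\bf j}}$, so it coincides with $\Theta_{{\bf j},{\bf i}}$ by the uniqueness above. This reduces the theorem to the rank-two case.

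In rank two there are four subcases, the braid relations in types $A_1\times A_1$, $A_2$, $B_2$, $G_2$. The $A_1\times A_1$ case is immediate from commutativity. For $A_2$ one expands each PBW vector $E^{(c_1)}_aT'_{a,1}(E^{(c_2)}_b)T'_{a,1}T'_{b,1}(E^{(c_3)}_a)$ using $T'_{i,1}(E_j)=E_iE_j-q_i^{-1}E_jE_i$ into a polynomial in $E_a,E_b$, and compares the resulting reordering coefficients against the matrix elements of $\Theta_{{\bf j},{\bf i}}$ computed directly from Definition \ref{vsigma} applied to $V_{s_a}\otimes V_{s_b}\otimes V_{s_a}$ versus $V_{s_b}\otimes V_{s_a}\otimes V_{s_b}$; this recovers, and matches, the Levendorskii--Soibelman $R$-matrix for $\mathbb{Q}_q[SL_3]$. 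The same scheme applies to $B_2$ and $G_2$. The main obstacle is the combinatorial bookkeeping in rank two, especially in type $G_2$, where the braid relation has length six and both the PBW transition and the intertwiner matrix elements are intricate multi-parameter $q$-rational functions. I would organize this by encoding the basis $\{|m\rangle_i\}_{m\geqq 0}$ through generating series in a formal variable, which turns the required identity into a finite list of $q$-series identities; matching these with the rank-two PBW transition formulas (essentially the ``tropical $R$-matrix'' identities of \cite{KOY}) is the combinatorial heart of the argument.
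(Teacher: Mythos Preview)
The paper does not supply a proof of this theorem: it is quoted verbatim as \cite[Theorem~5]{KOY} and used as input for the rest of the paper, so there is no in-paper argument to compare against. Your plan is essentially the strategy of the original reference \cite{KOY}: Matsumoto reduction to a single braid move, localization of both the PBW transition (via the braid relations for $T'_{i,1}$) and the intertwiner (via irreducibility and the normalization at $|(0)\rangle$) to the rank-two block, and then a case-by-case verification in types $A_1\times A_1$, $A_2$, $B_2$, $G_2$.

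Two small points worth tightening. First, in the localization step for the module side you write that $\mathrm{id}\otimes\Theta'\otimes\mathrm{id}$ is a $\mathbb{Q}_q[G]$-module map because ``the Hopf coaction acts factor-wise''; more precisely, you want $\Theta'$ itself to be a $\mathbb{Q}_q[G]$-intertwiner (not merely for the rank-two subalgebra), which follows because the $\mathbb{Q}_q[G]$-action on each $V_{s_i}$ already factors through $\phi_i^\ast$ and hence through the rank-two quotient on the block. Second, your description of the rank-two verification is accurate but optimistic: in \cite{KOY} the $B_2$ and especially $G_2$ cases are not short, and the matching of the intertwiner matrix elements with the PBW transition coefficients is the bulk of the work; a generating-function reorganization helps, but one should not understate the amount of explicit computation required.
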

Now, $\Theta_{{\bf j}, {\bf i}}$ can be regarded as the identity map of $V_{w_0}$. (i.e. We can identify  $V_{s_{i_1}}\otimes \cdots \otimes V_{s_{i_{l(w_0)}}}$ with $V_{s_{j_1}}\otimes \cdots \otimes V_{s_{j_{l(w_0)}}}$ via $\Theta_{{\bf j}, {\bf i}}$.) Hence, this theorem says that $U_q(\mathfrak{n}^+)$ and $V_{w_0}$ have the ``same'' natural bases and the map $\Phi_{{\bf i}}$ is determined independently of the choice of ${\bf i}$. In this sense, we denote this map $U_q(\mathfrak{n}^+)\to V_{w_0}$ by $\Phi_{\mathrm{KOY}}$. We will investigate the canonical basis of $U_q(\mathfrak{n}^+)$ using $\Phi_{\mathrm{KOY}}$ in Section \ref{main}. 
\section{The isomorphism $\mathbb{Q}_q[G/N^+]\simeq \dis \bigoplus_{\lambda \in P_+}F(\lambda )K_{\lambda}$ }\label{isom}
By Lemma \ref{decomposition} and Proposition \ref{hw}, we have $V_{w_0}=\mathbb{Q}_q[G/N^-].|(0)\rangle_{{\bf i}}$. In this section, we first establish the isomorphism of the right $U_q(\mathfrak{g})$-algebras between $\mathbb{Q}_q[G/N^-]$ and the specific subalgebra of a variant of the quantized enveloping algebra. Next, we check the compatibility of Kuniba-Okado-Yamada's conjecture and this isomorphism. (Theorem \ref{maintool}) Kuniba-Okado-Yamada's conjecture was recently proved by Saito (\cite{Saito_QFA}) and Tanisaki (\cite{Tanisaki_QFA}). This compatibility is one of the main tools of this paper.

The above-mentioned isomorphism is essentially written in the reference (\cite[Lemma 9.1.7]{Josbook}). However, our convention is slightly different from the one in that book (coproduct, left or right,\dots). So, we attach the details for the reader's convenience.

\begin{definition}\label{filt}
We consider the right adjoint action $\adr $ of $U_q(\mathfrak{g})$ on itself defined by $\adr (u)x=\sum_{(u)}S(u_{(1)})xu_{(2)}$ for $u, x \in U_q(\mathfrak{g})$, where $\Delta(u)=\sum_{(u)}u_{(1)}\otimes u_{(2)}$. Note that this is a right action.
We have the following equalities:
\[
\begin{array}{l}
\adr (E_i)x=-K_{-i}E_ix+K_{-i}xE_i,\\
\adr (F_i)x=-F_iK_ixK_{-i}+xF_i,\\
\adr (K_{\alpha_i^{\vee}})x=K_{-\alpha_i^{\vee}}xK_{\alpha_i^{\vee}},\\
\end{array}
\]
for $i \in I$ and $x \in U_q(\mathfrak{g})$.

Now, we can define a filtration $\mathcal{F}$ on $U_q(\mathfrak{g})$ in which $E_i$ has degree $(\alpha_i, \alpha_i)/2$, $F_i$ has degree 0 and $K_{\alpha_i^{\vee}}$ has degree 1 ($i \in I$).
By the equalities above, this filtration is $\adr(U_q(\mathfrak{g}))$-invariant. Hence, we obtain the right action of $U_q(\mathfrak{g})$ on $\gra_{\mathcal{F}}U_q(\mathfrak{g}) (:= \dis \bigoplus_{i \in \mathbb{Z}}\mathcal{F}_i(U_q(\mathfrak{g}))/\mathcal{F}_{i-1}(U_q(\mathfrak{g})))$ from the right adjoint action $\adr $. (This induced action will be again denoted by $\adr $.)

Let us denote by $U^+$ the subalgebra of $U_q(\mathfrak{g})$ generated by $\{ -K_{-i}E_i(=: G_i )\}_{i \in I}$. Then, $U^+$ is isomorphic to $U_q(\mathfrak{n^+})$ as a $\mathbb{Q}(q)$-algebra.
Now, $G_i$ has degree 0 with respect to $\mathcal{F}$. Therefore, $U^+$ can be naturally embedded 
into $\gra_{\mathcal{F}}U_q(\mathfrak{g})$ as a $\mathbb{Q}(q)$-algebra and its image will be denoted by $\check{U}^+$.
\end{definition}
\begin{lemma}\label{submod}
$\check{U}^+$ is a right $U_q(\mathfrak{g})$-submodule of $\gra_{\mathcal{F}}U_q(\mathfrak{g})$.
\end{lemma}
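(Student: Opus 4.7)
The plan is to show stability of $\check{U}^+$ under $\adr(u)$ for all $u \in U_q(\mathfrak{g})$ by reducing to two facts: (a) for each algebra generator $u\in\{E_i, F_i, K_{\alpha_i^\vee}\}$ of $U_q(\mathfrak{g})$ and each algebra generator $G_j$ of $U^+$, the element $\adr(u)G_j$ already lies in $U^+$ (up to lower filtration pieces, so that its image in $\gra_{\mathcal{F}}U_q(\mathfrak{g})$ lies in $\check{U}^+$), and (b) a Leibniz-type compatibility rule for $\adr$ on products, coming from the Hopf structure, which lets us propagate (a) from the generators $G_j$ of $U^+$ to arbitrary monomials.

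For step (a), I would carry out three short direct calculations inside $U_q(\mathfrak{g})$ using the explicit formulas for $\adr(E_i)$, $\adr(F_i)$, $\adr(K_{\alpha_i^\vee})$ displayed in Definition~\ref{filt} together with the commutation rules between $K$'s and $E_j, F_j$. The expected outcomes (and what I would verify in the writeup) are
\begin{align*}
\adr(E_i)G_j &= G_iG_j - q^{-(\alpha_i,\alpha_j)}G_jG_i,\\
\adr(F_i)G_j &= -\delta_{ij}\,\frac{1-K_{-2i}}{q_i - q_i^{-1}},\\
\adr(K_{\alpha_i^\vee})G_j &= q^{-\langle \alpha_i^\vee,\alpha_j\rangle}G_j.
\end{align*}
The first and third are literally in $U^+$, while in the second the term $K_{-2i}/(q_i-q_i^{-1})$ has strictly lower $\mathcal{F}$-degree than the degree-$0$ scalar $-1/(q_i-q_i^{-1})\cdot 1$, so after passing to $\gra_{\mathcal{F}}U_q(\mathfrak{g})$ the image lies in $\check{U}^+$ (recall $1 \in U^+$). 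The mildly subtle point in these calculations is keeping track of the powers of $q$ coming from moving $K$'s past $E$'s and $F$'s; the identity $K_i G_i K_{-i} = q_i^2 G_i$ and the $[E_i,F_i]$-relation are the only nontrivial ingredients.

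For step (b), recall that for a Hopf algebra the right adjoint action satisfies
\[
\adr(u)(xy) = \sum_{(u)} \adr(u_{(1)})(x)\,\adr(u_{(2)})(y),
\]
which follows directly from $\sum u_{(1)}S(u_{(2)}) = \varepsilon(u)$ applied via coassociativity. Specializing this to $u \in \{E_i, F_i, K_{\alpha_i^\vee}\}$ using the comultiplication formulas in Definition~\ref{QEA}, one gets explicit rules such as $\adr(E_i)(xy) = \adr(E_i)(x)\,y + \adr(K_i)(x)\,\adr(E_i)(y)$, and similarly for $F_i$ and $K_{\alpha_i^\vee}$. Since $\mathcal{F}$ is $\adr$-invariant, these identities descend to $\gra_{\mathcal{F}}U_q(\mathfrak{g})$, where $\check{U}^+$ is a subalgebra. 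A straightforward induction on the length of a monomial in the $G_j$'s then shows that $\adr(u)$ preserves $\check{U}^+$ for each generator $u$ of $U_q(\mathfrak{g})$, and hence for all of $U_q(\mathfrak{g})$.

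The main obstacle I anticipate is purely bookkeeping in step (a): assembling the $q$-powers correctly so that the output is visibly expressible in terms of the $G_j$ (and $1$), and verifying in the $\adr(F_i)G_i$ case that the genuinely non-$U^+$ summand $K_{-2i}$ indeed has lower $\mathcal{F}$-degree than $1$, so that it vanishes upon passing to the graded in the degree-$0$ part. Once these computations are in hand, the Hopf-algebraic extension in step (b) is formal.
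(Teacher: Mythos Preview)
Your proposal is correct and follows essentially the same approach as the paper's proof: compute $\adr(u)G_j$ for each Chevalley generator $u$ (the paper only writes out the $F_i$ case explicitly, dismissing $E_i$ and $K_{\alpha_i^\vee}$ as easy, but your formulas agree with theirs, including $\adr(F_i)G_j=\delta_{ij}\frac{-1+K_{-i}^2}{q_i-q_i^{-1}}$ with $K_{-i}^2$ dropping out in the graded), and then use the Hopf-algebraic Leibniz rule to propagate to all of $\check{U}^+$.
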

\begin{proof}
It is easy to check that $\adr (E_i)$ and $\adr (K_{\alpha_i^{\vee}})$ preserve $\check{U}^+$. So, we have only to prove that $\adr (F_i)$ preserves $\check{U}^+$ ($i \in I$).

we have
\[
\adr (F_i)(xy)=\adr (F_i)(x)\adr (K_{-i})(y)+x\adr (F_i)(y)
\]
for $x, y \in \check{U}^+$ and
\begin{align*}
\adr (F_i)(G_j)&=-F_iK_i(-K_{-j}E_j)K_{-i}+(-K_{-j}E_j)F_i\\
&=K_{-j}(F_iE_j-E_jF_i)=\dis \delta_{ij}\frac{-1+K_{-i}^2}{q_i-q_i^{-1}}=\delta_{ij}\frac{-1}{q_i-q_i^{-1}}.
\end{align*}
The last equality follows because the degree of $K_{-i}^2$ is $-(\alpha_i, \alpha_i)$ ($<0$).

These equalities complete the proof.
\end{proof}

\begin{lemma}\label{rightmod}
Let $K_{\lambda}$ be the symbol indexed by an element $\lambda$ of $P_+$ and $\check{U}^+K_{\lambda}$ be the $\mathbb{Q}(q)$-vector space such that 
\begin{center}
$a(xK_{\lambda})=(ax)K_{\lambda}$ and $xK_{\lambda}+yK_{\lambda}=(x+y)K_{\lambda}$
\end{center}
for $a \in \mathbb{Q}(q)$ and $x, y \in \check{U}^+$.
Then,
\begin{itemize}
\item[{\nor (I)}] The vector space $\check{U}^+K_{\lambda}$ has the right $U_q(\mathfrak{g})$-module structure (the action of $U_q(\mathfrak{g})$ is denoted by $\adr _{\lambda}$) defined by the Leibnitz rule, the given right adjoint action of $U_q(\mathfrak{g})$ on $\check{U}^+$ and,
\[
\begin{array}{ll}
\multicolumn{2}{c}{\adr _{\lambda}(E_i)K_{\lambda}=(1-q_i^{\langle 2\lambda, \alpha_i^{\vee} \rangle})G_iK_{\lambda},}\\
\adr _{\lambda}(F_i)K_{\lambda}=0,& \adr _{\lambda}(K_{h})K_{\lambda}=q^{\langle \lambda, h \rangle}K_{\lambda}
\end{array}
\]
for $i \in I$ and $h \in Q^{\vee}$.\\
\item[{\nor (II)}] The right $U_q(\mathfrak{g})$-module $\check{U}^+K_{\lambda}$ is isomorphic to the graded dual $M(\lambda)^{\ast, \gra}$ of the (left) Verma module $M(\lambda)$ with highest weight $\lambda$, where the graded dual is defined by $\dis \bigoplus_{\gamma \in P}\Hom_{\mathbb{Q}(q)}(M(\lambda)_{\gamma}, \mathbb{Q}(q))$ with a natural right $U_q(\mathfrak{g})$-module structure.\\
\end{itemize}
\end{lemma}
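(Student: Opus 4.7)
The plan is to treat the two statements separately, proving (I) by reducing well-definedness to a small set of checks on the distinguished element $K_\lambda$, and proving (II) by constructing an explicit $U_q(\mathfrak{g})$-equivariant pairing.

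For Part (I), I would observe that the right adjoint action on $\check{U}^+$ is already supplied by Lemma \ref{submod}, so the candidate action on $\check{U}^+ K_\lambda$ is fully specified once one prescribes the action on the distinguished element $K_\lambda$ and invokes the Leibniz rule dictated by the coproduct of $U_q(\mathfrak{g})$. Well-definedness then reduces to checking that the stated scalars are consistent with the defining relations (i)-(vi) of Definition \ref{QEA} when applied to $K_\lambda$. Relations (i)-(iii) are immediate from weight bookkeeping, since $\text{ad}_{r,\lambda}(K_h)$ multiplies $x K_\lambda$ by $q^{\langle h, \mathrm{wt}(x) + \lambda\rangle}$. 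For relation (iv) with $i = j$, a direct Leibniz computation using $\text{ad}_r(F_i)(G_i) = -1/(q_i - q_i^{-1})$ (from the proof of Lemma \ref{submod}) and $\text{ad}_{r,\lambda}(K_{-i}) K_\lambda = q_i^{-\langle\lambda,\alpha_i^\vee\rangle}K_\lambda$ gives
\[
(E_i F_i - F_i E_i) \cdot K_\lambda = \frac{q_i^{\langle\lambda, \alpha_i^\vee\rangle} - q_i^{-\langle\lambda, \alpha_i^\vee\rangle}}{q_i - q_i^{-1}} K_\lambda,
\]
which is precisely $\tfrac{K_i - K_{-i}}{q_i - q_i^{-1}} \cdot K_\lambda$. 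The case $i \neq j$ is trivial as $F_j$ annihilates $K_\lambda$ and the $U^+$-part already satisfies (iv). The Serre relations (v), (vi) reduce via Leibniz to either the analogous sum in $\check{U}^+$ or to an obvious vanishing since $F_i$'s kill $K_\lambda$.

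For Part (II), I would construct a $U_q(\mathfrak{g})$-equivariant non-degenerate pairing $\langle\,\cdot\,,\,\cdot\,\rangle_\lambda: \check{U}^+ K_\lambda \times M(\lambda) \to \mathbb{Q}(q)$ and conclude that $\check{U}^+ K_\lambda \cong M(\lambda)^{*,\mathrm{gr}}$ canonically. Identifying $\check{U}^+$ with $U_q(\mathfrak{n}^+)$ via $G_i \leftrightarrow E_i$ and $M(\lambda) \cong U_q(\mathfrak{n}^-) v_\lambda$, the natural candidate is a $\lambda$-twisted form of the Drinfeld/Lusztig pairing between $U_q(\mathfrak{n}^+)$ and $U_q(\mathfrak{n}^-)$. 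Non-degeneracy of this underlying pairing on each weight space, together with the character equality
\[
\mathrm{ch}\, \check{U}^+ K_\lambda = e^\lambda \prod_{\alpha \in \Delta_+} \frac{1}{1 - e^{-\alpha}} = \mathrm{ch}\, M(\lambda)^{*,\mathrm{gr}},
\]
shows that the induced map $\check{U}^+ K_\lambda \to M(\lambda)^{*,\mathrm{gr}}$ is a weight-preserving linear bijection. Equivariance $\langle z \cdot u, m\rangle_\lambda = \langle z, u \cdot m\rangle_\lambda$ need only be verified on the generators $u = E_i, F_i, K_h$; for $E_i$ this pits the formula from Part (I) against the commutation identity
\[
E_i y v_\lambda = \frac{q_i^{\langle\lambda,\alpha_i^\vee\rangle}{_ie'}(y) - q_i^{-\langle\lambda,\alpha_i^\vee\rangle}e'_i(y)}{q_i - q_i^{-1}} v_\lambda
\]
recorded in Definition \ref{qderiv}, and matches the two sides.

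The main obstacle lies in Part (I): the scalar $(1 - q_i^{2\langle\lambda,\alpha_i^\vee\rangle})$ is \emph{not} the one suggested by a naive computation inside a formal extension where $K_\lambda E_i = q^{(\lambda,\alpha_i)} E_i K_\lambda$, so the choice of prefactor must be calibrated precisely against the $[E_i,F_i]$ relation on $K_\lambda$ — this is the delicate calculation above that forces exactly this scalar. The secondary obstacle lies in Part (II) in identifying the correct $\lambda$-dependent normalization of the pairing so that equivariance at $E_i$ reproduces the factor $(1 - q_i^{2\langle\lambda,\alpha_i^\vee\rangle})$; once the right twist is pinned down, the equivariance on $F_i$ and $K_h$ is essentially automatic because both sides are constructed so that $K_\lambda$ and $v_\lambda^*$ share the same annihilators.
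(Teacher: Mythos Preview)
Your approach to Part (I) is essentially the paper's: both verify relations (i)--(vi) directly, and your computation for (iv) is correct and matches what the paper calls ``straightforward calculation''. However, your treatment of the $E_i$-Serre relation (v) is too vague: ``reduce via Leibniz to the analogous sum in $\check{U}^+$'' is not quite right, because the action of $E_i$ on $xK_\lambda$ is \emph{not} simply $\adr(E_i)(x)K_\lambda$ --- it picks up an extra term from $\adr_\lambda(E_i)K_\lambda$, so the operator is a genuinely $\lambda$-twisted commutator and the Serre relation for these twisted operators needs its own argument. The paper handles this by observing that the prescribed action on $K_\lambda$ coincides with the honest right adjoint action on the element $K_{\sum_j 2(\lambda,\alpha_j)\varpi_j^\vee}$ inside the \emph{adjoint-type} quantum group, whence (v) is inherited from a genuine module structure. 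Incidentally, this contradicts your remark that the scalar $(1-q_i^{2\langle\lambda,\alpha_i^\vee\rangle})$ does not come from a formal extension: it does, just not from the extension with commutation $K_\lambda E_i = q^{(\lambda,\alpha_i)}E_iK_\lambda$ that you wrote down, but rather from one with $q^{2(\lambda,\alpha_i)}$.

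For Part (II) your route is genuinely different from the paper's. You propose to build an explicit $\lambda$-twisted Drinfeld pairing $\check{U}^+K_\lambda \times M(\lambda)\to\mathbb{Q}(q)$ and verify equivariance on generators; this is workable but, as you note yourself, the correct normalization is a nontrivial obstacle you do not resolve. The paper instead argues by double dualization: it takes the functional $l_\lambda\in(\check{U}^+K_\lambda)^{\ast,\gra}$ with $l_\lambda(K_\lambda)=1$, checks $E_i\cdot l_\lambda=0$, identifies $-(q_i-q_i^{-1})\adr(F_i)$ with $r_i$ via $\check{U}^+\simeq U_q(\mathfrak{n}^+)$, and then uses the nondegeneracy statement in Definition~\ref{qderiv}(2) to show $l_\lambda$ is a cyclic vector. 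This yields a surjection $M(\lambda)\twoheadrightarrow(\check{U}^+K_\lambda)^{\ast,\gra}$, which is an isomorphism by character comparison. The paper's approach buys you freedom from any explicit pairing or normalization; yours would buy a concrete formula for the isomorphism, at the cost of the calibration you flagged.
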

\begin{proof}
To prove (I), it suffices to show that the relations (i)-(vi) in \ref{QEA} are satisfied as the operators on $\check{U}^+K_{\lambda}$. The relations (i), (ii), (iii), (vi) are easily checked. The relation (v) is satisfied since the action of $E_i$ on $K_{\lambda}$ is essentially the same as the right adjoint action of $E_i$ on the element $K_{\sum_{i\in I}2(\lambda, \alpha_i)\varpi_i^{\vee}}$ in the variant of the quantized enveloping algebra whose basis of the Cartan part is indexed by the coweights (called the adjoint type in \cite{Lusbook}), where $\langle\varpi_i^{\vee}, \alpha_j\rangle=\delta_{ij} (i, j\in I)$.
 
The relation (iv) can be checked by straightforward calculation. Hence, (I) follows.

For (II), we have only to prove that the graded dual $(\check{U}^+K_{\lambda})^{\ast, \gra}$ of $\check{U}^+K_{\lambda}$ is isomorphic to $M(\lambda)$. Let $l_{\lambda}$ be  the element of $(\check{U}^+K_{\lambda})^{\ast, \gra}_{\lambda}$ (1 dimensional) such that $l_{\lambda}(K_{\lambda})=1$. Then, we have $E_i.l_{\lambda}=0$ for all $i\in I$. 

Now, by the proof of Lemma \ref{submod}, we can identify $-(q_i-q_i^{-1})\adr (F_i)$ on $\check{U}^+$ as $r_i$ on $U_q(\mathfrak{n}^+)$ via the $\mathbb{Q}(q)$-algebra isomorphism $\check{U}^+\rightarrow U_q(\mathfrak{n}^+), G_i\mapsto E_i$. Hence, it follows from Definition \ref{qderiv} (2) that $(\check{U}^+K_{\lambda})^{\ast, \gra}=U_q(\mathfrak{g}).l_{\lambda}$. Therefore, there exists a surjective (left) $U_q(\mathfrak{g})$-module homomorphism $M(\lambda)\to (\check{U}^+K_{\lambda})^{\ast, \gra}$. Moreover, these modules have the same formal characters. Hence, these modules are isomorphic and this completes the proof of (II).
\end{proof}

We can deduce from Lemma \ref{rightmod} (II) that $\adr _{\lambda}(U_q(\mathfrak{g}))(K_{\lambda})$ is a finite dimensional right $U_q(\mathfrak{g})$-module isomorphic to $V(\lambda)^{\ast}$.
\begin{notation}
For $\lambda \in P_+$, we set $F(\lambda )K_{\lambda}:= \adr _{\lambda}(U_q(\mathfrak{g}))(K_{\lambda})$ ($F(\lambda )\subset \check{U}^+$).
\end{notation}
\begin{lemma}\label{algstr}
For $\lambda, \mu \in P_+$, we have $F(\lambda)F(\mu)=F(\lambda+\mu)$.
\end{lemma}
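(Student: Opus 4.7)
The plan is to realize $F(\lambda)F(\mu)K_{\lambda+\mu}$ as the image of a right $U_q(\mathfrak{g})$-equivariant multiplication map from $F(\lambda)K_\lambda\otimes F(\mu)K_\mu$ into $\check{U}^+K_{\lambda+\mu}$, and then to sandwich $F(\lambda+\mu)K_{\lambda+\mu}$ between this image (from below) and the maximal integrable submodule of $\check{U}^+K_{\lambda+\mu}$ (from above).

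First I would introduce the twisted multiplication
\begin{equation*}
m_{\lambda,\mu}:\check{U}^+K_\lambda\otimes_{\mathbb{Q}(q)}\check{U}^+K_\mu\to \check{U}^+K_{\lambda+\mu},\qquad xK_\lambda\otimes yK_\mu\mapsto x\,\tau_\lambda(y)\,K_{\lambda+\mu},
\end{equation*}
where $\tau_\lambda(y)=q^{(\lambda,\weight y)}y$ on weight homogeneous $y\in \check{U}^+$; this is exactly the restriction to $\check{U}^+K_\lambda\otimes \check{U}^+K_\mu$ of the algebra multiplication in the larger algebra $\check{U}^{\geqq 0}$ anticipated in the discussion preceding Theorem \ref{main1}, in which $K_\lambda G_iK_\lambda^{-1}=q_i^{\langle\lambda,\alpha_i^{\vee}\rangle}G_i$ and $K_\lambda K_\mu=K_{\lambda+\mu}$. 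Note that $\tau_\lambda(F(\mu))=F(\mu)$ because $\tau_\lambda$ acts on each weight space by a scalar, so $m_{\lambda,\mu}\bigl(F(\lambda)K_\lambda\otimes F(\mu)K_\mu\bigr)=F(\lambda)F(\mu)K_{\lambda+\mu}$. The key technical step is to verify that $m_{\lambda,\mu}$ intertwines the tensor product action $\adr _{\lambda}\otimes \adr _{\mu}$ with $\adr _{\lambda+\mu}$. Because both actions are defined by Leibniz from the module-algebra action of $\adr $ on $\check{U}^+$ and the prescribed actions on the $K_{\ast}$'s, the check reduces to testing on Chevalley generators applied to $K_\lambda\otimes K_\mu$: the cases $F_i$ (both sides vanish) and $K_h$ (scalar action via $\Delta(K_h)=K_h\otimes K_h$) are immediate from Lemma \ref{rightmod}(I), while $E_i$, using $\Delta(E_i)=E_i\otimes 1+K_i\otimes E_i$, comes down to the elementary identity
\begin{equation*}
(1-q_i^{2a})+q_i^{2a}(1-q_i^{2b})=1-q_i^{2(a+b)},\qquad a=\langle\lambda,\alpha_i^{\vee}\rangle,\ b=\langle\mu,\alpha_i^{\vee}\rangle.
\end{equation*}

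With $m_{\lambda,\mu}$ known to be $U_q(\mathfrak{g})$-equivariant, both inclusions follow formally. For $F(\lambda+\mu)\subseteq F(\lambda)F(\mu)$: since $K_\lambda=\adr _{\lambda}(1)K_\lambda\in F(\lambda)K_\lambda$ we have $1\in F(\lambda)$, and likewise $1\in F(\mu)$, so $K_{\lambda+\mu}=m_{\lambda,\mu}(K_\lambda\otimes K_\mu)\in F(\lambda)F(\mu)K_{\lambda+\mu}$; the latter is a right $U_q(\mathfrak{g})$-submodule of $\check{U}^+K_{\lambda+\mu}$ by equivariance, and since $F(\lambda+\mu)K_{\lambda+\mu}$ is by definition the $U_q(\mathfrak{g})$-cyclic submodule generated by $K_{\lambda+\mu}$, the inclusion follows. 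For the reverse inclusion, by Lemma \ref{rightmod}(II) the source $F(\lambda)K_\lambda\otimes F(\mu)K_\mu\cong V(\lambda)^{\ast}\otimes V(\mu)^{\ast}$ is a finite dimensional integrable right $U_q(\mathfrak{g})$-module, so its image is an integrable submodule of $\check{U}^+K_{\lambda+\mu}\cong M(\lambda+\mu)^{\ast,\gra}$. The maximal integrable submodule of $M(\lambda+\mu)^{\ast,\gra}$ equals $V(\lambda+\mu)^{\ast}=F(\lambda+\mu)K_{\lambda+\mu}$: any integrable submodule dualizes to a quotient of $M(\lambda+\mu)$ which must factor through its unique maximal integrable quotient $V(\lambda+\mu)$. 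This gives $F(\lambda)F(\mu)\subseteq F(\lambda+\mu)$.

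I expect the only real obstacle to be the equivariance check for $E_i$: the scalar twist built into $\adr _{\lambda}(E_i)K_\lambda=(1-q_i^{2\langle\lambda,\alpha_i^{\vee}\rangle})G_iK_\lambda$ interacts with the non-cocommutative coproduct $\Delta(E_i)$ and with the commutation $K_\lambda G_i=q_i^{\langle\lambda,\alpha_i^{\vee}\rangle}G_iK_\lambda$ hidden in $\tau_\lambda$, but after unwinding both sides their agreement is exactly the scalar identity displayed above. From there the two inclusions proceed entirely through general module-theoretic considerations: cyclicity of $F(\lambda+\mu)K_{\lambda+\mu}$ on one side, and maximal integrability of $V(\lambda+\mu)^{\ast}\subset M(\lambda+\mu)^{\ast,\gra}$ on the other.
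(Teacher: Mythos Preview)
Your overall strategy matches the paper's: establish that $F(\lambda)F(\mu)K_{\lambda+\mu}$ is a right $U_q(\mathfrak{g})$-submodule of $\check{U}^+K_{\lambda+\mu}$, after which both inclusions follow---one from cyclicity of $F(\lambda+\mu)K_{\lambda+\mu}$, the other from $V(\lambda+\mu)^\ast$ being the maximal integrable submodule of $M(\lambda+\mu)^{\ast,\gra}$. You make these two deductions explicit; the paper compresses them into ``it suffices to show the submodule property'' and then performs a direct computation of $\adr_{\lambda+\mu}(E_i)(G'G''K_{\lambda+\mu})$ for arbitrary $G'\in F(\lambda)$, $G''\in F(\mu)$.

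The gap is your claimed reduction of the equivariance of $m_{\lambda,\mu}$ to the single element $K_\lambda\otimes K_\mu$. For such a reduction you would need $m_{\lambda,\mu}$ to be a left $(\check{U}^+\otimes\check{U}^+)$-module map into a target carrying a compatible $(\check{U}^+\otimes\check{U}^+)$-action. The obvious candidate $(x\otimes y)\cdot zK_{\lambda+\mu}=x\,\tau_\lambda(y)\,zK_{\lambda+\mu}$ does not make $m_{\lambda,\mu}$ linear (since $x'\tau_\lambda(y)\neq\tau_\lambda(y)x'$ in general), and $\tau_\lambda$ does not commute with $\adr(E_i)$ on $\check{U}^+$. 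So the Leibniz rules on the two sides, though built from the same $\adr$, are organized differently (a four-fold Sweedler expansion versus a two-fold one), and matching them requires the computation on general $G'K_\lambda\otimes G''K_\mu$. That is exactly what the paper does: it expands $\adr_{\lambda+\mu}(E_i)(G'G''K_{\lambda+\mu})$ and recognizes the two pieces as the $\check{U}^+$-parts of $\adr_\lambda(E_i)(G'K_\lambda)$ and $\adr_\mu(E_i)(G''K_\mu)$; your scalar identity $(1-q_i^{2a})+q_i^{2a}(1-q_i^{2b})=1-q_i^{2(a+b)}$ is precisely the $G'=G''=1$ special case. An alternative route, hinted at in the paper's proof of Lemma~\ref{rightmod}(I), is to realize $\check{U}^{\geqq 0}$ inside the graded adjoint-type quantized enveloping algebra, where $\adr$ is automatically a module-algebra action and equivariance of multiplication is free; but that identification must also be spelled out, not asserted.
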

\begin{proof}
It suffices to show that $F(\lambda)F(\mu)K_{\lambda+\mu}$ is a right $U_q(\mathfrak{g})$-submodule of  $\check{U}^+K_{\lambda+\mu}$. It is clear that the actions of $F_i$ ($i\in I$) and $K_h$ ($h\in Q^{\vee}$) preserve the space $F(\lambda)F(\mu)K_{\lambda+\mu}$. Consider the action of $E_i$ ($i\in I$). Recall that $\adr (E_i)(x)=G_ix-\adr (K_i)(x)G_i$ for all $x\in U_q(\mathfrak{g})$. For $G' \in F(\lambda)$ and $G''\in F(\mu)$, we have 
\begin{align*}
&\adr _{\lambda+\mu}(E_i)(G'G''K_{\lambda+\mu})\\
&=\adr (E_i)(G')G''K_{\lambda+\mu}+\adr (K_i)(G')\adr (E_i)(G'')K_{\lambda+\mu}\\
&\hspace{20pt}+\adr (K_i)(G')\adr (K_i)(G'')\adr _{\lambda+\mu}(E_i)(K_{\lambda+\mu})\\
&=\left\{ \adr (E_i)(G')+(1-q_i^{\langle 2\lambda, \alpha_i^{\vee} \rangle})\adr (K_i)(G')G_i \right\}G''K_{\lambda+\mu}\\
&\hspace{20pt}+q_i^{\langle 2\lambda, \alpha_i^{\vee} \rangle}\adr (K_i)(G')\left\{\adr (E_i)(G'')+(1-q_i^{\langle 2\mu, \alpha_i^{\vee} \rangle})\adr (K_i)(G'')G_i \right\}K_{\lambda+\mu}.
\end{align*}
By the definition of $F(\lambda)$ and $F(\mu)$, $\adr (E_i)(G')+(1-q_i^{\langle 2\lambda, \alpha_i^{\vee} \rangle})\adr (K_i)(G')G_i$ is an element of $F(\lambda)$ and $\adr (E_i)(G'')+(1-q_i^{\langle 2\mu, \alpha_i^{\vee} \rangle})\adr (K_i)(G'')G_i$ is an element of $F(\mu)$. This completes the proof.
\end{proof}
By Lemma \ref{algstr}, we can conclude that the vector space $C:= \bigoplus_{\lambda \in P_+}F(\lambda )K_{\lambda}$ has a $\mathbb{Q}(q)$-algebra structure, where $K_{\lambda}K_{\mu}=K_{\lambda+\mu}$ and $K_{\lambda}G_i=q^{(\lambda, \alpha_i)}G_iK_{\lambda}$ for $\lambda, \mu \in P_+$ and $i \in I$. The following proposition is the main proposition of this section. 
\begin{proposition}\label{mainisom}
\begin{itemize}
\item[{\nor (i)}] The algebra $C$ is a right $U_q(\mathfrak{g})$-algebra.
\item[{\nor (ii)}] The map $\Upsilon : C \to \mathbb{Q}_q[G/N^-], K_{\lambda}\mapsto c_{f_{\lambda}, v_{w_0\lambda}}^{\lambda} (\lambda \in P_+)$ is an isomorphism of right $U_q(\mathfrak{g})$-algebras.
\end{itemize}
\end{proposition}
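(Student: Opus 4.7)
My plan is to build $\Upsilon$ summand by summand using the irreducible right module structure on each $F(\lambda)K_\lambda$, and then to reduce both parts of the proposition to the multiplication rule for the distinguished matrix coefficients $c^\lambda_{f_\lambda, v_{w_0\lambda}}$.

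For (i), I would verify the right $U_q(\mathfrak{g})$-algebra Leibniz rule
\[
\adr(u)(cc') = \sum_{(u)} \adr(u_{(1)})(c)\cdot \adr(u_{(2)})(c')
\]
for $u \in U_q(\mathfrak{g})$ and $c, c' \in C$. By linearity and multiplicativity of the coproduct, this reduces to $u \in \{E_i, F_i, K_{\alpha_i^\vee}\}$ and $c, c'$ being monomials in the $G_j$'s and $K_\lambda$'s. When both $c, c'$ lie in $\check{U}^+$ this is Lemma \ref{submod}; the delicate interaction of $u = E_i$ with the $K_\lambda$ symbols is essentially the computation already carried out in the proof of Lemma \ref{algstr}; and the remaining cases ($u = F_i$ or $K_{\alpha_i^\vee}$) follow directly from the defining formulas in Lemma \ref{rightmod}~(I).

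For (ii), I first define $\Upsilon$ summand by summand. For each $\lambda \in P_+$, Lemma \ref{rightmod} (together with the remark preceding Lemma \ref{algstr}) identifies $F(\lambda)K_\lambda$ with the irreducible right module $V^r(\lambda)$, whose highest weight vector is $K_\lambda$ (killed by $\adr_\lambda(F_i)$ for every $i$). On the target side, the subspace $\{c^\lambda_{f, v_{w_0\lambda}} \mid f \in V(\lambda)^*\}$ is a right $U_q(\mathfrak{g})$-submodule of $\mathbb{Q}_q[G/N^-]$ isomorphic to $V^r(\lambda)$ via $f\mapsto c^\lambda_{f,v_{w_0\lambda}}$, with highest weight vector $c^\lambda_{f_\lambda, v_{w_0\lambda}}$ (since $f_\lambda.F_i = 0$). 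Schur's lemma supplies a unique right-module isomorphism $\Upsilon_\lambda$ matching these vectors, and I set $\Upsilon := \bigoplus_\lambda \Upsilon_\lambda$. The $q$-analogue of the Peter-Weyl theorem gives $\mathbb{Q}_q[G/N^-] = \bigoplus_\lambda V^r(\lambda)$ as a right module, so $\Upsilon$ is automatically a bijective right $U_q(\mathfrak{g})$-module homomorphism.

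The main remaining (and hardest) step is multiplicativity. Because both $C$ and $\mathbb{Q}_q[G/N^-]$ are right $U_q(\mathfrak{g})$-algebras, the Leibniz rule propagates any identity between highest-weight vectors to an identity between arbitrary products, so it suffices to prove
\[
c^\lambda_{f_\lambda, v_{w_0\lambda}} \cdot c^\mu_{f_\mu, v_{w_0\mu}} = c^{\lambda+\mu}_{f_{\lambda+\mu}, v_{w_0(\lambda+\mu)}}
\]
in $\mathbb{Q}_q[G]$. The multiplication rule for matrix coefficients rewrites the left-hand side as the matrix coefficient of $V(\lambda) \otimes V(\mu)$ paired with $(f_\lambda \otimes f_\mu, v_{w_0\lambda}\otimes v_{w_0\mu})$. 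Under the Cartan embedding $V(\lambda+\mu) \hookrightarrow V(\lambda) \otimes V(\mu)$ (sending $v_{\lambda+\mu} \mapsto v_\lambda\otimes v_\mu$), I would then check that $v_{w_0\lambda}\otimes v_{w_0\mu}$ realizes $v_{w_0(\lambda+\mu)}$ with the specific normalization from Notation \ref{normalization}, and dually that $f_\lambda \otimes f_\mu$ restricts to $f_{\lambda+\mu}$ on that component. The weight $w_0(\lambda+\mu)$ occurs with multiplicity one in $V(\lambda) \otimes V(\mu)$ and only inside the $V(\lambda+\mu)$-isotypic component, since any other constituent $V(\nu)$ satisfies $\nu < \lambda+\mu$ and hence $w_0\nu > w_0(\lambda+\mu)$, which excludes $w_0(\lambda+\mu)$ from being a weight of $V(\nu)$. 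The normalization constants are then pinned down by the pairings $\langle f_\lambda\otimes f_\mu, v_\lambda\otimes v_\mu\rangle = 1 = \langle f_{\lambda+\mu}, v_{\lambda+\mu}\rangle$ together with the analogous computation against $f_{w_0(\lambda+\mu)}$ at the lowest-weight end; this normalization match is what I expect to require the most care.
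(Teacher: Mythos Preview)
Your approach to (i) is essentially the same as the paper's: both reduce the Leibniz rule to the generators $E_i,F_i,K_{\alpha_i^\vee}$ and check it by direct computation, invoking the calculation from Lemma~\ref{algstr} for the interaction of $E_i$ with the $K_\lambda$'s.

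For (ii) you take a genuinely different route. The paper quotes a presentation $\mathbb{Q}_q[G/N^-]\simeq T(V)/K$ from \cite[9.1.6]{Josbook} (Fact~\ref{genrel}), builds a right $U_q(\mathfrak{g})$-algebra map $T(V)\to C$ by $f_{\varpi_i}\mapsto K_{\varpi_i}$, checks that the relations in $K$ die, and concludes by comparing both sides with $\bigoplus_\lambda V(\lambda)^{\ast}$ as right modules. Your argument instead assembles $\Upsilon$ summand by summand via Schur's lemma and then reduces multiplicativity to the single identity $c^{\lambda}_{f_{\lambda},v_{w_0\lambda}}\,c^{\mu}_{f_{\mu},v_{w_0\mu}}=c^{\lambda+\mu}_{f_{\lambda+\mu},v_{w_0(\lambda+\mu)}}$. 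This is correct, but note that your ``Leibniz propagation'' step is really the statement that $(c,c')\mapsto \Upsilon(cc')-\Upsilon(c)\Upsilon(c')$ is a right $U_q(\mathfrak{g})$-module map on $F(\lambda)K_\lambda\otimes F(\mu)K_\mu$ which vanishes on $K_\lambda\otimes K_\mu$; to conclude it vanishes identically you need that this tensor product is \emph{cyclic} on $K_\lambda\otimes K_\mu$, which is a standard but nontrivial fact you should state explicitly. Your normalization check that the embedding $V(\lambda+\mu)\hookrightarrow V(\lambda)\otimes V(\mu)$ carries $v_{w_0(\lambda+\mu)}$ to $v_{w_0\lambda}\otimes v_{w_0\mu}$ (and dually for $f_\lambda\otimes f_\mu$) is again correct and well known, though it amounts to a small induction on $l(w)$ using the coproduct of $F_i^{(n)}$. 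The upshot: your proof is more self-contained (no external presentation result) at the cost of two auxiliary facts---cyclicity of the tensor product and compatibility of extremal vectors with tensor products---while the paper's proof packages everything into the cited Fact~\ref{genrel}.
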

\begin{proof}
Direct calculation shows that, for $i\in I$ and homogeneous elements $G', G'' \in \check{U}^+$,
\begin{align*}
&\adr _{\lambda}(E_i)(G'K_{\lambda})G''K_{\mu}+\adr _{\lambda}(K_i)(G'K_{\lambda})\adr _{\mu}(E_i)(G''K_{\mu})\\
&=q^{( \lambda, \weight G'' )}\adr _{\lambda+\mu}(E_i)(G'G''K_{\lambda+\mu}),\\
&\adr _{\lambda}(F_i)(G'K_{\lambda})\adr _{\mu}(K_{-i})(G''K_{\mu})+G'K_{\lambda}\adr _{\mu}(F_i)(G''K_{\mu})\\
&=q^{( \lambda, \weight G'' )}\adr _{\lambda+\mu}(F_i)(G'G''K_{\lambda+\mu}),\\
&\adr _{\lambda}(K_{\alpha_i^{\vee}})(G'K_{\lambda})\adr _{\mu}(K_{\alpha_i^{\vee}})(G''K_{\mu})\\
&=q^{( \lambda, \weight G'' )}\adr _{\lambda+\mu}(K_{\alpha_i^{\vee}})(G'G''K_{\lambda+\mu}).
\end{align*}
This completes the proof of (i). For the proof of (ii), we recall the following fact. (See \cite[9.1.6]{Josbook}.)
\begin{fact}\label{genrel}
Put $V := \bigoplus_{i \in I}V(\varpi_i)^{\ast}$ and let $T(V)$ be the tensor algebra of $V$.
The algebra $T(V)$ is a right $U_q(\mathfrak{g})$-algebra. For all $k \in \mathbb{Z}_{\geqq 0}$ and all $i_1, \dots , i_k \in I$, we have the right $U_q(\mathfrak{g})$-module homomorphism
\begin{center}
$V(\varpi_{i_1})^{\ast}\otimes \dots \otimes V(\varpi_{i_k})^{\ast}\to V(\varpi_{i_1}+\dots +\varpi_{i_k})^{\ast}, f_{\varpi_{i_1}}\otimes \dots \otimes f_{\varpi_{i_k}}\mapsto f_{\varpi_{i_1}+\dots +\varpi_{i_k}}$,
\end{center}
and denote its kernel by $K_{i_1, \dots , i_k}$. For $i, j \in I$, we set
\begin{center}
$E_{i, j}:= (f_{\varpi_i}\otimes f_{\varpi_j}-f_{\varpi_j}\otimes f_{\varpi_i}).U_q(\mathfrak{g})(\subset T(V)).$
\end{center} 
Let $K$ denote the two-sided ideal of $T(V)$ generated by the all $K_{i_1, \dots , i_k}$ and the all $E_{i, j}$.
Then, there exists the right $U_q(\mathfrak{g})$-algebra isomorphism $T(V)/K\to \mathbb{Q}_q[G/N^-]$ defined by $f_{\varpi_i}\mapsto c_{f_{\varpi_i}, v_{w_0\varpi_i}}^{\varpi_i}$ $(i \in I)$. Moreover, this isomorphism maps $f_{\varpi_{i_1}}\otimes \dots \otimes f_{\varpi_{i_k}}$ to $c_{f_{\varpi_{i_1}+\dots +\varpi_{i_k}}, v_{w_0(\varpi_{i_1}+\dots +\varpi_{i_k})}}^{\varpi_{i_1}+\dots +\varpi_{i_k}}$ for all $k \in \mathbb{Z}_{\geqq 0}$ and all $i_1, \dots , i_k \in I$.
\end{fact}
Now, we have the right $U_q(\mathfrak{g})$-algebra homomorphism $T(V)\to C$ defined by $f_{\varpi_i}\mapsto K_{\varpi_i}$. By Lemma \ref{algstr}, this is a surjective homomorphism. Moreover, it is easy to check that this homomorphism factors through $T(V)/K$ and the induced map $T(V)/K\to C$ is the isomorphism of the right $U_q(\mathfrak{g})$-algebras. (Note that, by Lemma \ref{genrel}, both $T(V)/K$ and $C$ are isomorphic to $\bigoplus_{\lambda \in P_+}V(\lambda)^{\ast}$ as right $U_q(\mathfrak{g})$-modules.) Combining this with Fact \ref{genrel}, we obtain Proposition \ref{mainisom} (ii).
\end{proof}
Set $\mathcal{S}:= \Set{c_{f_{\lambda}, v_{w_0\lambda}}^{\lambda} | \lambda \in P_+ }$. Then, the set $\mathcal{S}$ is a multiplicative set and is (left and right) Ore in $\mathbb{Q}_q[G/N^-]$. So, we can define the localization of $\mathbb{Q}_q[G/N^-]$ by $\mathcal{S}$ (denoted by $\mathbb{Q}_q[G/N^-]_{\mathcal{S}}$). Under the $\mathbb{Q}(q)$-algebra isomorphism $\Upsilon^{-1}$, the set $\mathcal{S}$ corresponds to the set $\Set{K_{\lambda} | \lambda \in P_+}$ (denoted by $\mathcal{S}'$), and the algebra $C_{\mathcal{S}'}$ is isomorphic to the algebra $\check{U}^{\geqq 0}:= \bigoplus_{\lambda \in P}\check{U}^+K_{\lambda}$, where the relations concerning to $K_{\lambda}$'s ($\lambda \in P$) are the following 
\[
\begin{array}{c}
K_{\lambda}K_{\mu}=K_{\lambda+\mu},\ \text{and}\\
K_{\lambda}G_i=q^{(\lambda, \alpha_i)}G_iK_{\lambda}\ \text{for}\ \lambda, \mu \in P\ \text{and}\ i \in I. 
\end{array}
\]
By Proposition \ref{hw} (II), we have 
\begin{align*}
c_{f_{\lambda}, v_{w_0\lambda}}^{\lambda}.|(0)\rangle_{{\bf i}}&=c_{f_{\lambda}, v_{s_{i_1}\lambda}}^{\lambda}.|(0)\rangle_{i_1} \otimes \cdots \otimes c_{f_{s_{i_{l(w_0)-1}}\cdots s_{i_1}\lambda}, v_{w_0\lambda}}^{\lambda}.|(0)\rangle_{i_{l(w_0)}} \\
&=|(0)\rangle_{i_1} \otimes \cdots \otimes |(0)\rangle_{i_{l(w_0)}}=|(0)\rangle_{{\bf i}},
\end{align*}
for any reduced word ${\bf i}$ of $w_0$. Moreover, the elements of $\mathcal{S}$ are $q$-central in $\mathbb{Q}_q[G/N^-]$. Hence, the $\mathbb{Q}_q[G]$-module $V_{w_0}$ can be regarded as a $\mathbb{Q}_q[G/N^-]$-module by restriction and this $\mathbb{Q}_q[G/N^-]$-module can be extended to the $\mathbb{Q}_q[G/N^-]_{\mathcal{S}}$-module (denoted again by $V_{w_0}$). 

The following proposition was conjectured by Kuniba, Okado and Yamada (\cite[Conjecture 1]{KOY}) and recently proved by Saito (\cite{Saito_QFA}) and Tanisaki (\cite{Tanisaki_QFA}). We define the $\mathbb{Q}_q[G]$(or $\mathbb{Q}_q[G/N^-]$, $\mathbb{Q}_q[G/N^-]_{\mathcal{S}}$ )-module structure on $U_q(\mathfrak{n}^+)$ via $\Phi_{\mathrm{KOY}}$. Set 
$$\xi_i:=(1-q_i^2)^{-1}(c_{f_{\varpi_i}, v_{w_0\varpi_i}}^{\varpi_i}.E_i)({c_{f_{\varpi_i}, v_{w_0\varpi_i}}^{\varpi_i}})^{-1}(\in \mathbb{Q}_q[G/N^-]_{\mathcal{S}}),$$ 
for $i\in I$.
\begin{proposition}[{\cite[Corollary 4.3.3]{Saito_QFA} \cite[Proposition 7.6]{Tanisaki_QFA}}]\label{Saito}
The $\mathbb{Q}_q[G/N^-]_{\mathcal{S}}$-module structure on $U_q(\mathfrak{n}^+)$ is determined by the $\mathbb{Q}(q)$-algebra homomorphism $\tilde{\rho}_{w_0}:\mathbb{Q}_q[G/N^-]_{\mathcal{S}}\to \End_{\mathbb{Q}(q)}(U_q(\mathfrak{n}^+))$ defined by 
\[
\begin{array}{l}
\xi_i\mapsto (x\mapsto E_ix),\\
c_{f_{\varpi_i}, v_{w_0\varpi_i}}^{\varpi_i}\mapsto (x\mapsto q^{(\varpi_i, \weight x)}x \text{\ for\ any\ homogeneous\ element\ }x).
\end{array}
\]
\end{proposition}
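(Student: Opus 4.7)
The plan is to verify each of the two prescribed formulas directly on the KOY basis $\{|({\bf c})\rangle_{\bf i}\}_{{\bf c}\in(\mathbb{Z}_{\geqq 0})^N}$ of $V_{w_0}$ and then invoke a generation argument. By Theorem~\ref{KOY} the identification $\Phi_{\mathrm{KOY}}$ is independent of the reduced word ${\bf i}$, so one is free to choose the reduced word most convenient for each verification.

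First I would handle the action of $c_{f_{\varpi_i}, v_{w_0\varpi_i}}^{\varpi_i}$. The strategy is to expand its action on $V_{s_{i_1}}\otimes\cdots\otimes V_{s_{i_N}}$ via the iterated coproduct of $\mathbb{Q}_q[G]$, whose summands are simple tensors of matrix coefficients of $V(\varpi_i)$. Proposition~\ref{hw}(II) selects the surviving contributions on $|(0)\rangle_{\bf i}$: only the cascade in which the right slot of the $k$-th factor has weight $s_{i_1}\cdots s_{i_{k-1}}\varpi_i$ and the left slot has weight $s_{i_1}\cdots s_{i_k}\varpi_i$ survives, and it reproduces $|(0)\rangle_{\bf i}$ unchanged. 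For general $|({\bf c})\rangle_{\bf i}$, the additional weight shifts forced by $|c_k\rangle_{i_k}$ on each factor, tracked through the same coproduct analysis, multiply out to the scalar $q^{(\varpi_i,\,\sum_k c_k\beta_{\bf i}^k)}=q^{(\varpi_i,\,\mathrm{wt}\,E_{\bf i}^{\bf c})}$, which is the second formula in the statement.

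For $\xi_i$ I would choose ${\bf i}$ to start with $i$, so that $|({\bf c})\rangle_{\bf i}=|c_1\rangle_i\otimes|(c_2,\ldots)\rangle_{{\bf i}'}$ and left multiplication by $E_i$ sends $E_{\bf i}^{\bf c}=E_i^{(c_1)} T'_{i,1}(\cdots)$ to $[c_1+1]_i\,E_{\bf i}^{(c_1+1,c_2,\ldots)}$. Expanding $c^{\varpi_i}_{f_{\varpi_i},v_{w_0\varpi_i}}.E_i$ via $\Delta(E_i)=E_i\otimes 1+K_i\otimes E_i$ and rerunning the weight analysis above, only the term in which the entire $E_i$-shift lands on the leftmost factor $V_{s_i}$ can contribute nontrivially. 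On that factor the computation reduces to the $\mathfrak{sl}_2$-case of Definition~\ref{vsigma}: a direct calculation with the $c_{ij}$'s shows that, after division by $(1-q_i^2)$ and by the diagonal scalar produced by $c^{\varpi_i}_{f_{\varpi_i},v_{w_0\varpi_i}}$ itself, the first factor transforms as $|c_1\rangle_i\mapsto[c_1+1]_i|c_1+1\rangle_i$, which under $\Phi_{\mathrm{KOY}}$ is precisely left multiplication by $E_i$. Finally, since Fact~\ref{genrel} presents $\mathbb{Q}_q[G/N^-]$ as a right $U_q(\mathfrak{g})$-algebra generated by the $c^{\varpi_i}_{f_{\varpi_i},v_{w_0\varpi_i}}$, adjoining inverses and the Chevalley-type elements $\xi_i$ yields a generating set for $\mathbb{Q}_q[G/N^-]_{\mathcal{S}}$, so the two verified formulas determine $\tilde\rho_{w_0}$ completely.

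The hard part will be the weight-vanishing arguments that collapse the iterated coproduct expansion to a single surviving term. For $|({\bf c})\rangle_{\bf i}$ with ${\bf c}\neq 0$, cross terms in which a piece of the matrix coefficient or the $E_i$-shift acts on a later tensor factor must be ruled out by a simultaneous application of both parts of Proposition~\ref{hw} on every factor; the weight bookkeeping through the iterated coproduct is the real technical core of the argument, and is essentially what must be handled carefully to push the rank-one calculation on the first tensor factor through to the full module $V_{w_0}$.
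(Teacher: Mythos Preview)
The paper does not prove this proposition: it is quoted as a result of Saito and Tanisaki (the former Kuniba--Okado--Yamada conjecture), so there is no in-paper proof to compare against. That said, your outline is a viable direct verification and is close in spirit to Saito's argument.

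Two comments on the details. For the first formula, rather than tracking the full iterated coproduct on an arbitrary $|({\bf c})\rangle_{\bf i}$, it is quicker to use what the paper records just before the proposition: $c^{\varpi_i}_{f_{\varpi_i},v_{w_0\varpi_i}}$ is $q$-central in $\mathbb{Q}_q[G/N^-]$, it fixes $|(0)\rangle_{\bf i}$, and $V_{w_0}=\mathbb{Q}_q[G/N^-].|(0)\rangle_{\bf i}$; the diagonal action with scalar $q^{(\varpi_i,\weight x)}$ then follows from the commutation relations alone.

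For $\xi_i$, your reduction to a reduced word beginning with $i$ is the right move, and the collapse to a single term is exactly the weight argument you anticipate. Concretely: writing $V_{w_0}\simeq V_{s_i}\otimes V_{w'}$ with $w'=s_iw_0$, Proposition~\ref{hw}(II) forces the intermediate weight in the coproduct expansion of $c^{\varpi_i}_{f_{s_i\varpi_i},v_{w_0\varpi_i}}$ to satisfy $\mu\leq w'w_0\varpi_i=s_i\varpi_i$, while compatibility with the $\mathfrak{sl}_{2,i}$-action on the first factor forces $\mu\in\varpi_i+\mathbb{Z}\alpha_i$; since the only weights of $V(\varpi_i)$ in that coset are $\varpi_i$ and $s_i\varpi_i$, and the former is excluded, the surviving term is unique with $\mu=s_i\varpi_i$. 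The rank-one computation on $V_{s_i}$ then gives $(1-q_i^{2(c_1+1)})|c_1+1\rangle_i$, and comparing with the same coproduct expansion of $c^{\varpi_i}_{f_{\varpi_i},v_{w_0\varpi_i}}$ (whose first-factor piece is $c_{12}$, acting by $q_i^{c_1}$) cancels the $V_{w'}$-contribution and produces the scalar $(1-q_i^2)^{-1}(1-q_i^{2(c_1+1)})q_i^{-c_1}=[c_1+1]_i$, as required. So the ``cross terms'' you worry about do vanish, but the reason is a one-dimensionality fact about $V(\varpi_i)_{s_i\varpi_i}$ together with Proposition~\ref{hw}(II), not a simultaneous application of both parts of Proposition~\ref{hw}.
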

Combining Proposition \ref{mainisom} with Proposition \ref{Saito}, we obtain the following theorem.
\begin{theorem}\label{maintool}
The composition map $\check{U}^{\geqq 0}\xrightarrow[ ]{\Upsilon}\mathbb{Q}_q[G/N^-]_{\mathcal{S}}\xrightarrow[ ]{\tilde{\rho}_{w_0}} \End_{\mathbb{Q}(q)}(U_q(\mathfrak{n}^+))$ is given by
\[
\begin{array}{l}
G_i\mapsto (x\mapsto E_ix),\\
K_{\varpi_i}\mapsto (x\mapsto q^{(\varpi_i, \weight x)}x \text{\ for\ any\ homogeneous\ element\ }x).
\end{array}
\]
\end{theorem}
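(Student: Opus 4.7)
The plan is to evaluate the composition separately on the generators $K_{\varpi_i}$ and $G_i$, using the structural results already established. First I would observe that $\Upsilon:C\to \mathbb{Q}_q[G/N^-]$ from Proposition \ref{mainisom} is a $\mathbb{Q}(q)$-algebra isomorphism carrying $\mathcal{S}'=\{K_\lambda\}_{\lambda\in P_+}$ bijectively onto $\mathcal{S}$. Since both sets are $q$-central Ore sets, $\Upsilon$ extends to a $\mathbb{Q}(q)$-algebra isomorphism $\check{U}^{\geqq 0}=C_{\mathcal{S}'}\to \mathbb{Q}_q[G/N^-]_{\mathcal{S}}$, and the right $U_q(\mathfrak{g})$-action on $\mathbb{Q}_q[G/N^-]$ extends to the localization because the elements of $\mathcal{S}$ are weight vectors whose action is compatible with forming fractions. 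This lets me work uniformly in $\check{U}^{\geqq 0}$ and its image.

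For $K_{\varpi_i}$ the computation is immediate: by the definition of $\Upsilon$ we have $\Upsilon(K_{\varpi_i})=c^{\varpi_i}_{f_{\varpi_i}, v_{w_0\varpi_i}}$, and Proposition \ref{Saito} then directly gives $\tilde{\rho}_{w_0}(\Upsilon(K_{\varpi_i}))=\bigl(x\mapsto q^{(\varpi_i,\weight x)}x\bigr)$ on homogeneous $x$.

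For $G_i$ the key input is Lemma \ref{rightmod}(I) specialized at $\lambda=\varpi_i$. Since $\langle 2\varpi_i,\alpha_i^\vee\rangle=2$, that formula becomes $\adr_{\varpi_i}(E_i)(K_{\varpi_i})=(1-q_i^2)G_iK_{\varpi_i}$ in $C$, i.e.\ $K_{\varpi_i}.E_i=(1-q_i^2)G_iK_{\varpi_i}$. Rearranging inside $\check{U}^{\geqq 0}$ gives $G_i=(1-q_i^2)^{-1}(K_{\varpi_i}.E_i)K_{\varpi_i}^{-1}$. Applying the right $U_q(\mathfrak{g})$-algebra homomorphism $\Upsilon$ to both sides yields
$$\Upsilon(G_i)=(1-q_i^2)^{-1}\bigl(c^{\varpi_i}_{f_{\varpi_i}, v_{w_0\varpi_i}}.E_i\bigr)\bigl(c^{\varpi_i}_{f_{\varpi_i}, v_{w_0\varpi_i}}\bigr)^{-1}=\xi_i,$$
and invoking Proposition \ref{Saito} a second time delivers $\tilde{\rho}_{w_0}(\Upsilon(G_i))=\tilde{\rho}_{w_0}(\xi_i)=(x\mapsto E_ix)$.

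In short, the whole theorem reduces to the single explicit identity in $C$ packaged in Lemma \ref{rightmod}(I) together with the two deeper inputs already at hand: Proposition \ref{mainisom} (the right $U_q(\mathfrak{g})$-algebra isomorphism $C\cong \mathbb{Q}_q[G/N^-]$) and Proposition \ref{Saito} (the Saito--Tanisaki description of the $\mathbb{Q}_q[G/N^-]_{\mathcal{S}}$-action on $U_q(\mathfrak{n}^+)$). The main conceptual obstacle is not in this statement itself but lies upstream in those two results; the only point here requiring a brief verification is that $\Upsilon$ extends compatibly as a right $U_q(\mathfrak{g})$-algebra map to the localizations, which follows at once from the $q$-centrality of $\mathcal{S}$ and from its elements being weight vectors.
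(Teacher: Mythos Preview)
Your proof is correct and follows exactly the route the paper indicates: the paper states the theorem immediately after Proposition~\ref{Saito} with only the phrase ``Combining Proposition~\ref{mainisom} with Proposition~\ref{Saito}, we obtain the following theorem,'' and you have supplied precisely the natural details of that combination, namely reading off $\Upsilon(K_{\varpi_i})$ from the definition and computing $\Upsilon(G_i)=\xi_i$ via the identity $\adr_{\varpi_i}(E_i)K_{\varpi_i}=(1-q_i^2)G_iK_{\varpi_i}$ from Lemma~\ref{rightmod}(I).
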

By Theorem \ref{maintool}, we can compute the action of the element of the form $c_{f_{\lambda}, v_{w_0\lambda}}^{\lambda}.x $ ($x\in U_q(\mathfrak{g})$) on $V_{w_0}$ by computing the element $\adr_{\lambda}(x)K_{\lambda}$ in $C$.
\begin{example}\label{sl2ex}
Set $\mathfrak{g}:= \mathfrak{sl}_2$. In this case, $P_+$ is naturally identified with $\mathbb{Z}_{\geqq 0}$ and $w_0=s_1$. Then, for any $n, k \in \mathbb{Z}_{\geqq 0}$ with $k \leqq n$, we have
\begin{align*}
\left( c_{f_n, v_{-n}}^{n}.E^{(k)} \right).\Phi_{\mathrm{KOY}}^{-1}(|(0) \rangle)&=\Upsilon\left( \adr_n(E^{(k)})K_n \right).\Phi_{\mathrm{KOY}}^{-1}(|(0) \rangle)\\
&=\Upsilon\left( \prod_{i=1}^{k}(1-q^{2(n-i+1)})G^{(k)}K_n \right).\Phi_{\mathrm{KOY}}^{-1}(|(0) \rangle)\\
&=\prod_{i=1}^{k}(1-q^{2(n-i+1)})E^{(k)}.
\end{align*}
Therefore, via $\Phi_{\mathrm{KOY}}$, we have 
$$\left( c_{f_n, v_{-n}}^{n}.E^{(k)} \right).|(0) \rangle=\Phi_{\mathrm{KOY}}(\prod_{i=1}^{k}(1-q^{2(n-i+1)})E^{(k)})=\prod_{i=1}^{k}(1-q^{2(n-i+1)})|(k) \rangle,$$
in $V_{s_1}$.
\end{example}
We remark that if $n$ is sufficiently larger than $k$ then the ``output'' element $\prod_{i=1}^{k}(1-	q^{2(n-i+1)})E^{(k)}$ is equal to the ``input'' element $\ast(E^{(k)})=E^{(k)}$ modulo the terms of the form (the sufficiently large powers of $q$ )$\cdot E^{(k)}$. In general, for $i\in I$, $\lambda\in P_+$ and a homogeneous element $\tilde{G}\in \check{U}^+$, we have
\begin{align}\label{act}
\adr_{\lambda}(E_i)(\tilde{G}K_{\lambda})=(G_i\tilde{G}-q^{(2\lambda-\weight \tilde{G}, \alpha_i)}\tilde{G}G_i)K_{\lambda}.
\end{align}
Hence, we obtain the following corollary. 
\begin{corollary}\label{converge}
For $x\in U_q(\mathfrak{n}^+), \lambda\in P_+$ and a reduced word ${\bf i}$ of $w_0$, we write 
$$x=\sum_{{\bf c}\in (\mathbb{Z}_{\geqq 0})^{l(w_0)}}{_{{\bf i}}\zeta}_{{\bf c}}^{x}E_{{\bf i}}^{{\bf c}}\ \text{with\ } {_{{\bf i}}\zeta}_{{\bf c}}^{x}\in \mathbb{Q}(q),\ \text{and}$$
$$(c_{f_{\lambda}, v_{w_0\lambda}}^{\lambda}.\ast(x)).|(0) \rangle_{{\bf i}}=\sum_{{\bf c}\in (\mathbb{Z}_{\geqq 0})^{l(w_0)}}{_{{\bf i}}\zeta}_{{\bf c}}^{\lambda, x}|({\bf c}) \rangle_{{\bf i}}\ \text{with\ } {_{{\bf i}}\zeta}_{{\bf c}}^{\lambda, x}\in \mathbb{Q}(q).$$
When $\lambda \in P_+$ tends to $\infty$ in the sense that $\langle \lambda, \alpha_i^{\vee}\rangle$ tends to $\infty$ for all $i\in I$, ${_{{\bf i}}\zeta}_{{\bf c}}^{\lambda, x}$ converges to ${_{{\bf i}}\zeta}_{{\bf c}}^{x}$ in the complete discrete valuation field $\mathbb{Q}((q))$.
\end{corollary}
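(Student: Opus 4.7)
The plan is to use Theorem \ref{maintool} to pull the left-hand side back to an algebraic computation inside $\check{U}^{\geqq 0}$, express $\adr_\lambda(\ast(x))(K_\lambda)$ as $\iota^{-1}(x)K_\lambda$ plus a correction whose monomial coefficients all carry factors $q^{(2\lambda,\beta)}$ with $\beta\in Q_+\setminus\{0\}$, and then invoke the $q$-adic decay of such factors. Since $\Upsilon$ is a right $U_q(\mathfrak{g})$-algebra isomorphism with $\Upsilon(K_\lambda)=c_{f_{\lambda},v_{w_0\lambda}}^{\lambda}$, one has
\begin{align*}
c_{f_{\lambda},v_{w_0\lambda}}^{\lambda}.\ast(x) \;=\; \Upsilon\bigl(\adr_\lambda(\ast(x))(K_\lambda)\bigr) \;=\; \Upsilon(\tilde G_\lambda K_\lambda)
\end{align*}
for a unique homogeneous $\tilde G_\lambda\in\check U^+$ of weight $\weight x$. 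Because $\Phi_{\mathrm{KOY}}^{-1}(|(0)\rangle_{{\bf i}})=E_{{\bf i}}^{{\bf 0}}=1$, and $K_\lambda$ acts trivially on weight-zero vectors by Theorem \ref{maintool}, the same theorem yields
\begin{align*}
\Phi_{\mathrm{KOY}}^{-1}\bigl((c_{f_{\lambda},v_{w_0\lambda}}^{\lambda}.\ast(x)).|(0)\rangle_{{\bf i}}\bigr) \;=\; \iota(\tilde G_\lambda),
\end{align*}
where $\iota:\check U^+\to U_q(\mathfrak{n}^+)$ is the $\mathbb{Q}(q)$-algebra isomorphism $G_i\mapsto E_i$. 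So it suffices to prove that the PBW coefficients of $\iota(\tilde G_\lambda)$ converge to those of $x$ in $\mathbb{Q}((q))$ as $\lambda\to\infty$.

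Next I would prove, by induction on the length of a monomial expression of $z$ in the generators $E_i$, the assertion that for every $z\in U_q(\mathfrak{n}^+)$,
\begin{align*}
\adr_\lambda(\ast(z))(K_\lambda)\;=\;\bigl(\iota^{-1}(z)+R_\lambda^{z}\bigr)K_\lambda,
\end{align*}
where $R_\lambda^{z}\in\check U^+$ is a finite $\mathbb{Z}[q^{\pm 1}]$-linear combination of expressions of the form $q^{(2\lambda,\beta)}m$ with $\beta\in Q_+\setminus\{0\}$ and $m$ a monomial in the $G_i$ independent of $\lambda$. The case $z=1$ is trivial. For $z=E_iz'$, the identity $\ast(z)=\ast(z')E_i$ and the fact that $\adr_\lambda$ is a right (hence anti-algebra) action give
\begin{align*}
\adr_\lambda(\ast(z))(K_\lambda)\;=\;\adr_\lambda(E_i)\bigl((\iota^{-1}(z')+R_\lambda^{z'})K_\lambda\bigr),
\end{align*}
and applying equation \eqref{act} termwise produces the leading term $G_i\iota^{-1}(z')=\iota^{-1}(E_iz')=\iota^{-1}(z)$ along with further contributions that either come from $\adr_\lambda(E_i)$ applied to $R_\lambda^{z'}K_\lambda$ (which inherits the required form) or carry a fresh factor $q^{(2\lambda-\weight(\cdot),\alpha_i)}$; in the latter case the $\lambda$-independent offset $-(\weight(\cdot),\alpha_i)$ is absorbed into the $\mathbb{Z}[q^{\pm 1}]$-coefficient, leaving the factor $q^{(2\lambda,\alpha_i)}$ to contribute to $\beta$. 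This closes the induction.

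Setting $z=x$ yields $\iota(\tilde G_\lambda)=x+\iota(R_\lambda^{x})$, hence
\begin{align*}
{_{{\bf i}}\zeta}_{{\bf c}}^{\lambda,x}\;=\;{_{{\bf i}}\zeta}_{{\bf c}}^{x}+\bigl(\text{coefficient of } E_{{\bf i}}^{{\bf c}}\text{ in }\iota(R_\lambda^{x})\bigr).
\end{align*}
For any $\beta=\sum_i m_i\alpha_i\in Q_+\setminus\{0\}$, the exponent $(2\lambda,\beta)=\sum_i m_i(\alpha_i,\alpha_i)\langle\lambda,\alpha_i^\vee\rangle$ tends to $+\infty$ once $\langle\lambda,\alpha_i^\vee\rangle\to\infty$ for every $i\in I$, so $q^{(2\lambda,\beta)}\to 0$ in the $q$-adic topology of $\mathbb{Q}((q))$, and hence the same holds for every PBW coefficient of $\iota(R_\lambda^{x})$. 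This gives the asserted convergence. The main (mild) obstacle is the bookkeeping in the induction: one must check that the weight-shifted exponent in \eqref{act} cleanly splits into a $\lambda$-dependent factor $q^{(2\lambda,\alpha_i)}$ and a $\lambda$-independent remainder, so that the structural description of $R_\lambda^{z}$ is stable under the inductive step.
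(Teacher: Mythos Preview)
Your proof is correct and takes essentially the same approach as the paper: the paper states Corollary \ref{converge} as an immediate consequence of equation \eqref{act} and Theorem \ref{maintool} without writing out a formal argument, and your induction on monomial length is precisely the iteration of \eqref{act} that the paper implicitly invokes (and later carries out explicitly in Section \ref{prfidea} for $x=\omega(G^{\mathrm{low}}(b_0))$).
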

This point of view is important in the argument in Section \ref{prfidea}. By the way, we also have the following corollary.
\begin{corollary}\label{converge'}
For $x\in U_q(\mathfrak{n}^+), \lambda\in P_+, w\in W$ and a reduced word ${\bf i}_w$ of $w$, we set
$$(c_{f_{\lambda}, v_{w^{-1}\lambda}}^{\lambda}.\ast(x)).|(0) \rangle_{{\bf i}_w}=\sum_{{\bf c}\in (\mathbb{Z}_{\geqq 0})^{l(w)}}{_{{\bf i}_w}\zeta}_{{\bf c}}^{\lambda, x}|({\bf c}) \rangle_{{\bf i}_w}\ \text{with\ } {_{{\bf i}_w}\zeta}_{{\bf c}}^{\lambda, x}\in \mathbb{Q}(q).$$
{\nor (}$|({\bf c}) \rangle_{{\bf i}_w}$ is defined in the same way as the $w=w_0$ case.{\nor )}

When $\lambda \in P_+$ tends to $\infty$, ${_{{\bf i}_w}\zeta}_{{\bf c}}^{\lambda, x}$ converges to ${_{{\bf i}_w{\bf i}'_w}\zeta}_{({\bf c}, 0,\dots, 0)}^{x}$ in $\mathbb{Q}((q))$, where ${\bf i}'_w$  is a reduced word of $w^{-1}w_0$. {\nor (}For ${\bf i}=(i_1,\dots, i_s)$ and ${\bf i}'=(i_{s+1},\dots, i_t)$, ${\bf i}{\bf i}'$ denotes $(i_1,\dots, i_s, i_{s+1},\dots, i_t)$. {\nor )}
\end{corollary}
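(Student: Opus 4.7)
The plan is to reduce Corollary~\ref{converge'} to Corollary~\ref{converge} applied to the concatenated reduced word ${\bf i}_w{\bf i}'_w$ of $w_0$. This concatenation induces the tensor factorization $V_{w_0}\cong V_w\otimes V_{w^{-1}w_0}$ under which $|({\bf c},{\bf c}')\rangle_{{\bf i}_w{\bf i}'_w}=|({\bf c})\rangle_{{\bf i}_w}\otimes|({\bf c}')\rangle_{{\bf i}'_w}$. Corollary~\ref{converge} gives ${_{{\bf i}_w{\bf i}'_w}\zeta}_{({\bf c},0)}^{\lambda,x}\to {_{{\bf i}_w{\bf i}'_w}\zeta}_{({\bf c},0)}^{x}$ in $\mathbb{Q}((q))$ as $\lambda\to\infty$, so by the triangle inequality it suffices to show, for each fixed ${\bf c}$, that
\[
{_{{\bf i}_w}\zeta}_{{\bf c}}^{\lambda,x}-{_{{\bf i}_w{\bf i}'_w}\zeta}_{({\bf c},0)}^{\lambda,x}\longrightarrow 0\quad\text{in }\mathbb{Q}((q))\ \text{as}\ \lambda\to\infty.
\]

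To establish this I would expand the action of $c^{\lambda}_{f_\lambda, v_{w_0\lambda}}\cdot\ast(x)$ on $|(0)\rangle_{{\bf i}_w}\otimes|(0)\rangle_{{\bf i}'_w}$ through the comultiplication of $\mathbb{Q}_q[G]$. Fix a weight basis $\{b_\mu\}$ of $V(\lambda)$ containing $v_{w^{-1}\lambda}$, with dual basis $\{b_\mu^\ast\}\subset V(\lambda)^\ast$. The identity $c^{\lambda}_{f,v}.u = c^{\lambda}_{f.u,v}$ together with $\Delta(c^{\lambda}_{f,v})=\sum_\mu c^{\lambda}_{f,b_\mu}\otimes c^{\lambda}_{b^\ast_\mu,v}$ gives
\[
\Delta\bigl(c^{\lambda}_{f_\lambda,v_{w_0\lambda}}\cdot\ast(x)\bigr)=\sum_\mu \bigl(c^{\lambda}_{f_\lambda,b_\mu}\cdot\ast(x)\bigr)\otimes c^{\lambda}_{b^\ast_\mu,v_{w_0\lambda}}.
\]
Applying this to $|(0)\rangle_{{\bf i}_w}\otimes|(0)\rangle_{{\bf i}'_w}$ and extracting the coefficient of $|({\bf c})\rangle_{{\bf i}_w}\otimes|(0)\rangle_{{\bf i}'_w}$: by Proposition~\ref{hw}(II) applied to $V_{w^{-1}w_0}$, the second factor acts as zero on $V_{w^{-1}w_0}$ unless $\weight b^\ast_\mu\le w^{-1}\lambda$; and, by the analogue for $V_{w^{-1}w_0}$ of the computation just preceding Proposition~\ref{Saito}, its $|(0)\rangle_{{\bf i}'_w}$-component equals $1$ when $b^\ast_\mu = f_{w^{-1}\lambda}$. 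The contribution from the diagonal term $b_\mu=v_{w^{-1}\lambda}$ is therefore precisely $\bigl(c^{\lambda}_{f_\lambda,v_{w^{-1}\lambda}}\cdot\ast(x)\bigr)\cdot|(0)\rangle_{{\bf i}_w}\otimes|(0)\rangle_{{\bf i}'_w}$, whose $|({\bf c})\rangle_{{\bf i}_w}\otimes|(0)\rangle_{{\bf i}'_w}$-coefficient is exactly ${_{{\bf i}_w}\zeta}_{{\bf c}}^{\lambda,x}$.

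The principal obstacle is to show that every other term---those with $b_\mu\neq v_{w^{-1}\lambda}$, forcing $\weight b^\ast_\mu$ to lie strictly below $w^{-1}\lambda$ in the dominance order---produces a $q$-power coefficient whose $q$-adic valuation tends to $+\infty$ as $\lambda\to\infty$. The mechanism is the same as the one driving the proof of Corollary~\ref{converge}: the strict weight inequality, combined with the $K_h$-eigenvalues on the second tensor slot and the analogue of equation~(\ref{act}) iterated across the tensor factorization of $V_{w^{-1}w_0}=V_{s_{i'_1}}\otimes\cdots\otimes V_{s_{i'_{l'}}}$, forces a factor of the form $q^{(\lambda,\beta)}$ with $(\lambda,\beta)\to+\infty$. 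A careful weight-by-weight bookkeeping along the tensor factors---essentially tracking how each positive-weight "correction" to the extremal weight $w^{-1}\lambda$ forces an additional $q$-power depending linearly on $\lambda$---then yields the required decay and completes the comparison.
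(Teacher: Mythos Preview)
Your overall strategy matches the paper's: factor $V_{w_0}\cong V_w\otimes V_{w^{-1}w_0}$ via the concatenated reduced word, expand $c^\lambda_{f_\lambda,v_{w_0\lambda}}.\ast(x)$ through the coproduct, and isolate the diagonal summand $b_\mu=v_{w^{-1}\lambda}$ as the piece producing ${_{{\bf i}_w}\zeta}_{{\bf c}}^{\lambda,x}$ on the left tensor factor. The difference lies entirely in how the off-diagonal summands are disposed of.

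You argue that each off-diagonal contribution carries a $q$-power whose valuation tends to $+\infty$, by analogy with the mechanism behind Corollary~\ref{converge} and equation~(\ref{act}). This is both harder than necessary and not fully justified: equation~(\ref{act}) and Theorem~\ref{maintool} describe the action of $\mathbb{Q}_q[G/N^-]_{\mathcal S}$ on $V_{w_0}$, not on $V_{w^{-1}w_0}$, so the ``same mechanism'' does not transfer without further work; and since the number of off-diagonal weights grows with $\lambda$, a term-by-term decay estimate would still need a uniformity argument you have not supplied. The paper sidesteps all of this by showing that the off-diagonal contribution to the $|({\bf c},0,\dots,0)\rangle$-coefficient is \emph{exactly zero} for every $\lambda$: one checks, using only the $\mathfrak{sl}_2$ computation of Example~\ref{sl2ex} applied factor-by-factor along $V_{w^{-1}w_0}=V_{s_{i'_1}}\otimes\cdots\otimes V_{s_{i'_{l'}}}$, that $c^\lambda_{f_j,v_{w_0\lambda}}.|(0)\rangle_{{\bf i}'_w}$ has vanishing $|(0,\dots,0)\rangle$-component whenever $f_j\neq f_{w^{-1}\lambda}$. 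Hence ${_{{\bf i}_w}\zeta}_{{\bf c}}^{\lambda,x}={_{{\bf i}_w{\bf i}'_w}\zeta}_{({\bf c},0,\dots,0)}^{\lambda,x}$ identically, and Corollary~\ref{converge} finishes the proof. Replacing your asymptotic bookkeeping with this exact vanishing closes the gap and simplifies the argument considerably.
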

\begin{proof}
Set $w'=w^{-1}w_0$. By Theorem \ref{soi}, $V_w\otimes V_{w'}\to V_{w_0}, |(0) \rangle_{{\bf i}_w}\otimes |(0) \rangle_{{\bf i}'_w}\mapsto |(0) \rangle_{{\bf i}_w{\bf i}'_w}$ is an isomorphism of $\mathbb{Q}_q[G]$-modules. Moreover, we have
\begin{align*}
&(c_{f_{\lambda}, v_{w_0\lambda}}^{\lambda}.\ast(x)).(|(0) \rangle_{{\bf i}_w}\otimes|(0) \rangle_{{\bf i}'_w})\\
&=(c_{f_{\lambda}, v_{w^{-1}\lambda}}^{\lambda}.\ast(x)).|(0) \rangle_{{\bf i}_w}\otimes c_{f_{w^{-1}\lambda}, v_{w_0\lambda}}^{\lambda}.|(0) \rangle_{{\bf i}'_w}\\
&\hspace{15pt}+\sum_{j; v_j\neq v_{w^{-1}\lambda}} (c_{f_{\lambda}, v_j}^{\lambda}.\ast(x)).|(0) \rangle_{{\bf i}_w}\otimes c_{f_j, v_{w_0\lambda}}^{\lambda}.|(0) \rangle_{{\bf i}'_w},
\end{align*}
where $\{ v_j \}_j$ is a basis of $V(\lambda)$ consisting of weight vectors ond containing $v_{w^{-1}\lambda}$, and $\{ f_j \}_j$ is its dual basis of $V(\lambda)^{\ast}$. It is easy to check that $c_{f_{w^{-1}\lambda}, v_{w_0\lambda}}^{\lambda}.|(0) \rangle_{{\bf i}'_w}=|(0) \rangle_{{\bf i}'_w}$ by Proposition \ref{hw}.

Combining these argument with Corollary \ref{converge}, we have only to show that 
$$c_{f_j, v_{w_0\lambda}}^{\lambda}.|(0) \rangle_{{\bf i}'_w}=\sum_{{\bf c}}\eta_{{\bf c}}^{(j)}|({\bf c}) \rangle_{{\bf i}'_w}\ \text{with\ }\eta_{(0,\dots,0)}^{(j)}\neq 0\ \text{only\ if\ } f_j=f_{w^{-1}\lambda}.$$

This statement easily follows from the computation in Example \ref{sl2ex}.
\end{proof}
\section{Preliminaries II : the canonical and dual canonical bases}\label{dab2}
We collect the definitions of the canonical and dual canonical bases and their properties. Our main result of this section is Proposition \ref{similarity}. All statements in this section are valid even when $\mathfrak{g}$ is a symmetrizable infinite Kac-Moody Lie algebra.
\subsection{The category of crystals}
We recall the definition of the abstract crystal introduced by Kashiwara \cite{KasDem} associated with the root datum $(P, \Delta, Q^{\vee}, \Delta^{\vee})$.
\begin{definition}\label{abstractcrystal}
A crystal (associated with the root datum $(P, \Delta, Q^{\vee}, \Delta^{\vee})$) is a set $B$ endowed with maps $\weight : B\to P, \varepsilon_i, \varphi_i: B\to \mathbb{Z}\coprod \{-\infty \}, \tilde{e}_i, \tilde{f}_i: B\to B\coprod \{0 \} (i\in I)$ satisfying the following conditions:
\begin{itemize}
\item[(i)] $\varphi_i(b)=\varepsilon_i(b)+\langle \alpha_i^{\vee}, \weight b \rangle$ for $i\in I$ and $b\in B$,
\item[(ii)] for $i\in I$, $b \in B$ with $\tilde{e}_ib \in B$,  $\weight\tilde{e}_ib=\weight b+\alpha_i$, $\varepsilon_i(\tilde{e}_ib)=\varepsilon_i(b)-1$, $\varphi_i(\tilde{e}_ib)=\varphi_i(b)+1$,
\item[(iii)] for $i\in I$, $b \in B$ with $\tilde{e}_ib \in B$, $\weight\tilde{f}_ib=\weight b-\alpha_i$, $\varepsilon_i(\tilde{f}_ib)=\varepsilon_i(b)+1$, $\varphi_i(\tilde{f}_ib)=\varphi_i(b)-1$,
\item[(iv)] for $b, b' \in B$ and $i\in I$, $b'=\tilde{e}_ib$ if and only if $b=\tilde{f}_ib'$,
\item[(v)] if $\varphi_i(b)=-\infty$ for $i\in I$, $b \in B$, then $\tilde{e}_ib=\tilde{f}_ib=0$.
\end{itemize} 
For two crystals $B_1, B_2$, a map $\psi: B_1\coprod\{ 0 \}\to B_2\coprod\{ 0 \}$ is called a morphism of crystals from $B_1$ to $B_2$ if it satisfies the following properties:
\begin{itemize}
\item[(i)] $\psi(0)=0$,
\item[(ii)] $\weight(\psi(b))=\weight(b)$, $\varepsilon_i(\psi(b))=\varepsilon_i(b)$, $\varphi_i(\psi(b))=\varphi_i(b)$ for all $i\in I$ and $b\in B_1$ with $\psi(b)\in B_2$,
\item[(iii)] $\tilde{e}_i\psi(b)=\psi(\tilde{e}_ib)$ if $\psi(b), \psi(\tilde{e}_ib)\in B_2$,
\item[(iv)] $\tilde{f}_i\psi(b)=\psi(\tilde{f}_ib)$ if $\psi(b), \psi(\tilde{f}_ib)\in B_2$.
\end{itemize}
\end{definition}
\subsection{The canonical bases of $U_q(\mathfrak{n}^-)$}
We define the canonical basis of $U_q(\mathfrak{n}^-)$ according to Kashiwara's method \cite{Kas_originalcrys}. Let $\mathcal{A}_0$ be the subring of $\mathbb{Q}(q)$ consisting of rational functions without poles at $q=0$.
\begin{definition}\label{lowercrystal}
For $i\in I$, we have $U_q(\mathfrak{n}^-)=\bigoplus_{n\in \mathbb{Z}_{\geqq 0}}F_i^{(n)}\Ker e'_i$ (\cite[3.5]{Kas_originalcrys}). Hence, we can define the $\mathbb{Q}(q)$-linear maps $\tilde{e}_i, \tilde{f}_i:U_q(\mathfrak{n}^-)\to U_q(\mathfrak{n}^-)$ by
\begin{center}
$\tilde{e}_i(F_i^{(n)}u)=F_i^{(n-1)}u$, $\tilde{f}_i(F_i^{(n)}u)=F_i^{(n+1)}u$
\end{center}
for $u\in \Ker e'_i$ where $F_i^{(-1)}u:=0$. We have $\tilde{e}_i\circ \tilde{f}_i=\mathrm{id}_{U_q(\mathfrak{n}^-)}$.
\\

We set
\[
\begin{array}{l}
\dis L(\infty):=\sum_{l\geqq 0, i_1,\dots , i_l\in I}\mathcal{A}_0\tilde{f}_{i_1}\cdots\tilde{f}_{i_l}1(\subset U_q(\mathfrak{n}^-)),\\
B(\infty):=\Set{\tilde{f}_{i_1}\cdots\tilde{f}_{i_l}1\ \mathrm{mod}\ qL(\infty)|l\geqq 0, i_1,\dots , i_l\in I} (\subset L(\infty)/qL(\infty)).
\end{array}
\]
Then, $(L(\infty), B(\infty))$ satisfies the following properties(\cite[Theorem 4]{Kas_originalcrys}):
\begin{itemize}
\item[(i)] The $\mathcal{A}_0$-submodule $L(\infty)$ of $U_q(\mathfrak{n}^-)$ is free and $\mathbb{Q}(q)\otimes_{\mathcal{A}_0}L(\infty)\simeq U_q(\mathfrak{n}^-)$,
\item[(ii)] The set $B(\infty)$ is a basis of the $\mathbb{Q}$-vector space $L(\infty)/qL(\infty)$,
\item[(iii)] $\tilde{e}_iL(\infty)\subset L(\infty)$ and $\tilde{f}_iL(\infty)\subset L(\infty)$ for all $i\in I$,
\item[(iv)] $\tilde{e}_i: B(\infty)\to B(\infty)\coprod \{ 0\}$ and $\tilde{f}_i: B(\infty)\to B(\infty)$ for all $i\in I$,
\item[(v)] For $b\in B(\infty)$ with $\tilde{e}_ib\in B(\infty)$, we have $b=\tilde{f}_i\tilde{e}_ib$.
\end{itemize}
This pair $(L(\infty), B(\infty))$ is called the lower crystal basis of $U_q(\mathfrak{n}^-)$. 

We define the maps $\varepsilon_i, \varphi_i:B(\infty)\to \mathbb{Z}\coprod \{-\infty \}$ ($i\in I$) by
\begin{center}
$\varepsilon_i(b)=\max\Set{ k\in \mathbb{Z}_{\geqq 0}| \tilde{e}_i^k b\in B(\infty) }$, $\varphi_i(b)=\varepsilon_i(b)+\langle \alpha_i^{\vee}, \weight b\rangle$,
\end{center}
for $b\in B(\infty)$. (The images of $\varepsilon_i, \varphi_i$'s are in $\mathbb{Z}$.) 

Then, the sextuple $(B(\infty); \weight, \{\tilde{e}_i\}_{i\in I}, \{\tilde{f}_i\}_{i\in I}, \{ \varepsilon_i\}_{i\in I}, \{\varphi_i\}_{i\in I})$ is a crystal.
\\

Moreover, we have $\ast(L(\infty))=L(\infty)$ and $\ast(B(\infty))=B(\infty)$ (\cite[Proposition 5.2.4]{Kas_originalcrys}, \cite[Theorem 2.1.1]{KasDem}). So, we can define the maps $\tilde{e}_i^{\ast}:=\ast\circ\tilde{e}_i\circ\ast, \tilde{f}_i^{\ast}:=\ast\circ\tilde{f}_i\circ\ast:B(\infty)\to B\coprod \{0 \} (i\in I)$. We set $\varepsilon_i^{\ast}:=\varepsilon_i\circ\ast, \varphi_i^{\ast}:=\varphi_i\circ\ast :B(\infty)\to \mathbb{Z}\coprod \{-\infty \}$. Note that $\varepsilon_i^{\ast}(b)=\max\Set{ k\in \mathbb{Z}_{\geqq 0}| \tilde{e}_i^{\ast k}b\in B(\infty) }$. 

Then, the sextuple $(B(\infty); \weight, \{\tilde{e}_i^{\ast}\}_{i\in I}, \{\tilde{f}_i^{\ast}\}_{i\in I}, \{ \varepsilon_i^{\ast}\}_{i\in I}, \{\varphi_i^{\ast}\}_{i\in I})$ is again a crystal.
\end{definition}
\begin{definition}\label{lowerglobal}
Let $U_q(\mathfrak{n}^-)_{\mathbb{Q}}$ be the $\mathbb{Q}[q^{\pm 1}]$-subalgebra of $U_q(\mathfrak{n}^-)$ generated by $\{ F_i^{(n)} \}_{i\in I, n\in \mathbb{Z}_{\geqq 0}}$. Then, the canonical map
\begin{center}
$E:=L(\infty)\cap \overline{L(\infty)}\cap U_q(\mathfrak{n}^-)_{\mathbb{Q}}\to L(\infty)/qL(\infty)$ 
\end{center}
is an isomorphism of $\mathbb{Q}$-vector spaces  (\cite[Theorem 6]{Kas_originalcrys}). Let $G^{\mathrm{low}}:L(\infty)/qL(\infty)\to E$ be the inverse of this map.

Now, the set $\Set{ G^{\mathrm{low}}(b) | b\in B(\infty)}$ is a $\mathbb{Q}[q^{\pm 1}]$-basis of $U_q(\mathfrak{n}^-)_{\mathbb{Q}}$ and this is called the canonical basis of $U_q(\mathfrak{n}^-)$. (This forms also a $\mathcal{A}_0$-basis of $L(\infty)$.) We have $\ast(G^{\mathrm{low}}(b))=G^{\mathrm{low}}(\ast b)$ for $b\in B(\infty)$. (Recall the last part of Definition \ref{lowercrystal}.)
\end{definition}
\subsection{The canonical and dual canonical bases of $V(\lambda)$}
In this subsection, we recall the definition of the canonical and dual canonical bases of the module $V(\lambda)$ ($\lambda\in P_+$). The main references are \cite{Kas_originalcrys} and \cite{Kasglo}.
\begin{definition}\label{crystalsinrepresentation}
Let $\dis M=\bigoplus_{\lambda \in P}M_{\lambda}$ be an object of $\mathcal{O}_{\mathrm{int}}(\mathfrak{g})$. For $i\in I$, we define the $\mathbb{Q}(q)$-linear maps $\tilde{e}_i^{\mathrm{low}}, \tilde{f}_i^{\mathrm{low}}, \tilde{e}_i^{\mathrm{up}}, \tilde{f}_i^{\mathrm{up}}: M\to M$ by
\[
\begin{array}{ll}
\tilde{e}_i^{\mathrm{low}}(F_i^{(n)}u)=F_i^{(n-1)}u,& \tilde{f}_i^{\mathrm{low}}(F_i^{(n)}u)=F_i^{(n+1)}u\\
\dis \tilde{e}_i^{\mathrm{up}}(F_i^{(n)}u)=\frac{\left[\langle \alpha_i^{\vee}, \lambda \rangle-n+1\right]_i}{[n]_i}F_i^{(n-1)}u,& \dis \tilde{f}_i^{\mathrm{up}}(F_i^{(n)}u)=\frac{[n+1]_i}{\left[\langle \alpha_i^{\vee}, \lambda \rangle-n \right]_i}F_i^{(n+1)}u, 
\end{array}
\]
for $u\in \Ker (E_i.) \cap M_{\lambda}$, where $F_i^{(-1)}u:=0$.

Now, we can define a unique symmetric non-degenerate bilinear form $(\ ,\ )_{\lambda}:V(\lambda)\otimes V(\lambda)\to \mathbb{Q}(q)$ ($\lambda\in P_+$) by
\[
\begin{array}{c}
(xu, v)_{\lambda}=(u, \varphi (x)v)_{\lambda} \text{\ for\ all\ } u, v \in V(\lambda) \text{\ and\ } x\in U_q(\mathfrak{g}), \text{and}\\
(v_{\lambda}, v_{\lambda})_{\lambda}=1. \\
\end{array}
\]
Then, we have
\begin{center}
$(\tilde{e}_i^{\mathrm{up}}u, v)_{\lambda}=(u, \tilde{f}_i^{\mathrm{low}}v )_{\lambda},\  (\tilde{f}_i^{\mathrm{up}}u, v)_{\lambda}=(u, \tilde{e}_i^{\mathrm{low}}v )_{\lambda}$
\end{center}
for $u, v\in V(\lambda)$ and $i\in I$.

For $\lambda\in P_+$, we set

\begin{align*}
\dis L^{\mathrm{low}}(\lambda)&:=\sum_{l\geqq 0, i_1,\dots , i_l\in I}\mathcal{A}_0\tilde{f}_{i_1}^{\mathrm{low}}\cdots\tilde{f}_{i_l}^{\mathrm{low}}v_{\lambda}\ (\subset V(\lambda)),\\
B^{\mathrm{low}}(\lambda)&:=\Set{\tilde{f}_{i_1}^{\mathrm{low}}\cdots\tilde{f}_{i_l}^{\mathrm{low}}v_{\lambda}\  \mathrm{mod}\ qL^{\mathrm{low}}(\lambda)|l\geqq 0, i_1,\dots , i_l\in I}\setminus \{ 0 \}\\
&(\subset L^{\mathrm{low}}(\lambda)/qL^{\mathrm{low}}(\lambda)).
\end{align*}

Then, $(L^{\mathrm{low}}(\lambda), B^{\mathrm{low}}(\lambda))$ satisfies the following properties(\cite[Theorem 2]{Kas_originalcrys}):
\begin{itemize}
\item[(i)] The $\mathcal{A}_0$-submodule $L^{\mathrm{low}}(\lambda)$ of $V(\lambda)$ is  free and $\mathbb{Q}(q)\otimes_{\mathcal{A}_0}L^{\mathrm{low}}(\lambda)\simeq V(\lambda)$,
\item[(ii)] The set $B^{\mathrm{low}}(\lambda)$ is a basis of the $\mathbb{Q}$-vector space $L^{\mathrm{low}}(\lambda)/qL^{\mathrm{low}}(\lambda)$,
\item[(iii)] $L^{\mathrm{low}}(\lambda)=\bigoplus_{\mu\in P}(L^{\mathrm{low}}(\lambda)\cap V(\lambda)_{\mu})(=: L^{\mathrm{low}}(\lambda)_{\mu})$ and \\
$B^{\mathrm{low}}(\lambda)=\coprod_{\mu\in P}(B^{\mathrm{low}}(\lambda)\cap L^{\mathrm{low}}(\lambda)_{\mu}/qL^{\mathrm{low}}(\lambda)_{\mu})(=: B^{\mathrm{low}}(\lambda)_{\mu})$,
\item[(iv)] $\tilde{e}_i^{\mathrm{low}}L^{\mathrm{low}}(\lambda)\subset L^{\mathrm{low}}(\lambda)$ and $\tilde{f}_i^{\mathrm{low}}L^{\mathrm{low}}(\lambda)\subset L^{\mathrm{low}}(\lambda)$ for all $i\in I$,
\item[(v)] $\tilde{e}_i^{\mathrm{low}}: B^{\mathrm{low}}(\lambda)\to B^{\mathrm{low}}(\lambda)\coprod \{ 0\}$ and $\tilde{f}_i^{\mathrm{low}}: B^{\mathrm{low}}(\lambda)\to B^{\mathrm{low}}(\lambda)\coprod \{ 0\}$ for all $i\in I$,
\item[(vi)] For $b, b'\in B^{\mathrm{low}}(\lambda)$, we have $b'=\tilde{f}_i^{\mathrm{low}}b$ if and only if $b=\tilde{e}_i^{\mathrm{low}}b'$.
\end{itemize}
This pair $(L^{\mathrm{low}}(\lambda), B^{\mathrm{low}}(\lambda))$ is called the lower crystal basis of $V(\lambda)$. We define the maps $\varepsilon_i^{\lambda,\mathrm{low}}, \varphi_i^{\lambda, \mathrm{low}}:B^{\mathrm{low}}(\lambda)\to \mathbb{Z}\coprod \{-\infty \}$ ($i\in I$) by
\begin{align*}
\varepsilon_i^{\lambda,\mathrm{low}}(b)&=\max\Set{ k\in \mathbb{Z}_{\geqq 0}| (\tilde{e}_i^{\mathrm{low}})^k b\in B^{\mathrm{low}}(\lambda) },\\
\varphi_i^{\lambda,\mathrm{low}}(b)&=\max\Set{ k\in \mathbb{Z}_{\geqq 0}| (\tilde{f}_i^{\mathrm{low}})^k b\in B^{\mathrm{low}}(\lambda) },
\end{align*}
for $b\in B^{\mathrm{low}}(\lambda)$. (The images of $\varepsilon_i^{\lambda,\mathrm{low}}, \varphi_i^{\lambda,\mathrm{low}}$'s are in $\mathbb{Z}$.) Then, the sextuple $(B^{\mathrm{low}}(\lambda); \weight, \{\tilde{e}_i^{\mathrm{low}}\}_{i\in I}, \{\tilde{f}_i^{\mathrm{low}}\}_{i\in I}, \{ \varepsilon_i^{\lambda,\mathrm{low}}\}_{i\in I}, \{\varphi_i^{\lambda,\mathrm{low}}\}_{i\in I})$ is a crystal.

We can define the $\mathbb{Q}$-linear automorphism $\overline{(\cdot )}:V(\lambda)\to V(\lambda)$ ($\lambda\in P_+$) by $\overline{xv_{\lambda}}=\overline{x}v_{\lambda}$ for all $x\in U_q(\mathfrak{g})$.

For $\lambda\in P_+$, we set $V(\lambda)_{\mathbb{Q}}^{\mathrm{low}}:=U_q(\mathfrak{n}^-)_{\mathbb{Q}}v_{\lambda}$. Then, the canonical map 
\begin{center}
$E^{\mathrm{low}}_{\lambda}:=L^{\mathrm{low}}(\lambda)\cap \overline{L^{\mathrm{low}}(\lambda)}\cap V(\lambda)_{\mathbb{Q}}^{\mathrm{low}}\to L^{\mathrm{low}}(\lambda)/qL^{\mathrm{low}}(\lambda)$
\end{center}
is an isomorphism of $\mathbb{Q}$-vector spaces (\cite[Theorem 6]{Kas_originalcrys}). 

Let $G_{\lambda}^{\mathrm{low}}:L^{\mathrm{low}}(\lambda)/qL^{\mathrm{low}}(\lambda)\to E^{\mathrm{low}}_{\lambda}$ be the inverse of this map.

Now, the set $\Set{ G_{\lambda}^{\mathrm{low}}(b) | b\in B^{\mathrm{low}}(\lambda)}$ is an $\mathbb{Q}[q^{\pm 1}]$-basis of $V(\lambda)_{\mathbb{Q}}^{\mathrm{low}}$ and this is called the canonical basis of $V(\lambda)$. 

For $\lambda\in P_+$, we set
\[
\begin{array}{l}
V(\lambda)_{\mathbb{Q}}^{\mathrm{up}}:=\Set{u\in V(\lambda)| (u, V(\lambda)_{\mathbb{Q}}^{\mathrm{low}} )_{\lambda}\subset \mathbb{Q}[q^{\pm 1}]},\\
L(\lambda)^{\mathrm{up}}:=\Set{u\in V(\lambda)| (u, L(\lambda)^{\mathrm{low}} )_{\lambda}\subset \mathcal{A}_0}.\\
\end{array}
\]
Then, $\overline{L(\lambda)^{\mathrm{up}}}=\Set{u\in V(\lambda)| (u, \overline{L(\lambda)^{\mathrm{low}}} )_{\lambda}\subset \mathcal{A}_{\infty}}$ where $\mathcal{A}_{\infty}$ is the subring of $\mathbb{Q}(q)$ consisting of rational functions without poles at $q=\infty$. 

Let $B^{\mathrm{up}}(\lambda)$ be the basis of $L^{\mathrm{up}}(\lambda)/qL^{\mathrm{up}}(\lambda)$ dual to $B^{\mathrm{low}}(\lambda)$ with respect to the induced pairing $(\ ,\ )_{\lambda}:L^{\mathrm{up}}(\lambda)/qL^{\mathrm{up}}(\lambda) \times L^{\mathrm{low}}(\lambda)/qL^{\mathrm{low}}(\lambda)\to \mathbb{Q}$. Then, the pair $(L^{\mathrm{up}}(\lambda), B^{\mathrm{up}}(\lambda))$ satisfies the following properties(\cite[Proposition 3.2.2]{Kasglo}):
\begin{itemize}
\item[(i)] The $\mathcal{A}_0$-submodule $L^{\mathrm{up}}(\lambda)$ of $V(\lambda)$ is free and $\mathbb{Q}(q)\otimes_{\mathcal{A}_0}L^{\mathrm{up}}(\lambda)\simeq V(\lambda)$,
\item[(ii)] The set $B^{\mathrm{up}}(\lambda)$ is a basis of the $\mathbb{Q}$-vector space $L^{\mathrm{up}}(\lambda)/qL^{\mathrm{up}}(\lambda)$,
\item[(iii)] $L^{\mathrm{up}}(\lambda)=\bigoplus_{\mu\in P}(L^{\mathrm{up}}(\lambda)\cap V(\lambda)_{\mu})(=: L^{\mathrm{up}}(\lambda)_{\mu})$ and\\
$B^{\mathrm{up}}(\lambda)=\coprod_{\mu\in P}(B^{\mathrm{up}}(\lambda)\cap L^{\mathrm{up}}(\lambda)_{\mu}/qL^{\mathrm{up}}(\lambda)_{\mu})(=: B^{\mathrm{up}}(\lambda)_{\mu})$,
\item[(iv)] $\tilde{e}_i^{\mathrm{up}}L^{\mathrm{up}}(\lambda)\subset L^{\mathrm{up}}(\lambda)$ and $\tilde{f}_i^{\mathrm{up}}L^{\mathrm{up}}(\lambda)\subset L^{\mathrm{up}}(\lambda)$ for all $i\in I$,
\item[(v)] $\tilde{e}_i^{\mathrm{up}}: B^{\mathrm{up}}(\lambda)\to B^{\mathrm{up}}(\lambda)\coprod \{ 0\}$ and $\tilde{f}_i^{\mathrm{up}}: B^{\mathrm{up}}(\lambda)\to B^{\mathrm{up}}(\lambda)\coprod \{ 0\}$ for all $i\in I$,
\item[(vi)] For $b, b'\in B^{\mathrm{up}}(\lambda)$, we have $b'=\tilde{f}_i^{\mathrm{up}}b$ if and only if $b=\tilde{e}_i^{\mathrm{up}}b'$.
\end{itemize}
This pair $(L^{\mathrm{up}}(\lambda), B^{\mathrm{up}}(\lambda))$ is called the upper crystal basis of $V(\lambda)$. We define the maps $\varepsilon_i^{\lambda,\mathrm{up}}, \varphi_i^{\lambda, \mathrm{up}}:B^{\mathrm{up}}(\lambda)\to \mathbb{Z}\coprod \{-\infty \}$ ($i\in I$) by
\begin{align*}
\varepsilon_i^{\lambda,\mathrm{up}}(b)&=\max\Set{ k\in \mathbb{Z}_{\geqq 0}| (\tilde{e}_i^{\mathrm{up}})^k b\in B^{\mathrm{up}}(\lambda) },\\
\varphi_i^{\lambda,\mathrm{up}}(b)&=\max\Set{ k\in \mathbb{Z}_{\geqq 0}| (\tilde{f}_i^{\mathrm{up}})^k b\in B^{\mathrm{up}}(\lambda) },
\end{align*}
for $b\in B^{\mathrm{up}}(\lambda)$. (The images of $\varepsilon_i^{\lambda,\mathrm{up}}, \varphi_i^{\lambda,\mathrm{up}}$'s are in $\mathbb{Z}$.) Then, the sextuple $(B^{\mathrm{up}}(\lambda); \weight, \{\tilde{e}_i^{\mathrm{up}}\}_{i\in I}, \{\tilde{f}_i^{\mathrm{up}}\}_{i\in I}, \{ \varepsilon_i^{\lambda,\mathrm{up}}\}_{i\in I}, \{\varphi_i^{\lambda,\mathrm{up}}\}_{i\in I})$ is a crystal.

The canonical map
\begin{center}
$E^{\mathrm{up}}_{\lambda}:=L^{\mathrm{up}}(\lambda)\cap \overline{L^{\mathrm{up}}(\lambda)}\cap V(\lambda)_{\mathbb{Q}}^{\mathrm{up}}\to L^{\mathrm{up}}(\lambda)/qL^{\mathrm{up}}(\lambda)$
\end{center}
is an isomorphism of $\mathbb{Q}$-vector spaces (\cite[Lemma 2.2.3]{Kasglo}). 

Let $G_{\lambda}^{\mathrm{up}}:L^{\mathrm{up}}(\lambda)/qL^{\mathrm{up}}(\lambda)\to E^{\mathrm{up}}_{\lambda}$ be the inverse of this map.

Now, the set $\Set{ G_{\lambda}^{\mathrm{up}}(b) | b\in B^{\mathrm{up}}(\lambda)}$ is an $\mathbb{Q}[q^{\pm 1}]$-basis of $V(\lambda)_{\mathbb{Q}}^{\mathrm{up}}$ and this is called the dual canonical basis of $V(\lambda)$. The dual canonical basis of $V(\lambda)$ is the dual basis of the canonical basis of $V(\lambda)$ with respect to $(\ ,\ )_{\lambda}$.

In fact, $B^{\mathrm{low}}(\lambda)$ and $B^{\mathrm{up}}(\lambda)$ are isomorphic as (abstract) crystals. Hence, we will abbreviate both crystals $B^{\mathrm{low}}(\lambda)$, $B^{\mathrm{up}}(\lambda)$ to $B(\lambda)$ and accordingly $\tilde{e}_i^{\mathrm{low}}$, $\tilde{e}_i^{\mathrm{up}}$ to $\tilde{e}_i$ (when we consider these operations as ones on $B(\lambda)\coprod \{ 0 \}$), etc.
\end{definition}
\subsection{Some properties of the canonical and dual canonical bases}
In this subsection, we collect some important properties of the canonical and dual canonical bases. 
\begin{proposition}[{\cite[Theorem 5, Lemma 7.3.2]{Kas_originalcrys}}]\label{subcrystal}
For $\lambda\in P_+$, we define a surjective $U_q(\mathfrak{n}^-)$-module homomorphism $\pi_{\lambda}:U_q(\mathfrak{n}^-)\to V(\lambda)$ by $x\mapsto x.v_{\lambda}$. Then, we have 
\begin{itemize}
\item[{\nor (i)}] $\pi_{\lambda}(L(\infty))=L^{\mathrm{low}}(\lambda)$. Hence, $\pi_{\lambda}$ induces the surjective $\mathbb{Q}$-linear map $L(\infty)/qL(\infty)\to L^{\mathrm{low}}(\lambda)/qL^{\mathrm{low}}(\lambda)\ (\text{denoted\ again\ by}\ \pi_{\lambda})$.
\item[{\nor (ii)}] The map $\pi_{\lambda}$ induces the bijection
$$\Set{b\in B(\infty)| \pi_{\lambda}(b)\neq 0 (\text{\ in\ }L^{\mathrm{low}}(\lambda)/qL^{\mathrm{low}}(\lambda)) }\tilde{\to}B(\lambda).$$
\item[{\nor (iii)}] $\tilde{f}_i\pi_{\lambda}(b)=\pi_{\lambda}(\tilde{f}_ib)$ for all $b\in B(\infty)$ and $i\in I$.
\item[{\nor (iv)}] If $b\in B(\infty)$ satisfies $\pi_{\lambda}(b)\neq 0$, then $\tilde{e}_i\pi_{\lambda}(b)=\pi_{\lambda}(\tilde{e}_ib)$ for all $i\in I$.
\item[{\nor (v)}] $\pi_{\lambda}(G^{\mathrm{low}}(b))=G_{\lambda}^{\mathrm{low}}(\pi_{\lambda}(b))$ for any $b\in B(\infty)$.
\end{itemize}
\end{proposition}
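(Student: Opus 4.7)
The plan is to prove (i), (iii) together by induction on weight height in the style of Kashiwara's ``grand loop'' argument, deduce (iv) and (ii) from them using the crystal axioms, and finally derive (v) from the bar-compatibility of $\pi_{\lambda}$. The starting point is that $\pi_{\lambda}$ is a surjective $U_q(\mathfrak{n}^-)$-module homomorphism with $\pi_{\lambda}(1)=v_{\lambda}\in L^{\mathrm{low}}(\lambda)$, and that $L(\infty)$ (respectively $L^{\mathrm{low}}(\lambda)$) is generated as an $\mathcal{A}_0$-module by the monomials $\tilde{f}_{i_1}\cdots\tilde{f}_{i_l}\cdot 1$ (respectively $\tilde{f}_{i_1}^{\mathrm{low}}\cdots\tilde{f}_{i_l}^{\mathrm{low}}v_{\lambda}$). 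Hence both the inclusion $\pi_{\lambda}(L(\infty))\subseteq L^{\mathrm{low}}(\lambda)$ and the surjectivity assertion in (i) reduce to establishing the crystal-lattice congruence $\pi_{\lambda}\circ\tilde{f}_i\equiv\tilde{f}_i^{\mathrm{low}}\circ\pi_{\lambda}\pmod{qL^{\mathrm{low}}(\lambda)}$, which simultaneously yields (iii).

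The key technical input reconciles the two different decompositions used to define the Kashiwara operators, namely $U_q(\mathfrak{n}^-)=\bigoplus_n F_i^{(n)}\Ker e'_i$ versus $V(\lambda)=\bigoplus_n F_i^{(n)}(\Ker(E_i.)\cap V(\lambda)_{\bullet})$. For $u\in U_q(\mathfrak{n}^-)$, I would write $u=\sum_n F_i^{(n)}u_n$ with $u_n\in\Ker e'_i$ and use the commutator identity
$$E_ix-xE_i=\frac{_ie'(x)K_i-K_{-i}e'_i(x)}{q_i-q_i^{-1}}$$
from Definition \ref{qderiv}, together with $E_i.v_{\lambda}=0$, to express the obstruction $E_i.(u_n.v_{\lambda})$ explicitly in terms of $\pi_{\lambda}({_ie'(u_n)})$. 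Although this quantity is in general non-zero, the inductive hypothesis applied to $_ie'(u_n)$ (which has strictly larger weight than $u_n$) allows one to absorb the discrepancy into $qL^{\mathrm{low}}(\lambda)$ upon re-expanding $u_n.v_{\lambda}$ along the $F_i$-string basis of $V(\lambda)$. This delivers the required modulo-$q$ compatibility and hence both (i) and (iii).

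Given (iii), part (iv) follows from the crystal axiom $\tilde{e}_i\tilde{f}_i=\mathrm{id}$: if $\pi_{\lambda}(b)\ne 0$ and $\tilde{e}_ib\ne 0$, writing $b=\tilde{f}_i\tilde{e}_ib$ and applying (iii) gives $\tilde{e}_i^{\mathrm{low}}\pi_{\lambda}(b)=\pi_{\lambda}(\tilde{e}_ib)$, while the case $\tilde{e}_ib=0$ is covered by the complementary $q$-order estimate from the second paragraph (applied to the upper end of the $F_i$-string). Part (ii) then follows: the set $\{b\in B(\infty)\mid\pi_{\lambda}(b)\ne 0\}$ is stable under $\tilde{f}_i$ and under the restricted $\tilde{e}_i$, and the induced map to $B(\lambda)$ is a surjective morphism of crystals by generation from $v_{\lambda}\bmod q$ via the $\tilde{f}_i^{\mathrm{low}}$'s; injectivity follows because two elements of $B(\infty)$ with the same image in $B(\lambda)$ can be pulled back along the same sequence of $\tilde{e}_i$'s to the unique highest-weight element $1\bmod qL(\infty)$.

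For (v), note that $\pi_{\lambda}$ intertwines the $\mathbb{Q}$-linear bar involutions because $\overline{x.v_{\lambda}}=\overline{x}.v_{\lambda}$ for $x\in U_q(\mathfrak{n}^-)$ by the very definition of the bar involution on $V(\lambda)$, and that it maps $U_q(\mathfrak{n}^-)_{\mathbb{Q}}$ into $V(\lambda)_{\mathbb{Q}}^{\mathrm{low}}$ by the defining identity $V(\lambda)_{\mathbb{Q}}^{\mathrm{low}}=U_q(\mathfrak{n}^-)_{\mathbb{Q}}.v_{\lambda}$. Combining these observations with (i), the image $\pi_{\lambda}(G^{\mathrm{low}}(b))$ lies in the triple intersection $L^{\mathrm{low}}(\lambda)\cap\overline{L^{\mathrm{low}}(\lambda)}\cap V(\lambda)_{\mathbb{Q}}^{\mathrm{low}}$ and reduces modulo $q$ to $\pi_{\lambda}(b)$; since $G_{\lambda}^{\mathrm{low}}$ is the inverse of the canonical isomorphism from this triple intersection to $L^{\mathrm{low}}(\lambda)/qL^{\mathrm{low}}(\lambda)$, the equality $\pi_{\lambda}(G^{\mathrm{low}}(b))=G_{\lambda}^{\mathrm{low}}(\pi_{\lambda}(b))$ is forced (and in particular both sides vanish when $\pi_{\lambda}(b)=0$). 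The main obstacle is the crystal-lattice estimate in the second paragraph: controlling the obstruction $E_i.(u_n.v_{\lambda})$ and the attendant $F_i$-string re-expansion modulo $qL^{\mathrm{low}}(\lambda)$ requires a simultaneous induction on weight across $U_q(\mathfrak{n}^-)$ and all $V(\mu)$ with $\mu\in P_+$, which is the technical heart of Kashiwara's grand loop argument; once this congruence is in hand, all remaining steps are essentially bookkeeping.
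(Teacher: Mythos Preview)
The paper does not supply a proof of this proposition at all: it is stated with the citation \cite[Theorem~5, Lemma~7.3.2]{Kas_originalcrys} and then used as a black box, with only the immediate Remark~\ref{crystalmorph} drawing out the easy consequences for $\weight$, $\varepsilon_i^{\lambda}$, $\varphi_i^{\lambda}$. So there is no ``paper's own proof'' to compare against; you are reconstructing the argument from the cited source.

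Your sketch is broadly faithful to Kashiwara's original treatment. Parts (i)--(iv) are indeed established together as part of the grand loop induction in \cite{Kas_originalcrys}, and the mechanism you describe---comparing the $\Ker e'_i$ decomposition on $U_q(\mathfrak{n}^-)$ with the $\Ker(E_i.)$ decomposition on $V(\lambda)$ via the commutator identity, and absorbing the discrepancy into $qL^{\mathrm{low}}(\lambda)$ by induction---is the right idea. Your derivation of (v) from bar-compatibility, the $\mathbb{Q}[q^{\pm 1}]$-form compatibility, and the defining property of $G_{\lambda}^{\mathrm{low}}$ as the inverse of the balanced-triple isomorphism is exactly the argument of \cite[Lemma~7.3.2]{Kas_originalcrys}.

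One caution: you correctly flag that the second-paragraph estimate is the crux, but your phrasing understates how intertwined the induction is. In Kashiwara's actual proof the statements $\pi_{\lambda}(L(\infty))\subset L^{\mathrm{low}}(\lambda)$ and the crystal compatibility are not proved in isolation; they are items in a list of a dozen or so mutually dependent assertions (including the existence of $(L(\infty),B(\infty))$ and $(L^{\mathrm{low}}(\lambda),B^{\mathrm{low}}(\lambda))$ themselves) proved simultaneously by induction on $-\height(\weight)$. Treating (i) and (iii) as a standalone induction that presupposes the crystal lattices already exist is a legitimate expository shortcut once one is willing to cite the grand loop, but it is not literally how the original argument is organized.
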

\begin{remark}\label{crystalmorph}
Let $\lambda\in P_+$. It follows easily from Proposition \ref{subcrystal} (iii), (iv) that, for any $b\in B(\infty)$ such that $\pi_{\lambda}(b)\in B(\lambda)$, 
\begin{itemize}
\item[(i)] $\weight \pi_{\lambda}(b)=\weight b+\lambda$,
\item[(ii)] $\varepsilon_i^{\lambda}(\pi_{\lambda}(b))=\varepsilon_i(b)$,
\item[(iii)] $\varphi_i^{\lambda}(\pi_{\lambda}(b))=\varphi_i(b)+\langle \lambda, \alpha_i^{\vee} \rangle$.
\end{itemize}
\end{remark}
\begin{proposition}[{\cite[Lemma 7.3.4]{Kas_originalcrys}}]\label{barinv}
For $b\in B(\infty)$, we have $$\overline{G^{\mathrm{low}}(b)}=G^{\mathrm{low}}(b).$$
\end{proposition}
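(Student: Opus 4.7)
The strategy is to use the defining property of $G^{\mathrm{low}}(b)$: by Definition \ref{lowerglobal}, it is the unique element of $E := L(\infty) \cap \overline{L(\infty)} \cap U_q(\mathfrak{n}^-)_{\mathbb{Q}}$ whose image in $L(\infty)/qL(\infty)$ is $b$. I would verify that $\overline{G^{\mathrm{low}}(b)}$ also lies in $E$ and has the same reduction $b$ modulo $qL(\infty)$, whence $\overline{G^{\mathrm{low}}(b)} = G^{\mathrm{low}}(b)$ follows from the uniqueness clause.

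The membership $\overline{G^{\mathrm{low}}(b)} \in E$ is the easy part. The bar involution fixes every divided power $F_i^{(n)}$ (since $[n]_i!$ is bar-invariant) and stabilises the coefficient ring $\mathbb{Q}[q^{\pm 1}]$, so it preserves $U_q(\mathfrak{n}^-)_{\mathbb{Q}}$. By the very definition of $\overline{L(\infty)}$, the bar involution interchanges $L(\infty)$ with $\overline{L(\infty)}$, and hence preserves their intersection. Combining these observations, $\overline{G^{\mathrm{low}}(b)} \in E$.

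The substantive step is the congruence $\overline{G^{\mathrm{low}}(b)} \equiv b \pmod{qL(\infty)}$. I would prove this by induction on the height of $-\weight b \in Q_+$. The base case $b = 1$ is immediate since $G^{\mathrm{low}}(1) = 1$ is visibly bar-invariant. For the inductive step, fix $i \in I$ with $n := \varepsilon_i(b) > 0$ and set $b'' := \tilde{e}_i^n b$; then $-\weight b'' < -\weight b$, so the inductive hypothesis applies. Using Kashiwara's machinery describing how $\tilde{f}_i$ and multiplication by $F_i^{(n)}$ relate on $L(\infty)$ modulo $qL(\infty)$, one constructs a bar-invariant element $\tilde{x} \in E$ whose image in $L(\infty)/qL(\infty)$ is $\tilde{f}_i^n b'' = b$; the essential ingredient is the bar-invariance of both $F_i^{(n)}$ and (by induction) $G^{\mathrm{low}}(b'')$. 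Uniqueness of the inverse map $G^{\mathrm{low}}$ then forces $\tilde{x} = G^{\mathrm{low}}(b)$, establishing bar-invariance.

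The main obstacle is the inductive step itself: producing the desired bar-invariant lift $\tilde{x}$ requires precise control over the defect by which $F_i^{(n)} G^{\mathrm{low}}(b'')$ fails to lie in $L(\infty)$ (it need not equal $\tilde{f}_i^n G^{\mathrm{low}}(b'')$ on the nose, only modulo $qL(\infty)$), and over how bar interacts with the projection $L(\infty) \to L(\infty)/qL(\infty)$. This control is precisely what Kashiwara's simultaneous ``grand-loop'' induction in \cite{Kas_originalcrys} provides, and in a self-contained treatment the bar-invariance cannot be cleanly separated from the construction of $L(\infty)$, $B(\infty)$, and $G^{\mathrm{low}}$ themselves.
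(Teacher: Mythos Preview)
The paper does not supply its own proof of this proposition: it is simply quoted from \cite[Lemma 7.3.4]{Kas_originalcrys} and used as a black box, so there is nothing in the paper to compare your argument against.

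That said, your outline is correct and is essentially the argument one finds in Kashiwara's original paper. You have identified the key point precisely: the membership $\overline{G^{\mathrm{low}}(b)} \in E$ is formal, while the congruence modulo $qL(\infty)$ is the substantive step, and you correctly observe that this cannot be cleanly separated from the grand-loop induction that simultaneously establishes the existence of $(L(\infty), B(\infty))$ and the map $G^{\mathrm{low}}$. Your final paragraph is an honest acknowledgement that the inductive step, as you have written it, is a sketch rather than a complete argument; filling it in would amount to reproducing a substantial portion of \cite{Kas_originalcrys}.
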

\begin{proposition}[{\cite[Theorem 7]{Kas_originalcrys}}]\label{c1=epsiloni1}
The following hold:
\begin{itemize}
\item[{\nor (i)}] The set $\{ G^{\mathrm{low}}(b)\}_{b; \varepsilon_i(b)\geqq p}$ is a $\mathbb{Q}(q)$-basis of $F_i^{(p)}U_q(\mathfrak{n}^-)$ for any $p\in \mathbb{Z}_{\geqq 0}$.
\item[{\nor (ii)}] The set $\{ G^{\mathrm{low}}(b)\}_{b; \varepsilon_i^{\ast}(b)\geqq p}$ is a $\mathbb{Q}(q)$-basis of $U_q(\mathfrak{n}^-)F_i^{(p)}$ for any $p\in \mathbb{Z}_{\geqq 0}$.
\end{itemize}
\end{proposition}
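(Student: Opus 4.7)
Proof plan. My strategy is to establish everything modulo $qL(\infty)$ first and then lift via bar-invariance; part (ii) will then reduce to part (i) by applying the anti-involution $\ast$. The main obstacle will be reconciling the divided-power decomposition of $U_q(\mathfrak{n}^-)$ (whose combinatorics involves nontrivial $q$-integers) with the Kashiwara operators $\tilde{e}_i,\tilde{f}_i$ which encode only the $q=0$ picture.

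For (i), the starting point is the decomposition $U_q(\mathfrak{n}^-)=\bigoplus_{n\geq 0}F_i^{(n)}\Ker e'_i$ from Definition \ref{lowercrystal}, which gives $F_i^{(p)}U_q(\mathfrak{n}^-)=\bigoplus_{n\geq p}F_i^{(n)}\Ker e'_i$. A key preparatory step is to show this decomposition is compatible with the crystal lattice, namely
\[
L(\infty)=\bigoplus_{n\geq 0}F_i^{(n)}\bigl(\Ker e'_i\cap L(\infty)\bigr).
\]
This can be extracted by repeatedly applying $\tilde{e}_i$ (which preserves $L(\infty)$ by Definition \ref{lowercrystal}(iii)) to isolate individual summands, and inducting on $n$: if $x=\sum_n F_i^{(n)}u_n\in L(\infty)$ with $u_n\in\Ker e'_i$, then $\tilde{e}_i^{n}x=u_n+\sum_{m>n}F_i^{(m-n)}u_m\in L(\infty)$, from which I can peel off $u_n\in L(\infty)\cap\Ker e'_i$ one at a time.

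With this at hand, for each $b\in B(\infty)$ I decompose $G^{\mathrm{low}}(b)=\sum_n F_i^{(n)}u_n$ with $u_n\in\Ker e'_i\cap L(\infty)$. Since $G^{\mathrm{low}}(b)\equiv b\pmod{qL(\infty)}$ and the image of $b$ in $L(\infty)/qL(\infty)$ lies precisely in the $F_i^{(\varepsilon_i(b))}$-piece of the above decomposition (this is essentially the definition of $\varepsilon_i$ together with the recipe $\tilde{f}_i(F_i^{(m)}u)=F_i^{(m+1)}u$), the smallest $n$ with $u_n\neq 0$ must equal $\varepsilon_i(b)$. Hence $G^{\mathrm{low}}(b)\in F_i^{(\varepsilon_i(b))}U_q(\mathfrak{n}^-)$, which yields the inclusion $\mathrm{span}\{G^{\mathrm{low}}(b):\varepsilon_i(b)\geq p\}\subset F_i^{(p)}U_q(\mathfrak{n}^-)$. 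Linear independence is automatic since these are elements of the canonical basis; for the reverse inclusion, I compare weight-graded dimensions, noting that
\[
\#\{b\in B(\infty)_{\mu}:\varepsilon_i(b)\geq p\}=\sum_{n\geq p}\dim(\Ker e'_i)_{\mu+n\alpha_i}=\dim_{\mathbb{Q}(q)}(F_i^{(p)}U_q(\mathfrak{n}^-))_{\mu},
\]
using the crystal decomposition of $L(\infty)/qL(\infty)$ and Proposition \ref{subcrystal} to match characters.

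For (ii), I apply the anti-involution $\ast$ to part (i), using three facts: $\ast$ is an anti-automorphism so $\ast(F_i^{(p)}U_q(\mathfrak{n}^-))=U_q(\mathfrak{n}^-)F_i^{(p)}$; by Definition \ref{lowerglobal}, $\ast(G^{\mathrm{low}}(b))=G^{\mathrm{low}}(\ast b)$; and by the defining formula $\varepsilon_i^{\ast}=\varepsilon_i\circ\ast$ from Definition \ref{lowercrystal}, the condition $\varepsilon_i(\ast b)\geq p$ translates exactly to $\varepsilon_i^{\ast}(b)\geq p$. Thus the bijection $b\mapsto\ast b$ of $B(\infty)$ transports the basis in (i) onto the claimed basis in (ii).
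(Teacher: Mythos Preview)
The paper does not prove this proposition; it is quoted without argument as a result of Kashiwara \cite[Theorem 7]{Kas_originalcrys}. So there is no proof in the paper to compare against, and I will evaluate your argument on its own.

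Your reduction of (ii) to (i) via $\ast$ is correct and is the standard move. The lattice decomposition $L(\infty)=\bigoplus_{n\geq 0}F_i^{(n)}(\Ker e'_i\cap L(\infty))$ and the dimension count are also fine.

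The gap is in the sentence ``the smallest $n$ with $u_n\neq 0$ must equal $\varepsilon_i(b)$.'' Your mod-$q$ argument shows only that for $n<\varepsilon_i(b)$ one has $u_n\in q\bigl(\Ker e'_i\cap L(\infty)\bigr)$, not that $u_n=0$. Nothing you have written forces those lower components to vanish exactly, so the inclusion $G^{\mathrm{low}}(b)\in F_i^{(\varepsilon_i(b))}U_q(\mathfrak{n}^-)$ is not yet established, and without it the dimension count cannot close the argument. You flag bar-invariance in your plan but never actually use it; note moreover that the individual summands $F_i^{(n)}\Ker e'_i$ are \emph{not} bar-stable (indeed $\overline{\Ker e'_i}=\Ker{_ie'}$ by (\ref{qderiv2})), so applying $\overline{(\cdot)}$ term by term to $\sum_n F_i^{(n)}u_n$ does not help directly.

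What is missing is the compatibility of the global basis with the bar-stable $\mathbb{Q}[q^{\pm1}]$-submodule $F_i^{(p)}U_q(\mathfrak{n}^-)_{\mathbb{Q}}$. The filtration $F_i^{(p)}U_q(\mathfrak{n}^-)$ itself is bar-invariant (even though its complementary pieces in your decomposition are not), one has $F_i^{(p)}U_q(\mathfrak{n}^-)\cap L(\infty)=\bigoplus_{n\geq p}F_i^{(n)}(\Ker e'_i\cap L(\infty))$, and likewise for $\overline{L(\infty)}$ and $U_q(\mathfrak{n}^-)_{\mathbb{Q}}$. These three intersections form a balanced triple inside $F_i^{(p)}U_q(\mathfrak{n}^-)$, whose associated crystal is $\{b:\varepsilon_i(b)\geq p\}$; the uniqueness in the definition of $G^{\mathrm{low}}$ (Definition~\ref{lowerglobal}) then forces $G^{\mathrm{low}}(b)\in F_i^{(p)}U_q(\mathfrak{n}^-)$ for such $b$. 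This is how Kashiwara's argument runs, and it is precisely the ``lift via bar-invariance'' you promised but did not carry out.
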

\begin{proposition}[{\cite[Lemma 3.2.1, Proposition 4.1]{KasDem}}]\label{Demazure}
For $\lambda\in P_+$ and $w\in W$, we define the subspace $V_w(\lambda)$ of $V(\lambda)$ by
\begin{center}
$V_w(\lambda):=U_q(\mathfrak{n}^-).v_{w\lambda}$.
\end{center}
Then, the following hold:
\begin{itemize}
\item[{\nor (i)}] If $l(s_iw)>l(w)$, then $V_w(\lambda)=U_{q_i}(\mathfrak{sl}_{2, i}).V_{s_iw}(\lambda)$.
\item[{\nor (ii)}] For any $b\in B(\infty)$, $G^{\mathrm{low}}(b).v_{w\lambda}\in G_{\lambda}^{\mathrm{low}}(B(\lambda))\coprod \{ 0\}$.
\item[{\nor (iii)}] If $b, \hat{b}\in B(\infty)$ satisfy $G^{\mathrm{low}}(b).v_{w\lambda}=G^{\mathrm{low}}(\hat{b}).v_{w\lambda}\neq 0$, then $b=\hat{b}$.
\end{itemize}
\end{proposition}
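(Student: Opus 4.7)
The plan is to establish (i) by showing both inclusions of subspaces of $V(\lambda)$ directly from the commutation relations, and then derive (ii) and (iii) by descending induction on $l(w_0)-l(w)$, starting from the trivial case $w=w_0$ and passing from $s_iw$ down to $w$ by means of (i).

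For (i), set $k:=\langle w\lambda,\alpha_i^{\vee}\rangle$. The hypothesis $l(s_iw)>l(w)$ forces $w^{-1}\alpha_i\in\Delta_+$, hence $k\geq 0$, and Notation \ref{normalization} applied to a reduced expression for $w$ extended by prepending $s_i$ yields $v_{s_iw\lambda}=F_i^{(k)}v_{w\lambda}$ together with $E_iv_{w\lambda}=0$. So $v_{w\lambda}$ is the highest weight vector of a $(k+1)$-dimensional irreducible $U_{q_i}(\mathfrak{sl}_{2,i})$-submodule of $V(\lambda)$ whose lowest weight vector is $v_{s_iw\lambda}$, and in particular $v_{w\lambda}=E_i^{(k)}v_{s_iw\lambda}\in U_{q_i}(\mathfrak{sl}_{2,i}).V_{s_iw}(\lambda)$. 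I would then check both stabilities. To see $V_w(\lambda)$ is stable under $E_i$, commute $E_i$ past successive factors $F_j$ using $[E_i,F_j]=\delta_{ij}(K_i-K_{-i})/(q_i-q_i^{-1})$ until it reaches $v_{w\lambda}$ and vanishes, every residual Cartan-scaled term remaining in $V_w(\lambda)$; this yields $U_{q_i}(\mathfrak{sl}_{2,i}).V_{s_iw}(\lambda)\subseteq V_w(\lambda)$. For the reverse inclusion, show $U_{q_i}(\mathfrak{sl}_{2,i}).V_{s_iw}(\lambda)$ is stable under each $F_j$ with $j\neq i$: the relation $[E_i,F_j]=0$ together with the Serre relations, which rewrite $F_jF_i^{(m)}$ as a $\mathbb{Q}(q)$-linear combination of products $F_i^{(a)}F_jF_i^{(b)}$ with smaller $a$, let an induction on the length of monomials push $F_j$ past any element of $U_{q_i}(\mathfrak{sl}_{2,i})$ into $V_{s_iw}(\lambda)$.

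For (ii) and (iii) I would induct on $l(w_0)-l(w)$. In the base case $w=w_0$ one has $V_{w_0}(\lambda)=\mathbb{Q}(q)v_{w_0\lambda}$ with $v_{w_0\lambda}=G_\lambda^{\mathrm{low}}(b_{w_0\lambda})$; any $b\in B(\infty)$ with $\weight b\neq 0$ produces a vector of weight strictly below $w_0\lambda$, which must vanish since $w_0\lambda$ is the lowest weight of $V(\lambda)$, while $b=1$ returns $v_{w_0\lambda}$ itself. For the inductive step with $l(s_iw)>l(w)$, decompose an arbitrary $b\in B(\infty)$ via its $\tilde{f}_i$-string as $b=\tilde{f}_i^m b_0$ with $\varepsilon_i(b_0)=0$. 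The compatibility of the canonical basis of $U_q(\mathfrak{n}^-)$ with this string structure gives $G^{\mathrm{low}}(b)=F_i^{(m)}G^{\mathrm{low}}(b_0)$, so $G^{\mathrm{low}}(b).v_{w\lambda}=F_i^{(m)}G^{\mathrm{low}}(b_0).v_{w\lambda}$. Rewriting $v_{w\lambda}=E_i^{(k)}v_{s_iw\lambda}$ and commuting $E_i^{(k)}$ past $G^{\mathrm{low}}(b_0)$ using the Leibniz-type formula for $_ie'$ from Definition \ref{qderiv}—whose corrections vanish or lie strictly lower by the hypothesis $\varepsilon_i(b_0)=0$—reduces the problem to computing $F_i^{(m)}$ applied to $G^{\mathrm{low}}(b_0).v_{s_iw\lambda}$. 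The inductive hypothesis identifies the latter with some $G_\lambda^{\mathrm{low}}(b')$ or zero, and the Lusztig--Kashiwara compatibility $F_i^{(m)}G_\lambda^{\mathrm{low}}(b')=G_\lambda^{\mathrm{low}}(\tilde{f}_i^m b')$ valid when $\varepsilon_i^{\lambda,\mathrm{low}}(b')=0$ then identifies the output as a canonical basis element of $V(\lambda)$ or zero. Part (iii) follows at once from (ii) and the linear independence of the canonical basis $G_\lambda^{\mathrm{low}}(B(\lambda))$.

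The main obstacle is controlling the interaction between $E_i^{(k)}$ and an arbitrary canonical basis element $G^{\mathrm{low}}(b_0)$ at the integral level, not merely modulo $qL(\infty)$: one must verify that every correction produced by iteratively applying $_ie'$ to $G^{\mathrm{low}}(b_0)$ either vanishes (thanks to $\varepsilon_i(b_0)=0$) or factors through canonical basis elements already accessible to the inductive hypothesis. This is precisely the $\mathfrak{sl}_2$-reduction that underlies Kashiwara's construction of the lower global basis and its behaviour under divided powers of $F_i$; Proposition \ref{subcrystal} supplies the $w=e$ template, and the present statement is its iteration along a reduced expression for $w_0w^{-1}$.
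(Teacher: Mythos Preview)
The paper does not supply its own proof of this proposition; it is quoted from Kashiwara's paper \cite{KasDem}. So there is no ``paper's proof'' to compare against, and your task is really to reproduce Kashiwara's argument.

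Your inductive scheme for (ii) and (iii) contains a genuine error. You assert that if $b=\tilde f_i^{\,m}b_0$ with $\varepsilon_i(b_0)=0$ then
\[
G^{\mathrm{low}}(b)=F_i^{(m)}G^{\mathrm{low}}(b_0).
\]
This is false already in type $A_2$. Take $i=1$, $b_0=\tilde f_2\tilde f_1\cdot 1$, so $G^{\mathrm{low}}(b_0)=F_2F_1$. One checks $e'_1(F_2F_1)=qF_2$, hence $F_2F_1\notin\Ker e'_1$; nonetheless $\tilde e_1(F_2F_1)\equiv qF_2\equiv 0\bmod qL(\infty)$, so $\varepsilon_1(b_0)=0$. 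Now $F_1\cdot F_2F_1=F_1F_2F_1$, and writing this in the PBW (= canonical) basis for the reduced word $(1,2,1)$ gives
\[
F_1F_2F_1=F_1F_{12}+(1+q^2)F_1^{(2)}F_2,
\]
a nontrivial combination of two canonical basis elements. So $F_1G^{\mathrm{low}}(b_0)\neq G^{\mathrm{low}}(\tilde f_1 b_0)$. The same example shows that $\varepsilon_i(b_0)=0$ does \emph{not} force $G^{\mathrm{low}}(b_0)\in\Ker e'_i$, so the ``corrections vanish'' step when commuting $E_i^{(k)}$ past $G^{\mathrm{low}}(b_0)$ is unjustified as well. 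You flag this as ``the main obstacle'' at the end, but the body of the argument has already used the false identity.

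Kashiwara's actual proof in \cite{KasDem} avoids this trap: it never factors $G^{\mathrm{low}}(b)$ as $F_i^{(m)}G^{\mathrm{low}}(b_0)$. Instead it proves directly that the Demazure module $V_w(\lambda)$ is spanned by a subset of the lower global basis of $V(\lambda)$, by an induction that tracks the whole set $B_w(\lambda)\subset B(\lambda)$ at once and uses the compatibility of global bases with the $i$-string filtration $F_i^{(m)}V(\lambda)$ (Proposition~\ref{c1=epsiloni1} and its module analogue), rather than any exact factorization of individual elements. Parts (ii) and (iii) then follow from comparing this basis with the surjection $\pi_\lambda$ twisted by the extremal vector $v_{w\lambda}$.
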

\begin{proposition}[The action of $E_i, F_i$. {\cite[Proposition 5.3.1, Remark]{Kasglo}$+$\cite[Theorem 7]{Kas_originalcrys}}(integrality)]\label{EFaction}
For $b'\in B(\lambda) (\lambda \in P_+), i\in I, p\in \mathbb{Z}_{\geqq 0}$, we have 
\begin{align*}
\dis E_i^{(p)}G_{\lambda}^{\mathrm{up}}(b')&=\left[ \begin{array}{c} \varepsilon_i^{\lambda}(b')\\p \end{array} \right]_iG_{\lambda}^{\mathrm{up}}(\tilde{e}_i^p b')+\sum_{\substack{b''\in B(\lambda),\\ \varepsilon_i^{\lambda}(b')-p>\varepsilon_i^{\lambda}(b'')}}E_{b', b''}^{(p), i}G_{\lambda}^{\mathrm{up}}(b''),\\
&\text{with\ } E_{b', b''}^{(p), i}\in qq_i^{-p(\varepsilon_i^{\lambda}(b')-p)}\mathbb{Z}[q],\\
\dis F_i^{(p)}G_{\lambda}^{\mathrm{up}}(b')&=\left[ \begin{array}{c} \varphi_i^{\lambda}(b')\\p \end{array} \right]_iG_{\lambda}^{\mathrm{up}}(\tilde{f}_i^p b')+\sum_{\substack{b''\in B(\lambda),\\ \varphi_i^{\lambda}(b')-p>\varphi_i^{\lambda}(b'')}}F_{b', b''}^{(p), i}G_{\lambda}^{\mathrm{up}}(b''),\\
&\text{with\ }F_{b', b''}^{(p), i}\in qq_i^{-p(\varphi_i^{\lambda}(b')-p)}\mathbb{Z}[q],
\end{align*}
and
\begin{align*}
\dis E_i^{(p)}G_{\lambda}^{\mathrm{low}}(b')&=\left[ \begin{array}{c} \varphi_i^{\lambda}(b')+p\\p \end{array} \right]_iG_{\lambda}^{\mathrm{low}}(\tilde{e}_i^p b')+\sum_{\substack{b''\in B(\lambda),\\ \varphi_i^{\lambda}(b')+p<\varphi_i^{\lambda}(b'')}}\widehat{E}_{b', b''}^{(p), i}G_{\lambda}^{\mathrm{low}}(b''),\\
&\text{with\ }\widehat{E}_{b', b''}^{(p), i}\in qq_i^{-p(\varphi_i^{\lambda}(b'')-p)}\mathbb{Z}[q],\\
\dis F_i^{(p)}G_{\lambda}^{\mathrm{low}}(b')&=\left[ \begin{array}{c} \varepsilon_i^{\lambda}(b')+p\\p \end{array} \right]_iG_{\lambda}^{\mathrm{low}}(\tilde{f}_i^p b')+\sum_{\substack{b''\in B(\lambda),\\ \varepsilon_i^{\lambda}(b')+p<\varepsilon_i^{\lambda}(b'')}}\widehat{F}_{b', b''}^{(p), i}G_{\lambda}^{\mathrm{low}}(b''),\\
&\text{with\ }\widehat{F}_{b', b''}^{(p), i}\in qq_i^{-p(\varepsilon_i^{\lambda}(b'')-p)}\mathbb{Z}[q].
\end{align*}
In particular,
\[
\begin{array}{ll}
E_i^{(\varepsilon_i^{\lambda}(b'))}G_{\lambda}^{\mathrm{up}}(b')=G_{\lambda}^{\mathrm{up}}(\tilde{e}_i^{\varepsilon_i^{\lambda}(b')} b'), &\text{and\ } E_i^{(\varepsilon_i^{\lambda}(b')+1)}G_{\lambda}^{\mathrm{up}}(b')=0,\\
F_i^{(\varphi_i^{\lambda}(b'))}G_{\lambda}^{\mathrm{up}}(b')=G_{\lambda}^{\mathrm{up}}(\tilde{f}_i^{\varphi_i^{\lambda}(b')} b'), &\text{and\ } F_i^{(\varphi_i^{\lambda}(b')+1)}G_{\lambda}^{\mathrm{up}}(b')=0.
\end{array}
\]
\end{proposition}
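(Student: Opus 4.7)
My plan is to treat the four formulas as two dual pairs, reduce everything to the action of $F_i^{(p)}$ on the lower canonical basis, and combine a crystal-lattice computation (which identifies the leading coefficient) with the integral $\mathbb{Z}[q^{\pm 1}]$-form and bar-involution (which control the error terms). The template is the $\mathfrak{g}=\mathfrak{sl}_2$ case, where the canonical basis of $V(n)$ is just $\{F^{(k)}v_n\}$ and all four formulas are immediate; the task is to transport this local picture to $V(\lambda)$ by an ``$i$-string'' analysis.

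\textbf{Reduction via duality.} Since $\varphi(E_i)=F_i$ and $\varphi(F_i)=E_i$, the pairing $(\,,\,)_\lambda$ satisfies $(E_i u,v)_\lambda=(u,F_iv)_\lambda$, and $\{G_\lambda^{\mathrm{up}}(b)\}$, $\{G_\lambda^{\mathrm{low}}(b)\}$ are dual bases under $(\,,\,)_\lambda$. Consequently the coefficient of $G_\lambda^{\mathrm{up}}(b'')$ in $E_i^{(p)}G_\lambda^{\mathrm{up}}(b')$ equals the coefficient of $G_\lambda^{\mathrm{low}}(b')$ in $F_i^{(p)}G_\lambda^{\mathrm{low}}(b'')$, and analogously for the $F_i^{(p)}$/$E_i^{(p)}$ pair. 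The leading-term conditions $b''=\tilde e_i^p b'$ and $b'=\tilde f_i^p b''$ are interchanged, and the strict inequalities on $\varepsilon_i^\lambda$ match under this swap (using $\varepsilon_i^\lambda(\tilde f_i^p b'')=\varepsilon_i^\lambda(b'')+p$). Thus it suffices to prove the two lower-basis formulas, and those two differ only by the symmetry $E_i\leftrightarrow F_i$; so I really only have to establish the formula for $F_i^{(p)}G_\lambda^{\mathrm{low}}(b')$.

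\textbf{Leading coefficient from the crystal lattice.} I would work with the decomposition $V(\lambda)=\bigoplus_{n\geq 0}F_i^{(n)}(\Ker E_i\cap V(\lambda))$ and the elementary identity $F_i^{(p)}(F_i^{(n)}u)=\left[{n+p\atop p}\right]_i F_i^{(n+p)}u$ for $u\in\Ker E_i$, which shows that $F_i^{(p)}$ acts on the $n$-th summand by the scalar $\left[{n+p\atop p}\right]_i$ and shifts it into the $(n+p)$-th summand. The lattice $L^{\mathrm{low}}(\lambda)$ is compatible with this decomposition because $\tilde e_i^{\mathrm{low}}$ and $\tilde f_i^{\mathrm{low}}$ preserve $L^{\mathrm{low}}(\lambda)$ and act as shifts between summands; and a crystal vector $b'\in B(\lambda)$ lies, modulo $qL^{\mathrm{low}}(\lambda)$, in the $\varepsilon_i^\lambda(b')$-th summand. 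Reducing $F_i^{(p)}G_\lambda^{\mathrm{low}}(b')$ modulo $qL^{\mathrm{low}}(\lambda)$ therefore yields $\left[{\varepsilon_i^\lambda(b')+p\atop p}\right]_i \tilde f_i^p b'$, which pins down both the leading crystal index and the leading coefficient. The same computation shows that $F_i^{(p)}V(\lambda)\subset \bigoplus_{n\geq p}F_i^{(n)}(\Ker E_i)$, so every non-leading $b''$ appearing must satisfy $\varepsilon_i^\lambda(b'')\geq \varepsilon_i^\lambda(b')+p$, with strict inequality after subtracting the leading term.

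\textbf{Integrality, vanishing at $q=0$, and the main obstacle.} Integrality of every expansion coefficient in $\mathbb{Z}[q^{\pm 1}]$ is immediate from the fact that $F_i^{(p)}$ preserves the $\mathbb{Z}[q^{\pm 1}]$-form $V(\lambda)^{\mathrm{low}}_{\mathbb{Q}}$, of which the canonical basis is a $\mathbb{Z}[q^{\pm 1}]$-basis. That each non-leading coefficient vanishes at $q=0$---i.e.\ lies in $q\mathbb{Z}[q^{\pm 1}]$---follows because, after subtracting the leading term identified above, the remainder lies in $qL^{\mathrm{low}}(\lambda)$. The main obstacle I anticipate is the sharper bound $\widehat F^{(p),i}_{b',b''}\in qq_i^{-p(\varepsilon_i^\lambda(b'')-p)}\mathbb{Z}[q]$, in particular the exact negative shift $q_i^{-p(\varepsilon_i^\lambda(b'')-p)}$. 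To extract this, I would introduce a second lattice at $q=\infty$ from the bar-involution: since $\overline{G_\lambda^{\mathrm{low}}(b)}=G_\lambda^{\mathrm{low}}(b)$ (Proposition \ref{barinv}) and $\overline{F_i^{(p)}}=F_i^{(p)}$, the transition matrix is bar-invariant, which combined with its membership in $\mathbb{Z}[q^{\pm 1}]$ forces it to be a symmetric Laurent polynomial. I would then restrict to each irreducible $U_{q_i}(\mathfrak{sl}_{2,i})$-summand (``$i$-string''), in which $F_i^{(p)}$ acts with explicit $q_i$-binomial coefficients exhibiting the predicted $q_i$-power shift, and perform a triangular induction on $\varepsilon_i^\lambda(b'')-\varepsilon_i^\lambda(b')-p$ using the characterisation of $G_\lambda^{\mathrm{low}}(b'')$ via the filtration by $\varepsilon_i^\lambda$. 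Feeding the result back through the duality of the second paragraph produces the $G_\lambda^{\mathrm{up}}$ formulas with the corresponding $q$-order refinements.
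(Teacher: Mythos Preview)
The paper does not supply its own argument for this proposition; it is quoted directly from Kashiwara's papers, so there is no in-paper proof to compare against. Your duality reduction between the four displayed formulas via $(\,,\,)_\lambda$ is correct and standard.

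There is, however, a genuine gap in your treatment of the leading term. The sentence ``reducing $F_i^{(p)}G_\lambda^{\mathrm{low}}(b')$ modulo $qL^{\mathrm{low}}(\lambda)$ yields $\left[\begin{smallmatrix}\varepsilon_i^\lambda(b')+p\\p\end{smallmatrix}\right]_i\tilde f_i^p b'$'' has no meaning as written: $F_i^{(p)}$ does \emph{not} preserve $L^{\mathrm{low}}(\lambda)$, because on the $n$-th $i$-string summand it multiplies by the $q$-binomial $\left[\begin{smallmatrix}n+p\\p\end{smallmatrix}\right]_i\in q_i^{-np}+q_i^{-np+1}\mathbb{Z}[q]$, so $F_i^{(p)}G_\lambda^{\mathrm{low}}(b')$ typically lies far outside $L^{\mathrm{low}}(\lambda)$. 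For the same reason the next claim, that ``after subtracting the leading term the remainder lies in $qL^{\mathrm{low}}(\lambda)$'', is false; indeed the proposition itself only asserts $\widehat F^{(p),i}_{b',b''}\in qq_i^{-p(\varepsilon_i^\lambda(b'')-p)}\mathbb{Z}[q]$, which allows arbitrarily negative $q$-degree as $\varepsilon_i^\lambda(b'')$ grows. Your assertion of \emph{strict} inequality $\varepsilon_i^\lambda(b'')>\varepsilon_i^\lambda(b')+p$ for the off-diagonal terms is likewise not established by the filtration alone, which gives only $\geq$.

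What does work is to project onto the single $i$-string summand $F_i^{(\varepsilon_i^\lambda(b')+p)}(\Ker E_i)$ rather than to reduce modulo $qL^{\mathrm{low}}(\lambda)$: only $b''$ with $\varepsilon_i^\lambda(b'')=\varepsilon_i^\lambda(b')+p$ contribute to that component (by Proposition~\ref{c1=epsiloni1}), the projection lands in $\left[\begin{smallmatrix}\varepsilon_i^\lambda(b')+p\\p\end{smallmatrix}\right]_i L^{\mathrm{low}}(\lambda)$, and then bar-invariance of the coefficients forces the exact leading coefficient and kills the other $b''$ at that level. The sharper $q$-degree bound for deeper $b''$ then genuinely requires the inductive $i$-string analysis you gesture at in your final paragraph; that step is the actual content of Kashiwara's argument and cannot be replaced by a single lattice reduction.
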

\begin{notation}
We will use the notations $E_{b', b''}^{(p), i}, F_{b', b''}^{(p), i}, \widehat{E}_{b', b''}^{(p), i}$ and $ \widehat{F}_{b', b''}^{(p), i}$ for any $b', b''\in B(\lambda), i\in I$ and $p\in \mathbb{Z}_{\geqq 0}$ in a natural way. (For example, $E_i^{(p)}G_{\lambda}^{\mathrm{low}}(b')=\sum_{b''\in B(\lambda)}\widehat{E}_{b', b''}^{(p), i}G_{\lambda}^{\mathrm{low}}(b'')$.)
\end{notation}
\begin{notation}\label{coeff_notation}
For any $b\in B(\infty), i\in I$ and $p\in \mathbb{Z}_{\geqq 0}$, we write
\begin{align*}
&F_i^{(p)}G^{\mathrm{low}}(b)=\sum_{\tilde{b}\in B(\infty)}c_{-pi, b}^{\tilde{b}}G^{\mathrm{low}}(\tilde{b}),\\
%G^{\mathrm{low}}(b)F_i^{(p)}=\sum_{\tilde{b}\in B(\infty)}c_{b, -pi}^{\tilde{b}}G^{\mathrm{low}}(\tilde{b}).\\
&(_ie')^p(G^{\mathrm{low}}(b))=\sum_{\tilde{b}\in B(\infty)}d_{b, \tilde{b}}^{i, p}G^{\mathrm{low}}(\tilde{b}),\\
&(e'_i)^p(G^{\mathrm{low}}(b))=\sum_{\tilde{b}\in B(\infty)}\widehat{d}_{b, \tilde{b}}^{i, p}G^{\mathrm{low}}(\tilde{b}).
\end{align*}
\end{notation}
\begin{notation}
We will denote by $\rho$ an element of $P_+$ satisfying $\langle \rho, \alpha_i^{\vee} \rangle=1$ for all $i\in I$.
\end{notation}
The following proposition is essentially written in the reference \cite[Proposition 2.2]{Kas_parameter}. We attach its proof for the convenience of the reader.
\begin{proposition}\label{EFaction'}
For $b\in B(\infty)$, $i\in I$ and $p\in \mathbb{Z}_{\geqq 0}$, we have
\begin{itemize}
\item[{\nor (i)}] $\dis F_i^{(p)}G^{\mathrm{low}}(b)=\left[ \begin{array}{c} \varepsilon_i(b)+p\\p \end{array} \right]_iG^{\mathrm{low}}(\tilde{f}_i^pb)+\sum_{\substack{\tilde{b}\in B(\infty)\\ \varepsilon_i(b)+p<\varepsilon_i(\tilde{b})}}c_{-pi, b}^{\tilde{b}}G^{\mathrm{low}}(\tilde{b})$\\
\hspace{70pt}$\text{with}\ c_{-pi, b}^{\tilde{b}}\in qq_i^{-p(\varepsilon_i(\tilde{b})-p)}\mathbb{Z}[q].$\\
\item[{\nor (ii)}] $\dis (e'_i)^pG^{\mathrm{low}}(b)=q_i^{-p\varepsilon_i(b)+\frac{1}{2}p(p+1)}G^{\mathrm{low}}(\tilde{e}_i^pb)+\sum_{\substack{\tilde{b}\in B(\infty)\\ \varepsilon_i(b)-p<\varepsilon_i(\tilde{b})}}\widehat{d}_{b, \tilde{b}}^{i, p}G^{\mathrm{low}}(\tilde{b})$\\
\hspace{70pt} $\text{with}\ \widehat{d}_{b, \tilde{b}}^{i, p}\in qq_i^{-p\varepsilon_i(\tilde{b})-\frac{1}{2}p(p-1)}\mathbb{Z}[q].$
\end{itemize}
\end{proposition}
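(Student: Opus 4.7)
The plan is to reduce both statements to the corresponding formulas in $V(\lambda)$ for $\lambda\in P_+$ sufficiently dominant, furnished by Proposition \ref{EFaction}, via the canonical-basis compatibility $\pi_\lambda(G^{\mathrm{low}}(b))=G_\lambda^{\mathrm{low}}(\pi_\lambda b)$ of Proposition \ref{subcrystal}(v) together with the identifications $\varepsilon_i^\lambda(\pi_\lambda b)=\varepsilon_i(b)$ and $\tilde{f}_i^p\pi_\lambda b=\pi_\lambda\tilde{f}_i^p b$ from Remark \ref{crystalmorph} and Proposition \ref{subcrystal}(iii).

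For (i), I choose $\lambda\in P_+$ with $\langle\lambda,\alpha_j^\vee\rangle>\varepsilon_j^\ast(\tilde{b})$ for every (of the finitely many) $\tilde{b}\in B(\infty)$ of weight $\weight b-p\alpha_i$. By Proposition \ref{c1=epsiloni1}(ii) this makes $\pi_\lambda$ injective on $U_q(\mathfrak{n}^-)_{\weight b-p\alpha_i}$. Applying $\pi_\lambda$ to $F_i^{(p)}G^{\mathrm{low}}(b)$ produces $F_i^{(p)}G_\lambda^{\mathrm{low}}(\pi_\lambda b)$, which by the formula for $F_i^{(p)}G_\lambda^{\mathrm{low}}(b')$ in Proposition \ref{EFaction} and the crystal-statistic dictionary above expands as $\left[\begin{array}{c}\varepsilon_i(b)+p\\p\end{array}\right]_i G_\lambda^{\mathrm{low}}(\pi_\lambda\tilde{f}_i^p b)$ plus a sum over $\tilde{b}\in B(\infty)$ with $\varepsilon_i(\tilde{b})>\varepsilon_i(b)+p$ and coefficient in $qq_i^{-p(\varepsilon_i(\tilde{b})-p)}\mathbb{Z}[q]$. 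Injectivity of $\pi_\lambda$ lifts the identity back to $U_q(\mathfrak{n}^-)$.

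For (ii), since $(e'_i)^p$ is not realized by the action of any element of $U_q(\mathfrak{g})$, I instead extract it from the coproduct identity of Definition \ref{qderiv},
\[
\Delta(y)=F_i^{(p)}\otimes q_i^{\frac{1}{2}p(p-1)}K_{-p\alpha_i}(e'_i)^p(y)+\sum_{\weight y'\neq-p\alpha_i}y'\otimes K_{\weight y'}y'',
\]
evaluated in $V(\mu)\otimes V(\lambda)$. Fix $\mu\in P_+$ with $\langle\mu,\alpha_i^\vee\rangle\geq p$, so that $V(\mu)_{\mu-p\alpha_i}=\mathbb{Q}(q)F_i^{(p)}v_\mu$ is one-dimensional (the only way to reach weight $\mu-p\alpha_i$ from $v_\mu$ through $F_j$'s is by $p$ applications of $F_i$), and $\lambda\in P_+$ large enough that $\pi_\lambda$ is injective on $U_q(\mathfrak{n}^-)_{\weight b+p\alpha_i}$. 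Let $f\in V(\mu)^\ast$ be dual to $F_i^{(p)}v_\mu$ and vanish on the other weight spaces. Applying $G^{\mathrm{low}}(b)$ to $v_\mu\otimes v_\lambda$ and pairing the first tensor factor with $f$ annihilates every term of the sum above (since $\weight y'\neq-p\alpha_i$ forces $\langle f,y'v_\mu\rangle=0$), leaving the single expression $q_i^{\frac{1}{2}p(p-1)}K_{-p\alpha_i}(e'_i)^p(G^{\mathrm{low}}(b))v_\lambda$. Independently, the embedding $V(\mu+\lambda)\hookrightarrow V(\mu)\otimes V(\lambda)$, $v_{\mu+\lambda}\mapsto v_\mu\otimes v_\lambda$, combined with Proposition \ref{subcrystal}(v), identifies $G^{\mathrm{low}}(b).(v_\mu\otimes v_\lambda)$ with $G_{\mu+\lambda}^{\mathrm{low}}(\pi_{\mu+\lambda}b)$, whose $f$-pairing on the first factor is controlled by Proposition \ref{EFaction}. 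Cancelling the $q_i^{\frac{1}{2}p(p-1)}K_{-p\alpha_i}$ factor and using injectivity of $\pi_\lambda$ (together with Proposition \ref{subcrystal}(v) once more) yields (ii). The leading coefficient $q_i^{-p\varepsilon_i(b)+\frac{1}{2}p(p+1)}$ combines the coproduct twist $q_i^{\frac{1}{2}p(p-1)}$ with the lowest $q$-power $q_i^{-pn+\frac{1}{2}p(p+1)}$ of $\tfrac{[n]_i!}{[n-p]_i!}$ appearing in the elementary identity $(e'_i)^pF_i^{(n)}u=\tfrac{[n]_i!}{[n-p]_i!}F_i^{(n-p)}u$ for $u\in\Ker e'_i$; the error bound descends from that on $\widehat F^{(p),i}$ in Proposition \ref{EFaction}.

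The main obstacle is (ii): the $F_i^{(p)}v_\mu$-component must be cleanly isolated via the one-dimensionality of $V(\mu)_{\mu-p\alpha_i}$, and the precise pure-$q_i$-power shape of both the leading coefficient and the error bound---in contrast to the $q$-binomial structure of (i)---emerges only after careful bookkeeping of the coproduct twist $q_i^{\frac{1}{2}p(p-1)}$ and the weight shift induced by $K_{-p\alpha_i}$ on the right tensor factor.
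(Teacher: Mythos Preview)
Your argument for (i) is correct and is exactly the paper's approach: reduce to Proposition \ref{EFaction} via $\pi_\lambda$ for $\lambda$ large enough that $\pi_\lambda$ is injective on the relevant weight space.

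For (ii), however, there is a genuine gap. Your coproduct computation correctly shows that
\[
(f\otimes\mathrm{id})\bigl(G^{\mathrm{low}}(b).(v_\mu\otimes v_\lambda)\bigr)
= q_i^{\frac{1}{2}p(p-1)}K_{-p\alpha_i}(e'_i)^p(G^{\mathrm{low}}(b)).v_\lambda,
\]
but you then assert that the \emph{same} quantity, viewed through the embedding $V(\mu+\lambda)\hookrightarrow V(\mu)\otimes V(\lambda)$ as $G_{\mu+\lambda}^{\mathrm{low}}(\pi_{\mu+\lambda}b)$, has its $f$-pairing ``controlled by Proposition~\ref{EFaction}''. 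This is not justified: Proposition~\ref{EFaction} describes how $E_i^{(p)}$ and $F_i^{(p)}$ act on (dual) canonical basis elements of a \emph{single} irreducible module; it says nothing about how a canonical basis element of $V(\mu+\lambda)$ decomposes along $F_i^{(p)}v_\mu\otimes V(\lambda)$ inside a tensor product. Without an independent computation of that component you have no second expression to compare against, and hence no equation from which to read off the coefficients $\widehat d_{b,\tilde b}^{\,i,p}$ or their estimates. (Your closing remark about $(e'_i)^pF_i^{(n)}u$ is also not what you need: that identity would let you compute $(e'_i)^p$ on a \emph{PBW-type} expression, not on a canonical basis element, and in any case the coefficient is $q_i^{-pn+\frac{1}{2}p(p+1)}$, not $[n]_i!/[n-p]_i!$.)

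The paper's route is different and avoids tensor products entirely. It uses the commutation relation $E_iy-yE_i=\dfrac{{}_ie'(y)K_i-K_{-i}e'_i(y)}{q_i-q_i^{-1}}$ applied to $G^{\mathrm{low}}(b).v_{r\rho}$, together with bar-invariance of $G^{\mathrm{low}}(b)$ and the identity~(\ref{qderiv2}) relating $e'_i$ and ${}_ie'$, to express $E_iG^{\mathrm{low}}(b).v_{r\rho}$ as a combination of $\widehat d_{b,\tilde b}^{\,i,1}$ and its bar-conjugate. Comparing this with the known expansion of $E_iG_{r\rho}^{\mathrm{low}}(\pi_{r\rho}b)$ from Proposition~\ref{EFaction} and letting $r\to\infty$ separates the negative-degree part and yields the $p=1$ case; general $p$ then follows by composing $e'_i$ and tracking the $\varepsilon_i$-inequalities step by step.
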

\begin{proof}
The statement (i) easily follows from Remark \ref{crystalmorph}, Proposition \ref{subcrystal} (v) and \ref{EFaction}. We prove (ii). For $r\in \mathbb{Z}_{>0}$ with $0 \neq G^{\mathrm{low}}(b).v_{r\rho}(\in V(r\rho))$, we have
\begin{align*}
&E_iG^{\mathrm{low}}(b).v_{r\rho}\\
&=\dis \frac{( _ie')(G^{\mathrm{low}}(b))K_i-K_{-i}(e'_i)(G^{\mathrm{low}}(b))}{q_i-q_i^{-1}}.v_{r\rho}\ (\text{cf.\ Definition\ }\ref{qderiv}.)\\
&=\dis \frac{q_i^{\langle\weight b+\alpha_i, \alpha_i^{\vee}\rangle}\overline{(e'_i)(G^{\mathrm{low}}(b))}K_i-K_{-i}(e'_i)(G^{\mathrm{low}}(b))}{q_i-q_i^{-1}}.v_{r\rho}\\
%&(\text{By\ the equality\ }(\ref{qderiv2})\ \text{and\ Proposition\ }\ref{barinv}.)\\
&=\dis \frac{q_i^{r+\langle\weight b+\alpha_i, \alpha_i^{\vee}\rangle}\overline{(e'_i)(G^{\mathrm{low}}(b))}-q_i^{-r-\langle\weight b+\alpha_i, \alpha_i^{\vee}\rangle}(e'_i)(G^{\mathrm{low}}(b))}{q_i-q_i^{-1}}.v_{r\rho}\\
&=\dis \sum_{\tilde{b}\in B(\infty)}\frac{q_i^{r+\langle\weight b+\alpha_i, \alpha_i^{\vee}\rangle}\overline{\widehat{d}_{b, \tilde{b}}^{i, 1}}-q_i^{-r-\langle\weight b+\alpha_i, \alpha_i^{\vee}\rangle}\widehat{d}_{b, \hat{b}}^{i, 1}}{q_i-q_i^{-1}}G_{r\rho}^{\mathrm{low}}(\pi_{r\rho}(\tilde{b})),
\end{align*}
by the equality (\ref{qderiv2}) and Proposition \ref{barinv}. On the other hand, by Proposition \ref{EFaction}, 
\begin{align*}
E_iG^{\mathrm{low}}(b).v_{r\rho}=&[ \varphi_i^{r\rho}(b)+1]_iG_{r\rho}^{\mathrm{low}}(\tilde{e}_i\pi_{r\rho}(b))\\
&+\sum_{\substack{\tilde{b}\in B(\infty)\\ \varphi_i^{r\rho}(b)+1<\varphi_i^{r\rho}(\tilde{b})}}\widehat{E}_{\pi_{r\rho}(b), \pi_{r\rho}(\tilde{b})}^{(1), i}G_{r\rho}^{\mathrm{low}}(\pi_{r\rho}(\tilde{b})),
\end{align*}
with $\widehat{E}_{\pi_{r\rho}(b), \pi_{r\rho}(\tilde{b})}^{(1), i}\in qq_i^{-\varphi_i^{r\rho}(\pi_{r\rho}(\tilde{b}))+1}\mathbb{Z}[q]$. By Proposition \ref{subcrystal} and Remark \ref{crystalmorph}, this equality can be rewritten as follows:
\begin{align*}
E_iG^{\mathrm{low}}(b).v_{r\rho}=&[ \varphi_i(b)+r+1]_iG_{r\rho}^{\mathrm{low}}(\pi_{r\rho}(\tilde{e}_ib))\\
&+\sum_{\substack{\tilde{b}\in B(\infty)\\ \varepsilon_i(b)-1<\varepsilon_i(\tilde{b})}}\widehat{E}_{\pi_{r\rho}(b), \pi_{r\rho}(\hat{b})}^{(1), i}G_{r\rho}^{\mathrm{low}}(\pi_{r\rho}(\tilde{b})),
\end{align*}
with $\widehat{E}_{\pi_{r\rho}(b), \pi_{r\rho}(\tilde{b})}^{(1), i}\in qq_i^{-\varphi_i(\tilde{b})-r+1}\mathbb{Z}[q]$.

Comparing the above equalities, we deduce that there is a sufficiently large positive integer $r$ such that
\begin{itemize}
\item $\pi_{r\rho}(\tilde{b})\neq 0$ for any $\tilde{b}\in B(\infty)_{\weight b+\alpha_i}$, and
\item the degree $<0$ part of the Laurent polynomial $(q_i-q_i^{-1})\widehat{E}_{\pi_{r\rho}(b), \pi_{r\rho}(\tilde{b})}^{(1), i}$ is equal to $-q_i^{-r-\langle\weight b+\alpha_i, \alpha_i^{\vee}\rangle}\widehat{d}_{b, \tilde{b}}^{i, 1}$ for any $\tilde{b}\in B(\infty)$.
\end{itemize}
Therefore, we obtain
$$e'_iG^{\mathrm{low}}(b)=q_i^{-\varepsilon_i(b)+1}G^{\mathrm{low}}(\tilde{e}_ib)+\sum_{\substack{\tilde{b}\in B(\infty)\\ \varepsilon_i(b)-1<\varepsilon_i(\tilde{b})}}\widehat{d}_{b, \tilde{b}}^{i, 1}G^{\mathrm{low}}(\tilde{b}) \text{with}\ \widehat{d}_{b, \tilde{b}}^{i, 1}\in qq_i^{-\varepsilon_i(\tilde{b})}\mathbb{Z}[q].$$
Moreover, we have
\begin{align*}
(e'_i)^p(G^{\mathrm{low}}(b))=\sum_{\tilde{b}\in B(\infty)}\left(\sum_{\substack{ b^{1},\dots, b^{p-1}\in B(\infty)}}\mspace{-30mu}\widehat{d}_{b, b^1}^{i, 1}\widehat{d}_{b^1, b^2}^{i, 1}\cdots \widehat{d}_{b^{p-1}, \tilde{b}}^{i, 1}\right)G^{\mathrm{low}}(\tilde{b}).
\end{align*}
It follows from the above argument that if $\widehat{d}_{b^{s}, b^{s+1}}^{i, 1}\neq 0$ then $$\varepsilon_i(b^{s})-1\leqq \varepsilon_i(b^{s+1}).$$
Therefore, if $\widehat{d}_{b, b^1}^{i, 1}\widehat{d}_{b^1, b^2}^{i, 1}\cdots \widehat{d}_{b^{p-1}, \tilde{b}}^{i, 1}\neq 0$, then
\begin{align*}
\varepsilon_i(\tilde{b})&\geqq \varepsilon_i(b^{p-1})-1\\
&\geqq \varepsilon_i(b^{p-2})-2\\
&\geqq\cdots\geqq \varepsilon_i(b^1)-p+1\geqq \varepsilon_i(b)-p.
\end{align*}
Hence, $\widehat{d}_{b, b^1}^{i, 1}\widehat{d}_{b^1, b^2}^{i, 1}\cdots \widehat{d}_{b^{p-1}, \tilde{b}}^{i, 1}\in q_i^{-\varepsilon_i(\tilde{b})-\varepsilon_i(b^{p-1})-\cdots -\varepsilon_i(b^1)}\mathbb{Z}[q]\subset q_i^{-p\varepsilon_i(\tilde{b})-\frac{1}{2}p(p-1)}\mathbb{Z}[q].$
Combining the above arguments, we obtain
$$\widehat{d}_{b, \tilde{b}}^{i, p}=
\begin{cases}q_i^{-p\varepsilon_i(b)+\frac{1}{2}p(p+1)}& \text{if\ } \tilde{b}=\tilde{e}_i^pb,\\
\in qq_i^{-p\varepsilon_i(\tilde{b})-\frac{1}{2}p(p-1)}\mathbb{Z}[q]& \text{if\ }\varepsilon_i(b)-p<\varepsilon_i(\tilde{b}),\\
0& \text{otherwise.}\end{cases}$$
This completes the proof of (ii).
\end{proof}
\begin{notation}
For a Laurent polynomial $P\in \mathbb{Z}[q^{\pm 1}]$ and an integer $m\in \mathbb{Z}$, the degree $<m$  part of $P$ will be denoted by $P_{<m}$.
\end{notation}
\begin{notation}
Write $\dis \Delta_i:=\frac{(\alpha_i, \alpha_i)}{2} (i\in I)$.
\end{notation}
\begin{proposition}[Similarity of the structure constants]\label{similarity}
For any $b, \hat{b}\in B(\infty), i\in I$ and $N\in \mathbb{Z}_{\geqq 0}$, we have 
$$\left( c_{-Ni, b}^{\hat{b}} \right)_{<-\Delta_i(d-1)N}=\left(q_i^{\frac{1}{2}d(d-1)}\left[ \begin{array}{c} \varepsilon_i(\hat{b})\\N \end{array} \right]_i\widehat{d}_{b, \tilde{e}_i^{\varepsilon_i(\hat{b})}\hat{b}}^{i, d}\right)_{<-\Delta_i(d-1)N,}$$
where $d:=\varepsilon_i(\hat{b})-N$.
\end{proposition}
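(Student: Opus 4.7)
The plan is to work inside an integrable highest weight module $V(\lambda)$ for $\lambda\in P_+$ sufficiently large, compute the $G_\lambda^{\mathrm{low}}(\pi_\lambda(\hat b^{\mathrm{top}}))$-coefficient of $E_i^{(N+d)}F_i^{(N)}G^{\mathrm{low}}(b).v_\lambda$ in two different ways (where $\hat b^{\mathrm{top}}:=\tilde e_i^{\varepsilon_i(\hat b)}\hat b$ is the $i$-string top of $\hat b$), and compare their low $q$-degree parts. The first way starts from the expansion $F_i^{(N)}G^{\mathrm{low}}(b)=\sum_{\tilde b} c_{-Ni,b}^{\tilde b}G^{\mathrm{low}}(\tilde b)$, passes to $V(\lambda)$ via Proposition \ref{subcrystal}(v), and evaluates using Proposition \ref{EFaction}. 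The second way moves $E_i^{(N+d)}$ past $F_i^{(N)}$ via the $U_q(\mathfrak{sl}_2)$-commutation formula, then converts $E_i$-actions on $G^{\mathrm{low}}(b)$ into iterated $e'_i$-actions, using the identity $[E_i, y]=({_ie'}(y)K_i-K_{-i}e'_i(y))/(q_i-q_i^{-1})$ from Definition \ref{qderiv} together with $E_iv_\lambda=0$, and rewriting ${_ie'}$ in terms of $e'_i$ via \eqref{qderiv2} and Proposition \ref{barinv}; the resulting expression is evaluated by Proposition \ref{EFaction'}(ii).

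Concretely, fix $\lambda$ large enough that $\pi_\lambda$ is injective on all finitely many crystal elements arising in $F_i^{(N)}G^{\mathrm{low}}(b)$ and $(e'_i)^d G^{\mathrm{low}}(b)$, so no information is lost upon specialization. On the first side, Proposition \ref{EFaction} gives, for the term $\tilde b=\hat b$, a contribution to $G_\lambda^{\mathrm{low}}(\pi_\lambda(\hat b^{\mathrm{top}}))$ equal to $\left[\begin{smallmatrix}\varphi_i^\lambda(\pi_\lambda(\hat b))+\varepsilon_i(\hat b)\\ \varepsilon_i(\hat b)\end{smallmatrix}\right]_i c_{-Ni, b}^{\hat b}$, with subleading corrections in strictly larger $q$-degree (from the $qq_i^{-p(\varphi_i^\lambda(\hat b^{\mathrm{top}})-p)}\mathbb{Z}[q]$-bound on $\widehat E_{\cdot,\cdot}^{(p),i}$); contributions of $\tilde b\neq \hat b$ likewise lie in strictly larger $q$-degree by combining Propositions \ref{EFaction'}(i) and \ref{EFaction}. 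On the second side, among the commutator summands $F_i^{(N-t)}\left[\begin{smallmatrix}K_i;\,2t-2N-d\\ t\end{smallmatrix}\right]_i E_i^{(N+d-t)}$ only $t=N$ survives in the lowest $q$-degree (the $t<N$ summands acquire a factor of $F_i^{(N-t)}$ that by Proposition \ref{EFaction'}(i) contributes in strictly larger $q$-degree at $\hat b^{\mathrm{top}}$); this leading $t=N$ summand evaluates, via Proposition \ref{EFaction'}(ii), to $\left[\begin{smallmatrix}\varphi_i^\lambda(\pi_\lambda(\hat b))+\varepsilon_i(\hat b)\\ \varepsilon_i(\hat b)\end{smallmatrix}\right]_i q_i^{d(d-1)/2}\left[\begin{smallmatrix}\varepsilon_i(\hat b) \\ N\end{smallmatrix}\right]_i \widehat d^{i,d}_{b, \hat b^{\mathrm{top}}}$. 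Cancelling the common Gaussian binomial appearing on both sides and applying $(\,\cdot\,)_{<-\Delta_i(d-1)N}$ yields the claimed equality.

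The main obstacle is the $q$-degree bookkeeping: one has to verify that each discarded contribution — the $\tilde b\neq \hat b$ terms on the first side, the subleading terms produced by $E_i^{(\varepsilon_i(\hat b))}$ acting on $G_\lambda^{\mathrm{low}}(\pi_\lambda(\tilde b))$, the subleading terms in the $(e'_i)^{N+d-t}$-expansions at targets other than $\hat b^{\mathrm{top}}$, and the $t<N$ commutator summands — contributes only in $q$-degrees $\geq -\Delta_i(d-1)N$, so that all error terms vanish under the truncation $(\,\cdot\,)_{<-\Delta_i(d-1)N}$. The degree bounds in Propositions \ref{EFaction} and \ref{EFaction'}, together with the elementary degrees of the coefficients occurring in the standard $U_q(\mathfrak{sl}_2)$-commutation formula, are tailored exactly for this accounting; no new conceptual input is required, but collecting all the estimates into a single clean inequality is the essential technical step.
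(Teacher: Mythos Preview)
Your strategy --- compute the coefficient at the $i$-string-top element $\hat b^{\mathrm{top}}=\tilde e_i^{\varepsilon_i(\hat b)}\hat b$ in two ways and compare low-degree parts --- is precisely the paper's, but the paper implements it more directly. Rather than passing to $V(\lambda)$ and using $E_i^{(\varepsilon_i(\hat b))}$, the paper stays in $U_q(\mathfrak{n}^-)$ and examines the $G^{\mathrm{low}}(\hat b^{\mathrm{top}})$-coefficient $C_0$ of $(e'_i)^{\varepsilon_i(\hat b)}\bigl(F_i^{(N)}G^{\mathrm{low}}(b)\bigr)$. The first computation (expand $F_i^{(N)}G^{\mathrm{low}}(b)$, then apply $(e'_i)^{\varepsilon_i(\hat b)}$ via Proposition~\ref{EFaction'}) yields, after one truncation, $(C_0)_{<(\bullet)}=\bigl(q_i^{-\frac12\varepsilon_i(\hat b)(\varepsilon_i(\hat b)-1)}c_{-Ni,b}^{\hat b}\bigr)_{<(\bullet)}$. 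The second uses Kashiwara's commutation formula $(e'_i)^pF_i^{(N)}=\sum_s q_i^{(\cdots)}\left[\begin{smallmatrix}p\\s\end{smallmatrix}\right]_iF_i^{(N-s)}(e'_i)^{p-s}$ from \cite[(3.1.2)]{Kas_originalcrys}; since $\varepsilon_i(\hat b^{\mathrm{top}})=0$, only $s=N$ survives and one obtains $C_0=q_i^{-\varepsilon_i(\hat b)N+\frac12N(N+1)}\left[\begin{smallmatrix}\varepsilon_i(\hat b)\\N\end{smallmatrix}\right]_i\widehat d^{\,i,d}_{b,\hat b^{\mathrm{top}}}$ \emph{exactly}. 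No $\lambda$-dependent factors enter, and only one side needs truncation.

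Your route via $V(\lambda)$ is workable but carries more baggage. A minor point: the $t<N$ commutator terms vanish \emph{exactly} at $\hat b^{\mathrm{top}}$ (because $F_i^{(N-t)}$ forces $\varepsilon_i\ge N-t>0$), not merely in higher $q$-degree. More substantively, the ``common Gaussian binomial'' is not literally common: the $t=N$ term $\left[\begin{smallmatrix}K_i;-d\\N\end{smallmatrix}\right]_iE_i^{(d)}$ produces the factor $\left[\begin{smallmatrix}\langle\lambda+\weight b,\alpha_i^\vee\rangle+d\\N\end{smallmatrix}\right]_i$, whereas the first side carries $\left[\begin{smallmatrix}\langle\lambda+\weight b,\alpha_i^\vee\rangle+2d\\N+d\end{smallmatrix}\right]_i$; the conversion $E_i^{(d)}\rightsquigarrow(e'_i)^d$ supplies yet another $\lambda$-dependent prefactor, and these three only conspire to agree in leading $q$-degree. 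So your cancellation step requires its own error estimate on top of those you list. The paper's $e'_i$-only argument sidesteps all of this.
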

\begin{proof}
By Proposition \ref{EFaction'}, we have
\begin{align}
(e'_i)^{\varepsilon_i(\hat{b})}(F_i^{(N)}G^{\mathrm{low}}(b))&=\sum_{\substack{\tilde{b}\in B(\infty)\\ \varepsilon_i(b)+N\leqq\varepsilon_i(\tilde{b})}}c_{-Ni, b}^{\tilde{b}}(e'_i)^{\varepsilon_i(\hat{b})}G^{\mathrm{low}}(\tilde{b}) \notag\\
&=\sum_{\substack{\tilde{b}\in B(\infty)\\ \varepsilon_i(b)+N\leqq\varepsilon_i(\tilde{b})}}\sum_{\substack{\tilde{b}'\in B(\infty)\\ \varepsilon_i(\tilde{b})-\varepsilon_i(\hat{b})\leqq\varepsilon_i(\tilde{b}')}}c_{-Ni, b}^{\tilde{b}}\widehat{d}_{\tilde{b}, \tilde{b}'}^{i, \varepsilon_i(\hat{b})}G^{\mathrm{low}}(\tilde{b}'),\label{derivmulti}
\end{align}
and
\begin{align}
c_{-Ni, b}^{\tilde{b}}\widehat{d}_{\tilde{b}, \tilde{b}'}^{i, \varepsilon_i(\hat{b})}\in q_i^{-N(\varepsilon_i(\tilde{b})-N)-\varepsilon_i(\hat{b})\varepsilon_i(\tilde{b}')-\frac{1}{2}\varepsilon_i(\hat{b})(\varepsilon_i(\hat{b})-1)}\mathbb{Z}[q].\label{derivmulti_estimate}
\end{align}
Let $C_0$ be the coefficient of $G^{\mathrm{low}}(\tilde{e}_i^{\varepsilon_i(\hat{b})}\hat{b})(\neq 0)$ in (\ref{derivmulti}). Then,
$$C_0=\sum_{\substack{\tilde{b}\in B(\infty)\\ \varepsilon_i(b)+N\leqq\varepsilon_i(\tilde{b})\leqq\varepsilon_i(\hat{b})}}c_{-Ni, b}^{\tilde{b}}\widehat{d}_{\tilde{b}, \tilde{e}_i^{\varepsilon_i(\hat{b})}\hat{b}}^{i, \varepsilon_i(\hat{b})}.$$
By (\ref{derivmulti_estimate}), we have
$$\sum_{\substack{\tilde{b}\in B(\infty)\\ \varepsilon_i(b)+N\leqq\varepsilon_i(\tilde{b})<\varepsilon_i(\hat{b})}}c_{-Ni, b}^{\tilde{b}}\widehat{d}_{\tilde{b}, \tilde{e}_i^{\varepsilon_i(\hat{b})}\hat{b}}^{i, \varepsilon_i(\hat{b})}\in q_i^{-N(d-1)-\frac{1}{2}\varepsilon_i(\hat{b})(\varepsilon_i(\hat{b})-1)}\mathbb{Z}[q].$$
Moreover, if $\varepsilon_i(\tilde{b})=\varepsilon_i(\hat{b})$ and $\widehat{d}_{\tilde{b}, \tilde{e}_i^{\varepsilon_i(\hat{b})}\hat{b}}^{i, \varepsilon_i(\hat{b})}\neq 0$, then $\tilde{e}_i^{\varepsilon_i(\hat{b})}\tilde{b}=\tilde{e}_i^{\varepsilon_i(\hat{b})}\hat{b}(\neq 0)$ (equivalently, $\tilde{b}=\hat{b}$) by Proposition \ref{EFaction'}. Therefore, we have
$$(C_0)_{<(\bullet)}=\left(q_i^{-\frac{1}{2}\varepsilon_i(\hat{b})(\varepsilon_i(\hat{b})-1)}c_{-Ni, b}^{\hat{b}}\right)_{<(\bullet),}$$
where $\dis(\bullet)=-\Delta_i\left(N(d-1)+\frac{1}{2}\varepsilon_i(\hat{b})(\varepsilon_i(\hat{b})-1)\right).$

On the other hand, 
\begin{align*}
&(e'_i)^{\varepsilon_i(\hat{b})}(F_i^{(N)}G^{\mathrm{low}}(b))\\
&=\sum_{s=0}^Nq_i^{-2\varepsilon_i(\hat{b})N+(\varepsilon_i(\hat{b})+N)s-\frac{1}{2}s(s-1)}\left[ \begin{array}{c} \varepsilon_i(\hat{b})\\s \end{array} \right]_iF_i^{(N-s)}(e'_i)^{\varepsilon_i(\hat{b})-s}G^{\mathrm{low}}(b).
\end{align*}
This follows from a direct computation and this calculation result is written in the reference \cite[(3.1.2)]{Kas_originalcrys}. 
By Proposition \ref{EFaction'}, we have
\begin{align*}
&F_i^{(N-s)}(e'_i)^{\varepsilon_i(\hat{b})-s}G^{\mathrm{low}}(b)\\
&=\sum_{\substack{\tilde{b}\in B(\infty)\\ \varepsilon_i(b)-\varepsilon_i(\hat{b})+s\leqq\varepsilon_i(\tilde{b})}}\widehat{d}_{b, \tilde{b}}^{i, \varepsilon_i(\hat{b})-s}F_i^{(N-s)}G^{\mathrm{low}}(\tilde{b})\\
&=\sum_{\substack{\tilde{b}\in B(\infty)\\ \varepsilon_i(b)-\varepsilon_i(\hat{b})+s\leqq\varepsilon_i(\tilde{b})}}\sum_{\substack{\tilde{b}'\in B(\infty)\\ \varepsilon_i(\tilde{b})+N-s\leqq\varepsilon_i(\tilde{b}')}}\widehat{d}_{b, \tilde{b}}^{i, \varepsilon_i(\hat{b})-s}c_{-(N-s)i, \tilde{b}}^{\tilde{b}'}G^{\mathrm{low}}(\tilde{b}').
\end{align*}
Since we consider the case $\tilde{b}'=\tilde{e}_i^{\varepsilon_i(\hat{b})}\hat{b}$, the inequality $\varepsilon_i(\tilde{b})+N-s\leqq\varepsilon_i(\tilde{b}')(=0)$ does not hold unless $s=N$. Hence, we have
$$C_0=q_i^{-\varepsilon_i(\hat{b})N+\frac{1}{2}N(N+1)}\left[ \begin{array}{c} \varepsilon_i(\hat{b})\\N \end{array} \right]_i\widehat{d}_{b, \tilde{e}_i^{\varepsilon_i(\hat{b})}\hat{b}}^{i, d}.$$
Therefore,
$$\left(q_i^{-\frac{1}{2}\varepsilon_i(\hat{b})(\varepsilon_i(\hat{b})-1)}c_{-Ni, b}^{\hat{b}}\right)_{<(\bullet)}=\left(q_i^{-\varepsilon_i(\hat{b})N+\frac{1}{2}N(N+1)}\left[ \begin{array}{c} \varepsilon_i(\hat{b})\\N \end{array} \right]_i\widehat{d}_{b, \tilde{e}_i^{\varepsilon_i(\hat{b})}\hat{b}}^{i, d}\right)_{<(\bullet).}$$
Hence, we obtain the equality
$$\left( c_{-Ni, b}^{\hat{b}} \right)_{<-\Delta_i(d-1)N}=\left(q_i^{\frac{1}{2}d(d-1)}\left[ \begin{array}{c} \varepsilon_i(\hat{b})\\N \end{array} \right]_i\widehat{d}_{b, \tilde{e}_i^{\varepsilon_i(\hat{b})}\hat{b}}^{i, d}\right)_{<-\Delta_i(d-1)N.}$$
\end{proof}
\begin{remark}\label{d_and_dhat}
The coefficients $d_{b, \tilde{b}}^{i, p}$ and $\widehat{d}_{b, \tilde{b}}^{i, p}$ are related to each other.
 
Since $e'_i=\left.\ast\circ {_ie'}\circ\ast\right|_{U_q(\mathfrak{n}^-)}$, we have 
\begin{center}
$\widehat{d}_{b, \tilde{b}}^{i, p}=d_{\ast b, \ast\tilde{b}}^{i, p}$ for all $b, \tilde{b}\in B(\infty), i\in I$ and $p\in \mathbb{Z}_{\geqq 0}.$
\end{center}
In particular, $d_{b, \tilde{b}}^{i, p}\in q_i^{-p\varepsilon_i^{\ast}(\tilde{b})-\frac{1}{2}p(p-1)}\mathbb{Z}[q]$.

Using the equality (\ref{qderiv2}) in Definition \ref{qderiv} repeatedly, we obtain 
\begin{center}
$\widehat{d}_{b, \tilde{b}}^{i, p}=q_i^{p\langle \weight b,\alpha_i^{\vee}\rangle+p(p+1)}\overline{d_{b, \tilde{b}}^{i, p}}$ for all $b, \tilde{b}\in B(\infty), i\in I$ and $p\in \mathbb{Z}_{\geqq 0}$. 
\end{center}
\end{remark}
If $\mathfrak{g}$ is a symmetric Kac-Moody Lie algebra (=the Lie algebra corresponding to a symmetric generalized Cartan matrix), then the canonical bases have some positivities. 
\begin{proposition}[{\cite[Theorem 11.5]{Lus_quiper}}]\label{positivty_1}
Assume that the Lie algebra $\mathfrak{g}$ is a symmetric Kac-Moody Lie algebra. Then, for any $b, \tilde{b}\in B(\infty), i\in I$ and $p\in \mathbb{Z}_{\geqq 0}$, we have
$$c_{-pi, b}^{\tilde{b}}, d_{b, \tilde{b}}^{i, p}, \widehat{d}_{b, \tilde{b}}^{i, p} \in \mathbb{N}[q^{\pm 1}].$$
\end{proposition}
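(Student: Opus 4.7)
The plan is to invoke Lusztig's geometric realization of $U_q(\mathfrak{n}^-)$ via perverse sheaves on the moduli of representations of a quiver, since the result is genuinely geometric in nature and, to my knowledge, admits no purely combinatorial/algebraic proof at this generality. Since $\mathfrak{g}$ is symmetric, I would fix a quiver $Q$ with vertex set $I$ whose underlying graph encodes the Cartan matrix. For each dimension vector $\nu=\sum_{i\in I}\nu_i \alpha_i\in Q_+$, consider the representation variety $E_\nu=\bigoplus_{h:i\to j}\Hom(V_i,V_j)$ with the action of $G_\nu=\prod_i\mathrm{GL}(V_i)$. Lusztig's category $\mathcal{Q}_\nu$ of $G_\nu$-equivariant semisimple complexes, generated under Hall-type induction from constant sheaves on the points $E_{p\alpha_i}$, categorifies $U_q(\mathfrak{n}^-)_{-\nu}$, with the formal variable $q$ acting by cohomological shift $[1]$ composed with a half Tate twist. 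The canonical basis elements $G^{\mathrm{low}}(b)$ correspond precisely to (shifts of) the simple $G_\nu$-equivariant perverse sheaves in $\mathcal{Q}_\nu$.

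Next I would translate the three statements to be proved into geometric statements. Multiplication by $F_i^{(p)}$ corresponds to induction along the diagram of ``adding a summand of dimension $p\alpha_i$'', realized by a proper pushforward of the $G$-equivariant constant sheaf on $E_{p\alpha_i}$ convolved with the given sheaf. The Kashiwara derivations $({_ie'})^p$ and $(e'_i)^p$ correspond, up to explicit renormalizations, to restriction functors to closed/open subvarieties of $E_\nu$ parametrizing representations with a fixed $p$-dimensional subrepresentation or quotient at vertex $i$. Under these identifications, the structure constants $c^{\tilde b}_{-pi,b}$, $d^{i,p}_{b,\tilde b}$, $\widehat d^{i,p}_{b,\tilde b}$ become multiplicities of simple perverse sheaves in the decomposition of the induction or restriction of a simple perverse sheaf.

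The key step is then an application of the Beilinson--Bernstein--Deligne decomposition theorem: the pushforward of a semisimple complex along a proper map, and the pullback along a smooth map, remain semisimple; when decomposed into shifts of simple perverse sheaves, the multiplicities are Laurent polynomials in $q^{\pm 1}$ with non-negative integer coefficients, with the powers of $q$ recording the cohomological shifts. Applied to the induction diagram this yields $c^{\tilde b}_{-pi,b}\in\mathbb{N}[q^{\pm 1}]$, and applied to the restriction diagram it yields $d^{i,p}_{b,\tilde b}\in\mathbb{N}[q^{\pm 1}]$. The positivity of $\widehat d^{i,p}_{b,\tilde b}$ then follows formally from Remark~\ref{d_and_dhat}, which identifies $\widehat d^{i,p}_{b,\tilde b}=d^{i,p}_{\ast b,\ast\tilde b}$; geometrically, $\ast$ corresponds to reversing the orientation of $Q$, which gives an equivalence of categories of perverse sheaves and so preserves positivity.

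The main obstacle is that this argument depends on deep algebraic geometry, namely the full force of the BBD decomposition theorem and Lusztig's delicate identification of algebraic induction/restriction with the geometric convolution/restriction functors on $\mathcal{Q}_\nu$. A second, more technical obstacle is matching the normalizations: the derivations $e'_i$, ${_ie'}$ are defined algebraically via the coproduct, and one must verify the precise powers of $q$ and cohomological shifts in the geometric restriction so that the combinatorial constants appearing in Proposition~\ref{EFaction'} agree with the Poincar\'e polynomials of stalks of perverse sheaves. Once these bookkeeping identifications are in place, the positivity is an immediate consequence of the decomposition theorem, as in \cite[\S 11]{Lus_quiper}.
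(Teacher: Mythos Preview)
Your outline is a faithful sketch of Lusztig's geometric argument in \cite{Lus_quiper}, and the points you raise (identification of $F_i^{(p)}\cdot$ with induction, of the $q$-derivations with restriction, and the use of the decomposition theorem) are exactly the ingredients needed. Note, however, that the present paper does not supply a proof of this proposition at all: it is simply quoted as a known result with the citation to \cite[Theorem 11.5]{Lus_quiper}, and is used as a black box in the proof of Theorem~\ref{maintheorem}. So there is no ``paper's own proof'' to compare against; your proposal is essentially a summary of the cited reference rather than an alternative argument.
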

\section{The transition matrices from Canonical bases to PBW bases}\label{main}
Again, we assume that $\mathfrak{g}$ is a finite dimensional complex simple Lie algebra.
\begin{notation}
For a reduced word ${\bf i}$ of $w_0$, we set
$$F_{{\bf i}}^{{\bf c}}:=F_{i_1}^{(c_1)}T''_{i_1, 1}(F_{i_2}^{(c_2)})\cdots T''_{i_1, 1}T''_{i_2, 1}\cdots T''_{i_{l(w_0)-1}, 1}(F_{i_{l(w_0)}}^{(c_{l(w_0)})})=\omega(E_{{\bf i}}^{{\bf c}}).$$
\end{notation}
\subsection{The main theorem}
The following is one of the main conclusions of this paper.
\begin{theorem}[Positivity]\label{maintheorem}
Assume that the Lie algebra $\mathfrak{g}$ is of type $ADE$. Take an arbitrary reduced word ${\bf i}$ of $w_0$. Then, for any $b\in B(\infty)$, we have
$$G^{\mathrm{low}}(b)=\sum_{{\bf d}\in (\mathbb{Z}_{\geqq 0})^{l(w_0)}}{_{{\bf i}}\zeta}_{{\bf d}}^{G^{\mathrm{low}}(b)}F_{{\bf i}}^{{\bf d}}\ \text{with\ } {_{{\bf i}}\zeta}_{{\bf d}}^{G^{\mathrm{low}}(b)}\in \mathbb{N}[q^{\pm 1}].$$
\end{theorem}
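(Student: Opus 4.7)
The plan is to realise each PBW coefficient ${_{{\bf i}}\zeta}_{{\bf d}}^{G^{\mathrm{low}}(b)}$ as a matrix coefficient inside the Soibelman module $V_{w_0}$ via the identification $\Phi_{\mathrm{KOY}}$, and then to express it as a product of canonical-basis structure constants to which Lusztig's positivity (Proposition~\ref{positivty_1}) applies.

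First I would transfer the problem to $U_q(\mathfrak{n}^+)$. Since $F_{{\bf i}}^{{\bf d}}=\omega(E_{{\bf i}}^{{\bf d}})$ and the $\mathbb{Q}(q)$-algebra involution $\omega:U_q(\mathfrak{n}^-)\to U_q(\mathfrak{n}^+)$ carries the canonical basis $\{G^{\mathrm{low}}(b)\}$ to a canonical basis of $U_q(\mathfrak{n}^+)$ (the $\{G^{+}\}$ of the introduction), the expansions $G^{\mathrm{low}}(b)=\sum_{{\bf d}}{_{{\bf i}}\zeta}_{{\bf d}}^{G^{\mathrm{low}}(b)}F_{{\bf i}}^{{\bf d}}$ and $\omega(G^{\mathrm{low}}(b))=\sum_{{\bf d}}{_{{\bf i}}\zeta}_{{\bf d}}^{G^{\mathrm{low}}(b)}E_{{\bf i}}^{{\bf d}}$ have identical coefficients. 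Hence it suffices to establish positivity of the PBW coefficients of an arbitrary canonical basis element $G^{+}\in U_q(\mathfrak{n}^+)$.

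Next I would invoke the approximation afforded by Corollary~\ref{converge}: for any $\lambda\in P_+$ the coefficient ${_{{\bf i}}\zeta}_{{\bf c}}^{\lambda,G^{+}}$ of $|({\bf c})\rangle_{{\bf i}}$ in $(c_{f_{\lambda},v_{w_0\lambda}}^{\lambda}.\ast(G^{+})).|(0)\rangle_{{\bf i}}$ converges to ${_{{\bf i}}\zeta}_{{\bf c}}^{G^{+}}\in\mathbb{Z}[q^{\pm 1}]$ in $\mathbb{Q}((q))$ as $\lambda\to\infty$. Consequently, membership in $\mathbb{N}[q^{\pm 1}]$ will follow once the approximating coefficients ${_{{\bf i}}\zeta}_{{\bf c}}^{\lambda,G^{+}}$ lie in $\mathbb{N}[q^{\pm 1}]$ modulo a sufficiently high power of $q$ for all large $\lambda$. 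Note that $\ast(G^{+})$ is again a canonical basis element, because $\ast$ permutes the canonical basis.

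Then I would compute the approximating coefficient explicitly. Iterating the coproduct produces
$$
\bigl(c_{f_{\lambda},v_{w_0\lambda}}^{\lambda}.\ast(G^{+})\bigr).|(0)\rangle_{{\bf i}}
=\sum_{\text{terms of }\Delta^{(N-1)}}\bigotimes_{k=1}^{N}\bigl(c^{\lambda_k}_{f_{\lambda_k},v_{\mu_k}}.a_k\bigr).|0\rangle_{i_k},
$$
where each tensor factor is an $SL_2$-action on $V_{s_{i_k}}$ and the $a_k$'s arise from applying iterated comultiplication to $\ast(G^{+})$. The description of the coproduct in Definition~\ref{qderiv} shows that, up to high $q$-order corrections, the contribution of the $k$-th slot is controlled by the operators $({_{i_k}e'})^{c_k}$ (equivalently $(e'_{i_k})^{c_k}$) applied to the relevant canonical summand of $\ast(G^+)$, while the $SL_2$-evaluation (Example~\ref{sl2ex}) contributes a scalar whose leading part is $1$. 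Thus, modulo sufficiently large powers of $q$, the coefficient ${_{{\bf i}}\zeta}_{{\bf c}}^{\lambda,G^{+}}$ reduces to a sum of products of structure constants $\widehat{d}^{i_k,c_k}_{\bullet,\bullet}$ (or of $c^{\bullet}_{-c_k i_k,\bullet}$, which by Proposition~\ref{similarity} are interchangeable with the $\widehat{d}$'s up to a positive $q$-monomial in the relevant degree range), one factor per tensor slot.

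The final step is positivity: by Proposition~\ref{positivty_1}, under the $ADE$ hypothesis every $\widehat{d}^{i,p}_{b,\tilde{b}}$ and $c^{\tilde{b}}_{-pi,b}$ lies in $\mathbb{N}[q^{\pm 1}]$, so each ${_{{\bf i}}\zeta}_{{\bf c}}^{\lambda,G^{+}}$ lies in $\mathbb{N}[q^{\pm 1}]$ modulo the chosen high power of $q$, and passing to the limit yields ${_{{\bf i}}\zeta}_{{\bf c}}^{G^{+}}\in\mathbb{N}[q^{\pm 1}]$. The principal obstacle is the combinatorial bookkeeping in the coproduct computation: identifying precisely which summands of $\Delta^{(N-1)}(\ast(G^{+}))$ survive in each tensor slot to the prescribed $q$-order, aligning the degree truncations on both sides of Proposition~\ref{similarity} with the $q$-adic precision forced by Corollary~\ref{converge}, and verifying that cross-terms between distinct summands do not spoil positivity. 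Once this alignment is carried out, the conclusion is mechanical.
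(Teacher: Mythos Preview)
Your outline is essentially the paper's own strategy: the paper carries out precisely the computation you sketch, using the iterated coproduct in $V_{w_0}$ (equation~(\ref{shiki})) together with Theorem~\ref{maintool} and Corollary~\ref{converge}, and then shows via the technical Lemmas~\ref{method}, \ref{roughestlem} and Theorem~\ref{maincalc} that each tensor-slot contribution has low-degree part $q_{i_k}^{\frac{1}{2}d_k(d_k-1)}\widehat{d}^{i_k,d_k}_{b_{k-1},b}$, so that Proposition~\ref{positivty_1} applies. The ``combinatorial bookkeeping'' you flag as the principal obstacle is exactly what occupies the bulk of Section~\ref{main}; your plan is correct but the paper supplies the nontrivial work you defer.
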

\begin{remark}
This theorem was originally proved by Lusztig in his original paper of the canonical bases (\cite[Corollary 10.7]{Lus_canari1}) in the case when PBW bases are associated with the adapted reduced words of $w_0$ (See \cite[4.7]{Lus_canari1}.), through his geometric realization of the elements of the canonical bases and PBW bases. Recently, this fact for arbitrary PBW bases was proved by Kato (\cite[Theorem 4.17]{KatoKLR}) and subsequently by McNamara (\cite{McNamara_finite}), through the categorification of (dual) PBW bases by using the Khovanov-Lauda-Rouquier algebras. We give another algebraic proof from now on.

For general nonsymmetric finite types, McNamara also established the categorification of dual PBW bases (\cite[Theorem 3.1]{McNamara_finite}). However, the ``canonical basis'' arising from the Khovanov-Lauda-Rouquier categorification does not coincide with the canonical basis in Section \ref{dab2} and, in fact, the positivity fails in general. 
\end{remark}
\begin{remark}
Although we assume that the Lie algebra $\mathfrak{g}$ is of type $ADE$ in the statement of Theorem \ref{maintheorem}, we never put this assumption in the following calculation. This assumption will be used only when we check the positivity of the calculation results. (See Theorem \ref{maincalc}.)
\end{remark}
\subsection{Calculations}\label{prfidea}
Fix an element $G^{\mathrm{low}}(b_0)$ of the canonical basis of $U_q(\mathfrak{n}^-)$ and an expression
\begin{center}
$\dis G^{\mathrm{low}}(b_0)=\sum_{\substack{j_1,\dots, j_l\in I\\ -\sum_{k=1}^{l}\alpha_{j_k}=\weight b_0}}\eta_{j_1,\dots,j_l}^{(0)}F_{j_1}\cdots F_{j_l}$ with $\eta_{j_1,\dots,j_l}^{(0)}\in \mathbb{Q}(q)$. 
\end{center}
(The expression of this form is not unique.) 

Fix an arbitrary reduced word ${\bf i}$ of $w_0$. 

For any $j_1,\dots, j_k \in I$, we write
\begin{center}
$\dis E_{j_1}\cdots E_{j_k}=\sum_{{\bf d}\in (\mathbb{Z}_{\geqq 0})^{l(w_0)}}m_{{\bf d}}^{j_1,\dots ,j_k}E_{{\bf i}}^{{\bf d}}$ with $m_{ {\bf d}}^{j_1,\dots ,j_k}\in \mathbb{Z}[q^{\pm 1}]$.
\end{center}
\begin{definition}[The large number $L_{(b_0, \{ \eta_{j_1,\dots,j_l}^{(0)} \}, {\bf i})}$]\label{largenumber}
Let 
\begin{align*}
&A:= Q_- \bigcap {\big (}(\cup_{w\in W} w\Set{\beta \in Q_- | \weight b_0\leq \beta \leq 0})+Q_+{\big )},\\
&h_A:=\max\Set{-\height\beta|\beta\in A}.
\end{align*}
We define the ``large'' number $L_{(b_0, \{ \eta_{j_1,\dots,j_l}^{(0)} \}, {\bf i})}$ ($=:L$ for short) by $L=l_0+l_1+l_2+l_3+l_4+l_5+1$, where
\begin{itemize}
\item $l_0 :=$ the minimum of the elements $l$ of $\mathbb{Z}_{\geqq 0}$ such that ${_{{\bf i}}\zeta}_{{\bf d}}^{G^{\mathrm{low}}(b_0)}\in q^l\mathbb{Z}[q^{-1}]$ for any ${\bf d} \in (\mathbb{Z}_{\geqq 0})^{l(w_0)}$.
\item $l_1 :=$ the minimum of the elements $l$ of $\mathbb{Z}_{\geqq 0}$ such that $\eta_{j_1,\dots,j_l}^{(0)}\in q^{-l}\mathcal{A}_0$ for all $j_1,\dots, j_l\in I$ with $-\sum_{k=1}^{l}\alpha_{j_k}=\weight b_0$.
\item $l_2 :=$ the minimum of the elements $l$ of $\mathbb{Z}_{\geqq 0}$ such that $m_{ {\bf d}}^{j_1,\dots ,j_k}\in q^{-l}\mathbb{Z}[q]$ for any $j_1,\dots, j_l\in I$ with $-\sum_{k=1}^{l}\alpha_{j_k}=\weight b_0$ and any ${\bf d} \in (\mathbb{Z}_{\geqq 0})^{l(w_0)}$.
\item $l_3 :=$ the minimum of the elements $l$ of $\mathbb{Z}_{\geqq 0}$ such that $d_{b, b'}^{j, p}, \widehat{d}_{b, b'}^{j, p}\in q^{-l}\mathbb{Z}[q]$ for any $j\in I$, $p\in \mathbb{Z}_{\geqq 0}$ and $b, b'\in B(\infty)$ with $\weight b\in A$.
\item $l_4:=\max\Set{ h_A\vert (\alpha, \beta)\vert | \alpha\in \Delta_+, \beta \in A }$.
\item $\dis l_5:= \frac{(\alpha, \alpha)}{2}\left(\frac{3}{2}(\height (\weight b_0))^2+\frac{1}{2}\height (\weight b_0)\right)$ for a long root $\alpha\in \Delta$.
\end{itemize}
Moreover, we put $\gamma:=l(w_0)+1$ and $\lambda_0:= 2\gamma L\rho(\in P_+)$. 
\end{definition}
\begin{remark}
The definition of the large number $L$ seems to be rather complicated, but it is nothing serious. In fact, the following method for proving Theorem \ref{maintheorem} is valid as long as we take sufficiently large positive integer as $L$. (cf. Corollary \ref{converge}.) We merely fix such a number explicitly. It is probably possible to take a smaller positive integer as $L$ than the one in Definition \ref{largenumber}.
\end{remark}
Recall the notation in Section \ref{isom} and the equality (\ref{act}). Then, we have 
\begin{align*}
&\adr_{\lambda_0}(\varphi(G^{\mathrm{low}}(b_0)))K_{\lambda_0}\\
&=\dis \sum_{j_1,\dots, j_l\in I}\eta_{j_1,\dots,j_l}^{(0)}\adr_{\lambda_0}(E_{j_l}\cdots E_{j_1})K_{\lambda_0}\\
&=\sum_{j_1,\dots, j_l\in I}\eta_{j_1,\dots,j_l}^{(0)}G_{j_1}\cdots G_{j_l}K_{\lambda_0}+q^{4\gamma L -l_1-l_4}\sum_{j'_1,\dots, j'_l\in I}\eta_{j'_1,\dots,j'_l}^{(1)}G_{j'_1}\cdots G_{j'_l}K_{\lambda_0},
\end{align*}
for some $\eta_{j'_1,\dots,j'_l}^{(1)} \in \mathcal{A}_0$. Note that $2\gamma L\leqq (\alpha_i, \alpha_i)\gamma L$ for all $i\in I$. Hence, by Theorem \ref{maintool}, we get 
\begin{align*}
&\Upsilon(\adr_{\lambda_0}(\varphi(G^{\mathrm{low}}(b_0)))K_{\lambda_0}).\Phi_{\mathrm{KOY}}^{-1}(|(0) \rangle_{\bf i})\\
&=\sum_{j_1,\dots, j_l\in I}\eta_{j_1,\dots,j_l}^{(0)}E_{j_1}\cdots E_{j_l}+q^{4\gamma L -l_1-l_4}\sum_{j'_1,\dots, j'_l\in I}\eta_{j'_1,\dots,j'_l}^{(1)}E_{j'_1}\cdots E_{j'_l}\\
&=\omega (G^{\mathrm{low}}(b_0))+q^{4\gamma L -l_1-l_2-l_4}\sum_{{\bf d}'\in (\mathbb{Z}_{\geqq 0})^{l(w_0)}}\eta_{{\bf d}'}^{(2)}E_{{\bf i}}^{{\bf d}'}\\
&=\omega\left(\sum_{{\bf d}\in (\mathbb{Z}_{\geqq 0})^{l(w_0)}}{_{{\bf i}}\zeta}_{{\bf d}}^{G^{\mathrm{low}}(b_0)}F_{{\bf i}}^{{\bf d}}\right)+q^{4\gamma L -l_1-l_2-l_4}\sum_{{\bf d}'\in (\mathbb{Z}_{\geqq 0})^{l(w_0)}}\eta_{{\bf d}'}^{(2)}E_{{\bf i}}^{{\bf d}'}\\
&=\sum_{{\bf d}\in (\mathbb{Z}_{\geqq 0})^{l(w_0)}}{_{{\bf i}}\zeta}_{{\bf d}}^{G^{\mathrm{low}}(b_0)}E_{{\bf i}}^{{\bf d}}+q^{4\gamma L -l_1-l_2-l_4}\sum_{{\bf d}'\in (\mathbb{Z}_{\geqq 0})^{l(w_0)}}\eta_{{\bf d}'}^{(2)}E_{{\bf i}}^{{\bf d}'}
\end{align*}
for some $\eta_{{\bf d}'}^{(2)} \in \mathcal{A}_0$. Note that ${_{{\bf i}}\zeta}_{{\bf d}}^{G^{\mathrm{low}}(b_0)}\in q^{l_0}\mathbb{Z}[q^{-1}]$ and $4\gamma L -l_1-l_2-l_4>4\gamma L-L(>L)$. On the other hand, 
\begin{align*}
&\Upsilon(\adr_{\lambda_0}(\varphi(G^{\mathrm{low}}(b_0)))K_{\lambda_0}).\Phi_{\mathrm{KOY}}^{-1}(|(0) \rangle_{\bf i})\\
&=\left(c_{f_{\lambda_0}, v_{w_0\lambda_0}}^{\lambda_0}.\varphi(G^{\mathrm{low}}(b_0))\right).\Phi_{\mathrm{KOY}}^{-1}(|(0) \rangle_{\bf i})\\
&=\Phi_{\mathrm{KOY}}^{-1}\left(\left(c_{f_{\lambda_0}, v_{w_0\lambda_0}}^{\lambda_0}.\varphi(G^{\mathrm{low}}(b_0))\right).|(0) \rangle_{\bf i}\right).
\end{align*}
Set 
\begin{align}\label{zetaprime}
\left(c_{f_{\lambda_0}, v_{w_0\lambda_0}}^{\lambda_0}.\varphi(G^{\mathrm{low}}(b_0))\right).|(0) \rangle_{\bf i}=\sum_{{\bf d}\in (\mathbb{Z}_{\geqq 0})^{l(w_0)}}{_{{\bf i}}\zeta'}_{{\bf d}}^{G^{\mathrm{low}}(b_0)}|({\bf d})\rangle_{{\bf i}},
\end{align}
for some ${_{{\bf i}}\zeta'}_{{\bf d}}^{G^{\mathrm{low}}(b_0)}\in \mathbb{Q}(q)$. Then, we have
\begin{center}
$\dis \Phi_{\mathrm{KOY}}^{-1}\left(\left(c_{f_{\lambda_0}, v_{w_0\lambda_0}}^{\lambda_0}.\varphi(G^{\mathrm{low}}(b_0))\right).|(0) \rangle_{\bf i}\right)
=\sum_{{\bf d}\in (\mathbb{Z}_{\geqq 0})^{l(w_0)}}{_{{\bf i}}\zeta'}_{{\bf d}}^{G^{\mathrm{low}}(b_0)}E_{{\bf i}}^{{\bf d}}$.
\end{center}
These results give the equality ${_{{\bf i}}\zeta'}_{{\bf d}}^{G^{\mathrm{low}}(b_0)}={_{{\bf i}}\zeta}_{{\bf d}}^{G^{\mathrm{low}}(b_0)}+q^{4\gamma L -l_1-l_2-l_4}\eta_{{\bf d}}^{(2)} ({\bf d}\in (\mathbb{Z}_{\geqq 0})^{l(w_0)})$. Hence, the proof of Theorem \ref{maintheorem} is completed by showing that the principal part of the Laurent expansion at 0 of $q^{-L}{_{{\bf i}}\zeta'}_{{\bf d}}^{G^{\mathrm{low}}(b_0)}$ belongs to $\mathbb{N}[q^{-1}]$ for any ${\bf d}\in (\mathbb{Z}_{\geqq 0})^{l(w_0)}$ when the Lie algebra $\mathfrak{g}$ is of type $ADE$. Hence, from now on, we compute $\left(c_{f_{\lambda_0}, v_{w_0\lambda_0}}^{\lambda_0}.\varphi(G^{\mathrm{low}}(b_0))\right).|(0) \rangle_{\bf i}$. 

Since $f_{\lambda_0}=(v_{\lambda_0},\cdot)_{\lambda_0}\in V(\lambda_0)^{\ast}$ (See Definition \ref{crystalsinrepresentation}.), we obtain
\begin{align*}
c_{f_{\lambda_0}, v_{w_0\lambda_0}}^{\lambda_0}.\varphi(G^{\mathrm{low}}(b_0))&=(v_{\lambda_0}, \varphi(G^{\mathrm{low}}(b_0))(\cdot ).v_{w_0\lambda_0})_{\lambda_0}\\
&=(G^{\mathrm{low}}(b_0).v_{\lambda_0}, (\cdot ).v_{w_0\lambda_0})_{\lambda_0}\\
&=(G_{\lambda_0}^{\mathrm{low}}(\pi_{\lambda_0}(b_0)), (\cdot ).v_{w_0\lambda_0})_{\lambda_0} \  \text{by\ Proposition\ }\ref{subcrystal} \text{(v)},\\
&=c_{(G_{\lambda_0}^{\mathrm{low}}(\pi_{\lambda_0}(b_0)), \cdot)_{\lambda_0}, v_{w_0\lambda_0}}^{\lambda_0},
\end{align*}
in $\mathbb{Q}_q[G]$. By the way, for $f\in V(\lambda_0)^{\ast}, v\in V(\lambda_0)$, 
\begin{center}
$\dis \Delta(c_{f, v}^{\lambda_0})=\sum_{b'\in B(\lambda_0)}c_{f, G_{\lambda_0}^{\mathrm{up}}(b')}^{\lambda_0}\otimes c_{(G_{\lambda_0}^{\mathrm{low}}(b'), \cdot)_{\lambda_0}, v}^{\lambda_0}$.
\end{center}

Therefore, we have 
\begin{align}\label{shiki}
c_{(G_{\lambda_0}^{\mathrm{low}}(\pi_{\lambda_0}(b_0)), \cdot)_{\lambda_0}, v_{w_0\lambda_0}}^{\lambda_0}.&|(0) \rangle_{{\bf i}} \notag\\
=\sum_{b'_1,\dots, b'_{l(w_0)-1}\in B(\lambda_0)}&c_{(G_{\lambda_0}^{\mathrm{low}}(\pi_{\lambda_0}(b_0)), \cdot )_{\lambda_0}, G_{\lambda_0}^{\mathrm{up}}(b'_1)}^{\lambda_0}.|0 \rangle_{i_1}\otimes c_{(G_{\lambda_0}^{\mathrm{low}}(b'_1), \cdot )_{\lambda_0}, G_{\lambda_0}^{\mathrm{up}}(b'_2)}^{\lambda_0}.|0 \rangle_{i_2}\notag \\
&\hspace{20pt}\otimes\dots\otimes c_{(G_{\lambda_0}^{\mathrm{low}}(b'_{l(w_0)-1}), \cdot)_{\lambda_0}, v_{w_0\lambda_0}}^{\lambda_0}.|0 \rangle_{i_{l(w_0)}}.
\end{align}
Note that $v_{w_0\lambda_0}=G_{\lambda_0}^{\mathrm{up}}(b'_{l(w_0)})$ with $b'_{l(w_0)}\in B(\lambda)_{w_0\lambda_0}$. (Such a $b'_{l(w_0)}$ is uniquely determined.) We set $b'_0=\pi_{\lambda_0}(b_0)\in B(\lambda_0)$.
\subsubsection{A method of calculation}
We first give a method of calculating the terms of the form
$$c_{(G_{\lambda_0}^{\mathrm{low}}(b'_{k-1}), \cdot)_{\lambda_0}, G_{\lambda_0}^{\mathrm{up}}(b'_k)}^{\lambda_0}.|0 \rangle_{i_k}.$$
(For later use, we attach some additional condition.) The main result of this subsection is Lemma \ref{method}.

\begin{lemma}\label{sl2case}
Let $\lambda\in P_+$. If $c_{f, v}^{\lambda}.|0 \rangle_i\neq 0$ for some weight vectors $f\in V(\lambda)^{\ast}, v\in V(\lambda)$ and $i\in I$, then
\begin{center}
$\weight f-\weight v\in \mathbb{Z}\alpha_i$ and $\langle \weight f+\weight v, \alpha_i^{\vee}\rangle\leqq 0$.
\end{center}
\end{lemma}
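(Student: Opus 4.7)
The plan is to reduce the question to an explicit computation inside $\mathbb{Q}_{q_i}[SL_2]$ using the Hopf algebra surjection $\phi_i^{\ast}\colon\mathbb{Q}_q[G]\to\mathbb{Q}_{q_i}[SL_2]$ of Definition~\ref{vsigma}. Since the $\mathbb{Q}_q[G]$-module structure on $V_{s_i}$ factors as $\pi|_{q=q_i}\circ\phi_i^{\ast}$, the hypothesis $c_{f,v}^{\lambda}.|0\rangle_i\ne 0$ forces $\phi_i^{\ast}(c_{f,v}^{\lambda}).|0\rangle\ne 0$ in the $\mathbb{Q}_{q_i}[SL_2]$-module $V$. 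I would first decompose the restriction of $V(\lambda)$ along $\phi_i$ into finite-dimensional irreducible $U_{q_i}(\mathfrak{sl}_2)$-submodules $V(n_j)^{(j)}$; each summand is supported on a single $\alpha_i$-string of $V(\lambda)$. Writing $f=\sum_j f^{(j)}$ and $v=\sum_j v^{(j)}$ along this decomposition,
\[
\phi_i^{\ast}(c_{f,v}^{\lambda})=\sum_{j}c_{f^{(j)},\,v^{(j)}}^{V(n_j)^{(j)}}\in\mathbb{Q}_{q_i}[SL_2].
\]

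Assertion (a) then falls out: non-vanishing of a summand requires both $f^{(j)}$ and $v^{(j)}$ to be nonzero, so the weights $\weight f$ and $\weight v$ must both appear in a common irreducible $V(n_j)^{(j)}$; since each such $V(n_j)^{(j)}$ spans a single $\alpha_i$-string, this forces $\weight f-\weight v\in\mathbb{Z}\alpha_i$. For assertion (b) I reduce further to the following $\mathfrak{sl}_2$-statement, using that the $\mathfrak{sl}_2$-weights of $f^{(j)}$ and $v^{(j)}$ inside $V(n_j)^{(j)}$ are $\langle\weight f,\alpha_i^{\vee}\rangle$ and $\langle\weight v,\alpha_i^{\vee}\rangle$ respectively: if $f^{\circ}\in V(n)^{\ast}$ and $v^{\circ}\in V(n)$ are weight vectors of $\mathfrak{sl}_2$-weights $a$ and $b$, and $c^n_{f^{\circ},v^{\circ}}.|0\rangle\ne 0$ in $V$, then $a+b\le 0$.

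To prove the $\mathfrak{sl}_2$-statement, I expand $c^n_{f^{\circ},v^{\circ}}$ as a linear combination of monomials in the generators $c_{11},c_{12},c_{21},c_{22}$; these carry bi-weight sums $+2,0,0,-2$ respectively, and bi-weight is additive under multiplication, so every monomial appearing in the expansion satisfies $\#c_{11}-\#c_{22}=(a+b)/2$. The action of each such monomial on $|0\rangle$ is then traced using the formulas of Definition~\ref{vsigma}: $c_{11}$ lowers the index $m$ of $|m\rangle$ by one (and annihilates $|0\rangle$), $c_{22}$ raises it, and $c_{12},c_{21}$ act diagonally by powers of $q$. Viewing the sequence of indices attained as a lattice path starting at $0$, the hypothesis $a+b>0$ gives $\#c_{11}>\#c_{22}$, so the path must pass through a negative index at some intermediate step; at that step a $c_{11}$ hits $|0\rangle$ and returns $0$, whence the entire monomial annihilates $|0\rangle$ and $c^n_{f^{\circ},v^{\circ}}.|0\rangle=0$. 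The main obstacle is that the expansion of $c^n_{f^{\circ},v^{\circ}}$ is not canonical because of the quadratic relations among the $c_{ij}$'s; however, these relations preserve bi-weight, so after reducing to any chosen normal form the lattice-path argument applies uniformly to each resulting monomial, which is enough to conclude.
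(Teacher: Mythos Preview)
Your argument is correct and follows essentially the same route as the paper: reduce via $\phi_i^{\ast}$ to $\mathbb{Q}_{q_i}[SL_2]$ and analyse the bi-weight of monomials in the $c_{ij}$'s acting on $|0\rangle$. The paper shortcuts your lattice-path step by writing $\phi_i^{\ast}(c_{f,v}^{\lambda})$ directly in the ordered normal form $\sum a_{m_1,m_2,m_3,m_4}\,c_{22}^{m_1}c_{21}^{m_2}c_{12}^{m_3}c_{11}^{m_4}$, so that $\langle\weight f+\weight v,\alpha_i^{\vee}\rangle>0$ forces $m_4>0$ in every term and $c_{11}.|0\rangle=0$ kills each monomial immediately.
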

\begin{proof}
By definition, $c_{f, v}^{\lambda}.|0 \rangle_i=\phi_i^{\ast}(c_{f, v}^{\lambda}).|0 \rangle_i$. (See Definition \ref{vsigma}.) So, the first part of the statement clearly holds. We can write
$$\dis \phi_i^{\ast}(c_{f, v}^{\lambda})=\sum_{m_1, m_2, m_3, m_4\in \mathbb{Z}_{\geqq 0}}a_{m_1, m_2, m_3, m_4}c_{22}^{m_1}c_{21}^{m_2}c_{12}^{m_3}c_{11}^{m_4}$$ with $a_{m_1, m_2, m_3, m_4}\in \mathbb{Q}(q)$. Then, we have 
\begin{align*}
\langle \weight f, \alpha_i^{\vee}\rangle &=-(m_1+m_2)+m_3+m_4\ \text{and},\\
\langle \weight v, \alpha_i^{\vee}\rangle &=-(m_1+m_3)+m_2+m_4 ,
\end{align*}
for any $a_{m_1, m_2, m_3, m_4}\neq 0$. Therefore, if $\langle \weight f+\weight v, \alpha_i^{\vee}\rangle >0$, then $-2m_1+2m_4>0$, in particular, $m_4>0$ for any $a_{m_1, m_2, m_3, m_4}\neq 0$. So, $c_{f, v}^{\lambda}.|0 \rangle_i=0$ by Definition \ref{vsigma}. This contradicts our assumption.
\end{proof}

In the proof of the following lemma, we use the following proposition. This is a direct consequence of \cite[Proposition 31.2.6]{Lusbook}.
\begin{proposition}\label{divide}
Let $\lambda_1, \lambda_2, \lambda_3\in P_+$. Set
\[
\begin{array}{c}
\tilde{V}:=\Set{v\in V(\lambda_3)_{\lambda_2-\lambda_1}|\begin{array}{l}E_i^{(m)}.v=0 \text{\footnotesize\ for\ all\ }m>\langle\lambda_1, \alpha_i^{\vee}\rangle \text{\footnotesize\ and\ } i\in I, \text{\footnotesize\ and}\\
F_i^{(m)}.v=0 \text{\footnotesize\ for\ all\ }m>\langle\lambda_2, \alpha_i^{\vee}\rangle \text{\footnotesize\ and\ }i\in I\end{array}}.
\end{array}
\]
Then, the map
\begin{center}
$\Hom_{U_q(\mathfrak{g})}(V(-w_0\lambda_1)\otimes V(\lambda_2), V(\lambda_3))\to \tilde{V}, f\mapsto f(v_{-\lambda_1}\otimes v_{\lambda_2})$
\end{center}
is a $\mathbb{Q}(q)$-linear isomorphism.
\end{proposition}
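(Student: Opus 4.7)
The plan is to structure the proof in three steps: well-definedness of the evaluation map, injectivity via cyclic generation, and surjectivity via a dimension count. First, I would verify that $f\mapsto f(v_{-\lambda_1}\otimes v_{\lambda_2})$ lands in $\tilde{V}$. The weight of $v_{-\lambda_1}\otimes v_{\lambda_2}$ is $\lambda_2-\lambda_1$, so the image lies in $V(\lambda_3)_{\lambda_2-\lambda_1}$. For the $E_i^{(m)}$-annihilation conditions, I would use $\Delta(E_i)=E_i\otimes 1+K_i\otimes E_i$ and iterate: since $E_i.v_{\lambda_2}=0$, only the left-hand term survives and $\Delta(E_i^{(m)})(v_{-\lambda_1}\otimes v_{\lambda_2})=E_i^{(m)}v_{-\lambda_1}\otimes v_{\lambda_2}$, which vanishes for $m>\langle\lambda_1,\alpha_i^\vee\rangle$ by the $\mathfrak{sl}_2$-string property at the lowest weight vector $v_{-\lambda_1}$. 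The $F_i^{(m)}$-check is symmetric, using $\Delta(F_i)=F_i\otimes K_{-i}+1\otimes F_i$ and $F_i.v_{-\lambda_1}=0$.

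Next, I would establish injectivity by showing that $v_{-\lambda_1}\otimes v_{\lambda_2}$ generates $V(-w_0\lambda_1)\otimes V(\lambda_2)$ as a $U_q(\mathfrak{g})$-module. Let $N$ be the cyclic submodule. The formula $\Delta(F_i)=F_i\otimes K_{-i}+1\otimes F_i$ combined with $F_i.v_{-\lambda_1}=0$ gives $F_i.(v_{-\lambda_1}\otimes v)=v_{-\lambda_1}\otimes F_i.v$, so $v_{-\lambda_1}\otimes V(\lambda_2)\subseteq N$ via $V(\lambda_2)=U_q(\mathfrak{n}^-).v_{\lambda_2}$. Setting $V^\sharp:=\{u\in V(-w_0\lambda_1)\mid u\otimes V(\lambda_2)\subseteq N\}$, the identity $E_iu\otimes w=E_i.(u\otimes w)-K_iu\otimes E_iw$ together with closure of $V^\sharp$ under $K_i$ shows $V^\sharp$ is stable under $E_i$, and since $v_{-\lambda_1}\in V^\sharp$ I conclude $V^\sharp\supseteq U_q(\mathfrak{n}^+).v_{-\lambda_1}=V(-w_0\lambda_1)$ (the last equality holds because the $U_q(\mathfrak{n}^+)$-submodule generated by the lowest weight vector is automatically stable under $F_i$'s via the commutation relation, hence is all of the simple module $V(-w_0\lambda_1)$). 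Therefore $N=V(-w_0\lambda_1)\otimes V(\lambda_2)$, and injectivity of the evaluation map follows.

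Finally, for surjectivity I would reduce to a dimension count. By semisimplicity of $\mathcal{O}_{\mathrm{int}}(\mathfrak{g})$ one has $V(-w_0\lambda_1)\otimes V(\lambda_2)\simeq\bigoplus_{\nu\in P_+}V(\nu)^{\oplus n_\nu}$, so $\dim\Hom_{U_q(\mathfrak{g})}(V(-w_0\lambda_1)\otimes V(\lambda_2),V(\lambda_3))=n_{\lambda_3}$. The proposition then follows from the identity $n_{\lambda_3}=\dim\tilde{V}$, which is precisely the content of \cite[Proposition 31.2.6]{Lusbook}. This dimension matching is the main obstacle: the $E_i$- and $F_i$-annihilation conditions defining $\tilde{V}$ are designed to cut out exactly the ``room'' for images of $v_{-\lambda_1}\otimes v_{\lambda_2}$, and the nontrivial combinatorial content is that the resulting dimension equals the correct tensor-product multiplicity. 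Following the strategy of the paper, I would invoke this as a black box from Lusztig's book rather than reprove it here.
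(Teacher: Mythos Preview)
Your proposal is correct. The paper itself does not give a proof of this proposition; it simply states that the result ``is a direct consequence of \cite[Proposition 31.2.6]{Lusbook}'' and moves on. Your argument is a reasonable unpacking of that citation: the well-definedness and injectivity steps are the standard verifications one would make (and your check that $v_{-\lambda_1}\otimes v_{\lambda_2}$ is a cyclic vector via the $V^\sharp$ argument is clean), and you correctly isolate the nontrivial content---the dimension identity $n_{\lambda_3}=\dim\tilde{V}$---as the part that genuinely requires Lusztig's result. So there is no divergence in approach to comment on; you have simply supplied the details the paper omitted.
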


\begin{lemma}[A method of calculation]\label{method}
Let $\lambda\in P_+$, $w\in W$ and $i\in I$ with $l(s_iw)>l(w)$. Take $b'_+, b'_-\in B(\lambda)$ with $\weight b'_+ -\weight b'_-=n\alpha_i$ for some $n\in \mathbb{Z}$.

Assume that $G_{\lambda}^{\mathrm{low}}(b'_+), G_{\lambda}^{\mathrm{low}}(b'_-)\in V_w(\lambda)$. Then, by Proposition \ref{Demazure} (ii), (iii), there uniquely exist $b_+,b_- \in B(\infty)$ such that $G^{\mathrm{low}}(b_{\pm}).v_{w\lambda}=G_{\lambda}^{\mathrm{low}}(b'_{\pm})$. Set $\dis \lambda_1= \varepsilon_i^{\lambda}(b'_-)\varpi_i+\sum_{j\in I\setminus \{i\}}\varepsilon_j\varpi_j$ with $\varepsilon_j^{\lambda}(b'_-)\leqq \varepsilon_j\in \mathbb{Z}_{\geqq 0}$ for all $j\in I\setminus \{i\}$.

Then, we have
\begin{align*}
&c_{(G_{\lambda}^{\mathrm{low}}(b'_{+}), \cdot)_{\lambda}, G_{\lambda}^{\mathrm{up}}(b'_{-})}^{\lambda}.|0\rangle_i\\
&=\dis (-q_i)^{\varphi_i^{\lambda}(b'_-)}\prod_{s=1}^{-\frac{1}{2}\langle \weight b'_+ +\weight b'_-, \alpha_i^{\vee}\rangle}\mspace{-25mu}(1-q_i^{2s})\sum_{t=0}^{\varphi_i^{\lambda}(b'_-)}(-1)^tq_i^{-\varphi_t^2-t+\varphi_t\langle w\lambda, \alpha_i^{\vee}\rangle+N(\varepsilon_i^{\lambda}(b'_-)-N)}\mspace{-8mu}\\
&\hspace{15pt}\times(G_{\lambda_1}^{\mathrm{up}}((\pi_{\lambda_1}(\ast b_-)), E_i^{(t)}\ast(G_{\varphi_t})F_i^{(N)}.v_{\lambda_1})_{\lambda_1}\left|-\frac{1}{2}\langle \weight b'_+ +\weight b'_-, \alpha_i^{\vee}\rangle\right>_i,
\end{align*}
where $\varphi_t:=\varphi_i^{\lambda}(b'_{-})-t$, $N:=n+\varphi_i^{\lambda}(b'_{-})$ and $G_s:=q_i^{\frac{1}{2}s(s-1)}(_ie')^s(G^{\mathrm{low}}(b_{+}))$.
\end{lemma}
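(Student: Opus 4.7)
The plan is to reduce the computation to a rank-one $\mathfrak{sl}_{2,i}$-calculation and then transfer the resulting pairing in $V(\lambda)$ to one in $V(\lambda_1)$ via the intertwiners supplied by Proposition \ref{divide}. First, by Definition \ref{vsigma}, the action factors through $\phi_i^\ast(c^\lambda_{f,v})\in \mathbb{Q}_{q_i}[SL_2]$, and Lemma \ref{sl2case} forces the output to lie along the single basis vector $|m\rangle_i$ with $m = -\frac{1}{2}\langle \weight b'_+ + \weight b'_-,\alpha_i^\vee\rangle$, producing the ket on the right-hand side. The summation range $0\leq t\leq \varphi_i^\lambda(b'_-)$ and the shift $N = n+\varphi_i^\lambda(b'_-)$ are dictated by the annihilation bound $F_i^{(\varphi_i^\lambda(b'_-)+1)}G^{\mathrm{up}}_\lambda(b'_-)=0$ (Proposition \ref{EFaction}) together with the weight matching $\weight b'_+ - \weight b'_- = n\alpha_i$.

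Next, I would expand $\phi_i^\ast(c^\lambda_{f,v})$ in a PBW-type basis of $U_{q_i}(\mathfrak{sl}_2)$ and, applying the formulas of Definition \ref{vsigma} (cf.\ Example \ref{sl2ex}) monomial-by-monomial, collect the contributions to the coefficient of $|m\rangle_i$. The prefactor $\prod_{s=1}^{m}(1-q_i^{2s})$ arises exactly as in Example \ref{sl2ex} from the iterated action of the $c_{22}$-part on $|0\rangle_i$. Using the adjointness $(F_i.u,v)_\lambda=(u,E_i.v)_\lambda$, each residual pairing is rewritten as $(F_i^{(\bullet)}E_i^{(t)}G^{\mathrm{low}}_\lambda(b'_+),\,G^{\mathrm{up}}_\lambda(b'_-))_\lambda$. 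Invoking Proposition \ref{Demazure}(i)--(iii) together with the assumption $l(s_iw)>l(w)$, one writes $G^{\mathrm{low}}_\lambda(b'_+) = G^{\mathrm{low}}(b_+).v_{w\lambda}$ and pushes the $E_i$'s through $v_{w\lambda}$; since $E_i.v_{w\lambda}=0$, the formulas of Definition \ref{qderiv} give the replacement
\[
E_i^{(t)}G^{\mathrm{low}}(b_+).v_{w\lambda} = q_i^{\frac{1}{2}t(t-1)}(_ie')^t(G^{\mathrm{low}}(b_+)).v_{w\lambda},
\]
which produces the vectors $G_{\varphi_t}$ appearing in the claimed formula.

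The main obstacle, and the technical heart of the argument, is the transfer from $V(\lambda)$ to $V(\lambda_1)$. The hypotheses $\langle\lambda_1,\alpha_i^\vee\rangle = \varepsilon_i^\lambda(b'_-)$ and $\langle\lambda_1,\alpha_j^\vee\rangle \geq \varepsilon_j^\lambda(b'_-)$ for $j\neq i$ are calibrated precisely to invoke Proposition \ref{divide}, which supplies a nonzero $U_q(\mathfrak{g})$-intertwiner between $V(\lambda_1)$ and a suitable tensor factor of $V(\lambda)$. Under this intertwiner, $G^{\mathrm{up}}_\lambda(b'_-)$ is effectively replaced by $F_i^{(N)}.v_{\lambda_1}$, while the role of $(G^{\mathrm{low}}_\lambda(b'_+),\cdot)_\lambda$ becomes the pairing against $G^{\mathrm{up}}_{\lambda_1}(\pi_{\lambda_1}(\ast b_-))$; the appearance of $\ast b_-$ and of $\ast(G_{\varphi_t})$ reflects the fact that $\ast$ is an anti-involution exchanging $_ie'$ and $e'_i$ (Definition \ref{qderiv}, Remark \ref{d_and_dhat}), which is needed when one converts the left action of $G^{\mathrm{low}}(b_+)$ into an action on the opposite side of the pairing in $V(\lambda_1)$.

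Finally, collecting the $q$-powers produced along the way --- from the $\mathfrak{sl}_2$-computation, from commuting $K_h$ past $E_i,F_i$ (Definition \ref{QEA}), from the normalization $q_i^{\frac{1}{2}t(t-1)}$ of $({_ie'})^t$, and from the transfer to $V(\lambda_1)$ --- yields the exponent $-\varphi_t^2-t+\varphi_t\langle w\lambda,\alpha_i^\vee\rangle+N(\varepsilon_i^\lambda(b'_-)-N)$ and the global sign $(-q_i)^{\varphi_i^\lambda(b'_-)}$, completing the claimed formula. I expect the chief delicacy to be the careful bookkeeping of these $q$-powers and signs throughout the transfer, especially because the pairing $(\,,\,)_{\lambda_1}$, the anti-involution $\ast$, and the projection $\pi_{\lambda_1}$ all contribute independent normalizations that must cancel to match the stated expression.
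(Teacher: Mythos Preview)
Your outline has the right ingredients in the wrong places, and one intermediate step is simply false.  The identity
\[
E_i^{(t)}G^{\mathrm{low}}(b_+).v_{w\lambda} \;=\; q_i^{\frac{1}{2}t(t-1)}({_ie'})^t(G^{\mathrm{low}}(b_+)).v_{w\lambda}
\]
does not follow from $E_i.v_{w\lambda}=0$: the commutator formula in Definition~\ref{qderiv} reads $E_iy-yE_i=\frac{{_ie'}(y)K_i-K_{-i}e'_i(y)}{q_i-q_i^{-1}}$, so commuting $E_i$ past $y$ and then hitting $v_{w\lambda}$ produces \emph{both} an ${_ie'}(y)$-term and an $e'_i(y)$-term, and there is no reason for the latter to vanish.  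In the paper the operators $G_{\varphi_t}=q_i^{\frac{1}{2}\varphi_t(\varphi_t-1)}({_ie'})^{\varphi_t}(G^{\mathrm{low}}(b_+))$ arise from a different source altogether: they are the first tensor components of the \emph{coproduct} $\Delta(G^{\mathrm{low}}(b_+))$ (cf.\ the last displayed formulas of Definition~\ref{qderiv}), not from commuting $E_i$'s through a module.

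Relatedly, the order of operations in the paper is the reverse of yours.  Proposition~\ref{divide} is invoked \emph{first}, to produce a surjection $\Psi:V(-w_0\lambda_1)\otimes V(\lambda_2)\twoheadrightarrow V(\lambda)$ sending $v_{-\lambda_1}\otimes v_{\lambda_2}\mapsto G_\lambda^{\mathrm{up}}(b'_-)$; dualizing gives $c_{(G_\lambda^{\mathrm{low}}(b'_+),\cdot)_\lambda, G_\lambda^{\mathrm{up}}(b'_-)}^\lambda=\sum_l c_{f_l^{(1)},v_{-\lambda_1}}^{-w_0\lambda_1}c_{f_l^{(2)},v_{\lambda_2}}^{\lambda_2}$.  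By Proposition~\ref{hw} only the summand with $f_l^{(2)}=f_{\lambda_2}.E_i^{(\varphi_i^\lambda(b'_-))}$ survives on $|0\rangle_i$, and its $V(\lambda_2)$-factor contributes the sign $(-q_i)^{\varphi_i^\lambda(b'_-)}$ via $c_{21}^{\varphi_i^\lambda(b'_-)}$.  The remaining $V(-w_0\lambda_1)^\ast$-factor $f_{l^{(0)}}^{(1)}$ is then computed by writing $(G_\lambda^{\mathrm{low}}(b'_+),\cdot)_\lambda=f_{w\lambda}.\varphi(G^{\mathrm{low}}(b_+))$, applying the right action via $\Delta(\varphi(G^{\mathrm{low}}(b_+)))$, and using $f_{w\lambda}.F_i=0$ (this is where $l(s_iw)>l(w)$ enters) to obtain the explicit alternating sum over $t$.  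This yields a vector proportional to $f_{-\lambda_1}.F_i^{(N)}$, and the factor $\prod_{s=1}^{d}(1-q_i^{2s})$ together with the $q_i^{N(\varepsilon_i^\lambda(b'_-)-N)}$ exponent then comes from Example~\ref{sl2ex} applied to $V(-w_0\lambda_1)$, not $V(\lambda)$.

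Finally, the passage to $V(\lambda_1)$ is not via an ``intertwiner between $V(\lambda_1)$ and a tensor factor of $V(\lambda)$''---there is none---but via the $\omega$-twisted isomorphism $\Phi:V(-w_0\lambda_1)\xrightarrow{\sim}V(\lambda_1)^\omega$ (Claim~\ref{identify} in the paper).  Under $\Phi$ the dual vector $f'_{{l'}^{(0)}}{}^{(1)}$ becomes $(G_{\lambda_1}^{\mathrm{up}}(\pi_{\lambda_1}(\ast b_-)),\omega(\cdot).v_{\lambda_1})_{\lambda_1}$; this $\omega$-twist, combined with $\omega\circ\varphi=\ast$, is the actual reason $\ast(G_{\varphi_t})$ and $\ast b_-$ appear.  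Your description of why $\ast$ enters (``$\ast$ exchanges ${_ie'}$ and $e'_i$'') is not the mechanism used.
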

\begin{remark}
If $\dis \phi_i^{\ast}{\big (}c_{(G_{\lambda}^{\mathrm{low}}(b'_+), \cdot)_{\lambda}, G_{\lambda}^{\mathrm{up}}(b'_-)}^{\lambda}{\big )}\neq 0$, then
$$\dis (G_{\lambda}^{\mathrm{low}}(b'_+), E_i^{(a)}F_i^{(b)}G_{\lambda}^{\mathrm{up}}(b'_-))_{\lambda}\neq 0,$$
for some $a, b\in \mathbb{Z}_{\geqq 0}$ with $a-b=n$. (in particular $b\geqq -n$). Since $F_i^{(b)}G_{\lambda}^{\mathrm{up}}(b'_-)=0$ when $b>\varphi_i^{\lambda}(b'_-)$, we have $\varphi_i^{\lambda}(b'_-)\geqq -n$, that is, $N\geqq 0$ in this case.
\end{remark}
\begin{proof}
Set $\dis \lambda_2=\varphi_i^{\lambda}(b'_-)\varpi_i+\sum_{j\in I\setminus \{i\}}\varphi_j\varpi_j$, where $\varphi_j:=\varepsilon_j+\langle \weight b'_-, \alpha_j^{\vee}\rangle(\geqq \varphi_j^{\lambda}(b'_-))$. Since $E_j^{(m)}G_{\lambda}^{\mathrm{up}}(b'_-)=0$ for all $m>\varepsilon_j^{\lambda}(b'_-)$ and $F_j^{(m)}G_{\lambda}^{\mathrm{up}}(b'_-)=0$ for all $m>\varphi_j^{\lambda}(b'_-)$ by Proposition \ref{EFaction} for all $j\in I$, there exists a surjective homomorphism $\Psi: V(-w_0\lambda_1)\otimes V(\lambda_2)\to V(\lambda)$ of left $U_q(\mathfrak{g})$-modules uniquely determined by $$v_{-\lambda_1}\otimes v_{\lambda_2}\mapsto G_{\lambda}^{\mathrm{up}}(b'_-),$$ by Proposition \ref{divide}. (Surjectivity follows from the irreducibility of $V(\lambda)$.) Then, the dual injective $\mathbb{Q}(q)$-linear map $\Psi^{\ast}: V(\lambda)^{\ast}\to (V(-w_0\lambda_1)\otimes V(\lambda_2))^{\ast}$ given by $f\mapsto f\circ \Psi$ is a homomorphism of right $U_q(\mathfrak{g})$-modules. 

Moreover, $\xi:V(-w_0\lambda_1)^{\ast}\otimes V(\lambda_2)^{\ast}\to (V(-w_0\lambda_1)\otimes V(\lambda_2))^{\ast}$ given by $f_1\otimes f_2\mapsto (v_1\otimes v_2\mapsto \left<f_1, v_1\right>\left<f_2, v_2\right>)$ is an isomorphism of right $U_q(\mathfrak{g})$-modules. Note that $V(-w_0\lambda_1)$ and $V(\lambda_2)$ are finite dimensional.

Set the composition map
$$\iota: V(\lambda)^{\ast}\xrightarrow[ ]{\Psi^{\ast}} (V(-w_0\lambda_1)\otimes V(\lambda_2))^{\ast}\xrightarrow[ ]{\xi^{-1}}V(-w_0\lambda_1)^{\ast}\otimes V(\lambda_2)^{\ast},$$
and write $\iota ((G_{\lambda}^{\mathrm{low}}(b'_+), \cdot)_{\lambda})=\sum_{l}f_l^{(1)}\otimes f_l^{(2)}$ for some weight vectors $f_l^{(1)}, f_l^{(2)}$. Then, we have
$$\dis c_{(G_{\lambda}^{\mathrm{low}}(b'_+), \cdot)_{\lambda}, G_{\lambda}^{\mathrm{up}}(b'_-)}^{\lambda}=\sum_{l}c_{f_l^{(1)}, v_{-\lambda_1}}^{-w_0\lambda_1}c_{f_l^{(2)}, v_{\lambda_2}}^{\lambda_2}.$$
By Proposition \ref{hw} (ii), we have only to consider $l$ with $\weight f_l^{(2)}=s_i\lambda_2=\lambda_2-\varphi_i^{\lambda}(b'_-)\alpha_i$. Since $\dim V(\lambda_2)_{s_i\lambda_2}=1$, we may assume that such a $l$ (denoted by $l^{(0)}$) is uniquely determined and $f_{l^{(0)}}^{(2)}=f_{\lambda_2}.E_i^{(\varphi_i^{\lambda}(b'_-))}$. Therefore, we investigate the element $f_{l^{(0)}}^{(1)}\in V(-w_0\lambda_1)^{\ast}$.

Now, we have the equality
\begin{align*}
(G_{\lambda}^{\mathrm{low}}(b'_+), \cdot)_{\lambda}&=(G^{\mathrm{low}}(b_+).v_{w\lambda}, \cdot)_{\lambda}\\
&=(v_{w\lambda}, \varphi(G^{\mathrm{low}}(b_+))(\cdot))_{\lambda}=f_{w\lambda}.\varphi(G^{\mathrm{low}}(b_+)).
\end{align*}
Therefore, $\iota ((G_{\lambda}^{\mathrm{low}}(b'_+), \cdot)_{\lambda})=\iota (f_{w\lambda}).\varphi(G^{\mathrm{low}}(b_+))$. Set $$\iota (f_{w\lambda})=\sum_{l'} {f'_{l'}}^{(1)}\otimes {f'_{l'}}^{(2)}$$ for some weight vectors ${f'_{l'}}^{(1)}, {f'_{l'}}^{(2)}$. We may assume that there exists a unique $l'$ (denoted by ${l'}^{(0)}$) such that $\weight {f'_{l'}}^{(2)}=\lambda_2$, and also assume that ${f'_{{l'}^{(0)}}}^{\mspace{-25mu}(2)}=f_{\lambda_2}$.
\begin{claim}\label{firstclaim}
The element ${f'_{{l'}^{(0)}}}^{\mspace{-25mu}(1)}$ is given by $$\varphi(G^{\mathrm{low}}(b)).v_{-\lambda_1}\mapsto \delta_{b, b_-}$$ for $b\in B(\infty)$.
\end{claim}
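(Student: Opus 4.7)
The plan is to identify ${f'_{{l'}^{(0)}}}^{\mspace{-25mu}(1)}$ by pairing $\iota(f_{w\lambda})$ against suitable tensors and using the duality between lower and upper canonical bases of $V(\lambda)$. First I would extract the $l'=l'^{(0)}$ component by pairing with a tensor of the form $v\otimes v_{\lambda_2}$: since $\langle f, v_{\lambda_2}\rangle=0$ for any weight vector $f\in V(\lambda_2)^{\ast}$ of weight different from $\lambda_2$, only the term $l'={l'}^{(0)}$ contributes, and the normalization $\langle f_{\lambda_2},v_{\lambda_2}\rangle=1$ gives
\[
\langle {f'_{{l'}^{(0)}}}^{\mspace{-25mu}(1)},\ \varphi(G^{\mathrm{low}}(b)).v_{-\lambda_1}\rangle
=\langle \iota(f_{w\lambda}),\ \varphi(G^{\mathrm{low}}(b)).v_{-\lambda_1}\otimes v_{\lambda_2}\rangle
=\langle f_{w\lambda},\ \Psi(\varphi(G^{\mathrm{low}}(b)).v_{-\lambda_1}\otimes v_{\lambda_2})\rangle.
\]

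Next I would move $\varphi(G^{\mathrm{low}}(b))$ inside $\Psi$. Since $\varphi(G^{\mathrm{low}}(b))\in U_q(\mathfrak{n}^+)$ and $v_{\lambda_2}$ is a highest weight vector, the formulas for $\Delta$ in Definition \ref{qderiv} show that all terms of $\Delta(\varphi(G^{\mathrm{low}}(b)))$ in which the right tensor factor has strictly positive weight act as $0$ on the second tensor slot; hence $\varphi(G^{\mathrm{low}}(b)).(v_{-\lambda_1}\otimes v_{\lambda_2})=\varphi(G^{\mathrm{low}}(b)).v_{-\lambda_1}\otimes v_{\lambda_2}$. Using the $U_q(\mathfrak{g})$-equivariance of $\Psi$ and $\Psi(v_{-\lambda_1}\otimes v_{\lambda_2})=G_\lambda^{\mathrm{up}}(b'_-)$, this reduces the right-hand side to $\langle f_{w\lambda},\varphi(G^{\mathrm{low}}(b)).G_\lambda^{\mathrm{up}}(b'_-)\rangle$.

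Then I would use Notation \ref{normalization} together with $(xu,v)_\lambda=(u,\varphi(x)v)_\lambda$ to check that $f_{w\lambda}=(v_{w\lambda},\cdot)_\lambda$, which turns the pairing into $(v_{w\lambda},\varphi(G^{\mathrm{low}}(b)).G_\lambda^{\mathrm{up}}(b'_-))_\lambda=(G^{\mathrm{low}}(b).v_{w\lambda},G_\lambda^{\mathrm{up}}(b'_-))_\lambda$. By Proposition \ref{Demazure}(ii), $G^{\mathrm{low}}(b).v_{w\lambda}$ is either $0$ or some $G_\lambda^{\mathrm{low}}(b')$; duality of $B^{\mathrm{low}}(\lambda)$ and $B^{\mathrm{up}}(\lambda)$ gives $(G_\lambda^{\mathrm{low}}(b'),G_\lambda^{\mathrm{up}}(b'_-))_\lambda=\delta_{b',b'_-}$, and Proposition \ref{Demazure}(iii) (injectivity of $b\mapsto G^{\mathrm{low}}(b).v_{w\lambda}$ on its nonzero locus) forces $b=b_-$, yielding the desired $\delta_{b,b_-}$.

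The main potentially delicate point is the bookkeeping in Step 1: I must verify that the decomposition $\iota(f_{w\lambda})=\sum_{l'}{f'_{l'}}^{(1)}\otimes{f'_{l'}}^{(2)}$ can be chosen so that ${f'_{{l'}^{(0)}}}^{\mspace{-25mu}(2)}=f_{\lambda_2}$ (justifying the hypothesis already stated in the text) and that no other $l'$ contributes to the weight-$\lambda_2$ slot. This follows because the graded dual of $V(\lambda_2)$ has one-dimensional weight space at $\lambda_2$, so any other summand can be absorbed into the ${l'}^{(0)}$-term by adjusting ${f'_{{l'}^{(0)}}}^{\mspace{-25mu}(1)}$ accordingly; the rest of the argument is essentially a combination of the bilinear form formalism and Proposition \ref{Demazure}, both of which are already available.
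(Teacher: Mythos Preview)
Your proposal is correct and follows essentially the same route as the paper: isolate the ${l'}^{(0)}$-term by pairing against $\varphi(G^{\mathrm{low}}(b)).v_{-\lambda_1}\otimes v_{\lambda_2}$, use that $\varphi(G^{\mathrm{low}}(b)).(v_{-\lambda_1}\otimes v_{\lambda_2})=\varphi(G^{\mathrm{low}}(b)).v_{-\lambda_1}\otimes v_{\lambda_2}$ together with the $U_q(\mathfrak{g})$-equivariance of $\Psi$, and finish with $f_{w\lambda}=(v_{w\lambda},\cdot)_\lambda$ and the duality $(G_\lambda^{\mathrm{low}}(b'),G_\lambda^{\mathrm{up}}(b'_-))_\lambda=\delta_{b',b'_-}$. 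The only cosmetic difference is that the paper first records the intermediate identity $\varphi(G^{\mathrm{low}}(b)).G_\lambda^{\mathrm{up}}(b'_-)=\delta_{b,b_-}v_{w\lambda}$ before running the pairing computation.
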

\begin{proof}[Proof of Claim \ref{firstclaim}]
The element ${f'_{{l'}^{(0)}}}^{\mspace{-25mu}(1)}$ is a weight vector of $V(-w_0\lambda_1)^{\ast}$ of weight $w\lambda-\lambda_2$. Since $-\lambda_1+\lambda_2=\weight b'_-=w\lambda+\weight b_-$, we have $w\lambda-\lambda_2=-\lambda_1-\weight b_-$. Hence, $\langle {f'_{{l'}^{(0)}}}^{\mspace{-25mu}(1)}, v\rangle=0$ for any weight vector $v\in V(-w_0\lambda_1)$ with $\weight v\neq -\lambda_1-\weight b_-$. 

The set $\{ \varphi(G^{\mathrm{low}}(b)).v_{-\lambda_1} \}_{b\in B(\infty)_{\weight b_-}}\setminus \{ 0\}$ forms a basis of $V(-w_0\lambda_1)_{-\lambda_1-\weight b_-}$. So, we consider the images of the elements of this basis under ${f'_{{l'}^{(0)}}}^{\mspace{-25mu}(1)}$.

We claim that $\varphi(G^{\mathrm{low}}(b)).G_{\lambda}^{\mathrm{up}}(b'_-)=\delta_{b, b_-}v_{w\lambda}$ for $b\in B(\infty)_{\weight b_-}$. Indeed, we have $\weight \left(\varphi(G^{\mathrm{low}}(b)).G_{\lambda}^{\mathrm{up}}(b'_-)\right)=w\lambda$ because $\weight b=\weight b_-$. So, we can write $\varphi(G^{\mathrm{low}}(b)).G_{\lambda}^{\mathrm{up}}(b'_-)=c_bv_{w\lambda}$ for some $c_b\in \mathbb{Q}(q)$, and
\begin{center}
$c_b=(v_{w\lambda}, \varphi(G^{\mathrm{low}}(b)).G_{\lambda}^{\mathrm{up}}(b'_-))_{\lambda}=(G^{\mathrm{low}}(b).v_{w\lambda}, G_{\lambda}^{\mathrm{up}}(b'_-))_{\lambda}=\delta_{b, b_-}$,
\end{center}
by Proposition \ref{Demazure} (ii), (iii). 

Hence, for $b\in B(\infty)_{\weight b_-}$, we have
\begin{align*}
\langle{f'_{{l'}^{(0)}}}^{\mspace{-25mu}(1)}, \varphi(G^{\mathrm{low}}(b)).v_{-\lambda_1}\rangle&=\langle{f'_{{l'}^{(0)}}}^{\mspace{-25mu}(1)}, \varphi(G^{\mathrm{low}}(b)).v_{-\lambda_1}\rangle\langle f_{\lambda_2}, v_{\lambda_2}\rangle\\
&=\left<\xi (\sum_{l'} {f'_{l'}}^{(1)}\otimes {f'_{l'}}^{(2)}), (\varphi(G^{\mathrm{low}}(b)).v_{-\lambda_1})\otimes v_{\lambda_2}\right>\\
&=\langle\Psi^{\ast} (f_{w\lambda}), \varphi(G^{\mathrm{low}}(b)).(v_{-\lambda_1}\otimes v_{\lambda_2})\rangle\\
&=\langle f_{w\lambda}, \varphi(G^{\mathrm{low}}(b))G_{\lambda}^{\mathrm{up}}(b'_-)\rangle\\
&=\langle f_{w\lambda}, \delta_{b, b_-}v_{w\lambda}\rangle=\delta_{b, b_-}.
\end{align*}
(In particular, $\varphi(G^{\mathrm{low}}(b_-)).v_{-\lambda_1}\neq 0$.)
\end{proof}
Since $l(s_iw)>l(w)$, we have $\iota (f_{w\lambda}).F_i=\iota (f_{w\lambda}.F_i)=0$. Therefore, we have
\begin{center}
$\dis \iota (f_{w\lambda})=\mspace{-5mu}\sum_{t=0}^{\langle\lambda_2, \alpha_i^{\vee} \rangle(=\varphi_i^{\lambda}(b'_-))}\mspace{-30mu}X_t\otimes f_{\lambda_2}.E_i^{(t)}+\mspace{-20mu}\sum_{\substack{l'\\ \weight {f'_{l'}}^{(2)}\notin \lambda_2+\mathbb{Z}_{\leqq 0}\alpha_i}}\mspace{-20mu}{f'_{l'}}^{(1)}\otimes {f'_{l'}}^{(2)},$
\end{center}
where
\begin{center}
$\dis X_t=(-1)^tq_i^{-t\varphi_i^{\lambda}(b'_-)+t(t-1)}\left[ \begin{array}{c} \varphi_i^{\lambda}(b'_-)\\t \end{array} \right]_i^{-1}{f'_{{l'}^{(0)}}}^{\mspace{-25mu}(1)}.F_i^{(t)}$,
\end{center}
which is easily checked by induction on $t$.
 
Let $$\dis \Delta (G^{\mathrm{low}}(b_+))=\sum_{s=0}^{\infty}G_s\otimes K_{\weight G_s}F_i^{(s)}+\sum_{\substack{G^{(2)}:\text{homogeneous}\\ \weight G^{(2)}\notin \mathbb{Z}_{\leqq 0}\alpha_i}}G^{(1)}\otimes G^{(2)}.$$ By the statement in Definition \ref{qderiv}, this definition of $G_s$ is compatible with the definition of $G_s$ in the statement of Lemma \ref{method}. We prepare one claim here whose proof is straightforward.
\begin{claim}\label{automrel}
For all homogeneous element $x\in U_q(\mathfrak{n}^-)$, we have
$$\varphi(x)=q^{-\frac{1}{2}(\weight x, \weight x)+(\rho, \weight x)}K_{-\weight x}\omega'(x).$$
(See Definition \ref{automorphisms}.)
\end{claim}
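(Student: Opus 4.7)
The plan is to prove the claim by induction on $\height(-\weight x)\in \mathbb{Z}_{\geqq 0}$. Setting $A(\mu) := -\tfrac{1}{2}(\mu,\mu)+(\rho,\mu)$, the asserted identity reads $\varphi(x) = q^{A(\weight x)} K_{-\weight x}\,\omega'(x)$, and I will establish (i) the base case $x=F_i$ and (ii) that the identity is multiplicative, which together suffice since $U_q(\mathfrak{n}^-)$ is generated by $\{F_i\}_{i\in I}$.

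For (i), $\weight F_i=-\alpha_i$, so $A(-\alpha_i)=-\tfrac{1}{2}(\alpha_i,\alpha_i)-(\rho,\alpha_i)=-2\Delta_i$. Using $K_i E_i = q_i^2 E_i K_i$ from Definition \ref{QEA}(ii), the right-hand side becomes
\[
q_i^{-2} K_{\alpha_i}\, \omega'(F_i) \;=\; q_i^{-2} K_i E_i K_{-i} \;=\; q_i^{-2}\cdot q_i^{2}\, E_i\, K_i K_{-i} \;=\; E_i \;=\; \varphi(F_i),
\]
as required.

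For (ii), suppose the identity holds for homogeneous $x,y\in U_q(\mathfrak{n}^-)$ of weights $\mu$ and $\nu$. Since $\varphi$ and $\omega'$ are both $\mathbb{Q}(q)$-algebra anti-involutions, $\varphi(xy)=\varphi(y)\varphi(x)$ and $\omega'(xy)=\omega'(y)\omega'(x)$. Because $\omega'(y)$ is homogeneous of weight $-\nu$, the relation $K_\alpha u = q^{(\alpha,\weight u)} u K_\alpha$ (for homogeneous $u$) yields $\omega'(y)K_{-\mu} = q^{-(\mu,\nu)} K_{-\mu}\,\omega'(y)$. Combining these with the inductive hypothesis gives
\[
\varphi(xy) \;=\; q^{A(\mu)+A(\nu)} K_{-\nu}\,\omega'(y)\,K_{-\mu}\,\omega'(x) \;=\; q^{A(\mu)+A(\nu)-(\mu,\nu)}\,K_{-(\mu+\nu)}\,\omega'(xy).
\]
The proof is then completed by the elementary scalar identity $A(\mu+\nu)=A(\mu)+A(\nu)-(\mu,\nu)$, which follows immediately from expanding $-\tfrac{1}{2}(\mu+\nu,\mu+\nu)$ and the bilinearity of $(\rho,\cdot)$. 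Induction on $\height(-\weight x)$ then covers every homogeneous element of $U_q(\mathfrak{n}^-)$. There is no genuine obstacle: the entire argument is a direct verification from the defining formulas for $\varphi$ and $\omega'$ together with the $U^0$-commutation relations, consistent with the author's remark that the proof is straightforward.
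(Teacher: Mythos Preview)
Your proof is correct and is exactly the kind of direct verification the paper has in mind when it says the proof is straightforward: check the identity on each generator $F_i$ and then verify that the two sides transform the same way under multiplication, using the commutation relation $K_\alpha u = q^{(\alpha,\weight u)} u K_\alpha$ together with the quadratic identity $A(\mu+\nu)=A(\mu)+A(\nu)-(\mu,\nu)$. One very minor remark: strictly speaking the induction gives the identity for monomials $F_{i_1}\cdots F_{i_l}$, and you then need $\mathbb{Q}(q)$-linearity of both sides to conclude for an arbitrary homogeneous element; this is immediate but worth a word since neither $\varphi$ nor $x\mapsto K_{-\weight x}\omega'(x)$ is an algebra homomorphism.
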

By Definition \ref{automorphisms} and Claim \ref{automrel}, 
\begin{align*}
\Delta(\varphi(G^{\mathrm{low}}(b_+)))&=q^{(\ast)}\Delta(K_{-\weight b_+}\omega'(G^{\mathrm{low}}(b_+)))\\
&=q^{(\ast)}(K_{-\weight b_+}\otimes K_{-\weight b_+})(\omega'\otimes\omega')(\Delta(G^{\mathrm{low}}(b_+)))\\
&=q^{(\ast)}\left(\sum_{s=0}^{\infty}K_{-\weight b_+}\omega'(G_s)\otimes K_{-\weight b_+}\omega'(K_{\weight G_s}F_i^{(s)})\right.\\
&\hspace{40pt}\left.+\sum_{\substack{G^{(2)}:\text{homogeneous}\\ \weight G^{(2)}\notin \mathbb{Z}_{\leqq 0}\alpha_i}}\mspace{-15mu}K_{-\weight b_+}\omega'(G^{(1)})\otimes K_{-\weight b_+}\omega'(G^{(2)})\right)\\
&=\sum_{s=0}^{\infty}K_{s\alpha_i}\varphi(G_s)\otimes E_i^{(s)}+\sum_{\substack{{G'}^{(2)}:\text{homogeneous}\\ \weight {G'}^{(2)}\notin \mathbb{Z}_{\geqq 0}\alpha_i}}\mspace{-15mu}{G'}^{(1)}\otimes {G'}^{(2)},
\end{align*}
where $\dis (\ast)=-\frac{1}{2}(\weight b_+, \weight b_+)+(\rho, \weight b_+)$.
Combining the above results, we have
\begin{align*}
\iota ((G_{\lambda}^{\mathrm{low}}(b'_+), \cdot)_{\lambda})&=\iota (f_{w\lambda}).\varphi(G^{\mathrm{low}}(b_+))\\
&=\left(\sum_{t=0}^{\varphi_i^{\lambda}(b'_-)}X_t\otimes f_{\lambda_2}.E_i^{(t)}+\sum_{\substack{l'\\ \weight {f'_{l'}}^{(2)}\notin \lambda_2+\mathbb{Z}_{\leqq 0}\alpha_i}} {f'_{l'}}^{(1)}\otimes {f'_{l'}}^{(2)}\right)\\
&\hspace{35pt}.\left(\sum_{s=0}^{\infty}K_{s\alpha_i}\varphi(G_s)\otimes E_i^{(s)}+\sum_{\substack{{G'}^{(2)}:\text{homogeneous}\\ \weight {G'}^{(2)}\notin \mathbb{Z}_{\geqq 0}\alpha_i}}\mspace{-15mu}{G'}^{(1)}\otimes {G'}^{(2)}\right).
\end{align*}
Therefore, we obtain 
\begin{align*}
f_{l^{(0)}}^{(1)}&=\sum_{t=0}^{\varphi_i^{\lambda}(b'_-)}\left[ \begin{array}{c} \varphi_i^{\lambda}(b'_-)\\t \end{array} \right]_iX_tK_{\varphi_t\alpha_i}\varphi(G_{\varphi_t})\\
&=\sum_{t=0}^{\varphi_i^{\lambda}(b'_-)}(-1)^tq_i^{-t\varphi_i^{\lambda}(b'_-)+t(t-1)+\varphi_t\langle\weight {f'_{{l'}^{(0)}}}^{\mspace{-25mu}(1)}+t\alpha_i, \alpha_i^{\vee}\rangle}{f'_{{l'}^{(0)}}}^{\mspace{-25mu}(1)}.F_i^{(t)}\varphi(G_{\varphi_t})\\
&=\sum_{t=0}^{\varphi_i^{\lambda}(b'_-)}(-1)^tq_i^{-\varphi_t^2-t+\varphi_t\langle w\lambda, \alpha_i^{\vee}\rangle}{f'_{{l'}^{(0)}}}^{\mspace{-25mu}(1)}.F_i^{(t)}\varphi(G_{\varphi_t}),
\end{align*}
where $\varphi_t:=\varphi_i^{\lambda}(b'_-)-t$. Note that $\weight {f'_{{l'}^{(0)}}}^{\mspace{-25mu}(1)}=w\lambda-\lambda_2$. Set $Y_t:={f'_{{l'}^{(0)}}}^{\mspace{-25mu}(1)}.F_i^{(t)}\varphi(G_{\varphi_t})$. 

By the way, $\dis \phi_i^{\ast}\left( c_{f_{l^{(0)}}^{(2)}, v_{\lambda_2}}^{\lambda_2}\right)=c_{21}^{\varphi_i^{\lambda}(b'_-)}$. Hence, 
\begin{align*}
&c_{(G_{\lambda}^{\mathrm{low}}(b'_+), \cdot)_{\lambda}, G_{\lambda}^{\mathrm{up}}(b'_-)}^{\lambda}.|0\rangle_i\\
&=\left(\sum_{t=0}^{\varphi_i^{\lambda}(b'_-)}(-1)^tq_i^{-\varphi_t^2-t+\varphi_t\langle w\lambda, \alpha_i^{\vee}\rangle}c_{Y_t, v_{-\lambda_1}}^{-w_0\lambda_1}\right)c_{f_{l^{(0)}}^{(2)}, v_{\lambda_2}}^{\lambda_2}.|0\rangle_i\\
&=(-q_i)^{\varphi_i^{\lambda}(b'_-)}\sum_{t=0}^{\varphi_i^{\lambda}(b'_-)}(-1)^tq_i^{-\varphi_t^2-t+\varphi_t\langle w\lambda, \alpha_i^{\vee}\rangle}c_{Y_t, v_{-\lambda_1}}^{-w_0\lambda_1}.|0\rangle_i.
\end{align*}
Fix $t\in \{0,\dots, \varphi_i^{\lambda}(b'_-) \}$. Since $Y_t\in V(-w_0\lambda_1)^{\ast}$ and
\begin{align*}
\weight Y_t&=\weight {f'_{{l'}^{(0)}}}^{\mspace{-25mu}(1)}+t\alpha_i+\weight G_{\varphi_t}\\
&=(-\lambda_1-\weight b_-)+t\alpha_i+(\weight b_++\varphi_t\alpha_i)=-\lambda_1+(n+\varphi_i^{\lambda}(b'_-))\alpha_i,
\end{align*}
we have $Y_t=c_tf_{-\lambda_1}.F_i^{(N)}$ for some $c_t\in \mathbb{Q}(q)$. By the way,
\begin{align*}
&\langle {f'_{{l'}^{(0)}}}^{\mspace{-25mu}(1)}, F_i^{(t)}\varphi(G_{\varphi_t})E_i^{(N)}.v_{-\lambda_1}\rangle\\
&=\langle Y_t, E_i^{(N)}.v_{-\lambda_1}\rangle\\
&=c_t\langle f_{-\lambda_1}, F_i^{(N)}E_i^{(N)}.v_{-\lambda_1}\rangle=\dis c_t\left[ \begin{array}{c} \varepsilon_i^{\lambda}(b'_-)\\N \end{array} \right]_i\langle f_{-\lambda_1}, v_{-\lambda_1}\rangle=c_t\left[ \begin{array}{c} \varepsilon_i^{\lambda}(b'_-)\\N \end{array} \right]_i.
\end{align*} 
Note that this equality is valid even when $\varepsilon_i^{\lambda}(b'_-)<N$. Moreover, $f_{-\lambda_1}.F_i^{(N)}=f_{-\lambda_1}.F_i^{(\varepsilon_i^{\lambda}(b'_-))}E_i^{(\varepsilon_i^{\lambda}(b'_-)-N)}$. Hence, $$\phi_i^{\ast}\left( c_{f_{-\lambda_1}.F_i^{(N)}, v_{-\lambda_1}}^{-w_0\lambda_1}\right)=c_{f_{\varepsilon_i^{\lambda}(b'_-)}, v_{-\varepsilon_i^{\lambda}(b'_-)}}^{\varepsilon_i^{\lambda}(b'_-)}.E^{(\varepsilon_i^{\lambda}(b'_-)-N)}.$$ (We used the notation in  Example \ref{sl2ex}.) Therefore, by Example \ref{sl2ex}, we obtain
\begin{align*}
&c_{Y_t, v_{-\lambda_1}}^{-w_0\lambda_1}.|0\rangle_i\\
&=c_t\phi_i^{\ast}\left( c_{f_{-\lambda_1}.F_i^{(N)}, v_{-\lambda_1}}^{-w_0\lambda_1}\right) .|0\rangle_i\\
&=c_tq_i^{N(\varepsilon_i^{\lambda}(b'_-)-N)}\left[ \begin{array}{c} \varepsilon_i^{\lambda}(b'_-)\\N \end{array} \right]_i\mspace{-8mu}\prod_{s=1}^{\varepsilon_i^{\lambda}(b'_-)-N}\mspace{-8mu}(1-q_i^{2s})\ket{\varepsilon_i^{\lambda}(b'_-)-N}_i\\
&=q_i^{N(\varepsilon_i^{\lambda}(b'_-)-N)}\langle {f'_{{l'}^{(0)}}}^{\mspace{-25mu}(1)}, F_i^{(t)}\varphi(G_{\varphi_t})E_i^{(N)}.v_{-\lambda_1}\rangle\mspace{-8mu}\prod_{s=1}^{\varepsilon_i^{\lambda}(b'_-)-N}\mspace{-8mu}(1-q_i^{2s})\ket{\varepsilon_i^{\lambda}(b'_-)-N}_i.
\end{align*}
From now on, we examine $\langle {f'_{{l'}^{(0)}}}^{\mspace{-25mu}(1)}, F_i^{(t)}\varphi(G_{\varphi_t})E_i^{(N)}.v_{-\lambda_1}\rangle$.
\begin{claim}\label{identify}
$c_{{f'_{{l'}^{(0)}}}^{\mspace{-25mu}(1)}, v_{-\lambda_1}}^{-w_0\lambda_1}=(G_{\lambda_1}^{\mathrm{up}}((\pi_{\lambda_1}(\ast b_-)),\omega(\cdot ).v_{\lambda_1})_{\lambda_1}$ in $\mathbb{Q}_q[G]$.
\end{claim}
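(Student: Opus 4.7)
The plan is to compare both sides of the claim as elements of $U_q(\mathfrak{g})^{\ast}$ by passing through a canonical $\omega$-twist isomorphism between $V(-w_0\lambda_1)$ and $V(\lambda_1)$. Since $\omega$ is an involutive algebra automorphism of $U_q(\mathfrak{g})$ exchanging $E_i\leftrightarrow F_i$ and $K_h\leftrightarrow K_{-h}$, the twisted module $V(\lambda_1)^{\omega}$ (same underlying vector space, action $x\star v:=\omega(x).v$) is a simple integrable highest weight module with highest weight $-w_0\lambda_1$ and highest weight vector $v_{w_0\lambda_1}$. There is therefore a unique $U_q(\mathfrak{g})$-module isomorphism $\iota:V(-w_0\lambda_1)\to V(\lambda_1)^{\omega}$ sending $v_{-w_0\lambda_1}$ to $v_{w_0\lambda_1}$; one checks via the divided-power formulas of Notation \ref{normalization} that $\iota(v_{-\lambda_1})=v_{\lambda_1}$.

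Using $\iota$, for any $x\in U_q(\mathfrak{g})$ we have $x.v_{-\lambda_1}=\iota^{-1}(\omega(x).v_{\lambda_1})$, so the left-hand side of the claim becomes $\langle \tilde{f},\omega(x).v_{\lambda_1}\rangle$ with $\tilde{f}:={f'_{{l'}^{(0)}}}^{\mspace{-25mu}(1)}\circ\iota^{-1}\in V(\lambda_1)^{\ast}$. It therefore suffices to show the equality $\tilde{f}=(G_{\lambda_1}^{\mathrm{up}}(\pi_{\lambda_1}(\ast b_{-})),\cdot)_{\lambda_1}$ as functionals on $V(\lambda_1)$. By the duality of the canonical and dual canonical bases of $V(\lambda_1)$ under $(\cdot,\cdot)_{\lambda_1}$, this reduces to verifying that $\tilde{f}(G_{\lambda_1}^{\mathrm{low}}(b'))=\delta_{b',\pi_{\lambda_1}(\ast b_{-})}$ for every $b'\in B(\lambda_1)$.

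To check the latter, use Proposition \ref{subcrystal}(ii) and (v) to write $b'=\pi_{\lambda_1}(\tilde{b})$ for the unique $\tilde{b}\in B(\infty)$ with $\pi_{\lambda_1}(\tilde{b})\neq 0$, so that $G_{\lambda_1}^{\mathrm{low}}(b')=G^{\mathrm{low}}(\tilde{b}).v_{\lambda_1}$. Using $\omega=\varphi\circ\ast$ together with $\ast(G^{\mathrm{low}}(b))=G^{\mathrm{low}}(\ast b)$, we compute $\iota^{-1}(G^{\mathrm{low}}(\tilde{b}).v_{\lambda_1})=\omega(G^{\mathrm{low}}(\tilde{b})).v_{-\lambda_1}=\varphi(G^{\mathrm{low}}(\ast\tilde{b})).v_{-\lambda_1}$, and Claim \ref{firstclaim} yields $\tilde{f}(G_{\lambda_1}^{\mathrm{low}}(b'))=\delta_{\ast\tilde{b},b_{-}}=\delta_{\tilde{b},\ast b_{-}}$. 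Since $\pi_{\lambda_1}(\ast b_{-})\neq 0$ (a fact already recorded at the end of the proof of Claim \ref{firstclaim}, where $\varphi(G^{\mathrm{low}}(b_{-})).v_{-\lambda_1}\neq 0$ is observed) and $\pi_{\lambda_1}$ is injective on its domain of nonvanishing by Proposition \ref{subcrystal}(ii), the two Kronecker deltas coincide.

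The main obstacle I expect is the verification $\iota(v_{-\lambda_1})=v_{\lambda_1}$ with the normalization fixed by $\iota(v_{-w_0\lambda_1})=v_{w_0\lambda_1}$. Concretely, if one writes $v_{-\lambda_1}=F_{i_N}^{(a_N)}\cdots F_{i_1}^{(a_1)}v_{-w_0\lambda_1}$ via a reduced word of $w_0$ as in Notation \ref{normalization}, one must check $E_{i_N}^{(a_N)}\cdots E_{i_1}^{(a_1)}v_{w_0\lambda_1}=v_{\lambda_1}$ in $V(\lambda_1)$. This is a routine but careful divided-power computation relying on the standard theory of extremal weight vectors in integrable highest weight modules, but it must be performed explicitly to pin down the scalar factor.
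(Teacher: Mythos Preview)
Your proposal is correct and follows essentially the same route as the paper: both use the $\omega$-twist isomorphism $V(-w_0\lambda_1)\simeq V(\lambda_1)^{\omega}$, then identify the functional via Claim~\ref{firstclaim} and the duality of canonical and dual canonical bases. The only difference is that the paper normalizes the isomorphism directly by $v_{-\lambda_1}\mapsto v_{\lambda_1}$ (at the lowest weight), whereas you normalize at the highest weight vectors and must then verify $\iota(v_{-\lambda_1})=v_{\lambda_1}$; by choosing the paper's normalization your ``main obstacle'' simply disappears, so the paper's version is slightly cleaner.
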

\begin{proof}[Proof of Claim \ref{identify}]
Let $V(\lambda_1)^{\omega}$ be a left $U_q(\mathfrak{g})$-module such that $V(\lambda_1)^{\omega}=V(\lambda_1)$ as a vactor space and the $U_q(\mathfrak{g})$-module structure on $V(\lambda_1)^{\omega}$ is defined by $x.v:=\omega(x).v$ for any $x\in U_q(\mathfrak{g})$ and $v\in  V(\lambda_1)^{\omega}$ (where the right-hand side is defined by the usual action of $U_q(\mathfrak{g})$ on $V(\lambda_1)$ via the identification of the underlying vector spaces $V(\lambda_1)^{\omega}=V(\lambda_1)$).

Then, there exists a $U_q(\mathfrak{g})$-module isomorphism $\Phi: V(-w_0\lambda_1)\to V(\lambda_1)^{\omega}$ given by $v_{-\lambda_1}\mapsto v_{\lambda_1}$. Note that the vector $v_{\lambda_1}$ is of weight $-\lambda_1$ in $V(\lambda_1)^{\omega}$. Moreover, we have $(u, v)_{-w_0\lambda_1}=(\Phi(u), \Phi(v))_{\lambda_1}$ for any $u, v\in V(-w_0\lambda_1)$, because $(\Phi(v_{-\lambda_1}), \Phi(v_{-\lambda_1}))_{\lambda_1}=(v_{\lambda_1}, v_{\lambda_1})_{\lambda_1}=1(=(v_{-\lambda_1}, v_{-\lambda_1})_{-w_0\lambda_1})$ and 
\begin{align*}
(\Phi(x.u), \Phi(v))_{\lambda_1}&=(\omega(x).\Phi(u), \Phi(v))_{\lambda_1}\\
&=(\Phi(u), \varphi(\omega(x)).\Phi(v))_{\lambda_1}\\
&=(\Phi(u), \omega(\varphi(x)).\Phi(v))_{\lambda_1}=(\Phi(u), \Phi(\varphi (x).v))_{\lambda_1}. 
\end{align*}
Let $\tilde{G}$ be the unique element of $V(-w_0\lambda_1)$ satisfying ${f'_{{l'}^{(0)}}}^{\mspace{-25mu}(1)}=(\tilde{G},\ \cdot\ )_{-w_0\lambda_1}\in V(-w_0\lambda_1)^{\ast}$. Then, by the above argument, ${f'_{{l'}^{(0)}}}^{\mspace{-25mu}(1)}=(\Phi(\tilde{G}), \Phi(\cdot))_{\lambda_1}$. By the way, 
\begin{align*}
\Phi(\varphi(G^{\mathrm{low}}(b)).v_{-\lambda_1})&=\omega(\varphi(G^{\mathrm{low}}(b))).\Phi(v_{-\lambda_1})\\
&=\ast(G^{\mathrm{low}}(b)).v_{\lambda_1}=G^{\mathrm{low}}(\ast b).v_{\lambda_1}=G_{\lambda_1}^{\mathrm{low}}(\pi_{\lambda_1}(\ast b)).
\end{align*}
Combining this with Claim \ref{firstclaim},
$$(\Phi(\tilde{G}), G_{\lambda_1}^{\mathrm{low}}(\pi_{\lambda_1}(\ast b)))_{\lambda_1}=\langle {f'_{{l'}^{(0)}}}^{\mspace{-25mu}(1)}, \varphi(G^{\mathrm{low}}(b)).v_{-\lambda_1}\rangle=\delta_{b, b_-},$$
for $b\in B(\infty)$. So, $\Phi(\tilde{G})=G_{\lambda_1}^{\mathrm{up}}(\pi_{\lambda_1}(\ast b_-))$. Therefore, 
\begin{center}
$c_{{f'_{{l'}^{(0)}}}^{\mspace{-25mu}(1)}, v_{-\lambda_1}}^{-w_0\lambda_1}=(G_{\lambda_1}^{\mathrm{up}}(\pi_{\lambda_1}(\ast b_-)), \Phi((\cdot).v_{-\lambda_1}))_{\lambda_1}=(G_{\lambda_1}^{\mathrm{up}}(\pi_{\lambda_1}(\ast b_-)), \omega(\cdot).v_{\lambda_1})_{\lambda_1}$.
\end{center}
\end{proof}

By Claim \ref{identify},
\begin{align*}
\langle {f'_{{l'}^{(0)}}}^{\mspace{-25mu}(1)}, F_i^{(t)}\varphi(G_{\varphi_t})E_i^{(N)}.v_{-\lambda_1}\rangle&=(G_{\lambda_1}^{\mathrm{up}}((\pi_{\lambda_1}(\ast b_-)),\omega(F_i^{(t)}\varphi(G_{\varphi_t})E_i^{(N)}).v_{\lambda_1})_{\lambda_1}\\
&=(G_{\lambda_1}^{\mathrm{up}}((\pi_{\lambda_1}(\ast b_-)), E_i^{(t)}\ast(G_{\varphi_t})F_i^{(N)}.v_{\lambda_1})_{\lambda_1}.
\end{align*}
Combining all the above results, we obtain
\begin{align*}
&c_{(G_{\lambda}^{\mathrm{low}}(b'_+), \cdot)_{\lambda}, G_{\lambda}^{\mathrm{up}}(b'_-)}^{\lambda}.|0\rangle_i\\
&=(-q_i)^{\varphi_i^{\lambda}(b'_-)}\sum_{t=0}^{\varphi_i^{\lambda}(b'_-)}(-1)^tq_i^{(\ast\ast)}\mspace{-8mu}\prod_{s=1}^{\varepsilon_i^{\lambda}(b'_-)-N}\mspace{-8mu}(1-q_i^{2s})\\
&\hspace{15pt}\times\langle {f'_{{l'}^{(0)}}}^{\mspace{-25mu}(1)}, F_i^{(t)}\varphi(G_{\varphi_t})E_i^{(N)}.v_{-\lambda_1}\rangle\ket{\varepsilon_i^{\lambda}(b'_-)-N}_i.\\
&=(-q_i)^{\varphi_i^{\lambda}(b'_-)}\sum_{t=0}^{\varphi_i^{\lambda}(b'_-)}(-1)^tq_i^{(\ast\ast)}\mspace{-8mu}\prod_{s=1}^{\varepsilon_i^{\lambda}(b'_-)-N}\mspace{-8mu}(1-q_i^{2s})\\
&\hspace{15pt}\times(G_{\lambda_1}^{\mathrm{up}}((\pi_{\lambda_1}(\ast b_-)), E_i^{(t)}\ast(G_{\varphi_t})F_i^{(N)}.v_{\lambda_1})_{\lambda_1}|\varepsilon_i^{\lambda}(b'_-)-N\rangle_i.
\end{align*}
where $(\ast\ast)=-\varphi_t^2-t+\varphi_t\langle w\lambda, \alpha_i^{\vee}\rangle+N(\varepsilon_i^{\lambda}(b'_-)-N)$. Moreover,
\begin{align*}
\varepsilon_i^{\lambda}(b'_-)-N&=\varepsilon_i^{\lambda}(b'_-)-\varphi_i^{\lambda}(b'_-)-n\\
&=\dis -\langle\weight b'_-, \alpha_i^{\vee}\rangle-\frac{1}{2}\langle \weight b'_+-\weight b'_-, \alpha_i^{\vee}\rangle\\
&=-\frac{1}{2}\langle \weight b'_++\weight b'_-, \alpha_i^{\vee}\rangle.
\end{align*}
So, Lemma \ref{method} follows.
\end{proof}
\subsubsection{Rough estimates}
In this subsection, we roughly compute the terms 
$$c_{(G_{\lambda_0}^{\mathrm{low}}(b'_{k-1}), \cdot)_{\lambda_0}, G_{\lambda_0}^{\mathrm{up}}(b'_k)}^{\lambda_0}.|0 \rangle_{i_k}  (k=1,\dots, l(w_0))$$
appearing in the right-hand side of (\ref{shiki}). The main result of this subsection is Lemma \ref{roughestlem}.
\begin{lemma}\label{numest}
In any non-zero summand of the right-hand side of (\ref{shiki}), we have

\begin{itemize}
\item[{\nor (i)}] $\weight b'_k-s_{i_k}\cdots s_{i_1}\lambda_0\in A$ for $k=0,\dots, l(w_0)$,
\item[{\nor (ii)}] $\weight b'_{k-1}-\weight b'_k=n_k\alpha_{i_k}$ with $n_k>\gamma L$ for $k=1,\dots, l(w_0)$,
\item[{\nor (iii)}] $\varphi_{i_k}^{\lambda_0}(b'_k)\leqq -\height (\weight b_0)$ for $k=1,\dots, l(w_0)$.
\end{itemize}
\end{lemma}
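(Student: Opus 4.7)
I would prove (i), (ii), (iii) by induction on $k$, combining three ingredients: Lemma~\ref{sl2case} applied to the $k$-th tensor factor, Proposition~\ref{hw}~(II) applied to the tail $V_{w^{(k)}}$ with $w^{(k)}:=s_{i_{k+1}}\cdots s_{i_{l(w_0)}}$ (so that $w^{(k)}w_0=s_{i_k}\cdots s_{i_1}$), and Lemma~\ref{method} for the sharpest bound.

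For (i), I would set $\mu^{(k)}:=s_{i_k}\cdots s_{i_1}\lambda_0$ and $\delta_k:=\weight b'_k-\mu^{(k)}$. The base case $\delta_0=\weight b_0\in A$ is immediate. Proposition~\ref{hw}~(II) applied to the tail yields $\weight b'_k\leq \mu^{(k)}$, i.e.\ $\delta_k\in Q_-$. Lemma~\ref{sl2case} gives $\weight b'_{k-1}-\weight b'_k=n_k\alpha_{i_k}$, and together with $\mu^{(k-1)}-\mu^{(k)}=\langle\mu^{(k-1)},\alpha_{i_k}^{\vee}\rangle\alpha_{i_k}$ this produces the recursion
\[
\delta_k=\delta_{k-1}+(\langle\mu^{(k-1)},\alpha_{i_k}^{\vee}\rangle-n_k)\alpha_{i_k}.
\]
Writing inductively $\delta_{k-1}=w\beta+\gamma\in A$, I would absorb the $\alpha_{i_k}$-correction into $\gamma$ when it is non-negative, or replace $w$ by $s_{i_k}w$ (using $s_{i_k}\gamma-\gamma\in\mathbb{Z}\alpha_{i_k}$) when it is negative, preserving membership in $A$.

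For (ii), I would combine (i) with the computation $\langle\mu^{(k-1)},\alpha_{i_k}^{\vee}\rangle=\langle\lambda_0,s_{i_1}\cdots s_{i_{k-1}}\alpha_{i_k}^{\vee}\rangle=2\gamma L\cdot\height(\beta_{{\bf i}}^{k,\vee})\geq 2\gamma L$, since $\beta_{{\bf i}}^k$ is a positive root and $\lambda_0=2\gamma L\rho$. By (i), the $\alpha_{i_k}$-coefficients of $\delta_{k-1},\delta_k$ are bounded in absolute value by $h_A$, so $|n_k-\langle\mu^{(k-1)},\alpha_{i_k}^{\vee}\rangle|\leq 2h_A$. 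Since $L\geq l_4+1$ and $\gamma\geq 2$ force $\gamma L>2h_A$, this gives $n_k>\gamma L$.

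For (iii), I would apply Lemma~\ref{method} to the $k$-th factor with $w:=s_{i_{k-1}}\cdots s_{i_1}$, which satisfies $l(s_{i_k}w)>l(w)$ by the reducedness of ${\bf i}$. An inductive argument via Proposition~\ref{Demazure} should place both $G_{\lambda_0}^{\mathrm{low}}(b'_{k-1})$ and $G_{\lambda_0}^{\mathrm{low}}(b'_k)$ in $V_w(\lambda_0)$, and the corresponding preimages $b_{k-1},b_k\in B(\infty)$ would then have weights $\weight b_{k-1}=\delta_{k-1}\in A$ and $\weight b_k=\delta_k-\langle\mu^{(k-1)},\alpha_{i_k}^{\vee}\rangle\alpha_{i_k}$. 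The Lemma~\ref{method} formula involves $({}_{i_k}e')^{\varphi_t}G^{\mathrm{low}}(b_{k-1})$, which vanishes whenever $\varphi_t$ exceeds the $-\alpha_{i_k}$-coefficient of $\weight b_{k-1}=\delta_{k-1}$; combining this with the weight-matching condition in the inner product and the detailed shape of $\delta_{k-1}$ from (i) would yield the sharp bound $\varphi_{i_k}^{\lambda_0}(b'_k)\leq -\height(\weight b_0)$.

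The main obstacle will be part (iii): verifying inductively the Demazure containment needed to apply Lemma~\ref{method} with this specific $w$ (one cannot take $w_0$, since $l(s_{i_k}w_0)<l(w_0)$ violates the length condition), and sharpening the straightforward estimate $|\mathrm{coef}_{i_k}(\delta_{k-1})|\leq h_A$ into the claimed $\leq -\height(\weight b_0)$, which requires tracking the contribution of $\weight b_0$ itself in the decomposition $\delta_{k-1}=w\beta+\gamma$ from (i) rather than just its membership in $A$.
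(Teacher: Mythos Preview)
Your approach to (i) and especially (iii) has genuine gaps.

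\textbf{Part (i).} Your case analysis ``absorb the $\alpha_{i_k}$-correction into $\gamma$ when non-negative, or replace $w$ by $s_{i_k}w$ when negative'' does not clearly work. In the second case you would write $\delta_k=s_{i_k}w\beta+\bigl(\gamma+(\langle w\beta,\alpha_{i_k}^{\vee}\rangle+c)\alpha_{i_k}\bigr)$, and there is no reason the new $Q_+$-part remains in $Q_+$. The paper avoids this entirely by proving the \emph{stronger} statement
\[
\weight b_0 \;\leq\; s_{i_1}\cdots s_{i_k}(\weight b'_k)-\lambda_0 \;\leq\; 0
\]
directly. The upper bound is just that $s_{i_1}\cdots s_{i_k}(\weight b'_k)$ is a weight of $V(\lambda_0)$. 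For the lower bound, Lemma~\ref{sl2case} gives $m_{k+1}:=n_{k+1}-\langle\weight b'_k,\alpha_{i_{k+1}}^{\vee}\rangle\geq 0$, whence $\weight b'_{k+1}=s_{i_{k+1}}(\weight b'_k+m_{k+1}\alpha_{i_{k+1}})$ and so
\[
s_{i_1}\cdots s_{i_{k+1}}(\weight b'_{k+1})-\lambda_0 \;=\; \bigl(s_{i_1}\cdots s_{i_k}(\weight b'_k)-\lambda_0\bigr)+m_{k+1}\,\beta_{{\bf i}}^{k+1}\;\geq\;\weight b_0,
\]
since $\beta_{{\bf i}}^{k+1}\in\Delta_+$. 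Thus $\delta_k=(s_{i_k}\cdots s_{i_1})\cdot(\text{element of the box})$, which together with $\delta_k\in Q_-$ (Proposition~\ref{hw}~(II)) gives $\delta_k\in A$.

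\textbf{Part (iii).} Your plan is both circular and unnecessarily heavy. You propose to invoke Lemma~\ref{method}, which requires $G_{\lambda_0}^{\mathrm{low}}(b'_{k-1}),G_{\lambda_0}^{\mathrm{low}}(b'_k)\in V_{s_{i_{k-1}}\cdots s_{i_1}}(\lambda_0)$; but this Demazure containment is precisely what is established later in Theorem~\ref{maincalc}, whose proof uses the present lemma. Moreover, the bound you extract from the vanishing of $({}_{i_k}e')^{\varphi_t}G^{\mathrm{low}}(b_{k-1})$ would only give $\varphi_{i_k}^{\lambda_0}(b'_k)\leq h_A$, not the sharp $-\height(\weight b_0)$.

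The paper's argument for (iii) is elementary and uses neither Lemma~\ref{method} nor any Demazure module. Since $(\tilde{f}_{i_k})^{\varphi}b'_k\neq 0$ where $\varphi:=\varphi_{i_k}^{\lambda_0}(b'_k)$, the element $\weight b'_k-\varphi\alpha_{i_k}$ is a weight of $V(\lambda_0)$, hence so is $s_{i_1}\cdots s_{i_k}(\weight b'_k-\varphi\alpha_{i_k})$, which therefore lies in $\lambda_0+Q_-$. Using the recursion above this equals $s_{i_1}\cdots s_{i_{k-1}}(\weight b'_{k-1})+(m_k+\varphi)\beta_{{\bf i}}^k$, so
\[
\weight b_0+(m_k+\varphi)\beta_{{\bf i}}^k \;\leq\; \bigl(s_{i_1}\cdots s_{i_{k-1}}(\weight b'_{k-1})-\lambda_0\bigr)+(m_k+\varphi)\beta_{{\bf i}}^k \;\in\; Q_-.
\]
Taking heights and using $m_k\geq 0$, $\height\beta_{{\bf i}}^k\geq 1$ gives $\varphi\leq -\height(\weight b_0)$ directly.

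Your part (ii) is essentially fine (and close to the paper's), though the paper gets the one-sided bound $n_k\geq\langle\weight b'_{k-1},\alpha_{i_k}^{\vee}\rangle$ straight from the sign condition in Lemma~\ref{sl2case} rather than bounding $|n_k-\langle\mu^{(k-1)},\alpha_{i_k}^{\vee}\rangle|$ via coefficients of $\delta_{k-1},\delta_k$.
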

\begin{proof}
Let us first prove (i). By Proposition \ref{hw} (II), $\weight b'_k-s_{i_k}\cdots s_{i_1}\lambda_0\in Q_-$ for all $k=0,\dots, l(w_0)$. (Note that $\weight b'_{l(w_0)}-s_{i_{l(w_0)}}\cdots s_{i_1}\lambda_0=0$.) So, it suffices to show that $$\weight b_0\leq s_{i_1}\cdots s_{i_k}(\weight b'_k)-\lambda_0\leq 0$$ for all $k=0,\dots, l(w_0)$. Since $s_{i_1}\cdots s_{i_k}(\weight b'_k)$ is a weight of $V(\lambda_0)$, the right inequality clearly holds. We prove the left one. The proof is by induction on $k$. When $k=0$, it is obvious by definition. Assuming it holds for $k$, we will prove it for $k+1$. By Lemma \ref{sl2case}, we can write $\weight b'_k=\weight b'_{k+1}+n_{k+1}\alpha_{i_{k+1}}$ for some $n_{k+1}\in \mathbb{Z}$ and $\langle \weight b'_k+\weight b'_{k+1}, \alpha_{i_{k+1}}^{\vee}\rangle \leqq 0$. Hence, $\langle \weight b'_k, \alpha_{i_{k+1}}^{\vee}\rangle \leqq n_{k+1}$. Write $m_{k+1}:=n_{k+1}-\langle \weight b'_k, \alpha_{i_{k+1}}^{\vee}\rangle\in \mathbb{Z}_{\geqq 0}$. Then, 
\begin{align}
\weight b'_{k+1}&=\weight b'_k-(m_{k+1}+\langle \weight b'_k, \alpha_{i_{k+1}}^{\vee}\rangle )\alpha_{i_{k+1}}\notag \\
&=s_{i_{k+1}}(\weight b'_k+m_{k+1}\alpha_{i_{k+1}}).\label{inductionb'_k}
\end{align}
Combining this with the induction hypothesis, we obtain 
\begin{align*}
s_{i_1}\cdots s_{i_k}s_{i_{k+1}}(\weight b'_{k+1})-\lambda_0&=s_{i_1}\cdots s_{i_k}(\weight b'_k+m_{k+1}\alpha_{i_{k+1}})-\lambda_0\\
&=s_{i_1}\cdots s_{i_k}(\weight b'_k)-\lambda_0+m_{k+1}s_{i_1}\cdots s_{i_k}(\alpha_{i_{k+1}})\\
&\geq \weight b_0. \ (\text{because\ } s_{i_1}\cdots s_{i_k}(\alpha_{i_{k+1}}) \in \Delta_+.)
\end{align*}
This completes the proof of (i).

By the above argument, for all $k=1,\dots, l(w_0)$, we have
\begin{align*}
n_k&\geqq \langle \weight b'_{k-1}, \alpha_{i_k}^{\vee}\rangle\\
&=\langle \weight b'_{k-1}-s_{i_{k-1}}\cdots s_{i_1}\lambda_0, \alpha_{i_k}^{\vee}\rangle+\langle s_{i_{k-1}}\cdots s_{i_1}\lambda_0, \alpha_{i_k}^{\vee}\rangle\\
&\geqq -l_4+\langle \lambda_0, s_{i_1}\cdots s_{i_{k-1}}\alpha_{i_k}^{\vee}\rangle\ \text{by\ (i),}\\
&\geqq -l_4+2\gamma L\ \text{because\ } s_{i_1}\cdots s_{i_{k-1}}\alpha_{i_k}^{\vee} \text{is\ a\ positive\ coroot},\\
&>\gamma L,
\end{align*}
which proves (ii). 

Let $k\in \{1,\dots, l(w_0)\}$. By Proposition \ref{EFaction}, we have $F_{i_k}^{(\varphi_{i_k}^{\lambda_0}(b'_k))}G_{\lambda_0}^{\mathrm{up}}(b'_k)=G_{\lambda_0}^{\mathrm{up}}((\tilde{f}_{i_k})^{\varphi_{i_k}^{\lambda_0}(b'_k)} b'_k)\neq 0$. Hence, $\weight b'_k-\varphi_{i_k}^{\lambda_0}(b'_k)\alpha_{i_k}$ is a weight of $V(\lambda_0)$. Therefore,
\begin{align*}
&s_{i_1}\cdots s_{i_k}(\weight b'_k-\varphi_{i_k}^{\lambda_0}(b'_k)\alpha_{i_k})\\
&=s_{i_1}\cdots s_{i_{k-1}}(\weight b'_{k-1})+(m_k+\varphi_{i_k}^{\lambda_0}(b'_k))s_{i_1}\cdots s_{i_{k-1}}(\alpha_{i_k}),
\end{align*}
by the equality (\ref{inductionb'_k}), is also a weight of $V(\lambda_0)$. Therefore, $$s_{i_1}\cdots s_{i_{k-1}}(\weight b'_{k-1})+(m_k+\varphi_{i_k}^{\lambda_0}(b'_k))s_{i_1}\cdots s_{i_{k-1}}(\alpha_{i_k})-\lambda_0 \in Q_-.$$ Combining this with $\weight b_0\leq s_{i_1}\cdots s_{i_{k-1}}(\weight b'_{k-1})-\lambda_0$, we have $$\weight b_0+(m_k+\varphi_{i_k}^{\lambda_0}(b'_k))s_{i_1}\cdots s_{i_{k-1}}(\alpha_{i_k}) \in Q_-.$$ Now, we have $m_k\geqq 0$ and $s_{i_1}\cdots s_{i_{k-1}}(\alpha_{i_k})\in \Delta_+$, which gives (iii).
\end{proof}
\begin{lemma}[Rough estimates]\label{roughestlem}
In any non-zero summand of the right-hand side of (\ref{shiki}) and $k=1,\dots, l(w_0)$, we have
$$c_{(G_{\lambda_0}^{\mathrm{low}}(b'_{k-1}), \cdot)_{\lambda_0}, G_{\lambda_0}^{\mathrm{up}}(b'_k)}^{\lambda_0}.|0 \rangle_{i_k}=p_k\left| -\frac{1}{2}\langle \weight b'_{k-1}+\weight b'_k, \alpha_{i_k}^{\vee}\rangle \right>_{i_k}$$
with $p_k\in q_{i_k}^{-\varphi_{i_k}^{\lambda_0}(b'_k)^2}\mathbb{Z}[q]$. 
\end{lemma}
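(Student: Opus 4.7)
The plan is to apply Lemma~\ref{method} with $\lambda=\lambda_0$, $w=s_{i_1}\cdots s_{i_{k-1}}$, $i=i_k$, $b'_+=b'_{k-1}$, $b'_-=b'_k$, and then to estimate the lowest $q_{i_k}$-degree of each factor in the resulting explicit formula. The hypothesis $l(s_{i_k}w)>l(w)$ is automatic from reducedness of $\mathbf{i}$. The weight relation $\weight b'_{k-1}-\weight b'_k\in\mathbb{Z}\alpha_{i_k}$ follows from Lemma~\ref{sl2case} applied to the non-vanishing tensor factor indexed by $i_k$ in~(\ref{shiki}). For the Demazure containment $G_{\lambda_0}^{\mathrm{low}}(b'_{k-1}),G_{\lambda_0}^{\mathrm{low}}(b'_k)\in V_{w}(\lambda_0)$, I would proceed by induction on $k$, starting from $V_e(\lambda_0)=\mathbb{Q}(q)v_{\lambda_0}$ and using Proposition~\ref{Demazure}(i) together with the non-vanishing of the $k$-th tensor factor to promote the containment one step at a time.

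With Lemma~\ref{method} in hand, the basis vector appearing is already $\left|-\tfrac{1}{2}\langle\weight b'_{k-1}+\weight b'_k,\alpha_{i_k}^{\vee}\rangle\right\rangle_{i_k}$, so the remaining work is to bound the scalar prefactor $p_k$. It consists of three pieces: the monomial $(-q_{i_k})^{\varphi_{i_k}^{\lambda_0}(b'_k)}$ (non-negative $q_{i_k}$-degree), the product $\prod_{s=1}^{\varepsilon_{i_k}^{\lambda_0}(b'_k)-N}(1-q_{i_k}^{2s})\in\mathbb{Z}[q]$ (non-negative degree), and a sum over $t$ of terms $q_{i_k}^{-\varphi_t^2-t+\varphi_t\langle w\lambda_0,\alpha_{i_k}^{\vee}\rangle+N(\varepsilon_{i_k}^{\lambda_0}(b'_k)-N)}\,M_t$, where $M_t$ is the pairing $(G^{\mathrm{up}}_{\lambda_1}(\pi_{\lambda_1}(\ast b_-)),E_{i_k}^{(t)}\ast(G_{\varphi_t})F_{i_k}^{(N)}.v_{\lambda_1})_{\lambda_1}$. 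I would estimate the exponent directly: $-\varphi_t^2\geq-\varphi_{i_k}^{\lambda_0}(b'_k)^2$ since $0\leq\varphi_t\leq\varphi_{i_k}^{\lambda_0}(b'_k)$; the term $\varphi_t\langle w\lambda_0,\alpha_{i_k}^{\vee}\rangle$ is non-negative because $l(s_{i_k}w)>l(w)$ forces $w^{-1}\alpha_{i_k}$ to be a positive root (so $\langle w\lambda_0,\alpha_{i_k}^{\vee}\rangle=\langle\lambda_0,w^{-1}\alpha_{i_k}^{\vee}\rangle\geq 0$); and $N(\varepsilon_{i_k}^{\lambda_0}(b'_k)-N)\geq 0$ since $0\leq N\leq\varepsilon_{i_k}^{\lambda_0}(b'_k)$ (see the remark just before the proof of Lemma~\ref{method}). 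The matrix coefficient $M_t$ itself is then estimated via Proposition~\ref{EFaction'}(ii) applied to $\ast(G_{\varphi_t})=q_{i_k}^{\frac12\varphi_t(\varphi_t-1)}(e'_{i_k})^{\varphi_t}G^{\mathrm{low}}(\ast b_+)$ (using $\ast\circ{_ie'}\circ\ast=e'_{i_k}$ on $U_q(\mathfrak{n}^-)$), together with Proposition~\ref{EFaction} for the actions of $E_{i_k}^{(t)}$ and $F_{i_k}^{(N)}$ on the dual canonical basis of $V(\lambda_1)$.

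The main obstacle will be the final bookkeeping for $M_t$. Proposition~\ref{EFaction'} produces lower degree bounds of the shape $-p\varepsilon_{i_k}(\tilde{b})-\tfrac{1}{2}p(p-1)$, and the challenge is to show that, after pairing against $G_{\lambda_1}^{\mathrm{up}}(\pi_{\lambda_1}(\ast b_-))$, these estimates combine with the $-\varphi_t^2-t$ coming from the prefactor to give a total $q_{i_k}$-degree of at least $-\varphi_{i_k}^{\lambda_0}(b'_k)^2$ uniformly in $t$. The tightest case is $t=0$, where $\varphi_0=\varphi_{i_k}^{\lambda_0}(b'_k)$ and the bound is saturated; there one needs that the leading $q$-coefficient of $M_0$ lies in $\mathbb{Z}[q]$ without introducing a further negative contribution, which is precisely what the integrality part of Proposition~\ref{EFaction'} supplies. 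Once this uniform bound is verified, the statement $p_k\in q_{i_k}^{-\varphi_{i_k}^{\lambda_0}(b'_k)^2}\mathbb{Z}[q]$ follows by summing over $t$.
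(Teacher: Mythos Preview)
Your plan to invoke Lemma~\ref{method} with $w=s_{i_1}\cdots s_{i_{k-1}}$ has a genuine gap at the Demazure step. First, $V_e(\lambda_0)=U_q(\mathfrak{n}^-).v_{\lambda_0}=V(\lambda_0)$, not $\mathbb{Q}(q)v_{\lambda_0}$; the Demazure modules \emph{shrink} as $w$ gets longer. More seriously, your induction only yields $G_{\lambda_0}^{\mathrm{low}}(b'_k)\in V_{s_{i_{k-1}}\cdots s_{i_1}}(\lambda_0)$ (via the $U_{q_{i_k}}(\mathfrak{sl}_{2,i_k})$-stability from Proposition~\ref{Demazure}(i)), not the finer containment $G_{\lambda_0}^{\mathrm{low}}(b'_k)\in V_{s_{i_k}\cdots s_{i_1}}(\lambda_0)$ you would need at step $k+1$. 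In fact the paper proves this finer statement only in Theorem~\ref{maincalc}(i), and only for those summands that contribute to the degree~$<L$ part; it is \emph{not} available for every nonzero summand of~(\ref{shiki}), which is what Lemma~\ref{roughestlem} concerns.

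The paper sidesteps this entirely by taking $w=e$ in Lemma~\ref{method}, so the Demazure hypothesis is vacuous. This choice is also what makes the final bookkeeping (your ``main obstacle'') close: with $w=e$ one has $b_-=\pi_{\lambda_0}^{-1}(b'_-)$, hence $\varepsilon_i^\ast(b_-)\le\langle\lambda_0,\alpha_i^\vee\rangle=2\gamma L$, so the constraint $\varepsilon_i^\ast(b)\le\varepsilon_i^\ast(b_-)+t\le 2\gamma L+t$ forces the term $\varphi_t\langle w\lambda_0,\alpha_i^\vee\rangle=2\gamma L\varphi_t$ to cancel against $-\varphi_t\varepsilon_i^\ast(b)$ up to $-\varphi_t t$. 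After collecting the remaining pieces one lands exactly on
\[
m_{t,b,b',\hat b}\ge \varphi_t-\varphi_t^2-2\varphi_t t\ge -(\varphi_t+t)^2\ge -\varphi_i^{\lambda_0}(b'_-)^2.
\]
With a nontrivial $w$ the analogous cancellation would require $\varepsilon_i^\ast(b_-)\le\langle w\lambda_0,\alpha_i^\vee\rangle$, which is not a direct consequence of $G^{\mathrm{low}}(b_-).v_{w\lambda_0}\neq 0$; the paper only extracts this equality at the leading term in the later Theorem~\ref{maincalc}. So the fix is simply to run your argument with $w=e$; then both the hypothesis of Lemma~\ref{method} and the degree estimate go through.
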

\begin{remark}\label{roughestremark}
By Lemma \ref{numest} (iii),
$$-\frac{(\alpha_{i_k}, \alpha_{i_k})}{2}\varphi_{i_k}^{\lambda_0}(b'_k)^2\geqq -\frac{(\alpha_{i_k}, \alpha_{i_k})}{2}\height (\weight b_0)^2\geqq -l_5>-L.$$
\end{remark}
\begin{proof}
Fix $k\in \{1,\dots, l(w_0)\}$. We abbreviate $i_k$ to $i$, $b'_{k-1}$ to $b'_+$, $b'_k$ to $b'_-$, $n_k$ to $n$. Let $b_{\pm}$ be unique elements of $B(\infty)$ satisfying $\pi_{\lambda_0}(b_{\pm})=b'_{\pm}$. Set $\dis d=-\frac{1}{2}\langle \weight b'_+ +\weight b'_-, \alpha_i^{\vee}\rangle$.

Then, by Lemma \ref{method} associated with $w=e$, we have
\begin{align*}
&c_{(G_{\lambda_0}^{\mathrm{low}}(b'_+), \cdot)_{\lambda_0}, G_{\lambda_0}^{\mathrm{up}}(b'_-)}^{\lambda_0}.|0\rangle_i\\
&=\dis (-q_i)^{\varphi_i^{\lambda_0}(b'_-)}\prod_{s=1}^{d}(1-q_i^{2s})\sum_{t=0}^{\varphi_i^{\lambda_0}(b'_-)}(-1)^tq_i^{-\varphi_t^2-t+2\gamma L\varphi_t+N(\varepsilon_i^{\lambda_0}(b'_-)-N)}\\
&\hspace{40pt}\times(G_{\lambda_1}^{\mathrm{up}}((\pi_{\lambda_1}(\ast b_-)), E_i^{(t)}\ast(G_{\varphi_t})F_i^{(N)}.v_{\lambda_1})_{\lambda_1}|d\rangle_i.
\end{align*}
where $\dis \lambda_1= \varepsilon_i^{\lambda_0}(b'_-)\varpi_i+\sum_{j\in I\setminus \{i\}}\varepsilon_j\varpi_j$ with $\varepsilon_j^{\lambda_0}(b'_-)\leqq \varepsilon_j\in \mathbb{Z}_{\geqq 0}$ for all $j\in I\setminus \{i\}$, $\varphi_t:=\varphi_i^{\lambda_0}(b'_-)-t$, $N:=n+\varphi_i^{\lambda_0}(b'_-)$ and $G_s:=q_i^{\frac{1}{2}s(s-1)}( _ie')^s(G^{\mathrm{low}}(b_+))$. Now, 
\begin{align*}
&(G_{\lambda_1}^{\mathrm{up}}((\pi_{\lambda_1}(\ast b_-)), E_i^{(t)}\ast(G_{\varphi_t})F_i^{(N)}.v_{\lambda_1})_{\lambda_1}\\
&=(\omega(G_{\varphi_t})F_i^{(t)}G_{\lambda_1}^{\mathrm{up}}((\pi_{\lambda_1}(\ast b_-)),F_i^{(N)}.v_{\lambda_1})_{\lambda_1}\\
&=q_i^{\frac{1}{2}\varphi_t(\varphi_t-1)}\sum_{b\in B(\infty)}d_{b_+, b}^{i, \varphi_t}(\omega(G^{\mathrm{low}}(b))F_i^{(t)}G_{\lambda_1}^{\mathrm{up}}((\pi_{\lambda_1}(\ast b_-)),F_i^{(N)}.v_{\lambda_1})_{\lambda_1}.
\end{align*}
By Proposition \ref{c1=epsiloni1}, $\omega(G^{\mathrm{low}}(b))\in U_q(\mathfrak{n}^+)E_i^{(\varepsilon_i^{\ast}(b))}$. Hence, in the above sum, we have only to consider $b\in B(\infty)$ such that
$$\varepsilon_i^{\ast}(b)\leqq \varepsilon_i^{\lambda_1}(\pi_{\lambda_1}(\ast b_-))+t=\varepsilon_i^{\ast}(b_-)+t.$$ 
(See Remark \ref{crystalmorph} and Proposition \ref{EFaction}.) Moreover, since $G^{\mathrm{low}}(b_-).v_{\lambda_0}=G_{\lambda_0}^{\mathrm{low}}(b'_-)\neq 0$, we have $\varepsilon_i^{\ast}(b_-)\leqq 2\gamma L$. 

Therefore,
\begin{align*}
&(G_{\lambda_1}^{\mathrm{up}}((\pi_{\lambda_1}(\ast b_-)), E_i^{(t)}\ast(G_{\varphi_t})F_i^{(N)}.v_{\lambda_1})_{\lambda_1}\\
&=q_i^{\frac{1}{2}\varphi_t(\varphi_t-1)}\sum_{\substack{b\in B(\infty),\\ \varepsilon_i^{\ast}(b)\leqq 2\gamma L+t}}d_{b_+, b}^{i, \varphi_t}(\omega(G^{\mathrm{low}}(b))F_i^{(t)}G_{\lambda_1}^{\mathrm{up}}(\pi_{\lambda_1}(\ast b_-)),F_i^{(N)}.v_{\lambda_1})_{\lambda_1}\\
&=q_i^{\frac{1}{2}\varphi_t(\varphi_t-1)}\sum_{\substack{b\in B(\infty),\\ \varepsilon_i^{\ast}(b)\leqq 2\gamma L+t}}d_{b_+, b}^{i, \varphi_t}(F_i^{(t)}G_{\lambda_1}^{\mathrm{up}}(\pi_{\lambda_1}(\ast b_-)),\ast (F_i^{(N)}G^{\mathrm{low}}(b)).v_{\lambda_1})_{\lambda_1}\\
&=q_i^{\frac{1}{2}\varphi_t(\varphi_t-1)}\mspace{-40mu}\sum_{\substack{b\in B(\infty), b'\in B(\lambda_1),\\ \varepsilon_i^{\ast}(b)\leqq 2\gamma L+t,\\ \varphi_i^{\lambda_1}(\pi_{\lambda_1}(\ast b_-))-t\geqq\varphi_i^{\lambda_1}(b')}}\mspace{-40mu}d_{b_+, b}^{i, \varphi_t}F_{\pi_{\lambda_1}(\ast b_-), b'}^{(t), i}(G_{\lambda_1}^{\mathrm{up}}(b'), \ast (F_i^{(N)}G^{\mathrm{low}}(b)).v_{\lambda_1})_{\lambda_1}.
\end{align*}
Since $\ast(G^{\mathrm{low}}(\hat{b}))\in U_q(\mathfrak{n}^-)F_i^{(\varepsilon_i(\hat{b}))}$, we have
\begin{align*}
\ast (F_i^{(N)}G^{\mathrm{low}}(b)).v_{\lambda_1}&=\sum_{\substack{\hat{b}\in B(\infty)\\ \varepsilon_i(\hat{b})\leqq \langle\lambda_1, \alpha_i^{\vee}\rangle}}c_{-Ni, b}^{\hat{b}}\ast (G^{\mathrm{low}}(\hat{b})).v_{\lambda_1}\\
&=\sum_{\substack{\hat{b}\in B(\infty)\\ \varepsilon_i(\hat{b})\leqq \varepsilon_i^{\lambda_0}(b'_-)}}c_{-Ni, b}^{\hat{b}}G_{\lambda_1}^{\mathrm{low}}(\pi_{\lambda_1}(\ast \hat{b})).
\end{align*}
Combining all the results above, we obtain
\begin{align}
&c_{(G_{\lambda_0}^{\mathrm{low}}(b'_+), \cdot)_{\lambda_0}, G_{\lambda_0}^{\mathrm{up}}(b'_-)}^{\lambda_0}.|0 \rangle_i \notag\\
&=(-q_i)^{\varphi_i^{\lambda_0}(b'_-)}\prod_{s=1}^{d}(1-q_i^{2s}) \notag\\
&\hspace{40pt}\times\sum_{t=0}^{\varphi_i^{\lambda_0}(b'_-)}(-1)^tq_i^{(\#)}(G_{\lambda_1}^{\mathrm{up}}((\pi_{\lambda_1}(\ast b_-)), E_i^{(t)}\ast(G_{\varphi_t})F_i^{(N)}.v_{\lambda_1})_{\lambda_1}|d\rangle_i \notag\\
&=(-q_i)^{\varphi_i^{\lambda_0}(b'_-)}\prod_{s=1}^{d}(1-q_i^{2s}) \notag\\
&\hspace{40pt}\times\sum_{t=0}^{\varphi_i^{\lambda_0}(b'_-)}(-1)^tq_i^{(\#)+\frac{1}{2}\varphi_t(\varphi_t-1)}\sum_{(\clubsuit)}d_{b_+, b}^{i, \varphi_t}F_{\pi_{\lambda_1}(\ast b_-), b'}^{(t), i}c_{-Ni, b}^{\hat{b}}\delta_{b', \pi_{\lambda_1}(\ast \hat{b})}|d\rangle_i, \label{1}
\end{align}
where $$(\#)=-\varphi_t^2-t+2\gamma L\varphi_t+N(\varepsilon_i^{\lambda_0}(b'_-)-N),$$ and
$$(\clubsuit)=\begin{cases}b, \hat{b}\in B(\infty), b'\in B(\lambda_1),\\ \varepsilon_i^{\ast}(b)\leqq 2\gamma L+t,\\ \varphi_i^{\lambda_1}(\pi_{\lambda_1}(\ast b_-))-t\geqq\varphi_i^{\lambda_1}(b'),\\ \varepsilon_i(\hat{b})\leqq \varepsilon_i^{\lambda_0}(b'_-).\end{cases}$$
Denote by $p_{t, b, b', \hat{b}}$ the term $$(-1)^tq_i^{(\#)+\frac{1}{2}\varphi_t(\varphi_t-1)}(-q_i)^{\varphi_i^{\lambda_0}(b'_-)}\prod_{s=1}^{d}(1-q_i^{2s})d_{b_+, b}^{i, \varphi_t}F_{\pi_{\lambda_1}(\ast b_-), b'}^{(t), i}c_{-Ni, b}^{\hat{b}}\delta_{b', \pi_{\lambda_1}(\ast \hat{b})}$$ with $t, b, b', \hat{b}$ appearing in the sum in (\ref{1}). Then, by Proposition \ref{EFaction}, \ref{EFaction'} and Remark \ref{d_and_dhat}, we have $p_{t, b, b', \hat{b}}\in q_i^{m_{t, b, b', \hat{b}}}\mathbb{Z}[q]$, where
\begin{align*}
m_{t, b, b', \hat{b}}&=-\varphi_t^2-t+2\gamma L\varphi_t+N(\varepsilon_i^{\lambda_0}(b'_-)-N)+\frac{1}{2}\varphi_t(\varphi_t-1)+\varphi_i^{\lambda_0}(b'_-)\\
&\hspace{25pt}-\varphi_t\varepsilon_i^{\ast}(b)-\frac{1}{2}\varphi_t(\varphi_t-1)-t(\varphi_i^{\lambda_1}(\pi_{\lambda_1}(\ast b_-))-t)-N(\varepsilon_i(\hat{b})-N)\\
&\geqq \varphi_t-\varphi_t^2+2\gamma L\varphi_t+N(\varepsilon_i^{\lambda_0}(b'_-)-\varepsilon_i(\hat{b}))-\varphi_t(2\gamma L+t)\\
&\hspace{25pt}-t(\varphi_i^{\lambda_1}(\pi_{\lambda_1}(\ast b_-))-t)\\
&\geqq\varphi_t-\varphi_t^2-\varphi_t t-t(\varphi_i^{\lambda_1}(\pi_{\lambda_1}(\ast b_-))-t).
\end{align*}
Now,
\begin{align*}
\varphi_i^{\lambda_1}(\pi_{\lambda_1}(\ast b_-))&=\varphi_i(\ast b_-)+\langle\lambda_1, \alpha_i^{\vee}\rangle\\
&=\varepsilon_i(\ast b_-)+\langle\weight b_-, \alpha_i^{\vee}\rangle+\varepsilon_i^{\lambda_0}(b'_-)\\
&=\varepsilon_i(\ast b_-)+\langle\weight b_-, \alpha_i^{\vee}\rangle+\varphi_i^{\lambda_0}(b'_-)-\langle\lambda_0+\weight b_-, \alpha_i^{\vee}\rangle\\
&=\varepsilon_i(\ast b_-)-\langle\lambda_0, \alpha_i^{\vee}\rangle+\varphi_i^{\lambda_0}(b'_-)\leqq \varphi_i^{\lambda_0}(b'_-).
\end{align*}
Hence, $$m_{t, b, b', \hat{b}}\geqq \varphi_t-\varphi_t^2-\varphi_t t-t\varphi_t\geqq -(\varphi_t+t)^2+t^2+\varphi_t\geqq -\varphi_i^{\lambda_0}(b'_-)^2.$$
So, Lemma \ref{roughestlem} follows.
\end{proof}
\subsubsection{Main calculation}\label{usesymm}
It follows from Lemma \ref{roughestlem} (and Remark \ref{roughestremark}) that 
\begin{itemize}
\item ${ _{{\bf i}}\zeta'}_{{\bf d}}^{G^{\mathrm{low}}(b_0)}\in \mathbb{Z}[q^{\pm 1}]$ (See the equality (\ref{zetaprime}) for the definition of ${ _{{\bf i}}\zeta'}_{{\bf d}}^{G^{\mathrm{low}}(b_0)}$), and
\item we may ignore the degree $\geqq l(w_0) L$ part of the Laurent polynomial $p_k$ for any $k$ when calculating the degree $< L$ parts of the Laurent polynomials ${ _{{\bf i}}\zeta'}_{{\bf d}}^{G^{\mathrm{low}}(b_0)}$.
\end{itemize}
Hence, the following theorem together with Proposition \ref{positivty_1} implies theorem \ref{maintheorem}.
\begin{theorem}\label{maincalc}
In any summand of the right-hand side of (\ref{shiki}) which contributes to the degree $< L$ part of the Laurent polynomial ${ _{{\bf i}}\zeta'}_{{\bf d}}^{G^{\mathrm{low}}(b_0)}$, we have
\begin{itemize}
\item[{\nor (i)}] $G_{\lambda_0}^{\mathrm{low}}(b'_k)\in V_{s_{i_k}\cdots s_{i_1}}(\lambda_0)$ for $k=0,\dots, l(w_0)$, and
\item[{\nor (ii)}] the degree $< l(w_0)L$ part of the Laurent polynomial $p_k$ is of the form $q_{i_k}^{\frac{1}{2}d_k(d_k-1)}\widehat{d}_{b_{k-1}, b}^{i_k, d_k}$ for some $b\in B(\infty)$ where $b_{k-1}\in B(\infty)$ is the element satisfying $G^{\mathrm{low}}(b_{k-1}).v_{s_{i_{k-1}}\cdots s_{i_1}\lambda_0}=G_{\lambda_0}^{\mathrm{low}}(b'_{k-1})$, for $k=1,\dots, l(w_0)$.
\end{itemize}
\end{theorem}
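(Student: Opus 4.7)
The plan is to prove (i) and (ii) simultaneously by induction on $k$. The base case $k=0$ is immediate: $V_e(\lambda_0)=V(\lambda_0)$ and $b'_0=\pi_{\lambda_0}(b_0)$, so the element $b_0$ itself realizes $G^{\mathrm{low}}(b_0).v_{\lambda_0}=G_{\lambda_0}^{\mathrm{low}}(b'_0)$ and serves as the required element of $B(\infty)$.

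For the inductive step, the induction hypothesis at step $k-1$ provides $b_{k-1}\in B(\infty)$ with $G^{\mathrm{low}}(b_{k-1}).v_{w\lambda_0}=G_{\lambda_0}^{\mathrm{low}}(b'_{k-1})$ for $w:=s_{i_{k-1}}\cdots s_{i_1}$. The idea is to redo the analysis of the $k$-th tensor factor from Lemma \ref{roughestlem}, but now invoking Lemma \ref{method} with this refined $w$ instead of $w=e$. Passing to $v_{w\lambda_0}$ replaces $\pi_{\lambda_0}(b'_{k-1})$ by $b_{k-1}$ inside the coefficient $d_{b_+,b}^{i_k,\varphi_t}$ appearing in the analogue of (\ref{1}), and the quantity $\langle\lambda_0,\alpha_{i_k}^{\vee}\rangle=2\gamma L$ in the exponent gets replaced by the (at least as large) $\langle w\lambda_0,\alpha_{i_k}^{\vee}\rangle$. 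A bookkeeping parallel to the proof of Lemma \ref{roughestlem} should then show that, modulo terms of $q$-degree $\geq l(w_0)L$, only the summand with $t=0$, $b=b_{k-1}$, and $\varepsilon_{i_k}(\hat{b})=\varepsilon_{i_k}^{\lambda_0}(b'_k)$ (so that the parameter $d$ in Proposition \ref{similarity} equals $d_k$) contributes.

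For this distinguished summand, the scalar prefactor carries a $q$-exponent of size roughly $Nd_k$, which lines up with the negative threshold $-\Delta_{i_k}(d_k-1)N$ in Proposition \ref{similarity}. Applying the proposition to the factor $c_{-Ni_k,b_{k-1}}^{\hat{b}}$ then identifies the surviving part of $p_k$ with $q_{i_k}^{\frac{1}{2}d_k(d_k-1)}\widehat{d}_{b_{k-1},b}^{i_k,d_k}$, where $b:=\tilde{e}_{i_k}^{\varepsilon_{i_k}(\hat{b})}\hat{b}$, after the $q$-binomial factor from the proposition is absorbed into the prefactor. This proves (ii), and (i) at step $k$ follows because non-vanishing of $\widehat{d}_{b_{k-1},b}^{i_k,d_k}$, via the definition of $(_{i_k}e')^{d_k}$ combined with Proposition \ref{Demazure}, forces $G_{\lambda_0}^{\mathrm{low}}(b'_k)\in V_{s_{i_k}w}(\lambda_0)=V_{s_{i_k}\cdots s_{i_1}}(\lambda_0)$.

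A logical subtlety is that Lemma \ref{method} formally requires $G_{\lambda_0}^{\mathrm{low}}(b'_k)\in V_w(\lambda_0)$, which is weaker than (i) at step $k$ but still not automatic from the induction. This weaker containment can be secured beforehand by combining the weight estimates of Lemma \ref{numest}(i) with the containment $V_{s_{i_k}\cdots s_{i_1}}(\lambda_0)\subseteq V_w(\lambda_0)$ from Proposition \ref{Demazure}(i). The main obstacle, however, is the bookkeeping in the refined estimation: for each summand index $(t,b,b',\hat{b})$ other than the distinguished one, one must verify that the combined shifts---from replacing $\lambda_0$ by $w\lambda_0$, from restricting $b_+$ to the smaller element $b_{k-1}$, and from the constraints $(\clubsuit)$---raise the total $q$-exponent above $l(w_0)L$, so that no additional term survives in the low-degree part. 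The choice of $L$ in Definition \ref{largenumber} is calibrated precisely to make this verification succeed.
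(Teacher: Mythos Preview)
Your overall architecture matches the paper's proof: induct on $k$, re-apply Lemma~\ref{method} with the refined Weyl element $w=s_{i_{k-1}}\cdots s_{i_1}$ in place of $e$, estimate to isolate a single surviving summand, and then invoke Proposition~\ref{similarity}. However, three concrete points in your execution are wrong or incomplete.

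\textbf{The surviving index is $\varphi_t=0$, not $t=0$.} In the expansion coming from Lemma~\ref{method} the exponent contains the term $\varphi_t\langle w\lambda_0,\alpha_{i_k}^{\vee}\rangle$ with $\varphi_t=\varphi_{i_k}^{\lambda_0}(b'_k)-t$. Since $\langle w\lambda_0,\alpha_{i_k}^{\vee}\rangle\ge 2\gamma L$, any summand with $\varphi_t\ge 1$ lands in degree $\ge l(w_0)L$; the term that survives is therefore $t=\varphi_{i_k}^{\lambda_0}(b'_k)$ (equivalently $\varphi_t=0$), not $t=0$. Your statement ``$b=b_{k-1}$'' is consistent only with $\varphi_t=0$, since $d_{b_+,b}^{i_k,\varphi_t}=\delta_{b,b_+}$ exactly when $\varphi_t=0$; with $t=0$ the factor $d_{b_+,b}^{i_k,\varphi_{i_k}^{\lambda_0}(b'_k)}$ is a genuine sum over many $b$.

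\textbf{Your derivation of (i) does not work.} Non-vanishing of $\widehat{d}_{b_{k-1},b}^{i_k,d_k}$ constrains $b_{k-1}$ and $b$, not $b'_k$; Proposition~\ref{Demazure} alone cannot convert this into $G_{\lambda_0}^{\mathrm{low}}(b'_k)\in V_{s_{i_k}w}(\lambda_0)$. In the paper the mechanism is different: once $t=\varphi_{i_k}^{\lambda_0}(b'_k)$, the constraint $\varphi_{i_k}^{\lambda_1}(\pi_{\lambda_1}(\ast b_-))-t\ge \varphi_{i_k}^{\lambda_1}(b')\ge 0$ from $(\clubsuit)'$, together with the inequality $\varphi_{i_k}^{\lambda_1}(\pi_{\lambda_1}(\ast b_-))\le \varphi_{i_k}^{\lambda_0}(b'_k)$ established via (\ref{shiki2}), forces $\varepsilon_{i_k}^{\ast}(b_-)=\langle w\lambda_0,\alpha_{i_k}^{\vee}\rangle$. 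Proposition~\ref{c1=epsiloni1} then gives $G^{\mathrm{low}}(b_-)\in U_q(\mathfrak{n}^-)F_{i_k}^{(\langle w\lambda_0,\alpha_{i_k}^{\vee}\rangle)}$, hence $G_{\lambda_0}^{\mathrm{low}}(b'_k)\in V_{s_{i_k}w}(\lambda_0)$.

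\textbf{Your fix for the ``logical subtlety'' fails.} Weight information from Lemma~\ref{numest}(i) does not imply Demazure-module membership, and invoking the inclusion $V_{s_{i_k}w}(\lambda_0)\subset V_w(\lambda_0)$ is circular since $G_{\lambda_0}^{\mathrm{low}}(b'_k)\in V_{s_{i_k}w}(\lambda_0)$ is exactly part (i) at step $k$. The paper instead uses Proposition~\ref{Demazure}(i) to see that $V_w(\lambda_0)=U_{q_{i_k}}(\mathfrak{sl}_{2,i_k}).V_{s_{i_k}w}(\lambda_0)$ is $U_{q_{i_k}}(\mathfrak{sl}_{2,i_k})$-stable; combined with $\phi_{i_k}^{\ast}\bigl(c_{(G_{\lambda_0}^{\mathrm{low}}(b'_{k-1}),\cdot)_{\lambda_0},G_{\lambda_0}^{\mathrm{up}}(b'_k)}^{\lambda_0}\bigr)\neq 0$ and $G_{\lambda_0}^{\mathrm{low}}(b'_{k-1})\in V_w(\lambda_0)$, this yields $G_{\lambda_0}^{\mathrm{low}}(b'_k)\in V_w(\lambda_0)$ directly, giving the element $b_-$ needed to run Lemma~\ref{method}.
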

\begin{proof}
Fix $k\in \{1,\dots, l(w_0)\}$ such that $G_{\lambda_0}^{\mathrm{low}}(b'_{k-1})\in V_{s_{i_{k-1}}\cdots s_{i_1}}(\lambda_0)$. (Note that $k=1$ clearly satisfies this condition.) Set $w:=s_{i_{k-1}}\cdots s_{i_1}$ Then, the proof of theorem is completed by showing that $G_{\lambda_0}^{\mathrm{low}}(b'_k)\in V_{s_{i_k}w}(\lambda_0)$ and the degree $< l(w_0) L$ part of the Laurent polynomial $p_k$ is of the form $q_{i_k}^{\frac{1}{2}d_k(d_k-1)}\widehat{d}_{b_{k-1}, b}^{i_k, d_k}$. We abbreviate $i_k$ to $i$, $b'_{k-1}$ to $b'_+$, $b'_k$ to $b'_-$, $n_k$ to $n$, $d_k$ to $d$. Note that by Lemma \ref{method} we have $d=-\frac{1}{2}\langle \weight b'_{k-1} +\weight b'_k, \alpha_{i_k}^{\vee}\rangle$.

By our assumption, there exists the unique element $b_+\in B(\infty)$ such that $G^{\mathrm{low}}(b_+).v_{w\lambda_0}=G_{\lambda_0}^{\mathrm{low}}(b'_+)$. By Proposition \ref{Demazure} (i) and $\phi_i^{\ast}(c_{(G_{\lambda_0}^{\mathrm{low}}(b'_+), \cdot)_{\lambda_0}, G_{\lambda_0}^{\mathrm{up}}(b'_-)}^{\lambda_0})\neq 0$, we have also $G_{\lambda_0}^{\mathrm{low}}(b'_-)\in V_w(\lambda_0)$ and, hence, there exists the unique element $b_-\in B(\infty)$ such that $G^{\mathrm{low}}(b_-).v_{w\lambda_0}=G_{\lambda_0}^{\mathrm{low}}(b'_-)$. 

Then, by Lemma \ref{method} and the similar argument in the proof of Lemma \ref{roughestlem}, we have
\begin{align}\label{2}
&c_{(G_{\lambda_0}^{\mathrm{low}}(b'_+), \cdot)_{\lambda_0}, G_{\lambda_0}^{\mathrm{up}}(b'_-)}^{\lambda_0}.|0\rangle_i\notag \\
&=(-q_i)^{\varphi_i^{\lambda_0}(b'_-)}\prod_{s=1}^{d}(1-q_i^{2s}) \notag\\
&\hspace{40pt}\sum_{t=0}^{\varphi_i^{\lambda_0}(b'_-)}(-1)^tq_i^{(\bigstar)}\sum_{(\clubsuit)'}d_{b_+, b}^{i, \varphi_t}F_{\pi_{\lambda_1}(\ast b_-), b'}^{(t), i}c_{-Ni, b}^{\hat{b}}\delta_{b', \pi_{\lambda_1}(\ast \hat{b})}|d\rangle_i,
\end{align}
where we set $\dis \lambda_1= \varepsilon_i^{\lambda_0}(b'_-)\varpi_i+\sum_{j\in I\setminus \{i\}}\varepsilon_j\varpi_j$ with $\varepsilon_j^{\lambda_0}(b'_-)\leqq \varepsilon_j\in \mathbb{Z}_{\geqq 0}$ for all $j\in I\setminus \{i\}$, $\varphi_t=\varphi_i^{\lambda_0}(b'_-)-t$, $N=n+\varphi_i^{\lambda_0}(b'_-)$ and
$$(\bigstar)=-\varphi_t^2-t+\varphi_t\langle w\lambda_0, \alpha_i^{\vee}\rangle+N(\varepsilon_i^{\lambda_0}(b'_-)-N)+\frac{1}{2}\varphi_t(\varphi_t-1),$$
$$(\clubsuit)'=\begin{cases}b, \hat{b}\in B(\infty), b'\in B(\lambda_1),\\ \varphi_i^{\lambda_1}(\pi_{\lambda_1}(\ast b_-))-t\geqq\varphi_i^{\lambda_1}(b'),\\  \varepsilon_i(\hat{b})\leqq \varepsilon_i^{\lambda_0}(b'_-).\end{cases}$$
Denote by $\tilde{p}_{t, b, b', \hat{b}}$ the term 
$$(-1)^tq_i^{(\bigstar)}(-q_i)^{\varphi_i^{\lambda_0}(b'_-)}\prod_{s=1}^{d}(1-q_i^{2s})d_{b_+, b}^{i, \varphi_t}F_{\pi_{\lambda_1}(\ast b_-), b'}^{(t), i}c_{-Ni, b}^{\hat{b}}\delta_{b', \pi_{\lambda_1}(\ast \hat{b})}$$ 
with $t, b, b', \hat{b}$ appearing in the sum in (\ref{2}). Then, by Proposition \ref{EFaction} and \ref{EFaction'}, we have $\tilde{p}_{t, b, b', \hat{b}}\in q_i^{\tilde{m}_{t, b, b', \hat{b}}}\mathbb{Z}[q]$, where
\begin{align*}
\tilde{m}_{t, b, b', \hat{b}}&\geqq -\varphi_t^2-t+\varphi_t\langle \lambda_0, w^{-1}\alpha_i^{\vee}\rangle+N(\varepsilon_i^{\lambda_0}(b'_-)-N)+\frac{1}{2}\varphi_t(\varphi_t-1)+\varphi_i^{\lambda_0}(b'_-)\\
&\hspace{25pt}-\frac{2}{(\alpha_i, \alpha_i)}l_3-t(\varphi_i^{\lambda_1}(\pi_{\lambda_1}(\ast b_-))-t)-N(\varepsilon_i(\hat{b})-N)\\
&\geqq -\frac{1}{2}\varphi_t(\varphi_t-1)+2\gamma L\varphi_t+N(\varepsilon_i^{\lambda_0}(b'_-)-\varepsilon_i(\hat{b}))\\
&\hspace{25pt}-l_3-t(\varphi_i^{\lambda_1}(\pi_{\lambda_1}(\ast b_-))-t).
\end{align*}
Note that $w^{-1}\alpha_i^{\vee}$ is a positive coroot and $\weight (\ast b_+)=\weight b_+=\weight b'_+-w\lambda_0\in A$ by Lemma \ref{numest} (i). 
Now, 
\begin{align}
&\varphi_i^{\lambda_1}(\pi_{\lambda_1}(\ast b_-)) \notag\\
&=\varphi_i(\ast b_-)+\langle\lambda_1, \alpha_i^{\vee}\rangle \notag\\ 
&=\varepsilon_i(\ast b_-)+\langle\weight b_-, \alpha_i^{\vee}\rangle+\varepsilon_i^{\lambda_0}(b'_-) \notag\\
&=\varepsilon_i(\ast b_-)+\langle\weight b_-, \alpha_i^{\vee}\rangle+\varphi_i^{\lambda_0}(b'_-)-\langle w\lambda_0+\weight b_-, \alpha_i^{\vee}\rangle \notag \\
&=\varepsilon_i(\ast b_-)-\langle w\lambda_0, \alpha_i^{\vee}\rangle+\varphi_i^{\lambda_0}(b'_-) \notag \\ \label{shiki2}
&\leqq \varphi_i^{\lambda_0}(b'_-).
\end{align}
Hence, 
\begin{align}\label{mainineq}
\tilde{m}_{t, b, b', \hat{b}}&\geqq -\frac{1}{2}\varphi_t(\varphi_t-1)+2\gamma L\varphi_t+N(\varepsilon_i^{\lambda_0}(b'_-)-\varepsilon_i(\hat{b}))-l_3-t\varphi_t\notag \\
&\geqq -l_5+2\gamma L\varphi_t+N(\varepsilon_i^{\lambda_0}(b'_-)-\varepsilon_i(\hat{b}))-l_3\notag \\
&>-L+2\gamma L\varphi_t+N(\varepsilon_i^{\lambda_0}(b'_-)-\varepsilon_i(\hat{b})),
\end{align}
because $0\leqq t, \varphi_t\leqq \varphi_i^{\lambda_0}(b'_-)\leqq -\height (\weight b_0)$ by Lemma \ref{numest} (iii). %Now, $N=n+\varphi_i^{\lambda_0}(b'_-)>\gamma L$ by Lemma \ref{numest} (ii).
Therefore, 
$\tilde{p}_{t, b, b', \hat{b}}$ does not contribute to the degree $< l(w_0) L$ part of the Laurent polynomial $p_{k+1}$ unless $\varphi_t=0$. % and $\varepsilon_i(\hat{b})=\varepsilon_i^{\lambda_0}(b'_-)$.

From now on, we consider $\tilde{p}_{t, b, b', \hat{b}}$ in the case $\varphi_t=0$ (i.e. $t=\varphi_i^{\lambda_0}(b'_-)$) and $b'=\pi_{\lambda_1}(\ast \hat{b})$. We have $b=b_+$ because $G_{\varphi_t}=G^{\mathrm{low}}(b_+)$.

Since $\varphi_i^{\lambda_1}(\pi_{\lambda_1}(\ast b_-))\leqq \varphi_i^{\lambda_0}(b'_-)$ and $\varphi_i^{\lambda_1}(\pi_{\lambda_1}(\ast b_-))-\varphi_i^{\lambda_0}(b'_-)\geqq\varphi_i^{\lambda_1}(\pi_{\lambda_1}(\ast \hat{b}))$, we have $\varphi_i^{\lambda_1}(\pi_{\lambda_1}(\ast b_-))=\varphi_i^{\lambda_0}(b'_-)$, equivalently, $\varepsilon_i^{\ast}(b_-)=\langle w\lambda_0, \alpha_i^{\vee}\rangle$. (See the inequality (\ref{shiki2}).) So, by Proposition \ref{c1=epsiloni1},
$$G_{\lambda_0}^{\mathrm{low}}(b'_-)=G^{\mathrm{low}}(b_-).v_{w\lambda_0}\in U_q(\mathfrak{n}^-)F_i^{(\langle w\lambda_0, \alpha_i^{\vee}\rangle)}.v_{w\lambda_0}=V_{s_i w}(\lambda_0).$$
Hence, (i) follows. 

When $t=\varphi_i^{\lambda_0}(b'_-)=\varphi_i^{\lambda_1}(\pi_{\lambda_1}(\ast b_-))$, we also have $\pi_{\lambda_1}(\ast \hat{b})(=b')=\tilde{f}_i^t(\pi_{\lambda_1}(\ast b_-))=\pi_{\lambda_1}(\ast (\tilde{f}_i^{\ast})^t(b_-))$ $(=:{b'}^{(0)}\neq 0)$. (Equivalently, $\hat{b}=(\tilde{f}_i^{\ast})^t(b_-)$.) Moreover, by the inequality (\ref{mainineq}) and $N>\gamma L$ (by Lemma \ref{numest} (ii)) we may assume that $\varepsilon_i(\hat{b})=\varepsilon_i^{\lambda_0}(b'_-)$. (In fact, this equality always holds. See the proof of Theorem \ref{comparison} in Appendix.)

% and $\pi_{\lambda_1}(\ast \tilde{f}_i^{\ast}\hat{b})=\tilde{f}_i^{t+1}(\pi_{\lambda_1}(\ast b_-))=0$. 
%Hence, we have $\varepsilon_i(\hat{b})=\varepsilon_i^{\lambda_0}(b'_-)$. Indeed, the equalities $G^{\mathrm{low}}(\ast \hat{b}).v_{\lambda_1}\neq 0$, $G^{\mathrm{low}}(\ast \tilde{f}_i^{\ast}\hat{b}).v_{\lambda_1}=0$ imply that $G^{\mathrm{low}}(\ast \hat{b})\notin U_q(\mathfrak{n}^-)F_i^{(\varepsilon_i^{\lambda}(b'_-)+1)}+\sum_{j\in I\setminus \{i\}}U_q(\mathfrak{n}^-)F_j^{(\varepsilon_j+1)}$ and $G^{\mathrm{low}}(\ast \tilde{f}_i^{\ast}\hat{b})\in U_q(\mathfrak{n}^-)F_i^{(\varepsilon_i^{\lambda}(b'_-)+1)}+\sum_{j\in I\setminus \{i\}}U_q(\mathfrak{n}^-)F_j^{(\varepsilon_j+1)}$. We may assume that $\varepsilon_j$ $(j\in I\setminus \{i\})$ are sufficiently large.

%$\varepsilon_i(\hat{b})=\langle \lambda_1, \alpha_i\rangle=\varepsilon_i^{\lambda_0}(b'_-)$ by Proposition \ref{c1=epsiloni1}.% Then,

Taking these assumptions into account, we have
\begin{align*}
\tilde{p}_{t, b, b', \hat{b}}&=\tilde{p}_{\varphi_i^{\lambda_0}(b'_-), b_+, {b'}^{(0)}, \hat{b}}\\
&=(-1)^{\varphi_i^{\lambda_0}(b'_-)}q_i^{-\varphi_i^{\lambda_0}(b'_-)+N(\varepsilon_i^{\lambda_0}(b'_-)-N)}(-q_i)^{\varphi_i^{\lambda_0}(b'_-)}\prod_{s=1}^{d}(1-q_i^{2s})c_{-Ni, b_+}^{\hat{b}}\\
&=q_i^{N(\varepsilon_i^{\lambda_0}(b'_-)-N)}\prod_{s=1}^d(1-q_i^{2s})c_{-Ni, b_+}^{\hat{b}}.
\end{align*}
Moreover, by Proposition \ref{similarity}, $\varepsilon_i(\hat{b})-N=\varepsilon_i^{\lambda_0}(b'_-)-N=d$ and $\Delta_iN>l(w_0)L$, we have
\begin{align*}
&(\tilde{p}_{\varphi_i^{\lambda_0}(b'_-), b_+, {b'}^{(0)}, \hat{b}})_{<l(w_0) L}\\
&=\left(q_i^{N(\varepsilon_i^{\lambda_0}(b'_-)-N)}\prod_{s=1}^{d}(1-q_i^{2s})c_{-Ni, b_+}^{\hat{b}}\right)_{<l(w_0) L}\\
&=\left(\prod_{s=1}^{d}(1-q_i^{2s})q_i^{dN}c_{-Ni, b_+}^{\hat{b}}\right)_{<l(w_0) L}\\
&=q_i^{dN}\left(\prod_{s=1}^{d}(1-q_i^{2s})\left(c_{-Ni, b_+}^{\hat{b}}\right)_{<(l(w_0) L-\Delta_idN)}\right)_{<(l(w_0) L-\Delta_idN)}\\
&=q_i^{dN}\left(\prod_{s=1}^{d}(1-q_i^{2s})\left(q_i^{\frac{1}{2}d(d-1)}\left[ \begin{array}{c} \varepsilon_i(\hat{b})\\N \end{array} \right]_i\widehat{d}_{b_+, \tilde{e}_i^{\varepsilon_i(\hat{b})}\hat{b}}^{i, d}\right)_{<(l(w_0) L-\Delta_idN)}\right)_{<(l(w_0) L-\Delta_idN)}\\
&=q_i^{dN}\left(q_i^{\frac{1}{2}d(d-1)}\prod_{s=1}^{d}(1-q_i^{2s})\left[ \begin{array}{c} N+d\\N \end{array} \right]_i\widehat{d}_{b_+, \tilde{e}_i^{\varepsilon_i(\hat{b})}\hat{b}}^{i, d}\right)_{<(l(w_0) L-\Delta_idN)}\\
&=q_i^{dN}\left(q_i^{\frac{1}{2}d(d-1)-dN}\prod_{s=1}^{d}(1-q_i^{2(d+N)-2s+2})\widehat{d}_{b_+, \tilde{e}_i^{\varepsilon_i(\hat{b})}\hat{b}}^{i, d}\right)_{<(l(w_0) L-\Delta_idN)}\\
&=\left(q_i^{\frac{1}{2}d(d-1)}\prod_{s=1}^{d}(1-q_i^{2(d+N)-2s+2})\widehat{d}_{b_+, \tilde{e}_i^{\varepsilon_i(\hat{b})}\hat{b}}^{i, d}\right)_{<l(w_0) L}\\
&=q_i^{\frac{1}{2}d(d-1)}\widehat{d}_{b_+, \tilde{e}_i^{\varepsilon_i(\hat{b})}\hat{b}}^{i, d}.
\end{align*}
The last equality holds as follows:

By Remark \ref{d_and_dhat},
$$\widehat{d}_{b_+, \tilde{e}_i^{\varepsilon_i(\hat{b})}\hat{b}}^{i, d}=q_i^{d\langle \weight b_+, \alpha_i^{\vee}\rangle+d(d+1)}\overline{d_{b_+, \tilde{e}_i^{\varepsilon_i(\hat{b})}\hat{b}}^{i, d}}.$$
So, we have $q_i^{\frac{1}{2}d(d-1)}\widehat{d}_{b_+, \tilde{e}_i^{\varepsilon_i(\hat{b})}\hat{b}}^{i, d}\in q^{l_3+l_4+l_5}\mathbb{Z}[q^{-1}]$. Note that $\weight b_+\in A$.

Moreover, 
$q_i^{\frac{1}{2}d(d-1)}\widehat{d}_{b_+, \tilde{e}_i^{\varepsilon_i(\hat{b})}\hat{b}}^{i, d}\in q^{-l_3}\mathbb{Z}[q].$

Hence, we obtain (ii) in Theorem \ref{maincalc}.
\end{proof}
\def\thesection{\Alph{section}}
 \setcounter{section}{0}
\section{Appendix}\label{app}
%\addcontentsline{toc}{section}{Appendix}
After this paper was posted on the preprint server, Yoshiyuki Kimura pointed out to the author the existence of a much simpler proof of the positivity of the transition matrices from canonical bases to PBW bases (Theorem \ref{maintheorem}). Moreover, it has been found that this simple method provides the same constants as in Section \ref{main} even when $\mathfrak{g}$ is of nonsymmetric finite type. In this appendix, we explain this point and the corollaries obtained from the following discussion. The author wishes to express his thanks to Yoshiyuki Kimura. 

\subsection{A simpler proof of the positivity}\label{app1}
\begin{definition}
We can define the $\mathbb{Q}(q)$-bilinear form $(\ ,\ ):U_q(\mathfrak{n}^-)\times U_q(\mathfrak{n}^-)\to \mathbb{Q}(q)$ uniquely by 
\[
\begin{array}{c}
(1,1)=1,\\
\dis (F_ix, y)=\frac{1}{(1-q_i^2)}(x, e'_i(y)),\ (xF_i, y)=\frac{1}{(1-q_i^2)}(x, {_ie'}(y)),
\end{array}
\]
for any $i\in I$ and $x, y\in U_q(\mathfrak{n}^-)$. This bilinear form is symmetric. See \cite[Chapter 1]{Lusbook} for details.
\end{definition}
We prepare some propositions before giving the simpler proof.
\begin{proposition}[{\cite[38.1.6]{Lusbook}}]\label{kernel}
For all $i\in I$, 
\begin{itemize}
\item[{\nor (i)}] $\Ker e'_i=\Set{x\in U_q(\mathfrak{n}^-)| {T''_{i, 1}}^{-1}(x)\in U_q(\mathfrak{n}^-) }$,
\item[{\nor (ii)}] $\Ker {_ie'}=\Set{x\in U_q(\mathfrak{n}^-)| T''_{i, 1}(x)\in U_q(\mathfrak{n}^-) }$.
\end{itemize}
\end{proposition}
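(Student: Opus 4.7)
The plan is to prove (i) directly and then derive (ii) by conjugating with the anti-involution $\ast$ from Definition \ref{automorphisms}. The key structural input is the direct sum decomposition $U_q(\mathfrak{n}^-)=\bigoplus_{n\geqq 0}F_i^{(n)}\Ker e'_i$ recalled at the start of Definition \ref{lowercrystal}.

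For the inclusion $\{x\mid {T''_{i,1}}^{-1}(x)\in U_q(\mathfrak{n}^-)\}\subseteq \Ker e'_i$, given such an $x$ I would write it uniquely as $x=\sum_{n\geqq 0}F_i^{(n)}x_n$ with $x_n\in\Ker e'_i$, using the decomposition above. Since ${T''_{i,1}}^{-1}=T'_{i,-1}$ is an algebra automorphism and ${T''_{i,1}}^{-1}(F_i)=-E_iK_i$, a direct computation using $E_iK_i=q_i^{-2}K_iE_i$ gives
\[
{T''_{i,1}}^{-1}(F_i^{(n)})=(-1)^n q_i^{-n(n-1)}E_i^{(n)}K_i^{\,n}.
\]
Assuming the other inclusion (proved below) that ${T''_{i,1}}^{-1}(x_n)\in U_q(\mathfrak{n}^-)$ for each $n$, we obtain
\[
{T''_{i,1}}^{-1}(x)=\sum_n(-1)^nq_i^{-n(n-1)}E_i^{(n)}K_i^{\,n}\,{T''_{i,1}}^{-1}(x_n).
\]
The summands for different $n$ lie in distinct $U_q(\mathfrak{n}^+)$-degree components in $U_q(\mathfrak{n}^+)U^0U_q(\mathfrak{n}^-)$, so the triangular decomposition forces $x_n=0$ for all $n\geqq 1$, yielding $x=x_0\in\Ker e'_i$.

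For the reverse inclusion $\Ker e'_i\subseteq\{x\mid {T''_{i,1}}^{-1}(x)\in U_q(\mathfrak{n}^-)\}$, I would proceed by induction on $\height(-\weight x)$ for homogeneous $x\in\Ker e'_i$. The base case $\weight x=0$ is trivial. For the inductive step, I would exploit the $q$-derivation property $e'_i(xy)=e'_i(x)y+q_i^{\langle\weight x,\alpha_i^{\vee}\rangle}xe'_i(y)$ from Definition \ref{qderiv} in order to exhibit $x$ as a sum of products involving the generators $F_j$ for $j\neq i$ and smaller elements of $\Ker e'_i$, then combine this with the explicit computation
\[
{T''_{i,1}}^{-1}(F_j)=\sum_{r+s=-\langle\alpha_i^{\vee},\alpha_j\rangle}(-1)^rq_i^{r}F_i^{(s)}F_jF_i^{(r)}\in U_q(\mathfrak{n}^-)\qquad(j\neq i),
\]
and the induction hypothesis to conclude ${T''_{i,1}}^{-1}(x)\in U_q(\mathfrak{n}^-)$.

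Finally, (ii) follows from (i) by applying $\ast$. Using $\ast\circ{_ir}\circ\ast|_{U_q(\mathfrak{n}^+)}=r_i$, which implies $\ast\circ e'_i\circ\ast|_{U_q(\mathfrak{n}^-)}={_ie'}$ (so that $\ast(\Ker e'_i)=\Ker{_ie'}$), together with the identity $\ast\circ T''_{i,1}\circ\ast=(T''_{i,1})^{-1}$ (derived from $\ast\circ T'_{i,\epsilon}\circ\ast=T''_{i,-\epsilon}$ and $T'_{i,\epsilon}=(T''_{i,-\epsilon})^{-1}$ in Definition \ref{braidaction}) and $\ast(U_q(\mathfrak{n}^-))=U_q(\mathfrak{n}^-)$, one translates the characterization in (i) into the one in (ii). The main obstacle is the forward containment ${T''_{i,1}}^{-1}(\Ker e'_i)\subset U_q(\mathfrak{n}^-)$: although ${T''_{i,1}}^{-1}$ generically produces positive-weight factors (as evidenced by ${T''_{i,1}}^{-1}(F_i)=-E_iK_i$), these contributions must cancel precisely for elements killed by $e'_i$, and verifying this cancellation — either by the inductive approach above or by a representation-theoretic computation of ${T''_{i,1}}^{-1}$ on $\mathfrak{sl}_{2,i}$-primitive vectors in integrable modules — is the technical heart of the statement.
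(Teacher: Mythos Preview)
The paper does not supply its own proof of this proposition; it is quoted from \cite[38.1.6]{Lusbook} and used as a black box. So there is no in-paper argument to compare against, and I will simply assess your sketch on its own terms.

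Your derivation of (ii) from (i) via $\ast$ is correct, and your argument for the inclusion $\{x : {T''_{i,1}}^{-1}(x)\in U_q(\mathfrak{n}^-)\}\subseteq\Ker e'_i$ --- using the decomposition $U_q(\mathfrak{n}^-)=\bigoplus_n F_i^{(n)}\Ker e'_i$ and then reading off the top $E_i$-degree in the triangular decomposition --- is sound once the reverse inclusion is available.

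The genuine gap is the reverse inclusion $\Ker e'_i\subseteq\{x : {T''_{i,1}}^{-1}(x)\in U_q(\mathfrak{n}^-)\}$, exactly where you flag it. The inductive step you propose --- writing an arbitrary $x\in\Ker e'_i$ as a sum of products of the $F_j$ ($j\neq i$) with strictly smaller elements of $\Ker e'_i$ --- is not justified by the $q$-derivation rule. Although $\Ker e'_i$ is a subalgebra containing each $F_j$ with $j\neq i$, it is strictly larger than the subalgebra those $F_j$ generate: already in type $A_2$ the one-dimensional piece of $\Ker e'_1$ in weight $-\alpha_1-\alpha_2$ involves $F_1$. So one cannot peel an $F_j$ off $x$ and remain inside $\Ker e'_i$, and there is no evident recursion of the kind you describe. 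What would make the induction go through is the statement that $\Ker e'_i$ equals the subalgebra generated by $\{T''_{i,1}(F_j):j\neq i\}$, but that is essentially equivalent to what is being proved.

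Lusztig's argument runs the comparison the other way. One first checks (via the commutation formula for $E_i$ with $U_q(\mathfrak{n}^-)$, or via the module-theoretic definition of the braid action) that the right-hand side $A:=U_q(\mathfrak{n}^-)\cap T''_{i,1}(U_q(\mathfrak{n}^-))$ is contained in $\Ker e'_i$, and independently establishes the PBW-type decomposition $U_q(\mathfrak{n}^-)=\bigoplus_{n\geqq 0}F_i^{(n)}A$. A weight-by-weight dimension count against the known decomposition $\bigoplus_{n\geqq 0}F_i^{(n)}\Ker e'_i$ then forces $A=\Ker e'_i$. Your alternative suggestion --- computing ${T''_{i,1}}^{-1}$ on $\mathfrak{sl}_{2,i}$-primitive vectors in integrable modules --- is the route that can actually be completed and is in the spirit of Lusztig's Chapter~38; the bare inductive sketch, as written, does not close.
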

\begin{proposition}[{\cite[38.2.1]{Lusbook}}]\label{T-inv}
For $x, y\in \Ker e'_i$ $(i\in I)$, we have 
$$(x, y)=((T''_{i, 1})^{-1}(x), (T''_{i, 1})^{-1}(y) ).$$
{\nor (}See also Proposition \ref{kernel} (i).{\nor )}
\end{proposition}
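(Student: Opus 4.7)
The plan is to prove this by induction on $n := -\height(\weight x)$ using the adjunction structure of the pairing. First, by $\mathbb{Q}(q)$-bilinearity and homogeneity (the pairing vanishes off the diagonal, and $(T''_{i,1})^{-1}$ shifts the weight grading by $s_i$), we may assume $x,y$ are homogeneous of the same weight $-\alpha \in Q_-$. The base case $\alpha = 0$ is immediate, since then $x,y \in \mathbb{Q}(q)$ are fixed by $(T''_{i,1})^{-1}$.

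For the inductive step, write $x = \sum_{j \in I} F_j\,x^{(j)}$ with $x^{(j)} \in U_q(\mathfrak{n}^-)$ of strictly lower height (possible because $\weight x < 0$ and $U_q(\mathfrak{n}^-)$ is generated by the $F_j$'s). Using the defining adjunction $(F_j a, b) = (1-q_j^2)^{-1}(a, e'_j b)$, the left-hand side becomes
\[
(x,y) \;=\; \sum_{j \in I} \frac{1}{1-q_j^2}\,(x^{(j)}, e'_j y).
\]
For the right-hand side, apply $(T''_{i,1})^{-1}$ term by term. For $j \neq i$, the braid formula gives
\[
(T''_{i,1})^{-1}(F_j) \;=\; \sum_{r+s = -\langle \alpha_i^\vee, \alpha_j\rangle}(-1)^r q_i^{\,r}\, F_i^{(s)} F_j F_i^{(r)} \;\in\; U_q(\mathfrak{n}^-),
\]
and one expands $((T''_{i,1})^{-1}(F_j x^{(j)}),\,(T''_{i,1})^{-1}(y))$ by iterated adjunctions in the $F_i$- and $F_j$-directions; the inductive hypothesis then applies to the resulting lower-weight pairings.

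The main obstacle is twofold. First, the term $j = i$ requires delicate handling: on the left $(F_i x^{(i)}, y) = (1-q_i^2)^{-1}(x^{(i)}, e'_i y) = 0$ because $y \in \Ker e'_i$, while on the right $(T''_{i,1})^{-1}(F_i) = -E_i K_i$ lies outside $U_q(\mathfrak{n}^-)$; the hypothesis $x \in \Ker e'_i$, equivalently $(T''_{i,1})^{-1}(x) \in U_q(\mathfrak{n}^-)$, must be used to ensure that the $E_i K_i$-contributions coming from the various summands combine to cancel. Second, and more substantially, one has to verify the combinatorial $q_i$-binomial identity which guarantees that the expanded double sum over $(r,s)$ collapses, after pairing against $(T''_{i,1})^{-1}(y)$ and all the attendant $e'_i$- and ${_i}e'$-adjunctions, to the clean expression $\frac{1}{1-q_j^2}((T''_{i,1})^{-1}(x^{(j)}),\,(T''_{i,1})^{-1}(e'_j y))$. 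This is essentially the assertion that $(T''_{i,1})^{-1}$ intertwines $e'_j$ (for $j \neq i$) with a specific combination of $e'_i$- and ${_i}e'$-twisted derivations, in such a way that on $\Ker e'_i$ the combination reduces to precisely $(T''_{i,1})^{-1} \circ e'_j$. Establishing this intertwining identity, using the commutation rules between the $e'_k$'s and $F_i$ from Definition \ref{qderiv} together with the explicit form of $(T''_{i,1})^{-1}$ on $U_q(\mathfrak{n}^-)$, is the technical core of the argument; once it is in hand, the inductive step closes immediately.
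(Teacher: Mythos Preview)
The paper does not prove this proposition; it is simply quoted from \cite[38.2.1]{Lusbook}, so your attempt must stand on its own.

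There is a genuine gap in the inductive scheme. The decomposition $x = \sum_j F_j\,x^{(j)}$ does not keep $x^{(j)}$ inside $\Ker e'_i$, and $e'_j y$ need not lie in $\Ker e'_i$ either (the derivations $e'_i$ and $e'_j$ do not commute for $j \neq i$). Consequently the inductive hypothesis, which is stated only for pairs of elements of $\Ker e'_i$, cannot be applied to the lower-weight pairings you produce. Worse, on the transported side $(T''_{i,1})^{-1}(x^{(j)})$ typically falls outside $U_q(\mathfrak{n}^-)$, where the form $(\cdot,\cdot)$ is not even defined, so the termwise expansion you describe is ill-posed; the ``cancellation of $E_iK_i$-contributions'' you invoke is nothing more than the statement $(T''_{i,1})^{-1}(x)\in U_q(\mathfrak{n}^-)$ from Proposition~\ref{kernel}(i), and it gives no control over the individual summands of the pairing. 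A concrete obstruction: already in rank two with $\langle\alpha_i^{\vee},\alpha_j\rangle=-1$, an element of $\Ker e'_i$ of weight $-\alpha_i-\alpha_j$ cannot be written as $F_ix^{(i)}+F_jx^{(j)}$ with $x^{(i)},x^{(j)}\in\Ker e'_i$, since $(\Ker e'_i)_{-\alpha_i}=0$.

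You correctly isolate the missing ingredient as an intertwining identity relating $(T''_{i,1})^{-1}\circ e'_j$ to derivations on the image, but you do not prove it, and establishing it directly is essentially as hard as the proposition itself. Lusztig's argument avoids this circularity by never decomposing along the other $F_j$: he works only with the index $i$, using the orthogonal splitting $U_q(\mathfrak{n}^-)=\Ker e'_i\oplus F_iU_q(\mathfrak{n}^-)$ and the compatibility of $T''_{i,1}$ with the coproduct (hence with the defining adjunctions of the form) to compare the two bilinear forms on $\Ker e'_i$ directly.
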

\begin{proposition}[The dual bases of PBW bases with respect to $(\ ,\ )$]\label{dualbasis}
Let ${\bf i}$ be a reduced word of $w_0$ and set 
$$\tilde{F}_{{\bf i}}^{{\bf d}}:=\left(\prod_{k=1}^{l(w_0)}q_{i_k}^{\frac{1}{2}d_k(d_k-1)}(1-q_{i_k}^2)^{d_k}\right)F_{i_1}^{d_1}T''_{i_1, 1}(F_{i_2}^{d_2})\cdots T''_{i_1, 1}T''_{i_2, 1}\cdots T''_{i_{l(w_0)-1}, 1}(F_{i_{l(w_0)}}^{d_{l(w_0)}}),$$
for ${\bf d}\in (\mathbb{Z}_{\geqq 0})^{l(w_0)}.$
Then, we have 
$$(\tilde{F}_{{\bf i}}^{{\bf d}}, F_{{\bf i}}^{{\bf d}'})=\delta_{{\bf d}, {\bf d}'}\ \text{for}\ {\bf d}, {\bf d}'\in (\mathbb{Z}_{\geqq 0})^{l(w_0)}.$$
\end{proposition}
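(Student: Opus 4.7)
The plan is to prove, more generally, that for any finite sequence $(j_1,\dots,j_m)$ of elements of $I$ forming a reduced word of some $w\in W$, the analogously defined elements
\begin{align*}
P_{\bf j}^{\bf e} &:= F_{j_1}^{(e_1)}T''_{j_1,1}(F_{j_2}^{(e_2)})\cdots T''_{j_1,1}\cdots T''_{j_{m-1},1}(F_{j_m}^{(e_m)}),\\
\tilde{P}_{\bf j}^{\bf e} &:= \Big(\prod_{k=1}^{m}q_{j_k}^{\frac{1}{2}e_k(e_k-1)}(1-q_{j_k}^2)^{e_k}\Big)\,F_{j_1}^{e_1}T''_{j_1,1}(F_{j_2}^{e_2})\cdots T''_{j_1,1}\cdots T''_{j_{m-1},1}(F_{j_m}^{e_m})
\end{align*}
satisfy $(\tilde{P}_{\bf j}^{\bf e}, P_{\bf j}^{{\bf e}'}) = \delta_{{\bf e},{\bf e}'}$, and to proceed by induction on $m$. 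The target Proposition is the case $w=w_0$. The reason for treating all reduced words simultaneously is that after stripping off the leading factor, the tail ${\bf j}_{\geq 2}:=(j_2,\dots,j_m)$ is a reduced word for $s_{j_1}w$, not for $w_0$, and the induction needs a hypothesis that applies to such tails.

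For the inductive step, factorize
$$\tilde{P}_{\bf j}^{\bf e} = \nu_1 F_{j_1}^{e_1}\,y,\qquad P_{\bf j}^{{\bf e}'} = F_{j_1}^{(e_1')}\,z,$$
where $\nu_1 := q_{j_1}^{\frac{1}{2}e_1(e_1-1)}(1-q_{j_1}^2)^{e_1}$, $y := T''_{j_1,1}\bigl(\tilde{P}_{{\bf j}_{\geq 2}}^{{\bf e}_{\geq 2}}\bigr)$, and $z := T''_{j_1,1}\bigl(P_{{\bf j}_{\geq 2}}^{{\bf e}'_{\geq 2}}\bigr)$. By Proposition \ref{kernel}(i), both $y$ and $z$ lie in $\Ker e'_{j_1}$. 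Applying the defining relation $(F_{j_1}u,v)=(1-q_{j_1}^2)^{-1}(u,e'_{j_1}(v))$ a total of $e_1$ times and using the Leibnitz rule for $e'_{j_1}$ together with $e'_{j_1}(z)=0$, we obtain
$$(\tilde{P}_{\bf j}^{\bf e}, P_{\bf j}^{{\bf e}'}) = \frac{\nu_1}{(1-q_{j_1}^2)^{e_1}}\Bigl(y,\;\bigl((e'_{j_1})^{e_1}(F_{j_1}^{(e_1')})\bigr)\cdot z\Bigr).$$
A direct $\mathfrak{sl}_2$-level computation (from $e'_{j_1}(F_{j_1}^n)=q_{j_1}^{-(n-1)}[n]_{j_1}F_{j_1}^{n-1}$) gives $(e'_{j_1})^{e_1}(F_{j_1}^{(e_1')})=0$ if $e_1>e_1'$, and equals $q_{j_1}^{\frac{1}{2}e_1(e_1+1)-e_1 e_1'}F_{j_1}^{(e_1'-e_1)}$ otherwise.

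Three cases: if $e_1>e_1'$, the inner product vanishes immediately; if $e_1<e_1'$, rewriting the remaining inner product $(y,F_{j_1}^{(e_1'-e_1)}z)$ by symmetry of the form and reapplying the same derivation relation on the first slot produces the factor $(e'_{j_1})^{e_1'-e_1}(y)=0$; and if $e_1=e_1'$, the $q$-power $q_{j_1}^{-\frac{1}{2}e_1(e_1-1)}$ produced by the $\mathfrak{sl}_2$ computation cancels $\nu_1/(1-q_{j_1}^2)^{e_1}$ exactly, yielding $(y,z)$. Proposition \ref{T-inv}, applicable because $y,z\in\Ker e'_{j_1}$, then converts this to $\bigl(\tilde{P}_{{\bf j}_{\geq 2}}^{{\bf e}_{\geq 2}}, P_{{\bf j}_{\geq 2}}^{{\bf e}'_{\geq 2}}\bigr)$, which the induction hypothesis identifies with $\delta_{{\bf e}_{\geq 2},{\bf e}'_{\geq 2}}$. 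Collecting the three cases gives $\delta_{{\bf e},{\bf e}'}$.

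The main conceptual ingredient is the braid invariance of the bilinear form on $\Ker e'_i$ furnished by Proposition \ref{T-inv}; this lets one peel off one factor at a time rather than manipulating the twisted root vectors directly. The $\mathfrak{sl}_2$ computation is routine, and the chosen normalization constants $\nu_k$ are precisely those that make the $q$-powers telescope to $1$ in the balanced case. The only delicate point, and the main obstacle to a literal induction on $l(w_0)$, is the need to generalize the statement from reduced words of $w_0$ to reduced words of arbitrary $w\in W$, so that the induction hypothesis is available for the tail subword.
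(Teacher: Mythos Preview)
Your argument is correct and follows essentially the same route as the paper's proof. The paper compresses the argument into two lines: it records the formula $(e'_{i_1})^e(F_{\bf i}^{\bf d})=q_{i_1}^{-\frac{1}{2}e(2d_1-e-1)}F_{\bf i}^{{\bf d}-(e,0,\dots,0)}$ (obtained exactly from your Leibniz-rule observation that the tail lies in $\Ker e'_{i_1}$, via Proposition~\ref{kernel}(i)) and then says that the result follows by combining this with the defining property of $(\ ,\ )$ and Proposition~\ref{T-inv}. Your explicit induction on the length of the reduced word, with the generalization to arbitrary $w\in W$, is precisely the unpacking of the paper's phrase ``we can easily obtain this proposition''; the paper leaves this iteration implicit.
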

\begin{proof}
Using Proposition \ref{kernel} (i) and $e'_i(F_i^{(d)})=q_i^{-d+1}F_i^{(d-1)}$, we have, for any ${\bf d}\in (\mathbb{Z}_{\geqq 0})^{l(w_0)}$, 
$$e'_i(F_{{\bf i}'}^{{\bf d}})=q_i^{-d_1+1}F_{{\bf i}'}^{{\bf d}-(1,0,\dots,0)},$$
and 
$$(e'_i)^e(F_{{\bf i}'}^{{\bf d}})=q_i^{-\frac{1}{2}e(2d_1-e-1)}F_{{\bf i}'}^{{\bf d}-(e,0,\dots,0)}\ \text{for}\ e\in \mathbb{Z}_{\geqq 0},$$
where $F_{{\bf i}'}^{{\bf d}-(e,0,\dots,0)}:=0$ if $e>d_1$. Combining this equality with the definition of the bilinear form $(\ ,\ )$ and Proposition \ref{T-inv}, we can easily obtain this proposition.
\end{proof}
\begin{proposition}[{\cite[Proposition 3.4.7, Corollary 3.4.8]{Saito_PBW}, \cite[Theorem 1.2]{Lus_braid}}]\label{Saito_refl}
Let $i\in I$ and
\begin{align*}
{}^i\pi:U_q(\mathfrak{n}^-)=\bigoplus_{n\in \mathbb{Z}_{\geqq 0}}F_i^{(n)}\Ker e'_i\to \Ker e'_i,\\ \pi^i:U_q(\mathfrak{n}^-)=\bigoplus_{n\in \mathbb{Z}_{\geqq 0}}\Ker {_ie'}F_i^{(n)}\to \Ker {_ie'},
\end{align*}
be natural projections.

Then, for $b\in B(\infty)$ with $\varepsilon_i(b)=0$, we have
$${}^i\pi(G^{\mathrm{low}}(b))=T''_{i, 1}(\pi^i G^{\mathrm{low}}(\Lambda_i^{-1}(b))),$$
where $\Lambda_i^{-1}:\Set{b\in B(\infty)|\varepsilon_i(b)=0}\to\Set{b\in B(\infty)|\varepsilon_i^{\ast}(b)=0}$ is the bijection defined by $b\mapsto \tilde{f}_i^{\varphi_i^{\ast}(b)}(\tilde{e}_i^{\ast})^{\varepsilon_i^{\ast}(b)}b.$
\end{proposition}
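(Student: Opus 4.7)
Both sides of the equality live in $\Ker e'_i$: the left-hand side by construction of the projection ${}^i\pi$, and the right-hand side by Proposition \ref{kernel}, since $\pi^i G^{\mathrm{low}}(\Lambda_i^{-1}(b)) \in \Ker {_ie'}$ and $T''_{i, 1}$ carries $\Ker {_ie'}$ into $\Ker e'_i$. The plan is to give a unique characterization of ${}^i\pi(G^{\mathrm{low}}(b))$ by bar-invariance (in a twisted sense), integrality, and a congruence modulo $qL(\infty)$, and then to verify that the right-hand side satisfies the same three properties.

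First I would use Proposition \ref{c1=epsiloni1}(i) to show that the decomposition $U_q(\mathfrak{n}^-) = \bigoplus_n F_i^{(n)} \Ker e'_i$ is compatible with the lower crystal lattice in the sense that $\{ {}^i\pi(G^{\mathrm{low}}(b)) : \varepsilon_i(b) = 0 \}$ is an $\mathcal{A}_0$-basis of $L(\infty) \cap \Ker e'_i$ and its reduction mod $q$ is $\{b : \varepsilon_i(b) = 0\}$; in particular ${}^i\pi(G^{\mathrm{low}}(b)) \equiv b \bmod qL(\infty)$ for such $b$. Next I would introduce a twisted bar-involution on $\Ker e'_i$ by
$$\sigma_i := T''_{i, 1} \circ \overline{(\cdot)} \circ (T''_{i, 1})^{-1},$$
which is well-defined thanks to Proposition \ref{kernel} combined with the relation $\overline{(\cdot)} \circ T''_{i, \epsilon} \circ \overline{(\cdot)} = T''_{i, -\epsilon}$ from Definition \ref{braidaction} and the identity $(T''_{i, 1})^{-1} = T'_{i, -1}$. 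Decomposing a bar-invariant $x \in U_q(\mathfrak{n}^-)$ as $x = \sum_n F_i^{(n)} y_n$ with $y_n \in \Ker e'_i$ and using $\overline{F_i^{(n)}} = F_i^{(n)}$, one checks that ${}^i\pi$ intertwines $\overline{(\cdot)}$ on bar-invariant elements with $\sigma_i$ on their projections. Consequently ${}^i\pi(G^{\mathrm{low}}(b))$ is $\sigma_i$-invariant, integral, and congruent to $b$ modulo $qL(\infty)$, and these three properties pin it down uniquely inside $L(\infty) \cap \Ker e'_i$.

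It then remains to check that $T''_{i, 1}(\pi^i G^{\mathrm{low}}(\Lambda_i^{-1}(b)))$ satisfies the same three properties. Integrality follows because $T''_{i, 1}$ preserves the Lusztig integral form and $\pi^i$ preserves integrality on $\Ker {_ie'}$ (the dual argument using Proposition \ref{c1=epsiloni1}(ii)). The $\sigma_i$-invariance is equivalent, via the very definition of $\sigma_i$, to bar-invariance of $\pi^i G^{\mathrm{low}}(\Lambda_i^{-1}(b))$, which is immediate from bar-invariance of $G^{\mathrm{low}}(\Lambda_i^{-1}(b))$ and the analogous $\overline{(\cdot)}$-equivariance of $\pi^i$. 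The main obstacle will be the congruence modulo $qL(\infty)$: one must prove that $T''_{i, 1}(\pi^i G^{\mathrm{low}}(\Lambda_i^{-1}(b))) \equiv b \bmod qL(\infty)$, i.e.\ that Saito's crystal reflection $\Lambda_i^{-1}$ correctly tracks the action of $T''_{i, 1}$ on $L(\infty)/qL(\infty) \cap \Ker e'_i$. This is the technical core, and I expect it to require an induction on $-\weight b$ together with a careful analysis of how the ordinary and $\ast$-Kashiwara operators interact under the Lusztig symmetry, exploiting the explicit description $\Lambda_i^{-1}(b) = \tilde{f}_i^{\varphi_i^{\ast}(b)} (\tilde{e}_i^{\ast})^{\varepsilon_i^{\ast}(b)} b$ to reduce to the case where both $\varepsilon_i(b)$ and $\varepsilon_i^{\ast}(\Lambda_i^{-1}(b))$ vanish.
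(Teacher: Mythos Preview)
The paper does not prove this proposition; it is quoted from \cite[Proposition~3.4.7, Corollary~3.4.8]{Saito_PBW} and \cite[Theorem~1.2]{Lus_braid} and used as a black box in the proof of Theorem~\ref{easyproof}. So there is no proof in the paper to compare against.

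That said, your outline has a genuine technical gap. Your twisted bar $\sigma_i = T''_{i,1}\circ\overline{(\cdot)}\circ(T''_{i,1})^{-1}$ is \emph{not} a well-defined endomorphism of $\Ker e'_i$. Concretely: $(T''_{i,1})^{-1}$ sends $\Ker e'_i$ bijectively onto $\Ker {_ie'}$ (this follows from Proposition~\ref{kernel}), and then by equation~(\ref{qderiv2}) the map $\overline{(\cdot)}$ interchanges $\Ker {_ie'}$ and $\Ker e'_i$, so after two steps you are back in $\Ker e'_i$, \emph{not} in $\Ker {_ie'}$. But by Proposition~\ref{kernel}(ii), $T''_{i,1}(z)\in U_q(\mathfrak n^-)$ only when $z\in\Ker {_ie'}$. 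One can check this explicitly in type $A_2$ with $i=1$: starting from $F_2\in\Ker e'_1$, one finds $(T''_{1,1})^{-1}(F_2)=F_1F_2-qF_2F_1$, whose bar $F_1F_2-q^{-1}F_2F_1$ lies in $\Ker e'_1$ but not in $\Ker {_1e'}$, and $T''_{1,1}$ applied to it produces terms involving $K_{-1}E_1$ that do not cancel. The same problem undermines your intertwining claim: since $\overline{(\cdot)}$ swaps the two kernels, the decomposition $U_q(\mathfrak n^-)=\bigoplus_n F_i^{(n)}\Ker e'_i$ is not bar-stable, so one cannot simply read off the effect of $\overline{(\cdot)}$ on ${}^i\pi(x)$ componentwise.

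Even granting a correct version of the twisted bar (which does exist in the literature), the step you flag as the ``main obstacle''---that $T''_{i,1}$ carries $L(\infty)\cap\Ker {_ie'}$ to $L(\infty)\cap\Ker e'_i$ and induces $\Lambda_i$ on the crystal---is precisely the content of the cited results of Saito and Lusztig. Saying you ``expect it to require an induction on $-\weight b$'' is not a proof; this crystal compatibility is the entire theorem, and the proofs in \cite{Saito_PBW} and \cite{Lus_braid} are substantial.
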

The following theorem together with Proposition \ref{positivty_1} implies the positivity of the transition matrices from canonical bases to PBW bases if $\mathfrak{g}$ is of type $ADE$. Moreover, the following proof is simpler than the proof in Section \ref{main}.
\begin{theorem}{\nor (}We do not assume that $\mathfrak{g}$ is of type $ADE$.{\nor )}\label{easyproof}

Fix a reduced word ${\bf i}$ of $w_0$. Then, for $b\in B(\infty)$ and ${\bf d}\in (\mathbb{Z}_{\geqq 0})^{l(w_0)}$, we have
$${_{{\bf i}}\zeta}_{{\bf d}}^{G^{\mathrm{low}}(b)}=\left(\prod_{k=1}^{l(w_0)}q_{i_k}^{\frac{1}{2}d_k(d_k-1)}\right)\sum_{\substack{b_1,\dots,b_{l(w_0)-1}\in B(\infty)\\ \text{with}\ \varepsilon_{i_l}^{\ast}(b_l)=0\ \text{for\ all}\ l}}\mspace{-20mu}\widehat{d}_{b, \Lambda_{i_1}(b_1)}^{i_1, d_1}\widehat{d}_{b_1, \Lambda_{i_2}(b_2)}^{i_2, d_2}\cdots \widehat{d}_{b_{l(w_0)-1}, 1}^{i_{l(w_0)}, d_{l(w_0)}}.$$
\end{theorem}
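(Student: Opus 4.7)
The plan is to observe that, by Proposition \ref{dualbasis}, the family $\{\tilde{F}_{{\bf i}}^{{\bf d}}\}$ is dual to the PBW basis $\{F_{{\bf i}}^{{\bf d}}\}$ with respect to the bilinear form $(\ ,\ )$, so
\[
{_{{\bf i}}\zeta}_{{\bf d}}^{G^{\mathrm{low}}(b)}=(\tilde{F}_{{\bf i}}^{{\bf d}},G^{\mathrm{low}}(b)).
\]
The strategy is to compute this pairing by iteratively ``peeling off'' the $l(w_0)$ factors of $\tilde{F}_{{\bf i}}^{{\bf d}}$ from left to right, using three tools at each step: the adjointness formula $(F_{i}x,y)=(1-q_{i}^{2})^{-1}(x,e'_{i}(y))$, Saito's reflection formula (Proposition \ref{Saito_refl}), and the $T''_{i,1}$-invariance of the form on $\Ker e'_{i}$ (Proposition \ref{T-inv}).

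Concretely, I would execute one step as follows. Write $\tilde{F}_{{\bf i}}^{{\bf d}}=C\cdot F_{i_1}^{d_1}T''_{i_1,1}(R_1)$ where $R_1\in U_q(\mathfrak{n}^-)$ is the tail of the product and $C=\prod_{k}q_{i_k}^{\frac{1}{2}d_k(d_k-1)}(1-q_{i_k}^2)^{d_k}$. Applying the adjointness formula $d_1$ times pulls $F_{i_1}^{d_1}$ across the form, producing $(1-q_{i_1}^2)^{-d_1}(T''_{i_1,1}(R_1),(e'_{i_1})^{d_1}G^{\mathrm{low}}(b))$. Expanding $(e'_{i_1})^{d_1}G^{\mathrm{low}}(b)=\sum_{b_1}\widehat{d}_{b,b_1}^{i_1,d_1}G^{\mathrm{low}}(b_1)$, I would argue that only terms with $\varepsilon_{i_1}(b_1)=0$ survive: indeed, $T''_{i_1,1}(R_1)\in\Ker e'_{i_1}$ by Proposition \ref{kernel}(i), and for $b_1$ with $\varepsilon_{i_1}(b_1)>0$ Proposition \ref{c1=epsiloni1}(i) gives $G^{\mathrm{low}}(b_1)\in F_{i_1}U_q(\mathfrak{n}^-)$, so the pairing $(F_{i_1}z,T''_{i_1,1}(R_1))=(1-q_{i_1}^2)^{-1}(z,e'_{i_1}T''_{i_1,1}(R_1))=0$. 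For the surviving $b_1$, Proposition \ref{Saito_refl} lets me write $G^{\mathrm{low}}(b_1)=T''_{i_1,1}(G^{\mathrm{low}}(c_1))$ with $c_1:=\Lambda_{i_1}^{-1}(b_1)$ (since $\varepsilon_{i_1}^{\ast}(c_1)=0$ implies $\pi^{i_1}G^{\mathrm{low}}(c_1)=G^{\mathrm{low}}(c_1)$, using $\ast\circ{_{i}e'}\circ\ast=e'_{i}$ on $U_q(\mathfrak{n}^-)$). Finally, Proposition \ref{T-inv} cancels the outer $T''_{i_1,1}$'s to yield $(R_1,G^{\mathrm{low}}(c_1))$, one ``letter'' shorter.

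Iterating this peeling for $k=1,\dots,l(w_0)$ reduces the computation to $(1,G^{\mathrm{low}}(c_{l(w_0)-1}))$ in the last stage (where $c_0:=b$), after which one more application of the adjointness formula pairs against $(e'_{i_{l(w_0)}})^{d_{l(w_0)}}G^{\mathrm{low}}(c_{l(w_0)-1})$ and selects the coefficient of $1=G^{\mathrm{low}}(\mathrm{hwt})$, namely $\widehat{d}_{c_{l(w_0)-1},1}^{i_{l(w_0)},d_{l(w_0)}}$. Multiplying all factors, the denominators $(1-q_{i_k}^2)^{d_k}$ accumulated at each step exactly cancel the corresponding factors in $C$, leaving the prefactor $\prod_{k}q_{i_k}^{\frac{1}{2}d_k(d_k-1)}$ and the claimed sum over $(c_1,\dots,c_{l(w_0)-1})$ with each $\varepsilon_{i_l}^{\ast}(c_l)=0$.

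There is no serious obstacle; the three structural inputs (Propositions \ref{dualbasis}, \ref{T-inv}, \ref{Saito_refl}) drive the argument and the only step requiring care is the vanishing of the contributions from $b_1$ with $\varepsilon_{i_1}(b_1)>0$ at each stage, which is a short direct computation using Proposition \ref{c1=epsiloni1}(i) together with the adjointness of $F_i$ and $e'_i$. The resulting formula is then manifestly the one stated, and combined with Proposition \ref{positivty_1} it yields the positivity statement of Theorem \ref{maintheorem} when $\mathfrak{g}$ is of type $ADE$, at the same time making explicit the formula for ${_{{\bf i}}\zeta}_{{\bf d}}^{G^{\mathrm{low}}(b)}$ in general finite type.
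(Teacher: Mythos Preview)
Your approach is exactly the paper's, and the overall structure (dualize via Proposition~\ref{dualbasis}, peel with the adjointness of $F_i$ and $e'_i$, apply Saito's reflection, cancel $T''_{i,1}$ via Proposition~\ref{T-inv}) is correct. However, one justification is wrong as stated: it is \emph{not} true that $\varepsilon_{i_1}^{\ast}(c_1)=0$ implies $\pi^{i_1}G^{\mathrm{low}}(c_1)=G^{\mathrm{low}}(c_1)$, nor that $\varepsilon_{i_1}(b_1)=0$ implies $G^{\mathrm{low}}(b_1)\in\Ker e'_{i_1}$. For instance, in type $A_2$ the element $b_1=\tilde{f}_2\tilde{f}_1\!\cdot\!1$ has $\varepsilon_1(b_1)=0$, yet $G^{\mathrm{low}}(b_1)=F_2F_1$ satisfies $e'_1(F_2F_1)=qF_2\neq 0$. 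So Proposition~\ref{Saito_refl} only yields ${}^{i_1}\pi(G^{\mathrm{low}}(b_1))=T''_{i_1,1}\bigl(\pi^{i_1}G^{\mathrm{low}}(c_1)\bigr)$, not the identity you wrote.

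The repair, which is precisely what the paper does, is to carry the projections and then remove them by orthogonality of the form. Since $T''_{i_1,1}(R_1)\in\Ker e'_{i_1}$ and $(x,F_{i_1}z)=(1-q_{i_1}^2)^{-1}(z,e'_{i_1}(x))=0$ for $x\in\Ker e'_{i_1}$, one may replace $G^{\mathrm{low}}(b_1)$ by ${}^{i_1}\pi(G^{\mathrm{low}}(b_1))$ inside the pairing. After applying Proposition~\ref{T-inv}, the remaining left entry $R_1$ lies in $\Ker{_{i_1}e'}$ by Proposition~\ref{kernel}(ii) (because $T''_{i_1,1}(R_1)\in U_q(\mathfrak{n}^-)$), and since $(x,zF_{i_1})=(1-q_{i_1}^2)^{-1}(z,{_{i_1}e'}(x))=0$ for $x\in\Ker{_{i_1}e'}$, one may replace $\pi^{i_1}G^{\mathrm{low}}(c_1)$ by $G^{\mathrm{low}}(c_1)$. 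With this correction, your iteration goes through and yields the stated formula.
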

\begin{proof}
By Proposition \ref{dualbasis}, we have
$${_{{\bf i}}\zeta}_{{\bf d}}^{G^{\mathrm{low}}(b)}=(\tilde{F}_{{\bf i}}^{{\bf d}}, G^{\mathrm{low}}(b)).$$
Set $C:=\prod_{k=1}^{l(w_0)}q_{i_k}^{\frac{1}{2}d_k(d_k-1)}(1-q_{i_k}^2)^{d_k}$.
By the definition of the bilinear form $(\ ,\ )$ and Proposition \ref{EFaction'}, we have
\begin{align*}
&(\tilde{F}_{{\bf i}}^{{\bf d}}, G^{\mathrm{low}}(b))\\
&=C(F_{i_1}^{d_1}T''_{i_1, 1}(F_{i_2}^{d_2})\cdots T''_{i_1, 1}T''_{i_2, 1}\cdots T''_{i_{l(w_0)-1}, 1}(F_{i_{l(w_0)}}^{d_{l(w_0)}}), G^{\mathrm{low}}(b))\\
&=C(1-q_{i_1}^2)^{-d_1}(T''_{i_1, 1}(F_{i_2}^{d_2})\cdots T''_{i_1, 1}T''_{i_2, 1}\cdots T''_{i_{l(w_0)-1}, 1}(F_{i_{l(w_0)}}^{d_{l(w_0)}}), (e'_{i_1})^{d_1}G^{\mathrm{low}}(b))\\
&=q_{i_1}^{\frac{1}{2}d_1(d_1-1)}\sum_{\substack{\tilde{b}\in B(\infty)\\ \tilde{b}=\tilde{e}_{i_1}^{d_1}b\ \text{or}\ \varepsilon_i(b)-d_1<\varepsilon_i(\tilde{b})}}\widehat{d}_{b, \tilde{b}}^{i_1, d_1}(\tilde{F}_{{\bf i}}^{(0, d_2,\dots,d_{l(w_0)})}, G^{\mathrm{low}}(\tilde{b})).
\end{align*}
By Proposition \ref{kernel} (i) and Proposition \ref{c1=epsiloni1}, we have
$$(\tilde{F}_{{\bf i}}^{(0, d_2,\dots,d_{l(w_0)})}, G^{\mathrm{low}}(\tilde{b}))=0$$ 
unless $\varepsilon_i(\tilde{b})=0$.
Moreover, when $\varepsilon_i(\tilde{b})=0$, by Proposition \ref{T-inv} and \ref{Saito_refl}, we have
\begin{align*}
(\tilde{F}_{{\bf i}}^{(0, d_2,\dots,d_{l(w_0)})}, G^{\mathrm{low}}(\tilde{b}))&=(\tilde{F}_{{\bf i}}^{(0, d_2,\dots,d_{l(w_0)})}, {}^{i_1}\pi (G^{\mathrm{low}}(\tilde{b})))\\
&=(\tilde{F}_{{\bf i}}^{(0, d_2,\dots,d_{l(w_0)})}, T''_{i_1, 1}(\pi^{i_1}(G^{\mathrm{low}}(\Lambda_{i_1}^{-1}(\tilde{b})))))\\
&=(\tilde{F}_{{\bf i}'}^{(d_2,\dots,d_{l(w_0)}, 0)}, \pi^{i_1}(G^{\mathrm{low}}(\Lambda_{i_1}^{-1}(\tilde{b})))),
\end{align*}
where $\alpha_{i'_1}:=-w_0\alpha_{i_1}$ and ${\bf i}'=(i_2,\dots,i_{l(w_0)}, i'_1)$. Note that ${\bf i}'$ is also a reduced word of $w_0$. Set $b_1:=\Lambda_{i_1}^{-1}(\tilde{b})$. ($\varepsilon_{i_1}^{\ast}(b_1)=0$.)

Now, by Proposition \ref{kernel} (ii), we have 
$$\tilde{F}_{{\bf i}'}^{(d_2,\dots,d_{l(w_0)}, 0)}\in \Ker {_{i_1}e'}.$$
Moreover, 
$$G^{\mathrm{low}}(b_1)-\pi^{i_1}(G^{\mathrm{low}}(b_1))\in \bigoplus_{n\in \mathbb{Z}_{>0}}\Ker {_{i_1}e'}F_{i_1}^{(n)}.$$
Therefore, 
$$(\tilde{F}_{{\bf i}'}^{(d_2,\dots,d_{l(w_0)}, 0)}, \pi^{i_1}(G^{\mathrm{low}}(b_1)))=(\tilde{F}_{{\bf i}'}^{(d_2,\dots,d_{l(w_0)}, 0)}, G^{\mathrm{low}}(b_1)).$$
We can repeat the argument above for $(\tilde{F}_{{\bf i}'}^{(d_2,\dots,d_{l(w_0)}, 0)}, G^{\mathrm{low}}(b_1))$. Hence, the proof is completed.
\end{proof}
\begin{remark}
For $\tilde{b}\in B(\infty)$ with $\epsilon_i(\tilde{b})=0$, we have 
$$\weight \Lambda_i^{-1}(\tilde{b})=\weight\tilde{b}+(\varepsilon_i^{\ast}(\tilde{b})-\varphi_i^{\ast}(\tilde{b}))\alpha_i=\weight\tilde{b}-\langle \weight \tilde{b}, \alpha_i^{\vee}\rangle\alpha_i=s_i(\weight\tilde{b}).$$
Therefore, in a nonzero summand of the right-hand side of the equality in Theorem \ref{easyproof}, we have $\weight b_k\in A\ \text{for all\ }k$. Here $A$ is as in Section \ref{main}. (We regard $b$ as $b_0$ in Section \ref{main}.)
\end{remark}
\begin{remark}
It also follows from the proof of Theorem \ref{easyproof} that for $b\in B(\infty)$ there exists ${\bf c}\in (\mathbb{Z}_{\geqq 0})^{l(w_0)}$ such that 
$${_{{\bf i}}\zeta}_{{\bf d}}^{G^{\mathrm{low}}(b)}=
\begin{cases}
1&\text{if\ }{\bf d}={\bf c},\\
\in q\mathbb{Z}[q]& \text{if\ }{\bf d}>{\bf c},\\
0&\text{otherwise},
\end{cases}$$
where
\begin{center}
${\bf d}=(d_1, d_2,\dots,d_{l(w_0)})>{\bf c}=(c_1, c_2,\dots,c_{l(w_0)})$\\
$\Leftrightarrow$ There exists $k\in \{1, \dots, l(w_0) \}$ such that $d_1=c_1,\dots ,d_{k-1}=c_{k-1}, d_k>c_k$.
\end{center}
This is known as ``the unitriangularity property''.
\end{remark}
\subsection{Comparison with Section \ref{main}}
We prove that the calculation procedure of ${_{{\bf i}}\zeta}_{{\bf d}}^{G^{\mathrm{low}}(b_0)}$ in Section \ref{main} is the same as the one in Theorem \ref{easyproof}. That is, we show the following theorem:
\begin{theorem}\label{comparison}
Let $\lambda_0$, $A$ be as in Section \ref{main}, $w\in W$ and $i\in I$ with $l(s_iw)>l(w)$. Take $b^{(0)}, b^{(1)}\in B(\infty)$ with $\weight b^{(0)}, \weight b^{(1)}\in A$, $G_{\lambda_0}^{\mathrm{low}}({b'}^{(0)}):=G^{\mathrm{low}}(b^{(0)}).v_{w\lambda_0}\neq 0$ and $G_{\lambda_0}^{\mathrm{low}}({b'}^{(1)}):=G^{\mathrm{low}}(b^{(1)}).v_{s_iw\lambda_0}\neq 0$. (In particular, $\varepsilon_i^{\ast}(b^{(1)})=0$.)
$$c_{(G_{\lambda_0}^{\mathrm{low}}({b'}^{(0)}), \cdot)_{\lambda_0}, G_{\lambda_0}^{\mathrm{up}}({b'}^{(1)})}^{\lambda_0}.|0 \rangle_i:=p\left| -\frac{1}{2}\langle \weight {b'}^{(0)}+\weight {b'}^{(1)}, \alpha_i^{\vee}\rangle \right>_i$$
with $p\in \mathbb{Z}[q^{\pm 1}]$. Then, the degree $< l(w_0)L$ part of $p$ is equal to
$$q_i^{\frac{1}{2}d(d-1)}\widehat{d}_{b^{(0)}, \Lambda_i(b^{(1)})}^{i, d},$$
where $d:=-\frac{1}{2}\langle \weight {b'}^{(0)}+\weight {b'}^{(1)}, \alpha_i^{\vee}\rangle$.
\end{theorem}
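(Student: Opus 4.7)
The plan is to carry out the computation of Lemma \ref{method} in the setup of the theorem, transport the degree estimates from the proof of Theorem \ref{maincalc} verbatim, and then dispatch a crystal identity that matches the surviving term with $\Lambda_i(b^{(1)})$. Concretely, I would begin by reducing to the framework of Section \ref{main}: let $b_-\in B(\infty)$ be the unique element with $G^{\mathrm{low}}(b_-).v_{w\lambda_0} = G_{\lambda_0}^{\mathrm{low}}({b'}^{(1)})$, which exists because $l(s_iw)>l(w)$ forces $V_{s_iw}(\lambda_0)\subset V_w(\lambda_0)$. Using Proposition \ref{c1=epsiloni1}(ii) to expand $G^{\mathrm{low}}(b^{(1)})F_i^{(n)}$ with $n = \langle w\lambda_0, \alpha_i^{\vee}\rangle$ in terms of canonical basis elements, and comparing the resulting action on $v_{w\lambda_0}$ to $G^{\mathrm{low}}(b_-).v_{w\lambda_0}$ via Proposition \ref{Demazure}(iii) (terms of higher $\varepsilon_i^{\ast}$ are annihilated by $v_{w\lambda_0}$), one obtains the identification $b_- = (\tilde{f}_i^{\ast})^{n} b^{(1)}$ together with $\varepsilon_i^{\ast}(b_-) = n$.

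Next I would apply Lemma \ref{method} with $\lambda = \lambda_0$, $b_+ = b^{(0)}$, and $b_-$ as above. Exactly the same algebraic rewriting as in the proof of Theorem \ref{maincalc} produces the double sum (\ref{2}), and the degree estimates culminating in (\ref{mainineq}) transfer unchanged since they rely only on $\weight b^{(0)}, \weight b^{(1)} \in A$ and the constants built into $L$. Modulo $q^{l(w_0)L}$ only the summand with $t = \varphi_i^{\lambda_0}({b'}^{(1)})$ and $b = b^{(0)}$ survives; after invoking Proposition \ref{similarity} and telescoping the $q$-factors as in the last display of the proof of Theorem \ref{maincalc}, the surviving contribution becomes $q_i^{\frac{1}{2}d(d-1)}\widehat{d}_{b^{(0)}, \tilde{e}_i^{\varepsilon_i(\hat{b})}\hat{b}}^{i, d}$ modulo $q^{l(w_0)L}$, where
\[
\hat{b} = (\tilde{f}_i^{\ast})^{\varphi_i^{\lambda_0}({b'}^{(1)})} b_- = (\tilde{f}_i^{\ast})^{\varphi_i^{\lambda_0}({b'}^{(1)}) + n} b^{(1)}.
\]

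The main obstacle, and the real content of the theorem, is the crystal identity
\[
\tilde{e}_i^{\varepsilon_i(\hat{b})}\hat{b} = \Lambda_i(b^{(1)}), \qquad \varepsilon_i(\hat{b}) = \varepsilon_i^{\lambda_0}({b'}^{(1)}).
\]
I would establish it by descending to $V(\lambda_0)$: Proposition \ref{subcrystal} gives $\pi_{\lambda_0}(\hat{b}) = \tilde{f}_i^{\varphi_i^{\lambda_0}({b'}^{(1)})}{b'}^{(1)}$, whose $\tilde{e}_i$-highest element in $B(\lambda_0)$ is computable from both sides; the equality at the level of $B(\lambda_0)$ then lifts uniquely back to $B(\infty)$ by Proposition \ref{subcrystal}(ii), provided the weights match. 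On the $\Lambda_i$-side, since $b^{(1)}$ has $\varepsilon_i^{\ast}=0$, Saito's reflection combinatorics (Proposition \ref{Saito_refl}) expresses $\Lambda_i(b^{(1)})$ precisely as the $\tilde{e}_i$-highest element of the $\tilde{f}_i^{\ast}$-orbit of $b^{(1)}$, which is exactly $\tilde{e}_i^{\varepsilon_i(\hat{b})}\hat{b}$. The equality $\varepsilon_i(\hat{b}) = \varepsilon_i^{\lambda_0}({b'}^{(1)})$ drops out of the same identification, since the length of the $\tilde{e}_i$-string through $\pi_{\lambda_0}(\hat{b})$ in $B(\lambda_0)$ is $\varepsilon_i^{\lambda_0}(\pi_{\lambda_0}(\hat{b}))$, and this matches $\varepsilon_i(\hat{b})$ by Remark \ref{crystalmorph}. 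This finally confirms the remark in Section \ref{usesymm} that the equality $\varepsilon_i(\hat{b}) = \varepsilon_i^{\lambda_0}(b'_-)$ is automatic, not merely a simplifying assumption.
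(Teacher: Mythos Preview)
Your first three steps (identifying $b_-=(\tilde f_i^{\ast})^{\langle w\lambda_0,\alpha_i^\vee\rangle}b^{(1)}$, re-running Lemma \ref{method}, and extracting the surviving term via Proposition \ref{similarity}) are correct and track the paper's argument exactly. The gap is in your treatment of the crystal identity $\tilde e_i^{\varepsilon_i(\hat b)}\hat b=\Lambda_i(b^{(1)})$ and of $\varepsilon_i(\hat b)=\varepsilon_i^{\lambda_0}({b'}^{(1)})$.

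Your proposed descent to $V(\lambda_0)$ does not go through. Proposition \ref{subcrystal} concerns $\pi_{\lambda_0}$, i.e.\ the map $x\mapsto x.v_{\lambda_0}$, and it is compatible only with the \emph{unstarred} operators $\tilde e_i,\tilde f_i$; there is no analogous compatibility with $\tilde f_i^{\ast}$. Since $\hat b$ is built from $b^{(1)}$ by iterated $\tilde f_i^{\ast}$, the formula $\pi_{\lambda_0}(\hat b)=\tilde f_i^{\varphi_i^{\lambda_0}({b'}^{(1)})}{b'}^{(1)}$ is unjustified. Even the premise is shaky: the relation defining $b_-$ is $G^{\mathrm{low}}(b_-).v_{w\lambda_0}=G_{\lambda_0}^{\mathrm{low}}({b'}^{(1)})$, not $G^{\mathrm{low}}(b_-).v_{\lambda_0}=G_{\lambda_0}^{\mathrm{low}}({b'}^{(1)})$, so $\pi_{\lambda_0}(b_-)$ need not equal ${b'}^{(1)}$. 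Your description of $\Lambda_i(b^{(1)})$ as ``the $\tilde e_i$-highest element of the $\tilde f_i^{\ast}$-orbit of $b^{(1)}$'' is also imprecise: by definition $\Lambda_i(b^{(1)})=(\tilde f_i^{\ast})^{\varphi_i(b^{(1)})}\tilde e_i^{\varepsilon_i(b^{(1)})}b^{(1)}$, which is not in the $\tilde f_i^{\ast}$-orbit of $b^{(1)}$ unless $\varepsilon_i(b^{(1)})=0$.

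The paper handles this step quite differently. For $\varepsilon_i(\hat b)=\varepsilon_i^{\lambda_0}({b'}^{(1)})$ it argues module-theoretically in $V(\lambda_1)$: from $G^{\mathrm{low}}(\ast\hat b).v_{\lambda_1}\neq 0$ and $G^{\mathrm{low}}(\ast\tilde f_i^{\ast}\hat b).v_{\lambda_1}=0$, Proposition \ref{c1=epsiloni1} bounds $\varepsilon_i(\hat b)$ above and $\varepsilon_i(\tilde f_i^{\ast}\hat b)$ below, and the jump $\varepsilon_i(\tilde f_i^{\ast}\hat b)-\varepsilon_i(\hat b)\in\{0,1\}$ (read off from Kashiwara's embedding $\Psi_i:B(\infty)\hookrightarrow B(\infty)\otimes B_i$) pins down the value. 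For the identity $\tilde e_i^{\varepsilon_i(\hat b)}\hat b=\Lambda_i(b^{(1)})$ the paper proves a dedicated lemma (Lemma \ref{coincide}): any two composites of $\tilde e_i,\tilde f_i,\tilde e_i^{\ast},\tilde f_i^{\ast}$ applied to the same element agree iff their $\alpha_i^\vee$-weights and their $\varepsilon_i^{\ast}$ agree, again via $\Psi_i$. Both sides of the desired equality are such composites of $b^{(1)}$, and the two numerical checks are straightforward. Replacing your $V(\lambda_0)$-descent with this $B(\infty)\otimes B_i$ argument closes the gap.
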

Before proving this theorem, we prepare one lemma.
\begin{lemma}\label{coincide}
Let $i\in I$ and $b\in B(\infty)$. Suppose that $\Lambda$ and $\Lambda'$ are any composition operators 
of $\tilde{e}_i, \tilde{f}_i, \tilde{e}_i^{\ast}$ and $\tilde{f}_i^{\ast}$ such that $\Lambda(b)\neq 0$ and $\Lambda'(b)\neq 0$. {\nor (}For example, $\Lambda=\tilde{e}_i\tilde{f}_i^2\tilde{e}_i^{\ast}\tilde{f}_i^{\ast}(\tilde{e}_i^{\ast})^2$ and $\Lambda'=\tilde{e}_i$.{\nor )}

Then, we have $\Lambda(b)=\Lambda'(b)$ if and only if 
\begin{center}
$\langle\weight\Lambda(b), \alpha_i^{\vee}\rangle=\langle\weight\Lambda'(b), \alpha_i^{\vee}\rangle$ and $\varepsilon_i^{\ast}(\Lambda(b))=\varepsilon_i^{\ast}(\Lambda'(b))$.
\end{center}
\end{lemma}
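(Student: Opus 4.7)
The ``only if'' direction is immediate. For the converse, the combined conditions upgrade to the equality $\weight\Lambda(b)=\weight\Lambda'(b)$: since each of the four operators shifts $\weight$ by $\pm\alpha_i$, both $\weight\Lambda(b)$ and $\weight\Lambda'(b)$ lie in $\weight b+\mathbb{Z}\alpha_i$, and their pairings with $\alpha_i^{\vee}$ determine the $\mathbb{Z}$-coefficient uniquely.

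Set $c:=\varepsilon_i^{\ast}(\Lambda(b))=\varepsilon_i^{\ast}(\Lambda'(b))$. Using the tautology $b'=(\tilde{f}_i^{\ast})^{\varepsilon_i^{\ast}(b')}(\tilde{e}_i^{\ast})^{\varepsilon_i^{\ast}(b')}b'$ valid for every $b'\in B(\infty)$, the problem reduces to proving $(\tilde{e}_i^{\ast})^{c}\Lambda(b)=(\tilde{e}_i^{\ast})^{c}\Lambda'(b)$; both elements lie in the slice $S:=\{b'\in B(\infty):\varepsilon_i^{\ast}(b')=0\}$ and still share a common weight. It therefore suffices to show that within the orbit $O(b):=\{\Lambda''(b):\Lambda''\text{ a nonvanishing composition}\}$, elements of $O(b)\cap S$ are uniquely determined by their weight.

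For this final step, the plan is to invoke Kashiwara's tensor embedding $\Psi_i:B(\infty)\hookrightarrow B(\infty)\otimes B_i$ associated to the index $i$, where $B_i$ is the elementary $\mathfrak{sl}_{2,i}$-crystal; explicitly, $\Psi_i(b')=(\tilde{e}_i^{\ast})^{\varepsilon_i^{\ast}(b')}b'\otimes b_i(-\varepsilon_i^{\ast}(b'))$, the $\ast$-operators $\tilde{e}_i^{\ast},\tilde{f}_i^{\ast}$ act only on the $B_i$-factor, and $\tilde{e}_i,\tilde{f}_i$ act via Kashiwara's tensor product rule. Elements of $S$ correspond precisely to tensors $b''\otimes b_i(0)$, and a direct application of the tensor rule (using $\varepsilon_i(b_i(0))=0=\varphi_i(b_i(0))$) shows that $\tilde{e}_i$ preserves this slice and acts purely on the first factor. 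By cancelling adjacent inverses $\tilde{e}_i^{\ast}\tilde{f}_i^{\ast}=\tilde{f}_i^{\ast}\tilde{e}_i^{\ast}=\mathrm{id}$ (and similarly for the non-$\ast$ operators) wherever nonvanishing, and then pushing all $\ast$-moves past $i$-moves via the tensor rule, every nonvanishing composition landing in $S$ is reduced to a canonical form, forcing $O(b)\cap S$ to be contained in the single $i$-string through $b_{\ast}:=(\tilde{e}_i^{\ast})^{\varepsilon_i^{\ast}(b)}b$. Since weight parametrizes the elements of an $i$-string, the desired uniqueness follows.

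The main obstacle is the rigorous bookkeeping in this last reduction: one must carefully track how each of the four operators enters or exits the slice $S$ via the tensor-product rule, in order to confirm that $\Psi_i(O(b)\cap S)$ really lies on a single $i$-string through $\Psi_i(b_{\ast})$ rather than possibly spreading across several strings of the same weight. Once this is settled, reversing the reduction steps yields $\Lambda(b)=\Lambda'(b)$ as required.
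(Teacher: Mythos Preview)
Your approach is essentially the paper's: both arguments invoke Kashiwara's embedding $\Psi_i:B(\infty)\hookrightarrow B(\infty)\otimes B_i$ and exploit that the $\ast$-operators act only on the $B_i$-coordinate while $\tilde e_i,\tilde f_i$ act via the tensor rule. The key fact you isolate --- that the orbit $O(b)$, after projection to the first factor, lies on a single $i$-string --- is exactly what the paper uses.

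Where you diverge is in the execution of the final step. You first pass to the slice $S=\{\varepsilon_i^{\ast}=0\}$ and then try to normalise compositions by cancelling adjacent inverses and commuting $\ast$-moves past $i$-moves; you flag this bookkeeping as the ``main obstacle''. The paper avoids this entirely. Write $\Psi_i(b)=(b_0,m_0)$. The crucial observation is that for \emph{any} element of the orbit $O(b)$ (not just those in $S$), the first coordinate of its image under $\Psi_i$ lies in the $i$-string $B_0^{(i)}$ through $b_0$. This is immediate from the explicit description of the crystal structure on $B(\infty)\times\mathbb Z$: the operators $\tilde e_i^{\ast},\tilde f_i^{\ast}$ leave the first coordinate fixed, and the tensor rule for $\tilde e_i,\tilde f_i$ either fixes the first coordinate or applies $\tilde e_i$ (resp.\ $\tilde f_i$) to it. No normalisation of compositions is needed.

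Once this is in hand, the lemma follows at once: $\Lambda(b)=\Lambda'(b)$ iff their images under the injective $\Psi_i$ agree; the first coordinates lie in $B_0^{(i)}$ and hence are determined by $\langle\weight(\cdot),\alpha_i^{\vee}\rangle$, while the second coordinates equal $-\varepsilon_i^{\ast}(\cdot)$. Your detour through $S$ is correct in spirit but unnecessary; replacing it with this direct observation removes the obstacle you identified.
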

\begin{proof}
Define the crystal structure on the set $B(\infty)\times \mathbb{Z}$ by
\begin{itemize}
\item $\weight (b', m)=\weight b'+m\alpha_i$,
\item $\varepsilon_j((b', m))=
\begin{cases}
\max\{\varepsilon_i(b'), -m-\langle \weight b', \alpha_i^{\vee} \rangle \}& \text{if\ }j=i,\\ \varepsilon_j(b') & \text{otherwise},
\end{cases}$
\item $\varphi_j((b', m))=
\begin{cases}
\max\{\varphi_i(b')+2m, m \}& \text{if\ }j=i,\\
\varphi_j(b')+m\langle\alpha_i, \alpha_j^{\vee}\rangle& \text{otherwise},
\end{cases}$
\item $\tilde{e}_j(b', m)=
\begin{cases}
(\tilde{e}_jb', m) & \text{if\ } j\neq i\ \text{or\ } \varphi_i(b')\geqq -m,\\ 
(b', m+1) & \text{otherwise},
\end{cases}$
\item $\tilde{f}_j(b', m)=
\begin{cases}
(\tilde{f}_jb', m) & \text{if\ } j\neq i\ \text{or\ } \varphi_i(b')> -m,\\ 
(b', m-1) & \text{otherwise},
\end{cases}$
\end{itemize}
Note that $(0, m)=0$ in the sense of crystals for any $m\in \mathbb{Z}$. (See Definition \ref{abstractcrystal}.)
\begin{remark}
This crystal is often denoted by $B(\infty)\otimes B_i$.
\end{remark}
We use the following proposition due to Kashiwara. 
\begin{proposition}[{\cite[Theorem 2.2.1]{KasDem}}]\label{Kasemb}
There exists a unique embedding of crystals $\Psi_i:B(\infty)\to B(\infty)\times \mathbb{Z}$ which sends $b^{(0,\dots, 0)}$ to $(b^{(0,\dots, 0)}, 0)$. Moreover, $\Psi_i$ have the following properties:
\begin{itemize}
\item[{\nor (i)}] $\tilde{e}_j\circ\Psi_i=\Psi_i\circ\tilde{e}_j$ and $\tilde{f}_j\circ\Psi_i=\Psi_i\circ\tilde{f}_j$ for any $j\in I$.
\item[{\nor (ii)}] If $\Psi_i(b')=(b'_0, m)$, then $\Psi_i(\tilde{f}_i^{\ast}b')=(b'_0, m-1)$ and $\varepsilon_i^{\ast}(b')=m$.
\item[{\nor (iii)}] $\Img \Psi_i=\Set{(b', m)|\varepsilon_i^{\ast}(b')=0, m\leqq 0}$.
\end{itemize}
\end{proposition}
Note that (ii) implies that if $\Psi_i(b')=(b'_0, m)$ and $\tilde{e}_i^{\ast}b'\neq 0$ then  $\Psi_i(\tilde{e}_i^{\ast}b')=(b'_0, m+1)$.

Set $\Psi_i(b)=(b_0, m_0), \Psi_i(\Lambda(b))=(\tilde{b}, \tilde{m}), \Psi_i(\Lambda'(b))=(\tilde{b}', \tilde{m}')$ and $$B_0^{(i)}:=\left(\Set{\tilde{e}_i^sb_0|s\in \mathbb{Z}_{\geqq 0}}\cup \Set{\tilde{f}_i^sb_0|s\in \mathbb{Z}_{\geqq 0}}\right)\setminus \{0\}.$$
Then, by the definition of the crystal structure on $B(\infty)\times \mathbb{Z}$ and the proposition above, $\Psi_i(\Lambda(b)), \Psi_i(\Lambda'(b))\in B_0^{(i)}\times\mathbb{Z}_{\leqq 0}$.

Now, for $b_1, b_2\in B_0^{(i)}$, we have $b_1=b_2$ if and only if $\langle \weight b_1, \alpha_i^{\vee}\rangle=\langle \weight b_2, \alpha_i^{\vee}\rangle$. Therefore, it follows that
\begin{align*}
\Lambda(b)=\Lambda'(b) &\Leftrightarrow \Psi_i(\Lambda(b))=\Psi_i(\Lambda'(b))\\
&\Leftrightarrow \langle \weight \tilde{b}, \alpha_i^{\vee}\rangle=\langle \weight \tilde{b}', \alpha_i^{\vee}\rangle\ \text{and}\ \varepsilon_i^{\ast}(\Lambda(b))=\varepsilon_i^{\ast}(\Lambda'(b))\\
&\Leftrightarrow \langle \weight \Lambda(b), \alpha_i^{\vee}\rangle=\langle \weight \Lambda'(b), \alpha_i^{\vee}\rangle\ \text{and}\ \varepsilon_i^{\ast}(\Lambda(b))=\varepsilon_i^{\ast}(\Lambda'(b)).
\end{align*}
\end{proof}
\begin{proof}[Proof of Theorem \ref{comparison}]
Let $G^{\mathrm{low}}(\tilde{b}^{(1)}).v_{w\lambda_0}=G^{\mathrm{low}}(b^{(1)}).v_{s_iw\lambda_0}$. Then, we have 
\begin{align}\label{epsilon_ast}
(\tilde{f}_i^{\ast})^{\langle w\lambda_0, \alpha_i^{\vee}\rangle}b^{(1)}=\tilde{b}^{(1)}.
\end{align}
Indeed, by $\varepsilon_i^{\ast}(b^{(1)})=0$ and Proposition \ref{EFaction'}, we have
$$G^{\mathrm{low}}(b^{(1)})F_i^{(\langle w\lambda_0, \alpha_i^{\vee}\rangle)}=G^{\mathrm{low}}((\tilde{f}_i^{\ast})^{\langle w\lambda_0, \alpha_i^{\vee}\rangle}b^{(1)})+\sum_{\substack{\tilde{b}\in B(\infty)\\ \langle w\lambda_0, \alpha_i^{\vee}\rangle<\varepsilon_i^{\ast}(\tilde{b})}}c_{\tilde{b}}G^{\mathrm{low}}(\tilde{b})$$
for some $c_{\tilde{b}}\in \mathbb{Z}[q^{\pm 1}]$.

We can calculate the degree $< l(w_0)L$ part of $p$ by the same method as in the proof of Lemma \ref{roughestlem} and Theorem \ref{maincalc}. (The element $b^{(0)}$ (resp.~$\tilde{b}^{(1)}$, resp.~${b'}^{(0)}$, resp.~${b'}^{(1)}$) corresponds to $b_+$ (resp.~$b_-$, resp.~$b'_+$, resp.~$b'_-$) in the proof of Theorem \ref{maincalc}.) Note that, by $\varepsilon_i^{\ast}(b^{(1)})=0$ and the equality (\ref{epsilon_ast}), we have $\varepsilon_i^{\ast}(\tilde{b}^{(1)})=\langle w\lambda_0, \alpha_i^{\vee}\rangle$, equivalently, $\varphi_i^{\lambda_1}(\pi_{\lambda_1}(\ast \tilde{b}^{(1)}))=\varphi_i^{\lambda_0}({b'}^{(1)})$, here $\lambda_1$ is as in the proof of Theorem \ref{maincalc}. (See the inequality (\ref{shiki2}).) 

Set $\hat{b}:=(\tilde{f}_i^{\ast})^{\varphi_i^{\lambda_1}(\pi_{\lambda_1}(\ast \tilde{b}^{(1)}))}(\tilde{b}^{(1)})=(\tilde{f}_i^{\ast})^{\varphi_i^{\lambda_0}({b'}^{(1)})}(\tilde{b}^{(1)})$. Then, we have $\varepsilon_i(\hat{b})=\varepsilon_i^{\lambda_0}({b'}^{(1)})$. Indeed, the equalities $G^{\mathrm{low}}(\ast \hat{b}).v_{\lambda_1}\neq 0$, $G^{\mathrm{low}}(\ast \tilde{f}_i^{\ast}\hat{b}).v_{\lambda_1}=0$ imply that 
\begin{itemize}
\item $G^{\mathrm{low}}(\ast \hat{b})\notin U_q(\mathfrak{n}^-)F_i^{(\varepsilon_i^{\lambda_0}({b'}^{(1)})+1)}+\sum_{j\in I\setminus \{i\}}U_q(\mathfrak{n}^-)F_j^{(\varepsilon_j+1)}$, and
\item $G^{\mathrm{low}}(\ast \tilde{f}_i^{\ast}\hat{b})\in U_q(\mathfrak{n}^-)F_i^{(\varepsilon_i^{\lambda_0}({b'}^{(1)})+1)}+\sum_{j\in I\setminus \{i\}}U_q(\mathfrak{n}^-)F_j^{(\varepsilon_j+1)}$.
\end{itemize}
We may assume that $\varepsilon_j$ $(j\in I\setminus \{i\})$ are sufficiently large. (Note that the definition of $\hat{b}$ does not depend on the choice of $\varepsilon_j$ $(j\in I\setminus \{i\})$.) Then, by Proposition \ref{c1=epsiloni1}, we have $\varepsilon_i(\hat{b})< \varepsilon_i^{\lambda_0}({b'}^{(1)})+1$ and $\varepsilon_i(\tilde{f}_i^{\ast}\hat{b})\geqq \varepsilon_i^{\lambda_0}({b'}^{(1)})+1$. On the other hand, it follows from the definition of the crystal $B(\infty)\times \mathbb{Z}$ and Proposition \ref{Kasemb} that $\varepsilon_i(\tilde{f}_i^{\ast}\hat{b})-\varepsilon_i(\hat{b})$ is equal to 0 or 1. Therefore, $\varepsilon_i(\tilde{f}_i^{\ast}\hat{b})=\varepsilon_i^{\lambda_0}({b'}^{(1)})+1$ and $\varepsilon_i(\hat{b})=\varepsilon_i^{\lambda_0}({b'}^{(1)})$.

Hence, it suffices to show that 
$$b^{(1)}=\Lambda_i^{-1}(\tilde{e}_i^{\varepsilon_i(\hat{b})}\hat{b}).$$
We abbreviate $\tilde{e}_i^{\varepsilon_i(\hat{b})}\hat{b}$ to $b$. 

Recall that $\varepsilon_i(\hat{b})=\varepsilon_i^{\lambda_0}({b'}^{(1)})$. We have 
$$b^{(1)}=(\tilde{e}_i^{\ast})^{\varphi_i^{\lambda_0}({b'}^{(1)})+\langle w\lambda_0, \alpha_i^{\vee}\rangle}\tilde{f}_i^{\varepsilon_i^{\lambda_0}({b'}^{(1)})}b.$$
Now, we have $\varepsilon_i^{\ast}(b^{(1)})(=0)=\varepsilon_i^{\ast}(\Lambda_i^{-1}(b))$. So, by Lemma \ref{coincide}, it only remains to prove that $\langle\weight b^{(1)}, \alpha_i^{\vee}\rangle=\langle\weight\Lambda_i^{-1}(b), \alpha_i^{\vee}\rangle$, that is,
\begin{align}\label{crystaleq}
-\varphi_i^{\ast}(b)+\varepsilon_i^{\ast}(b)=\varphi_i^{\lambda_0}({b'}^{(1)})+\langle w\lambda_0, \alpha_i^{\vee}\rangle-\varepsilon_i^{\lambda_0}({b'}^{(1)}),
\end{align}
by the definition of $\Lambda_i^{-1}$. (See Proposition \ref{Saito_refl}.)

By the property of crystals, the left-hand side of (\ref{crystaleq}) is equal to $-\langle \weight b, \alpha_i^{\vee}\rangle$. Moreover, we have $\weight b=\weight \hat{b}+\varepsilon_i^{\lambda_0}({b'}^{(1)})\alpha_i=\weight \tilde{b}^{(1)}-(\varphi_i^{\lambda_0}({b'}^{(1)})-\varepsilon_i^{\lambda_0}({b'}^{(1)}))\alpha_i$.
Hence, 
\begin{align*}
-\langle \weight b, \alpha_i^{\vee}\rangle&=-\langle \weight \tilde{b}^{(1)}, \alpha_i^{\vee}\rangle+2(\varphi_i^{\lambda_0}({b'}^{(1)})-\varepsilon_i^{\lambda_0}({b'}^{(1)}))\\
&=-\langle \weight \tilde{b}^{(1)}, \alpha_i^{\vee}\rangle+\langle \weight {b'}^{(1)}, \alpha_i^{\vee}\rangle+\varphi_i^{\lambda_0}({b'}^{(1)})-\varepsilon_i^{\lambda_0}({b'}^{(1)})\\
&=\langle w\lambda_0, \alpha_i^{\vee}\rangle+\varphi_i^{\lambda_0}({b'}^{(1)})-\varepsilon_i^{\lambda_0}({b'}^{(1)})\\
&=(\text{the\ right-hand\ side\ of\ }(\text{\ref{crystaleq}})).
\end{align*}
\end{proof}
As a corollary of this coincidence, we obtain the following results.
\begin{corollary}\label{converge_app}
For $x\in U_q(\mathfrak{n}^+), \lambda\in P_+, w\in W$, a reduced word ${\bf i}_w$ of $w$ and a reduced word ${\bf i}'_w$ of $w^{-1}w_0$, we write 
$$x=\sum_{\tilde{{\bf c}}\in (\mathbb{Z}_{\geqq 0})^{l(w_0)}}{_{{\bf i}_w {\bf i}'_w}\zeta}_{\tilde{{\bf c}}}^{x}E_{{\bf i}_w {\bf i}'_w}^{\tilde{{\bf c}}}\ \text{with\ } {_{{\bf i}_w{\bf i}'_w}\zeta}_{\tilde{{\bf c}}}^{x}\in \mathbb{Q}(q),\ \text{and}$$
$$(c_{f_{ww_0\lambda}, v_{w_0\lambda}}^{\lambda}.\ast(x)).|(0) \rangle_{{\bf i}_w}=\sum_{{\bf c}\in (\mathbb{Z}_{\geqq 0})^{l(w)}}{_{{\bf i}_w}\zeta}_{{\bf c}}^{ww_0\lambda, x}|({\bf c}) \rangle_{{\bf i}_w}\ \text{with\ } {_{{\bf i}_w}\zeta}_{{\bf c}}^{ww_0\lambda, x}\in \mathbb{Q}(q).$$
When $\lambda \in P_+$ tends to $\infty$ in the sense that $\langle \lambda, \alpha_i^{\vee}\rangle$ tends to $\infty$ for all $i\in I$, ${_{{\bf i}_w}\zeta}_{{\bf c}}^{ww_0\lambda, x}$ converges to ${_{{\bf i}_w {\bf i}'_w}\zeta}_{({\bf c}, 0,\dots, 0)}^{x}$ in the complete discrete valuation field $\mathbb{Q}((q))$.
\end{corollary}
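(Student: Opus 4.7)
The approach will mirror the strategy of Corollary \ref{converge'} in the main text, combined with the appendix framework (Theorems \ref{easyproof} and \ref{comparison}). First, we use the $\mathbb{Q}_q[G]$-module isomorphism $V_w \otimes V_{w^{-1}w_0} \simeq V_{w_0}$ of Theorem \ref{soi} (sending $|(0)\rangle_{{\bf i}_w} \otimes |(0)\rangle_{{\bf i}'_w}$ to $|(0)\rangle_{{\bf i}_w{\bf i}'_w}$) and expand $(c_{f_{ww_0\lambda}, v_{w_0\lambda}}^{\lambda}.\ast(x)).(|(0)\rangle_{{\bf i}_w}\otimes|(0)\rangle_{{\bf i}'_w})$ via the coproduct $\Delta(c_{f_{ww_0\lambda}, v_{w_0\lambda}}^\lambda) = \sum_j c_{f_{ww_0\lambda}, v_j}^\lambda \otimes c_{f_j, v_{w_0\lambda}}^\lambda$ over a weight-vector basis $\{v_j\}$ of $V(\lambda)$ (with dual basis $\{f_j\}$). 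By the closing argument in the proof of Corollary \ref{converge'}, only $v_j = v_{w^{-1}\lambda}$ contributes a nonzero $|(0)\rangle_{{\bf i}'_w}$-component on the second tensor factor, and its coefficient is $1$; hence extracting the coefficient of $|({\bf c}, 0, \ldots, 0)\rangle_{{\bf i}_w{\bf i}'_w}$ localizes the problem to a computation in $V_w$ alone.

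Next, we realize the $V_w$-computation via the iterated coproduct on $V_w = V_{s_{i_1}} \otimes \cdots \otimes V_{s_{i_{l(w)}}}$, producing an expression analogous to (\ref{shiki}) but truncated at $l(w)$ tensor factors, and apply Lemma \ref{method} step by step as in the proof of Theorem \ref{maincalc}. Analogues of Lemma \ref{numest} and Lemma \ref{roughestlem} should control the convergence, and by Theorem \ref{comparison} each of the $l(w)$ surviving factors is expected to stabilize in the $\lambda \to \infty$ limit to a structure constant of the form $q_{i_k}^{\frac{1}{2}d_k(d_k-1)} \widehat{d}_{b_{k-1}, \Lambda_{i_k}(b_k)}^{i_k, d_k}$.

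Finally, we invoke Theorem \ref{easyproof} applied to the reduced word ${\bf i}_w{\bf i}'_w$: since $\widehat{d}^{j, 0}$ reduces to the Kronecker delta, the $l(w_0)$-fold sum defining ${_{{\bf i}_w{\bf i}'_w}\zeta}_{({\bf c}, 0, \ldots, 0)}^{x}$ collapses to an $l(w)$-fold sum in which $b_{l(w)} = \cdots = b_{l(w_0)-1} = 1 \in B(\infty)$ is pinned, which should match the product produced by the truncated calculation. The main obstacle will be the boundary analysis at step $k = l(w)$: the proof of Theorem \ref{maincalc} relies on the terminal condition at $l(w_0)$ coming from the lowest weight structure of $V_{w_0}$, whereas here the terminal condition at $l(w)$ must be verified to yield precisely the factor $\widehat{d}_{b_{l(w)-1}, 1}^{i_{l(w)}, c_{l(w)}}$ demanded by the trailing vacuum tail in Theorem \ref{easyproof}. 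This Demazure compatibility should be enforced by the tensor identification in the first step, though a careful adaptation of the boundary arguments from the proof of Theorem \ref{maincalc} to the truncated setting will be required.
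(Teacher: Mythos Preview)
There is a gap in your first step. Expanding $(c^\lambda_{f_{ww_0\lambda},v_{w_0\lambda}}.\ast(x)).(|(0)\rangle_{{\bf i}_w}\otimes |(0)\rangle_{{\bf i}'_w})$ via the coproduct and isolating the $|(0)\rangle_{{\bf i}'_w}$-component on the right factor does pick out $v_j=v_{w^{-1}\lambda}$ exactly as in Corollary~\ref{converge'}, but then the left-factor contribution becomes $c^\lambda_{f_{ww_0\lambda}.\ast(x),\,v_{w^{-1}\lambda}}.|(0)\rangle_{{\bf i}_w}$. Its second argument is $v_{w^{-1}\lambda}$, not $v_{w_0\lambda}$, so this is \emph{not} the element whose coefficients are the ${_{{\bf i}_w}\zeta}_{{\bf c}}^{ww_0\lambda,x}$ defined in the statement. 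The coproduct trick of Corollary~\ref{converge'} thus relates the $V_{w_0}$-action to a $V_w$-action with the wrong lowest-weight insertion; moreover the quantity ${_{{\bf i}_w}\zeta}_{{\bf c}}^{ww_0\lambda,x}$ is already a $V_w$-computation, so there is nothing to ``localize'' in this direction.

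The paper's argument runs along a different axis. Its key step, absent from your outline, is the vanishing
\[
f_{ww_0\lambda}.\ast\bigl(T'_{i_1,1}\cdots T'_{i_{k-1},1}(E_{i_k})\bigr)=0 \qquad (k>l(w)),
\]
proved by a weight argument: otherwise $\lambda - w_0 s_{i_{l(w)+1}}\cdots s_{i_{k-1}}(\alpha_{i_k})>\lambda$ would be a weight of $V(\lambda)^\ast$. This forces $c^\lambda_{f_{ww_0\lambda},v_{w_0\lambda}}.\ast(x)=c^\lambda_{(\star),\,v_{w_0\lambda}}$ with $(\star)=\sum_{{\bf c}} {_{{\bf i}_w{\bf i}'_w}\zeta}_{({\bf c},0,\dots,0)}^{x}\,F_{{\bf i}_w{\bf i}'_w}^{({\bf c},0,\dots,0)}.v_{ww_0\lambda}$, so the target PBW coefficients enter the element acting on $V_w$ directly. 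For large $\lambda$ these vectors are linearly independent, $(\star)$ is rewritten as $\sum_s \eta_s\,G^{\mathrm{low}}(b^s).v_{ww_0\lambda}$, and the problem reduces to $x=\omega(G^{\mathrm{low}}(b^s))$; only then do the Section~\ref{main} computations together with Theorem~\ref{comparison} apply. Your steps~2--3 resemble this final phase, but without the vanishing observation there is no link between the coefficients ${_{{\bf i}_w{\bf i}'_w}\zeta}_{({\bf c},0,\dots,0)}^{x}$ and anything acting on $V_w$, and no reduction from general $x$ to canonical basis elements.
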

\begin{proof}
Write ${\bf i}_w=(i_1,\dots, i_l)$ and ${\bf i}'_w=(i_{l+1},\dots, i_{l(w_0)})$. We claim that
\begin{center}
$c_{f_{ww_0\lambda}, v_{w_0\lambda}}^{\lambda}.\ast(T'_{i_1, 1}\cdots T'_{i_{k-1}, 1}(E_{i_k}))=0$ if $k>l$ for all $\lambda \in P_+$.
\end{center}
Suppose that the left-hand side is not equal to zero for some $k>l$ and $\lambda \in P_+$. Then, $f_{ww_0\lambda}.\ast(T'_{i_1, 1}\cdots T'_{i_{k-1}, 1}(E_{i_k}))$ is a weight vector whose weight is $ww_0\lambda-s_{i_1}\cdots s_{i_{k-1}}\alpha_{i_k}$. Since the weight set of $V(\lambda)^{\ast}$ｴis $W$-stable, $\lambda-w_0w^{-1}s_{i_1}\cdots s_{i_{k-1}}\alpha_{i_k}=\lambda-w_0s_{i_{l+1}}\cdots s_{i_{k-1}}\alpha_{i_k}>\lambda$ (in $P$) is also a weight of $V(\lambda)^{\ast}$. This is a contradiction.

Hence, we have
\begin{align*}
c_{f_{ww_0\lambda}, v_{w_0\lambda}}^{\lambda}.\ast(x)&=c_{f_{ww_0\lambda}, v_{w_0\lambda}}^{\lambda}.\ast(\sum_{{\bf c}\in (\mathbb{Z}_{\geqq 0})^{l(w)}}{_{{\bf i}_w {\bf i}'_w}\zeta}_{({\bf c}, 0,\dots, 0)}^{x}E_{{\bf i}_w {\bf i}'_w}^{({\bf c}, 0,\dots, 0)})\\
&=c_{((\star),\cdot)_{\lambda}, v_{w_0\lambda}}^{\lambda},
\end{align*}
where $(\star)=\sum_{{\bf c}\in (\mathbb{Z}_{\geqq 0})^{l(w)}}{_{{\bf i}_w {\bf i}'_w}\zeta}_{({\bf c}, 0,\dots, 0)}^{x}F_{{\bf i}_w {\bf i}'_w}^{({\bf c}, 0,\dots, 0)}.v_{ww_0\lambda}$.

Set $\mathcal{P}:=\Set{{\bf c} \in (\mathbb{Z}_{\geqq 0})^{l(w)}| {_{{\bf i}_w {\bf i}'_w}\zeta}_{({\bf c}, 0,\dots, 0)}^{x}\neq 0}$. 

It is well known that, when $\lambda\in P_+$ is sufficiently large in the sense that $\langle \lambda, \alpha_i^{\vee}\rangle \gg 0$ for all $i\in I$, $\{ F_{{\bf i}_w {\bf i}'_w}^{({\bf c}, 0,\dots, 0)}.v_{ww_0\lambda} \}_{{\bf c}\in \mathcal{P}}$ is a linearly independent set. (Use the symmetries $T'_{i, -1}$ $(i\in I)$ on the integrable $U_q(\mathfrak{g})$-module. See \cite[5.2.1, 37.1.2]{Lusbook}.)

By the way, there exist uniquely $b^1,\dots, b^t \in B(\infty)$ and $\eta_1,\dots, \eta_t \in \mathbb{Q}(q)\setminus \{ 0\}$ such that 
$$G^{\mathrm{low}}(b^s).v_{ww_0\lambda}\neq 0\ \text{for\ all\ }s,\ \text{and}$$
$$\sum_{{\bf c}\in \mathcal{P}}{_{{\bf i}_w {\bf i}'_w}\zeta}_{({\bf c}, 0,\dots, 0)}^{x}F_{{\bf i}_w {\bf i}'_w}^{({\bf c}, 0,\dots, 0)}.v_{ww_0\lambda}=\sum_{s=1}^t \eta_s G^{\mathrm{low}}(b^s).v_{ww_0\lambda}.$$
Again, when $\lambda\in P_+$ is sufficiently large, we have
$${_{{\bf i}_w {\bf i}'_w}\zeta}_{({\bf c}, 0,\dots, 0)}^{x}=\sum_{s=1}^t \eta_s {_{{\bf i}_w {\bf i}'_w}\zeta}_{({\bf c}, 0,\dots, 0)}^{G^{\mathrm{low}}(b^s)}\ \text{for\ all\ }{\bf c}\in \mathcal{P}.$$
Therefore, it remains to prove that ${_{{\bf i}_w}\zeta}_{{\bf c}}^{ww_0\lambda, \omega(G^{\mathrm{low}}(b^s))}$ converges to ${_{{\bf i}_w {\bf i}'_w}\zeta}_{({\bf c}, 0,\dots, 0)}^{G^{\mathrm{low}}(b^s)}$ in $\mathbb{Q}((q))$ when $\lambda\in P_+$ tends to $\infty$ for all ${\bf c}\in \mathcal{P}$ and $s$. 

\noindent(Note that $c_{(G^{\mathrm{low}}(b^s).v_{ww_0\lambda}, \cdot)_{\lambda}, v_{w_0\lambda}}^{\lambda}.|(0) \rangle_{{\bf i}_w}=\sum_{{\bf c}'}{_{{\bf i}_w}\zeta}_{{\bf c}'}^{ww_0\lambda, \omega(G^{\mathrm{low}}(b^s))}|({\bf c}') \rangle_{{\bf i}_w}$.)

This follows from the computation in Section \ref{main} and Theorem \ref{comparison}.
\end{proof}
The following corollary follows by the same method as in the proof of Corollary \ref{converge'}.
\begin{corollary}\label{converge'_app}
For $x \in U_q(\mathfrak{n}^+), \lambda\in P_+, w, \tilde{w}\in W$ with $\tilde{w}\leq_r w$, where $\leq_r$ is the weak right Bruhat order, a reduced word ${\bf i}_{\tilde{w}}$ (resp.~${\bf i}_{\tilde{w}^{-1}w}$) of $\tilde{w}$ (resp.~$\tilde{w}^{-1}w$) and a reduced word ${\bf i}'_w$ of $w^{-1}w_0$, we write
$$(c_{f_{ww_0\lambda}, v_{\tilde{w}^{-1}ww_0\lambda}}^{\lambda}.\ast(x)).|(0) \rangle_{{\bf i}_{\tilde{w}}}=\mspace{-5mu}\sum_{{\bf c}\in (\mathbb{Z}_{\geqq 0})^{l(\tilde{w})}}\mspace{-15mu}{_{{\bf i}_{\tilde{w}}}\zeta}_{{\bf c}}^{ww_0\lambda, x}|({\bf c}) \rangle_{{\bf i}_{\tilde{w}}}\ \text{with\ } {_{{\bf i}_{\tilde{w}}}\zeta}_{{\bf c}}^{ww_0\lambda, x}\in \mathbb{Q}(q).$$
When $\lambda\in P_+$ tends to $\infty$, ${_{{\bf i}_{\tilde{w}}}\zeta}_{{\bf c}}^{ww_0\lambda, x}$ converges to ${_{{\bf i}_{\tilde{w}}{\bf i}_{\tilde{w}^{-1}w}{\bf i}'_w}\zeta}_{({\bf c}, 0,\dots, 0, 0,\dots, 0)}^{x}$ in $\mathbb{Q}((q))$.
\end{corollary}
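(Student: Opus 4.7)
My plan is to reduce Corollary~\ref{converge'_app} to Corollary~\ref{converge_app} by a tensor-product argument parallel to the reduction of Corollary~\ref{converge'} to Corollary~\ref{converge}. Since $\tilde{w}\leq_r w$, the concatenation ${\bf i}_w := {\bf i}_{\tilde{w}}{\bf i}_{\tilde{w}^{-1}w}$ is a reduced word of $w$. Applying Corollary~\ref{converge_app} to $w$ with this reduced word, I first obtain that the coefficients of $(c_{f_{ww_0\lambda}, v_{w_0\lambda}}^{\lambda}.\ast(x)).|(0)\rangle_{{\bf i}_w}$ in the basis $\{|(\tilde{{\bf c}})\rangle_{{\bf i}_w}\}$ of $V_w$ converge to ${_{{\bf i}_w{\bf i}'_w}\zeta}^x_{(\tilde{{\bf c}}, 0,\dots, 0)}$ as $\lambda\to\infty$.

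Using Theorem~\ref{soi}, I identify $V_w \simeq V_{\tilde{w}}\otimes V_{\tilde{w}^{-1}w}$ via $|(\tilde{{\bf c}})\rangle_{{\bf i}_w} \leftrightarrow |({\bf c})\rangle_{{\bf i}_{\tilde{w}}}\otimes |({\bf c}')\rangle_{{\bf i}_{\tilde{w}^{-1}w}}$ for $\tilde{{\bf c}} = ({\bf c}, {\bf c}')$. Expanding the action on this tensor product via the coproduct $\Delta(c_{f, v}^{\lambda}) = \sum_j c_{f, v_j}^{\lambda} \otimes c_{f_j, v}^{\lambda}$ of $\mathbb{Q}_q[G]$ (with $\{v_j\}$ a weight basis of $V(\lambda)$ and $\{f_j\}$ its dual basis), exactly as in the proof of Corollary~\ref{converge'}, I obtain a sum indexed by $j$ in which the second tensor factor has the form $c_{f_j, v_{w_0\lambda}}^{\lambda}.|(0)\rangle_{{\bf i}_{\tilde{w}^{-1}w}}$. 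To isolate the $|({\bf c}, 0,\dots, 0)\rangle_{{\bf i}_w}$ component of the whole expression, I extract the $|(0,\dots, 0)\rangle_{{\bf i}_{\tilde{w}^{-1}w}}$ component of this second factor.

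By the explicit $\mathbb{Q}_q[SL_2]$-computation in Example~\ref{sl2ex} used in the proof of Corollary~\ref{converge'}, applied inductively along the tensor factors of $V_{\tilde{w}^{-1}w}$, the term $c_{f_j, v_{w_0\lambda}}^{\lambda}.|(0)\rangle_{{\bf i}_{\tilde{w}^{-1}w}}$ can have a nonzero $|(0,\dots, 0)\rangle_{{\bf i}_{\tilde{w}^{-1}w}}$ coefficient only if $\weight f_j = \tilde{w}^{-1}w w_0\lambda$; since this weight space of $V(\lambda)^{\ast}$ is one-dimensional, this forces $f_j = f_{\tilde{w}^{-1}w w_0\lambda}$ and $v_j = v_{\tilde{w}^{-1}w w_0\lambda}$, and Proposition~\ref{hw} then gives $c_{f_{\tilde{w}^{-1}w w_0\lambda}, v_{w_0\lambda}}^{\lambda}.|(0)\rangle_{{\bf i}_{\tilde{w}^{-1}w}} = |(0)\rangle_{{\bf i}_{\tilde{w}^{-1}w}}$. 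Hence the only surviving summand contributes precisely $(c_{f_{ww_0\lambda}, v_{\tilde{w}^{-1}w w_0\lambda}}^{\lambda}.\ast(x)).|(0)\rangle_{{\bf i}_{\tilde{w}}}$ to the first tensor factor, so the $|({\bf c}, 0,\dots, 0)\rangle_{{\bf i}_w}$ coefficient of the LHS is exactly ${_{{\bf i}_{\tilde{w}}}\zeta}_{{\bf c}}^{ww_0\lambda, x}$.

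Combining these steps, ${_{{\bf i}_{\tilde{w}}}\zeta}_{{\bf c}}^{ww_0\lambda, x}$ converges to ${_{{\bf i}_{\tilde{w}}{\bf i}_{\tilde{w}^{-1}w}{\bf i}'_w}\zeta}^x_{({\bf c}, 0,\dots, 0, 0,\dots, 0)}$, which is the claim. The only nontrivial point will be the weight-counting isolating $f_j = f_{\tilde{w}^{-1}w w_0\lambda}$, but this is the same iterative use of Example~\ref{sl2ex} that appears in Corollary~\ref{converge'}, now propagated through each tensor factor of $V_{\tilde{w}^{-1}w}$, so no essential new difficulty is anticipated beyond careful bookkeeping of the tensor indices.
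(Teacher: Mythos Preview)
Your proposal is correct and follows exactly the approach the paper indicates: it says only that the corollary ``follows by the same method as in the proof of Corollary~\ref{converge'}'', and you have carried out precisely that reduction, replacing the decomposition $V_{w_0}\simeq V_w\otimes V_{w^{-1}w_0}$ and Corollary~\ref{converge} by $V_w\simeq V_{\tilde{w}}\otimes V_{\tilde{w}^{-1}w}$ and Corollary~\ref{converge_app}. The isolation of the unique surviving term $f_j=f_{\tilde{w}^{-1}ww_0\lambda}$ via Example~\ref{sl2ex} and Proposition~\ref{hw} is the same argument as in Corollary~\ref{converge'}.
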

%\bibliographystyle{jplain} 
%\bibliography{ref}

\begin{thebibliography}{10}
\bibitem[Jan]{Janbook}
J.~C. Jantzen,
\newblock {\it Lectures on quantum groups}, Vol.~6 of  Graduate Studies in
  Mathematics,
\newblock American Mathematical Society, Providence, RI, 1996.

\bibitem[Jos]{Josbook}
A.~Joseph,
\newblock {\it Quantum groups and their primitive ideals}, Vol.~29 of 
  Ergebnisse der Mathematik und ihrer Grenzgebiete (3) [Results in Mathematics
  and Related Areas (3)],
\newblock Springer-Verlag, Berlin, 1995.

\bibitem[K1]{Kas_originalcrys}
M.~Kashiwara,
\newblock {\it On crystal bases of the {$Q$}-analogue of universal enveloping
  algebras},
\newblock Duke Math. J, Vol.~63, No.~{\bf 2}, pp. 465--516, 1991.

\bibitem[K2]{Kasglo}
M.~Kashiwara,
\newblock {\it Global crystal bases of quantum groups},
\newblock Duke Math. J, Vol.~69, No.~{\bf 2}, pp. 455--485, 1993.

\bibitem[K3]{KasDem}
M.~Kashiwara,
\newblock {\it The crystal base and {L}ittelmann's refined {D}emazure character
  formula},
\newblock Duke Math. J, Vol.~71, No.~{\bf 3}, pp. 839--858, 1993.

\bibitem[K4]{Kas_parameter}
M.~Kashiwara,
\newblock {\it Notes on parameters of quiver {H}ecke algebras},
\newblock Proc. Japan Acad. Ser. A Math. Sci, Vol.~88, No.~{\bf 7}, pp.
  97--102, 2012.

\bibitem[Kato]{KatoKLR}
S.~Kato,
\newblock {\it Poincar\'e-{B}irkhoff-{W}itt bases and {K}hovanov-{L}auda-{R}ouquier
  algebras},
\newblock Duke Math. J, Vol. 163, No.~{\bf 3}, pp. 619--663, 2014.

\bibitem[KS]{KroSoi}
L.~I. Korogodski and Y.~S. Soibelman,
\newblock {\it Algebras of functions on quantum groups. {P}art {I}}, Vol.~56 of
  Mathematical Surveys and Monographs,
  \newblock American Mathematical Society, Providence, RI, 1998.

\bibitem[KOY]{KOY}
A.~{Kuniba}, M.~{Okado}, and Y.~{Yamada},
\newblock {\it A Common Structure in PBW Bases of the Nilpotent Subalgebra of
  $U_q(\mathfrak{g})$ and Quantized Algebra of Functions},
\newblock SIGMA, Vol.~9, 049, July 2013.

\bibitem[L1]{Lus_canari1}
G.~Lusztig,
\newblock {\it Canonical bases arising from quantized enveloping algebras},
\newblock J. Amer. Math. Soc, Vol.~3, No.~{\bf 2}, pp. 447--498, 1990.

\bibitem[L2]{Lus_quiper}
G.~Lusztig,
\newblock {\it Quivers, perverse sheaves, and quantized enveloping algebras},
\newblock J. Amer. Math. Soc, Vol.~4, No.~{\bf 2}, pp. 365--421, 1991.

\bibitem[L3]{Lus_braid}
G.~Lusztig,
\newblock {\it Braid group action and canonical bases},
\newblock  Adv. Math, Vol. 122, No.~{\bf 2}, pp. 237--261, 1996.

\bibitem[L4]{Lusbook}
G.~Lusztig,
\newblock {\it Introduction to quantum groups}, Modern Birkh\"auser Classics. Birkh\"auser/Springer, New York, 2010.
\newblock Reprint of the 1994 edition.

\bibitem[M1]{McNamara_finite}
P.~J. {McNamara},
\newblock {\it Finite dimensional representations of Khovanov-Lauda-Rouquier algebras I: Finite type},
\newblock  J. reine angew. Math, Ahead of Print, 2013.

\bibitem[M2]{McNamara_symmaffine}
P.~J. {McNamara}.
\newblock {\it Representations of Khovanov-Lauda-Rouquier algebras III: Symmetric affine type},
\newblock ArXiv e-prints, July 2014.

\bibitem[S1]{Saito_PBW}
Y.~Saito,
\newblock {\it P{BW} basis of quantized universal enveloping algebras},
\newblock Publ. Res. Inst. Math. Sci, Vol.~30, No.~{\bf 2}, pp. 209--232,
  1994.

\bibitem[S2]{Saito_QFA}
Y.~Saito,
\newblock {\it Quantized coordinate rings, PBW-type bases and $q$-boson algebras},
\newblock ArXiv e-prints, November 2014.

\bibitem[Soi]{Soirep}
Y.~S. Soibelman,
\newblock {\it Algebra of functions on a compact quantum group and its
  representations},
\newblock Algebra i Analiz, Vol.~2, No.~{\bf 1}, pp. 190--212, 1990.

\bibitem[T]{Tanisaki_QFA}
T.~{Tanisaki},
\newblock {\it Modules over quantized coordinate algebras and PBW-bases},
\newblock ArXiv e-prints, September 2014.

\bibitem[VS]{VakSoi}
L.~L. Vaksman and Y.~S. Soibelman,
\newblock {\it An algebra of functions on the quantum group {${\rm SU}(2)$}},
\newblock Funktsional. Anal. i Prilozhen, Vol.~22, No.~{\bf 3}, pp. 1--14, 96,
  1988.
\end{thebibliography}
\def\cprime{$'$} \def\cprime{$'$} \def\cprime{$'$} \def\cprime{$'$}

\end{document}